\documentclass[10pt]{article}
\usepackage{amsthm}
\usepackage{amsmath}
\usepackage{amssymb}
\usepackage{makeidx}

\linespread{1}
\setlength{\textwidth}{16.7cm}
\setlength{\textheight}{23.2cm}
\setlength{\oddsidemargin}{0.2cm}
\setlength{\evensidemargin}{0.2cm}

\theoremstyle{definition}
\newtheorem{definition}{Definition}[section]
\newtheorem{notation}[definition]{Notation}
\newtheorem{example}[definition]{Example}
\newtheorem{openproblem}[definition]{Open problem}

\theoremstyle{plain}
\newtheorem{theorem}[definition]{Theorem}
\newtheorem{lemma}[definition]{Lemma}
\newtheorem{proposition}[definition]{Proposition}
\newtheorem{corollary}[definition]{Corollary}
\newtheorem{remark}[definition]{Remark}

\newcommand{\beq}{\begin{equation}}

\newcommand{\eeq}{\end{equation}}

\newcommand{\bdfn}{\begin{definition}}

\newcommand{\edfn}{\end{definition}}

\newcommand{\bthm}{\begin{theorem}}

\newcommand{\ethm}{\end{theorem}}

\newcommand{\bprop}{\begin{proposition}}

\newcommand{\eprop}{\end{proposition}}

\newcommand{\bcor}{\begin{corollary}}

\newcommand{\ecor}{\end{corollary}}

\newcommand{\blem}{\begin{lemma}}

\newcommand{\elem}{\end{lemma}}

\newcommand{\bex}{\begin{example}}

\newcommand{\eex}{\end{example}}

\newcommand{\bxc}{\begin{exercise}}

\newcommand{\exc}{\end{exercise}}

\newcommand{\bntn}{\begin{notation}}

\newcommand{\entn}{\end{notation}}

\newcommand{\be}{\begin{enumerate}}

\newcommand{\ee}{\end{enumerate}}

\newcommand{\bce}{\begin{center}}

\newcommand{\ece}{\end{center}}

\newcommand{\bi}{\begin{itemize}}

\newcommand{\ei}{\end{itemize}}

\newcommand{\bt}{\begin{tabular}}

\newcommand{\et}{\end{tabular}}

\newcommand{\si}{\wedge}

\newcommand{\sau}{\vee}

\newcommand{\ba}{\begin{array}} 

\newcommand{\ea}{\end{array}}

\numberwithin{equation}{section}

\def\R{{\mathbb R}}

\def\N{{\mathbb N}}


\newcommand {\bua} {\begin{eqnarray*}}

\newcommand {\eua} {\end {eqnarray*}}

\begin{document}
\title{Generalized Bosbach States}
\author{George GEORGESCU and Claudia MURE\c{S}AN\\ \footnotesize University of Bucharest\\ \footnotesize Faculty of Mathematics and Computer Science\\ \footnotesize Academiei 14, RO 010014, Bucharest, Romania\\ \footnotesize Emails: georgescu@funinf.cs.unibuc.ro, c.muresan@yahoo.com}
\date{}
\maketitle

\begin{abstract}

Bosbach states represent a way of probabilisticly evaluating the formulas from various (commutative or non-commutative) many-valued logics. They are defined on the algebras corresponding to these logics with values in $[0,1]$. Starting from the observation that in the definition of Bosbach states there intervenes the standard MV-algebra structure of $[0,1]$, in this paper we introduce Bosbach states defined on residuated lattices with values in residuated lattices. We are led to two types of generalized Bosbach states, with distinct behaviours. The properties of generalized Bosbach states, proven in the paper, may serve as an algebraic foundation for developping some probabilistic many-valued logics.

{\bf Keywords:} Bosbach states, residuated lattices, MV-algebras, $s$-Cauchy completion, metric completion.

{\bf MSC 2010:} Primary 06F35. Secondary 06D35.
\end{abstract}

\section{Introduction}

\hspace*{10pt} Classical probability theory is based on the hypothesis that the sets of events associated with random experiments have a structure of a Boolean algebra. This fact derives from the thesis that the random experiment follows the rules of classical logic. An important part of probability theory can be developped by considering probabilities on arbitrary Boolean algebras (\cite{cuon}, \cite{frem}) .

It can happen for random experiments to follow the rules of another logical system. Then the sets of events will have the structure of the Lindenbaum-Tarski algebra associated to that logical system.

In the case of infinite-valued \L ukasiewicz logic, the sets of events will have a structure of MV-algebra (\cite{cdom}). The study of probabilities defined on MV-algebras (which are called {\em MV-states}) has been started in \cite{mun} and then continued by numerous authors (see, for instance, \cite{br1}, \cite{br3}, \cite{ioanal2}).

Together with these, there have been studied different types of states defined on pseudo-MV-algebras (\cite{dvus}), BL-algebras (\cite{br2}), pseudo-BL-algebras (\cite{gg}), Rl-monoids (\cite{dvura}, \cite{dvura2}), residuated lattices (\cite{lcciu}, \cite{lcciu1}), pseudo-BCK-algebras (\cite{kuhr}) etc..

Bosbach states, introduced in \cite{gg}, have as domain a pseudo-BL-algebra $A$ and as codomain the real interval $[0,1]$. The axioms of the Bosbach states are expressed in terms of the two implications of $A$ and of the addition in $\R $.

But states can be thought of in another way. By identifying an event with the sentence that describes that event, states will become functions defined on the set of the sentences of the logical system and having as target set the real interval $[0,1]$. This way states can be regarded as a type of semantics. This point of view suggests us to consider $[0,1]$ as a standard algebra of a logical system and to report the definition of states to this algebra.

The present work starts from the observation that Bosbach states can be defined using the canonical structure of standard MV-algebra of $[0,1]$.

By replacing the MV-algebra $[0,1]$ with an arbitrary residuated lattice $L$, we aim to find a concept of a state (called {\em generalized Bosbach state}) defined on an arbitrary residuated lattice $A$ and with $L$ as target set. To this end, we will express the definition of the Bosbach state in several equivalent forms. By comparing these equivalent forms we will obtain two notions of generalized Bosbach states: of type I and of type II.

We will notice that type I states are not order-preserving. By considering order preservation as an essential property for any notion of state, we will be studying especially order-preserving type I states. We will study in parallel order-preserving type I states and type II states. By analyzing the way in which some properties of Bosbach states can be extended to type I and type II states, we will notice a strong asymmetry between them.

This paper is organized as follows. In Section \ref{preliminaries} we present some basic definitions and results from the theory of residuated lattices. Section \ref{bosbach} contains the definition of generalized Bosbach states (of type I and of type II), preceeded by a detailed discussion on its motivation. We give several examples and we prove some arithmetic properties of generalized Bosbach states, as well as some characterizations of them. Section \ref{filtcan} deals with the properties of the canonical filter associated with a generalized Bosbach state and of the corresponding quotient residuated lattice. These are related to the notion of state-morphism, which generalizes the one from \cite{dvus}, \cite{dvura}, \cite{gg} to the more general context of this paper. In Section \ref{riecan} we introduce generalized Rie\v can states. These extend the concept of Rie\v can state from \cite{br2}, \cite{gg}, \cite{dvura}, \cite{lcciu}. We analyze the link between generalized Rie\v can states and generalized Bosbach states of type I and II. In Section \ref{convergente} we are treating the continuity of generalized Bosbach states. In \cite{ggap} the authors introduced the similarity convergence in the context of residuated lattices. Based on this similarity convergence, we are defining three types of continuity for generalized Bosbach states and we establish links between them. To each order-preserving type I state we can associate canonically a similarity relation, which allows us to accomplish, in the general case of the present paper, a construction that generalizes the metric completion of an MV-algebra. The last section of this paper contains a sketch of some connections between generalized Bosbach states and some many-valued logical systems.
 
\section{Preliminaries}
\label{preliminaries}

\hspace*{10pt} In this section we recall some notions and arithmetic properties of several varieties of residuated lattices and some from the theory of filters and congruences of residuated lattices. We refer the reader to \cite{beloh}, \cite{cdom}, \cite{haj}, \cite{ior}, \cite{kow}.

\begin{definition}
A {\em residuated lattice} is an algebraic structure of the form $(A,\vee ,\wedge ,\odot ,\rightarrow ,0,1)$, in which: $(A,\vee ,\wedge ,0,1)$ is a bounded lattice, $(A,\odot ,1)$ is a commutative monoid and, for all $a,b,c\in A$, $a\leq b\rightarrow c$ iff $a\odot b\leq c$ ({\em the law of residuation}).

\end{definition}

For any residuated lattice $A$ and any $a,b\in A$, we denote $\neg \, a=a\rightarrow 0$ ({\em the negation}) and $a\leftrightarrow b=(a\rightarrow b)\wedge (b\rightarrow a)$ ({\em the biresiduum} or {\em the equivalence}). We will also denote $d_A(a,b)=a\leftrightarrow b$.

The next two lemmas collect several arithmetic properties of residuated lattices.

\begin{lemma}{\rm \cite{kow}, \cite{haj}, \cite{pic}, \cite{tur}} For any residuated lattice $A$ and any $a,b,c,d\in A$, we have:
\begin{enumerate}
\item\label{l2.2(1)} $a\rightarrow 1=1$;
\item\label{l2.2(2)} $1\rightarrow a=a$;
\item\label{l2.2(3)} $a\leq b$ iff $a\rightarrow b=1$;
\item\label{l2.2(4)} if $a\leq b$ then $b\rightarrow c\leq a\rightarrow c$ and $c\rightarrow a\leq c\rightarrow b$;
\item\label{l2.2(5)} $a\odot b\leq a\wedge b\leq d_A(a,b)\leq a\rightarrow b$;
\item\label{l2.2nenum} $b\leq a\rightarrow b$;
\item\label{l2.2(6)} $a\odot (a\rightarrow b)\leq b$;
\item\label{l2.2nenum2} if $a\leq c$ and $b\leq d$, then $a\odot b\leq c\odot d$;
\item\label{l2.2(7)} $a\rightarrow (b\rightarrow c)=(a\odot b)\rightarrow c=b\rightarrow (a\rightarrow c)$;
\item\label{l2.2(8)} $(a\vee b)\odot c=(a\odot c)\vee (b\odot c)$;
\item\label{l2.2(9)} $(a\vee b)\rightarrow c=(a\rightarrow c)\wedge (b\rightarrow c)$ and $c\rightarrow (a\wedge b)=(c\rightarrow a)\wedge (c\rightarrow b)$; moreover, for any nonempty set $I$ and any family $(a_i)_{i\in I}\subseteq A$ such that $\bigvee _{i\in I}a_i$ exists, $(\bigvee _{i\in I}a_i)\rightarrow c=\bigwedge _{i\in I}(a_i\rightarrow c)$. 
\end{enumerate}
\label{l2.2}
\end{lemma}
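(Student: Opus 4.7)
The plan is to use the law of residuation systematically as a Galois adjunction between $-\odot b$ and $b\rightarrow -$: $a\leq b\rightarrow c$ if and only if $a\odot b\leq c$. Nearly every item reduces to one or two applications of this equivalence, together with commutativity and associativity of $\odot$; the main organizational concern is to order the items so that each one reuses the earlier ones.

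I would begin with (i)--(iii). Item (iii) is residuation applied with right factor $1$: $a\leq b$ iff $a\odot 1\leq b$ iff $1\leq a\rightarrow b$. Then (i) is immediate from $a\leq 1$, and (ii) follows by noting that $b\leq 1\rightarrow a$ iff $b\leq a$, so $1\rightarrow a$ is the largest element below $a$, namely $a$ itself. Next I would dispatch the adjunction-atoms: (vii) is residuation applied to $a\rightarrow b\leq a\rightarrow b$; (vi) reduces to $b\odot a\leq b\odot 1=b$; for (viii), $c\odot b\leq c\odot b$ gives $c\leq b\rightarrow (c\odot b)$, hence $a\leq b\rightarrow (c\odot b)$ and so $a\odot b\leq c\odot b$, and iterating in the second argument yields $c\odot b\leq c\odot d$. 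Item (iv) is analogous, using $a\leq b$ and (vii): $(b\rightarrow c)\odot a\leq (b\rightarrow c)\odot b\leq c$. Item (ix) is associativity repackaged via residuation, because both sides equal the largest $x$ with $x\odot a\odot b\leq c$, and commutativity gives the second equality.

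For (v) I would combine what has been established: $a\odot b\leq a$ and $a\odot b\leq b$ (by (viii) and $a,b\leq 1$) give $a\odot b\leq a\wedge b$; $a\wedge b\leq b\leq a\rightarrow b$ by (vi), and symmetrically, so $a\wedge b\leq d_A(a,b)$; and $d_A(a,b)\leq a\rightarrow b$ is immediate from the definition as a meet. For (xi) the efficient route is to prove the infinite join-version first: if $\bigvee_{i\in I}a_i$ exists and $y$ is any upper bound of $\{a_i\odot c:i\in I\}$, then $a_i\leq c\rightarrow y$ for all $i$, hence $\bigvee_i a_i\leq c\rightarrow y$, hence $(\bigvee_i a_i)\odot c\leq y$. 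Thus $-\odot c$ sends the join to $\bigvee_i(a_i\odot c)$, and a further application of residuation delivers $(\bigvee_i a_i)\rightarrow c=\bigwedge_i(a_i\rightarrow c)$. The dual identity $c\rightarrow (a\wedge b)=(c\rightarrow a)\wedge (c\rightarrow b)$ is the analogous statement that the right adjoint $c\rightarrow -$ preserves binary meets, again a direct residuation argument. Item (x) is the binary case of the join-distributivity just established.

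The proofs are routine and there is no genuine obstacle. The one conceptual point worth foregrounding is that $-\odot c$, as a left adjoint, preserves every existing join; once this is stated explicitly, items (x) and (xi) fall out immediately rather than requiring separate arguments, and the whole lemma becomes a catalogue of short adjunction calculations.
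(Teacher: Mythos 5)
The paper does not prove this lemma at all: it is stated as a known collection of facts with citations to \cite{kow}, \cite{haj}, \cite{pic}, \cite{tur}, so there is no in-paper argument to compare against. Your proof is correct and is exactly the standard adjunction-based derivation found in those sources; the only cosmetic point is that your argument for (vi) invokes the monotonicity in (viii) before you prove it, which is harmless since your proof of (viii) is independent of (vi) (or one can get $b\odot a\leq b$ directly from $a\leq 1=b\rightarrow b$ via residuation).
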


\begin{lemma}{\rm \cite{beloh}, \cite{tur}} In a residuated lattice, the biresiduum has the following properties, for all $a,b,c,x,y\in A$:

\begin{enumerate}
\item\label{lA(1)} $d_A(a,b)=1$ iff $a=b$;
\item\label{lA(2)} $d_A(a,b)=d_A(b,a)$;
\item\label{lA(3)} $d_A(a,b)\odot d_A(b,c)\leq d_A(a,c)$;
\item\label{lA(4)} $d_A(a,b)\leq d_A(\neg \, a,\neg \, b)$;
\item\label{lA(5)} $d_A(a,b)\odot d_A(x,y)\leq d_A(a\circ x,b\circ y)$ for each $\circ \in \{\vee ,\wedge ,\odot ,\rightarrow ,\leftrightarrow \}$.
\end{enumerate}
\label{lA}
\end{lemma}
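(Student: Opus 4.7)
The plan is to unfold the definition $d_A(u,v) = (u\rightarrow v)\wedge (v\rightarrow u)$ in every item and reduce each claim to arithmetic already available in Lemma \ref{l2.2} via the law of residuation. Parts (i) and (ii) are then immediate: $d_A(a,b)=1$ splits as $a\rightarrow b=1$ and $b\rightarrow a=1$, which by Lemma \ref{l2.2}(iii) means $a\le b\le a$; and $d_A(a,b)=d_A(b,a)$ is just commutativity of $\wedge$.

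For (iii) I would first establish the transitivity of $\rightarrow$, namely $(a\rightarrow b)\odot(b\rightarrow c)\le a\rightarrow c$. By residuation this is equivalent to $(a\rightarrow b)\odot(b\rightarrow c)\odot a\le c$, which follows from two applications of Lemma \ref{l2.2}(vii) (the inequality $u\odot(u\rightarrow v)\le v$) combined with monotonicity of $\odot$ (Lemma \ref{l2.2}(viii)). Using this and its mirror $(c\rightarrow b)\odot(b\rightarrow a)\le c\rightarrow a$, together with $d_A(a,b)\le a\rightarrow b$ and $d_A(a,b)\le b\rightarrow a$ coming from Lemma \ref{l2.2}(v), I obtain $d_A(a,b)\odot d_A(b,c)\le (a\rightarrow c)\wedge (c\rightarrow a)=d_A(a,c)$. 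Part (iv) then falls out as a corollary: specialising the transitivity inequality at $c=0$ gives $(a\rightarrow b)\odot \neg b\le \neg a$, i.e.\ $a\rightarrow b\le \neg b\rightarrow \neg a$ by residuation; conjoining this with the analogous $b\rightarrow a\le \neg a\rightarrow \neg b$ yields $d_A(a,b)\le d_A(\neg a,\neg b)$.

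The main work is (v), handled case by case on $\circ$. In every case the task splits into proving $(a\rightarrow b)\odot (x\rightarrow y)\le (a\circ x)\rightarrow (b\circ y)$ together with its mirror, after which conjoining and replacing each $\rightarrow$ on the left by $d_A$ finishes. For $\circ =\odot$ the inequality reduces via residuation to $(a\rightarrow b)\odot (x\rightarrow y)\odot a\odot x\le b\odot y$, which is immediate from two modus-ponens steps and monotonicity of $\odot$. For $\circ \in\{\vee ,\wedge \}$ similar manipulations using Lemma \ref{l2.2}(viii)--(ix) suffice. The slightly delicate case is $\circ =\rightarrow $, where I would verify $d_A(a,b)\odot d_A(x,y)\odot (a\rightarrow x)\odot b\le y$ by first exploiting $d_A(a,b)\odot b\le a$ to convert $b$ into $a$, then $(a\rightarrow x)\odot a\le x$, and finally $d_A(x,y)\odot x\le y$; residuation gives $d_A(a,b)\odot d_A(x,y)\le (a\rightarrow x)\rightarrow (b\rightarrow y)$, and the mirror inequality is obtained by a symmetric argument. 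The case $\circ =\leftrightarrow $ reduces to the $\wedge$ and $\rightarrow$ cases via the defining identity $u\leftrightarrow v=(u\rightarrow v)\wedge (v\rightarrow u)$ together with part (iii). The main obstacle is tracking the mixed variance in the $\rightarrow$-case, which is exactly why routing the argument through residuation, rather than attempting a direct substitution, keeps the computation transparent.
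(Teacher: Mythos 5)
The paper offers no proof of this lemma to compare against: it is quoted from \cite{beloh} and \cite{tur}. Judged on its own terms, your argument for (i)--(iv) and for the cases $\circ \in \{\vee ,\wedge ,\odot ,\rightarrow \}$ of (v) is correct and is the standard residuation computation (for the lattice cases the clauses of Lemma \ref{l2.2} you actually need are (\ref{l2.2(4)}), (\ref{l2.2(5)}) and (\ref{l2.2(9)}), not the ones your roman numerals point at, but the manipulations you describe do go through).

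The one step that fails as written is the case $\circ =\leftrightarrow $. Put $w=d_A(a,b)\odot d_A(x,y)$. Your reduction feeds $w$ into the $\rightarrow $-case twice, once to bound $d_A(a\rightarrow x,b\rightarrow y)$ and once to bound $d_A(x\rightarrow a,y\rightarrow b)$, and then merges the two bounds through the $\odot $-form $\wedge $-case or through (iii). Either way the factor $w$ is consumed twice, so what you actually obtain is $w\odot w\leq d_A(a\leftrightarrow x,b\leftrightarrow y)$, which is strictly weaker than the claim since $\odot $ is not idempotent. Two repairs are available. The cheap one is to observe that the $\wedge $-congruence holds in the stronger meet form $(u\wedge v)\rightarrow (u'\wedge v')\geq (u\rightarrow u')\wedge (v\rightarrow v')$ (immediate from Lemma \ref{l2.2}, (\ref{l2.2(9)}) and (\ref{l2.2(4)})), so the two bounds produced by the $\rightarrow $-case are combined with $\wedge $ rather than $\odot $, giving $w\wedge w=w$. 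Alternatively, verify the four residuated subgoals directly, e.g.\ $w\odot (a\leftrightarrow x)\odot b\leq y$ via the chain $d_A(a,b)\odot b\leq a$, then $(a\rightarrow x)\odot a\leq x$, then $d_A(x,y)\odot x\leq y$; each such chain uses each factor of $w$ exactly once, which is precisely what the naive composition of the earlier cases fails to do.
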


\begin{lemma}{\rm \cite{kow}, \cite{haj}, \cite{pic}, \cite{tur}} For any residuated lattice $A$ and any $a,b\in A$, we have:
\begin{enumerate}
\item\label{0neg1} $\neg \, 0=1$ and $\neg \, 1=0$;
\item\label{l2.3(1)} $a\leq \neg \, b$ iff $a\odot b=0$;
\item\label{l2.3(2)} $a\leq \neg \, \neg \, a$ and $\neg \, \neg \, \neg \, a=a$;
\item\label{l2.3(3)} if $a\leq b$ then $\neg \, b\leq \neg \, a$;
\item\label{l2.3(4)} $a\rightarrow b\leq \neg \, b\rightarrow \neg \, a$;
\item\label{l2.3(5)} $\neg \, (a\odot b)=a\rightarrow \neg \, b=b\rightarrow \neg \, a$.
\end{enumerate}
\label{l2.3}
\end{lemma}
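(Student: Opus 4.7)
The plan is to derive each of the six items directly from the defining identity $\neg a = a \rightarrow 0$ together with the arithmetic already collected in Lemma~\ref{l2.2}; no new technical ideas are required, so I expect the argument to proceed item by item without serious obstacle.

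For (i), I would compute $\neg 1 = 1 \rightarrow 0 = 0$ using Lemma~\ref{l2.2}(ii), and $\neg 0 = 0 \rightarrow 0 = 1$ by Lemma~\ref{l2.2}(iii) applied to the trivial inequality $0 \leq 0$. Item (ii) is just the law of residuation specialised at $c = 0$, once one notices that $a \odot b \leq 0$ forces $a \odot b = 0$ because $0$ is the bottom. Item (iv) is equally immediate: if $a \leq b$, then Lemma~\ref{l2.2}(iv) with $c = 0$ gives $\neg b = b \rightarrow 0 \leq a \rightarrow 0 = \neg a$.

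With (ii) and (iv) in hand, (iii) follows cleanly. From Lemma~\ref{l2.2}(vii), $a \odot \neg a = a \odot (a \rightarrow 0) \leq 0$, so (ii) yields $a \leq \neg \neg a$. Substituting $\neg a$ for $a$ in this first half produces $\neg a \leq \neg \neg \neg a$; applying (iv) to the inequality $a \leq \neg \neg a$ produces the reverse inequality $\neg \neg \neg a \leq \neg a$; together they give the claimed equality (as usual, the natural conclusion here is $\neg \neg \neg a = \neg a$, which is what the argument actually delivers in a general residuated lattice).

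For (v), the law of residuation applied twice reduces the claim to $(a \rightarrow b) \odot a \odot \neg b \leq 0$, which follows by combining $a \odot (a \rightarrow b) \leq b$ (Lemma~\ref{l2.2}(vii)), monotonicity of $\odot$ (Lemma~\ref{l2.2}(viii)), and $b \odot \neg b = 0$ (from (ii)). Finally, (vi) is a direct unfolding through Lemma~\ref{l2.2}(ix): $\neg (a \odot b) = (a \odot b) \rightarrow 0 = a \rightarrow (b \rightarrow 0) = a \rightarrow \neg b$, and the symmetric form $b \rightarrow \neg a$ falls out from the commutativity of $\odot$. The only step requiring a bit of care is the second half of (iii), where (iv) must be applied to the correct inequality in the correct direction; everything else is essentially a one-line manipulation.
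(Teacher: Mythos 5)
The paper states this lemma as a known result imported from the cited references and supplies no proof of its own, so there is no internal argument to compare against; your derivation of all six items from $\neg\, a=a\rightarrow 0$, the law of residuation, and Lemma \ref{l2.2} is correct and complete. You are also right to flag item (\ref{l2.3(2)}): as printed, $\neg \, \neg \, \neg \, a=a$ would assert involutivity, which fails in a general residuated lattice, and the correct (and standard) identity that your argument actually establishes is $\neg \, \neg \, \neg \, a=\neg \, a$.
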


Important classes of residuated lattices can be introduced starting from the notion of t-norm. A {\em t-norm} is a binary operation $\odot $ on $[0,1]$ with the properties of being associative, commutative, order-preserving and with $1$ as identity. If a t-norm $\odot $ is left-continuous, then we can consider the operation residuum $\rightarrow $ on $[0,1]$, defined by $a\rightarrow b=\max \{c\in [0,1]|c\odot a\leq b\}$. Then $([0,1],\max ,\min ,\odot ,\rightarrow ,0,1)$ is a residuated lattice.

A residuated lattice $A$ is called an {\em MTL-algebra} iff, for all $a,b\in A$, $(a\rightarrow b)\vee (b\rightarrow a)=1$. If $\odot $ is a left-continuous t-norm, then $([0,1],\max ,\min ,\odot ,\rightarrow ,0,1)$ is an MTL-algebra.

\begin{lemma}{\rm \cite{ego}} If $A$ is an MTL-algebra and $a,b\in A$, then $a\vee b=((a\rightarrow b)\rightarrow b)\wedge ((b\rightarrow a)\rightarrow a)$.
\label{lMTL}
\end{lemma}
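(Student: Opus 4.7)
The plan is to prove the identity by showing the two inequalities separately, relying on the residuation law, on the fact that $\odot$ distributes over $\vee$, and on the defining property of MTL-algebras.

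For the easier direction, $a\vee b\leq ((a\rightarrow b)\rightarrow b)\wedge ((b\rightarrow a)\rightarrow a)$, I would first observe that $a$ and $b$ both lie below each of the two factors on the right. Indeed, $b\leq (a\rightarrow b)\rightarrow b$ is immediate from Lemma \ref{l2.2}(\ref{l2.2nenum}), and $a\leq (a\rightarrow b)\rightarrow b$ follows by applying the law of residuation to Lemma \ref{l2.2}(\ref{l2.2(6)}), i.e.\ $a\odot(a\rightarrow b)\leq b$. Symmetrically for the other factor. Taking joins then gives the desired inequality.

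For the harder direction, set $x=(a\rightarrow b)\rightarrow b$ and $y=(b\rightarrow a)\rightarrow a$. The key idea is to multiply $x\wedge y$ by $1=(a\rightarrow b)\vee(b\rightarrow a)$ (the MTL axiom) and expand using Lemma \ref{l2.2}(\ref{l2.2(8)}):
\begin{equation*}
x\wedge y=(x\wedge y)\odot\bigl((a\rightarrow b)\vee(b\rightarrow a)\bigr)=\bigl((x\wedge y)\odot(a\rightarrow b)\bigr)\vee\bigl((x\wedge y)\odot(b\rightarrow a)\bigr).
\end{equation*}
Using monotonicity of $\odot$ (Lemma \ref{l2.2}(\ref{l2.2nenum2})), the first summand is bounded by $x\odot(a\rightarrow b)=((a\rightarrow b)\rightarrow b)\odot(a\rightarrow b)\leq b$ (Lemma \ref{l2.2}(\ref{l2.2(6)})), and similarly the second summand is bounded by $a$. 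Hence $x\wedge y\leq a\vee b$, concluding the proof.

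I do not expect any real obstacle here: the whole argument is a clean exploitation of prelinearity together with the distributivity of $\odot$ over $\vee$, so the only thing to be careful about is citing each arithmetic step from Lemma \ref{l2.2} correctly and ordering the monotonicity estimates so that $x$ meets $a\rightarrow b$ and $y$ meets $b\rightarrow a$, not the other way around.
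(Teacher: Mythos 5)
Your proof is correct and complete: the inequality $a\vee b\leq ((a\rightarrow b)\rightarrow b)\wedge ((b\rightarrow a)\rightarrow a)$ follows exactly as you say from Lemma \ref{l2.2}, (\ref{l2.2nenum}) and residuation applied to (\ref{l2.2(6)}), and the reverse inequality via $1=(a\rightarrow b)\vee(b\rightarrow a)$ together with the distributivity (\ref{l2.2(8)}) and the bounds $x\odot(a\rightarrow b)\leq b$, $y\odot(b\rightarrow a)\leq a$ is sound. The paper itself gives no proof of this lemma --- it is quoted from \cite{ego} --- and your argument is precisely the standard prelinearity-plus-distributivity proof found there, so there is nothing to add.
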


A {\em BL-algebra} is an MTL-algebra $A$ with the property that, for all $a,b\in A$, $a\wedge b=a\odot (a\rightarrow b)$. If $\odot $ is a continuous t-norm, then $([0,1],\max ,\min ,\odot ,\rightarrow ,0,1)$ is a BL-algebra.

We list below the three fundamental continuous t-norms and their residua:

\begin{itemize}
\item {\em the \L ukasiewicz t-norm}: $a\odot _{L}b=\max \{0,a+b-1\}$, $a\rightarrow _{L}b=\min \{1,1-a+b\}$;
\item {\em the $G\ddot{o}del$ t-norm}: $a\odot _{G}b=\min \{a,b\}$, $a\rightarrow _{G}b=\begin{cases}1, & {\rm if}\ a\leq b,\\ b, & {\rm otherwise;}\end{cases}$
\item {\em the product} or {\em Gaines t-norm}: $a\odot _{P}b=a\cdot b$, $a\rightarrow _{P}b=\begin{cases}1, & {\rm if}\ a\leq b,\\ b/a, & {\rm otherwise.}\end{cases}$
\end{itemize}

An {\em MV-algebra} is an algebra $(A,\oplus,\neg \, ,0)$ with one binary operation $\oplus$, one unary operation $\neg \, $ and one constant 0 such that: $(A,\oplus,0)$ is a commutative monoid and, for all $a,b\in A$, $\neg \, \neg \, a=a$, $a\oplus \neg \, 0=\neg \, 0$, $\neg \, (\neg \, a\oplus b)\oplus b=\neg \, (\neg \, b\oplus a)\oplus a$. If $A$ is an MV-algebra, then the binary operations $\odot$, $\si$, $\sau$, $\rightarrow $ and the constant 1 are defined by the following relations: for all $a,b\in A$, $a\odot b=\neg \, (\neg \, a\oplus \neg \, b)$, $a\si b=(a\oplus \neg \, b)\odot b$ , $a\sau b=(a\odot \neg \, b)\oplus b$, $a\rightarrow b=\neg\, a\oplus b$, $1=\neg \, 0$. According to \cite[Theorem 3.2, page 99]{pic}, MV-algebras are exactly the involutive BL-algebras, that is: an MV-algebra is a BL-algebra $A$ with the property that, for all $a\in A$, $\neg \, \neg \, a=a$. $([0,1],\max ,\min ,\odot _{L},\rightarrow _{L},0,1)$ is an MV-algebra, called {\em the standard MV-algebra}.

\begin{lemma}{\rm \cite{ioanal}} Let $A$ be an MV-algebra and $a,b,c\in A$. Then:

\begin{enumerate}

\item\label{mvsum} $a\oplus \neg \, a=1$;
\item\label{mvdemorgan1} $\neg \, (a\odot b)=\neg \, a\oplus \neg \, b$;
\item\label{mvdemorgan2} $\neg \, (a\oplus b)=\neg \, a\odot \neg \, b$;
\item\label{mvdistrib} $a\oplus (b\wedge c)=(a\oplus b)\wedge (a\oplus c)$;
\item\label{mvvee} $c\rightarrow (a\vee b)=(c\rightarrow a)\vee (c\rightarrow b)$; moreover, for any nonempty set $I$ and any family $(a_i)_{i\in I}\subseteq A$ such that $\bigvee _{i\in I}a_i$ exists, $c\rightarrow (\bigvee _{i\in I}a_i)=\bigvee _{i\in I}(c\rightarrow a_i)$;
\item\label{mvwedge} for any nonempty set $I$ and any family $(a_i)_{i\in I}\subseteq A$ such that $\bigwedge _{i\in I}a_i$ exists, $(\bigwedge _{i\in I}a_i)\rightarrow c=\bigvee _{i\in I}(a_i\rightarrow c)$.

\end{enumerate}
\label{mvlema}
\end{lemma}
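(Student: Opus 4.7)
The plan is to handle items 1--4 directly from the MV-algebra axioms, and then derive items 5 and 6 by combining them with the residuated-lattice arithmetic of Lemma \ref{l2.2} and the involutivity $\neg \neg a=a$.

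For item 1, I would specialize the MV-axiom $\neg (\neg a\oplus b)\oplus b=\neg (\neg b\oplus a)\oplus a$ to $b=\neg a$ and simplify using $\neg \neg a=a$ and $a\oplus 1=1$. Items 2 and 3 are immediate from $a\odot b=\neg (\neg a\oplus \neg b)$: applying $\neg $ and using involutivity gives item 2, and substituting $\neg x$ for $a$, $\neg y$ for $b$ and negating yields item 3. Item 4 follows from items 2, 3 and Lemma \ref{l2.2}(\ref{l2.2(8)}) via De Morgan: negating both sides converts the identity into the known $x\odot (y\vee z)=(x\odot y)\vee (x\odot z)$.

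Items 5 and 6 rest on two auxiliary facts. First, the involution identity $x\rightarrow y=\neg y\rightarrow \neg x$, obtained by applying Lemma \ref{l2.3}(\ref{l2.3(4)}) both to $(x,y)$ and to $(\neg y,\neg x)$ and using $\neg \neg z=z$. Second, De Morgan for existing suprema and infima, $\neg \bigvee _{i}a_i=\bigwedge _{i}\neg a_i$ and dually, which is direct from $\neg $ being an order-reversing involution. The binary case of item 5 reduces via $c\rightarrow x=\neg c\oplus x$ to distributivity of $\oplus $ over $\vee $, which is itself the De Morgan dual of Lemma \ref{l2.2}(\ref{l2.2(8)}) through items 2 and 3. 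For the infinite case of item 5 I would use $c\rightarrow x=\neg (c\odot \neg x)$ (from Lemma \ref{l2.3}(\ref{l2.3(5)}) and involutivity) and reduce to the preservation of existing infima by $c\odot \cdot $:
\[
c\rightarrow \bigvee _{i}a_i=\neg \bigl(c\odot \bigwedge _{i}\neg a_i\bigr)=\neg \bigwedge _{i}(c\odot \neg a_i)=\bigvee _{i}\neg (c\odot \neg a_i)=\bigvee _{i}(c\rightarrow a_i).
\]
Item 6 then follows formally, since $(\bigwedge _{i}a_i)\rightarrow c=\neg c\rightarrow \bigvee _{i}\neg a_i=\bigvee _{i}(\neg c\rightarrow \neg a_i)=\bigvee _{i}(a_i\rightarrow c)$, where the middle step applies the infinite case of item 5 to $\neg c$ and $(\neg a_i)_{i\in I}$.

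The main obstacle is the fact that $\odot $ distributes over arbitrary existing infima in an MV-algebra, which powers the infinite parts of items 5 and 6. The finite-meet version $c\odot (a\wedge b)=(c\odot a)\wedge (c\odot b)$ is, via the same De Morgan translation, equivalent to the binary case of item 5 already in hand; the genuine work is passing from finite to arbitrary meets. This is a classical fact about MV-algebras recorded in \cite{cdom} and \cite{ioanal}, provable either via Chang's subdirect representation by MV-chains (on which $\odot $ is monotone on a totally ordered set) or by a direct axiomatic argument exploiting the adjunction between $c\odot \cdot $ and $c\rightarrow \cdot $ together with involutivity.
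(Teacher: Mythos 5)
The paper offers no proof of this lemma --- it is imported verbatim from \cite{ioanal} --- so the only question is whether your argument stands on its own, and it has one genuine gap plus a small slip. The slip is in item (\ref{mvsum}): substituting $b=\neg\,a$ into the axiom $\neg\,(\neg\,a\oplus b)\oplus b=\neg\,(\neg\,b\oplus a)\oplus a$ yields $\neg\,(\neg\,a\oplus\neg\,a)\oplus\neg\,a=\neg\,(a\oplus a)\oplus a$, which is just the tautology $a\vee\neg\,a=\neg\,a\vee a$ and says nothing about $a\oplus\neg\,a$. The substitution you want is $b=\neg\,0=1$, which gives $\neg\,(\neg\,a\oplus 1)\oplus 1=\neg\,(0\oplus a)\oplus a$, i.e. $1=\neg\,a\oplus a$.

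The genuine gap is in the binary case of item (\ref{mvvee}). You reduce it to distributivity of $\oplus$ over $\vee$ and assert that this is the De Morgan dual of Lemma \ref{l2.2}, (\ref{l2.2(8)}). It is not: negating $x\oplus(y\vee z)=(x\oplus y)\vee(x\oplus z)$ through items (\ref{mvdemorgan1})--(\ref{mvdemorgan2}) produces $u\odot(v\wedge w)=(u\odot v)\wedge(u\odot w)$, i.e. distributivity of $\odot$ over $\wedge$ --- whereas the De Morgan dual of Lemma \ref{l2.2}, (\ref{l2.2(8)}) is distributivity of $\oplus$ over $\wedge$, which is exactly your item (\ref{mvdistrib}). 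So the law you need for item (\ref{mvvee}) is a different distributivity, one that fails in general residuated lattices, and your later remark that the finite-meet version of $\odot$ over $\wedge$ is ``equivalent to the binary case of item 5 already in hand'' closes a circle rather than a proof: each of the two statements is derived from the other. The binary case does hold in MV-algebras, but it needs its own argument --- e.g. Chang's subdirect representation into chains (legitimate here, since it is a finitary identity and on a chain $c\rightarrow\max\{a,b\}=\max\{c\rightarrow a,c\rightarrow b\}$ by monotonicity of the residuum in its second argument), or a direct equational derivation. A related caution for the infinitary parts: a subdirect embedding need not preserve arbitrary existing suprema and infima, so Chang's representation does not by itself deliver the infinite cases of items (\ref{mvvee}) and (\ref{mvwedge}); those require the bootstrapping arguments in \cite{cdom} or \cite{ioanal}, which it is fine to cite, as the paper itself does for the whole lemma. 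The remaining items --- (\ref{mvdemorgan1}), (\ref{mvdemorgan2}), (\ref{mvdistrib}), and the deduction of item (\ref{mvwedge}) from item (\ref{mvvee}) via contraposition and the De Morgan law for existing infima --- are correct as you present them.
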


A {\em Heyting algebra} is a residuated lattice $A$ such that, for all $a,b\in A$, $a\odot b=a\wedge b$. In a Heyting algebra $A$ we have: $a\wedge (a\rightarrow b)=a\wedge b$ for all $a,b\in A$ (for a proof see, for instance, \cite[Proposition 1.20, page 17]{pic}).

A {\em $G\ddot{o}del$ algebra} is a BL-algebra $A$ such that, for all $a,b\in A$, $a\odot b=a\wedge b$, that is: both a Heyting algebra and a BL-algebra. $([0,1],\max ,\min ,\odot _{G},\rightarrow _{G},0,1)$ is a ${\rm G\ddot{o}del}$ algebra.

A {\em product} or {\em PL-algebra} is a BL-algebra $A$ that satisfies the following two conditions:

\begin{itemize}
\item for all $a\in A$, $a\wedge \neg \, a=0$;
\item for all $a,b,c\in A$, $(\neg \, \neg \, c\odot ((a\odot c)\rightarrow (b\odot c)))\rightarrow (a\rightarrow b)=1$.
\end{itemize}

$([0,1],\max ,\min ,\odot _{P},\rightarrow _{P},0,1)$ is a product algebra.

A residuated lattice $A$ is said to be {\em involutive} iff $\neg \, \neg \, a=a$ for all $a\in A$. A residuated lattice $A$ is said to be {\em divisible} iff $a\wedge b=a\odot (a\rightarrow b)$ for all $a,b\in A$. A divisible and involutive residuated lattice is an MV-algebra.

\begin{lemma}{\rm \cite{cdom}, \cite{haj}, \cite{ior}} A residuated lattice $A$ is an MV-algebra iff, for all $a,b\in A$, $(a\rightarrow b)\rightarrow b=(b\rightarrow a)\rightarrow a$. In this case, for all $a,b\in A$, $a\vee b=(a\rightarrow b)\rightarrow b=(b\rightarrow a)\rightarrow a$.
\label{l2.4}
\end{lemma}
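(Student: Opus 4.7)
I would run both directions in parallel, extracting the coda about the join from the same computation that drives the forward implication.

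For ($\Rightarrow$), assume $A$ is an MV-algebra. The MV-definition $a\vee b=(a\odot \neg b)\oplus b$ simplifies, via De Morgan and involutivity, to $\neg(\neg a\oplus b)\oplus b$. Rewriting $\neg a\oplus b=a\rightarrow b$ and then $\neg(a\rightarrow b)\oplus b=(a\rightarrow b)\rightarrow b$ (again using involutivity) gives $a\vee b=(a\rightarrow b)\rightarrow b$; symmetry of $\vee$ then forces $(a\rightarrow b)\rightarrow b=(b\rightarrow a)\rightarrow a$, and the coda of the lemma also falls out.

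For ($\Leftarrow$), assume the hypothesized identity. Specializing $b=0$ yields $\neg\neg a=(a\rightarrow 0)\rightarrow 0=(0\rightarrow a)\rightarrow a=1\rightarrow a=a$ by Lemma~\ref{l2.2}(\ref{l2.2(2)}), so $A$ is involutive. Next I verify the join formula directly: $(a\rightarrow b)\rightarrow b$ is an upper bound of $\{a,b\}$ by Lemma~\ref{l2.2}(\ref{l2.2nenum}) and residuation applied to Lemma~\ref{l2.2}(\ref{l2.2(6)}); conversely, for any $c\geq a,b$, two uses of Lemma~\ref{l2.2}(\ref{l2.2(4)}) together with the hypothesis give
\[
(a\rightarrow b)\rightarrow b\leq (c\rightarrow b)\rightarrow b=(b\rightarrow c)\rightarrow c=1\rightarrow c=c,
\]
which simultaneously proves the last sentence of the lemma (by symmetry for the other formula).

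To promote $A$ to an MV-algebra, I define $a\oplus b=\neg a\rightarrow b$ and verify the MV-axioms. Commutativity of $\oplus$ reduces to the contrapositive law $x\rightarrow y=\neg y\rightarrow \neg x$, which in an involutive residuated lattice follows from two applications of Lemma~\ref{l2.3}(\ref{l2.3(4)}). Associativity is exactly Lemma~\ref{l2.2}(\ref{l2.2(7)}) once I note, via Lemma~\ref{l2.3}(\ref{l2.3(5)}) and involutivity, the De Morgan-type identity $\neg(\neg a\rightarrow b)=\neg a\odot \neg b$. The laws $a\oplus 0=a$ and $a\oplus 1=1$ are immediate. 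The remaining MV-axiom $\neg(\neg a\oplus b)\oplus b=\neg(\neg b\oplus a)\oplus a$ rewrites, exactly as in the forward direction, to the hypothesis. A final check confirms that the derived $\odot_{MV},\rightarrow_{MV}$ recover the original operations. The main obstacle is not a single deep step but bookkeeping in this last paragraph: before MV-structure is in hand I must rearrange expressions in $\oplus,\odot,\rightarrow,\neg$ using only involutivity plus Lemma~\ref{l2.3}(\ref{l2.3(5)}) as the lone bridge between $\odot$ and $\neg(\cdot\rightarrow\cdot)$.
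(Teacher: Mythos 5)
The paper does not prove this lemma at all: it is quoted as a known preliminary with references to \cite{cdom}, \cite{haj} and \cite{ior}, so there is no in-paper argument to compare yours against. Judged on its own, your proof is correct and follows the standard route from those sources. The forward direction via the MV-derived formula for $\vee$ is fine; in the converse, the specialization $b=0$ for involutivity, the least-upper-bound computation for $(a\rightarrow b)\rightarrow b$ (which, as you note, needs only Lemma~\ref{l2.2} and the hypothesis), and the verification of the Chang axioms for $a\oplus b=\neg\, a\rightarrow b$ all check out; associativity indeed reduces to $\neg\,(\neg\, a\rightarrow b)=\neg\, a\odot \neg\, b$ plus Lemma~\ref{l2.2}(\ref{l2.2(7)}).

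The only place I would press you is the closing ``final check.'' Recovering $\odot$ and $\rightarrow$ is not quite enough by itself: the statement ``$A$ is an MV-algebra'' means, in this paper's sense, that the residuated lattice $A$ is divisible and involutive (equivalently an involutive BL-algebra), so you must also see that the \emph{lattice} reducts agree. That is automatic once $\rightarrow_{MV}=\rightarrow$, because the order is definable by $a\leq b$ iff $a\rightarrow b=1$ and $\neg\,0=1$, so the two orders and hence the meets and joins coincide; divisibility of the original $A$ then follows from the standard fact that the Chang axioms make $(a\oplus\neg\, b)\odot b$ the lattice meet. Since the paper itself presupposes exactly this background on MV-algebras (via \cite{pic}), citing it is legitimate, but the sentence as written hides the one step where the lattice structure, rather than the monoid and residuum, has to be matched up.
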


Throughout the remaining part of this section, let $A$ be a residuated lattice. A {\em filter} of $A$ is a nonempty subset $F$ of $A$ such that, for all $a,b\in A$:

\begin{itemize}
\item $a,b\in F$ implies $a\odot b\in F$;
\item $a\in F$ and $a\leq b$ imply $b\in F$.
\end{itemize}

A filter $F$ of $A$ is said to be {\em proper} iff $F\neq A$, which is equivalent to the fact that $0\notin F$. A proper filter $P$ of $A$ is called a {\em prime filter} iff, for all $a,b\in A$, if $a\vee b\in P$, then $a\in P$ or $b\in P$. A maximal element of the set of all proper filters of $A$ is called a {\em maximal filter}.

If $F$ is a filter of $A$, then the congruence $\equiv (\mod F)$ associated to $F$ is defined by: for all $a,b\in A$, $a\equiv b(\mod F)$ iff $d_A(a,b)\in F$. It is obvious that $a\equiv b(\mod F)$ iff $a\rightarrow b\in F$ and $b\rightarrow a\in F$. We recall that residuated lattices form an equational class, which ensures us that the quotient set $A/_{\equiv (\mod F)}$ is a residuated lattice, which we denote by $A/F$. For all $a\in A$, we will denote by $a/F$ the congruence class of $A$ with respect to $\equiv (\mod F)$. It is easily seen that: $a/F=1/F$ iff $a\in F$ (\cite{haj}).

A subset $F$ of $A$ is a filter iff $1\in F$ and, for all $a,b\in A$, $a\in F$ and $a\rightarrow b\in F$ imply $b\in F$.

\begin{lemma}{\rm \cite{kow}, \cite{pic}} A proper filter $F$ of $A$ is maximal iff, for all $a\in A\setminus F$, there exists a nonzero natural number $n$ such that $\neg \, (a^{n})\in F$.
\label{l2.5}
\end{lemma}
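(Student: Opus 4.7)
The plan is to prove both implications by looking at the filter generated by $F\cup\{a\}$ when $a\notin F$. I first need the standard description (which follows from the definition of a filter together with Lemma \ref{l2.2} parts \ref{l2.2nenum2} and \ref{l2.2(7)}): the filter generated by $F\cup\{a\}$ equals
\[
\langle F,a\rangle=\{x\in A\mid f\odot a^{n}\leq x\text{ for some }f\in F\text{ and some }n\geq 1\}.
\]
I would verify briefly that this set is closed under $\odot$ (combine exponents and multiply the two elements of $F$) and is upward closed, and that it contains both $F$ and $a$; any filter containing $F\cup\{a\}$ must contain all such $x$, so this is indeed the generated filter.

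For the forward implication, suppose $F$ is maximal and fix $a\in A\setminus F$. Then $\langle F,a\rangle$ strictly contains $F$, so by maximality $\langle F,a\rangle=A$, hence $0\in\langle F,a\rangle$. This yields $f\in F$ and $n\geq 1$ with $f\odot a^{n}\leq 0$, i.e.\ $f\odot a^{n}=0$. By Lemma \ref{l2.3}\,(\ref{l2.3(1)}) this gives $f\leq\neg\,(a^{n})$, and upward closure of $F$ forces $\neg\,(a^{n})\in F$.

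For the converse, assume the stated condition and suppose, for contradiction, that $F$ is not maximal: let $G$ be a proper filter with $F\subsetneq G$, and pick $a\in G\setminus F$. By hypothesis there exists $n\geq 1$ with $\neg\,(a^{n})\in F\subseteq G$. Since $G$ is closed under $\odot$, also $a^{n}\in G$, and hence $a^{n}\odot\neg\,(a^{n})\in G$. But by Lemma \ref{l2.2}\,(\ref{l2.2(6)}) we have $a^{n}\odot\neg\,(a^{n})=a^{n}\odot(a^{n}\rightarrow 0)\leq 0$, so this product equals $0$, contradicting properness of $G$.

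There is no real obstacle: the only step that requires a little care is the explicit description of $\langle F,a\rangle$, which the paper does not state as a lemma, and ensuring that everywhere I invoke $n\geq 1$ (so that $a^{n}$ makes sense as a product in the monoid $(A,\odot,1)$). All other steps are direct applications of the arithmetic lemmas already recorded in Section \ref{preliminaries}.
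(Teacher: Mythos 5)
Your proof is correct. Note that the paper does not actually prove this lemma --- it is quoted from \cite{kow} and \cite{pic} without proof --- so there is nothing to compare against; your argument via the explicit description of the filter $\langle F,a\rangle$ generated by $F\cup\{a\}$, using $f\odot a^{n}\leq 0$ iff $f\leq\neg\,(a^{n})$ (Lemma \ref{l2.3}, (\ref{l2.3(1)})) in one direction and $a^{n}\odot\neg\,(a^{n})=0$ (Lemma \ref{l2.2}, (\ref{l2.2(6)})) against properness ($0\notin G$) in the other, is the standard one and all the steps check out.
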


$A$ is said to be {\em simple} iff it has exactly two filters.

\begin{lemma}{\rm \cite{kow}, \cite{pic}} $A$ is simple iff, for all $a\in A\setminus \{1\}$, there exists a nonzero natural number $n$ such that $a^{n}=0$.
\label{l2.6}
\end{lemma}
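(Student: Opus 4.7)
The plan is to handle the two directions separately, using the notion of the filter generated by a single element, which I expect is either available as a standard construction from the references (\cite{kow}, \cite{pic}) cited in the excerpt or is easy to verify directly.

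For the direction ``simple $\Rightarrow$ condition'', I would let $a \in A\setminus \{1\}$ and consider the filter generated by $a$, namely
\[
\langle a\rangle = \{b\in A : a^n\leq b \mbox{ for some nonzero } n\in \N\}.
\]
A quick check (using $a\leq 1$, upward closure, and $a^m\odot a^n=a^{m+n}$) shows that $\langle a\rangle$ is indeed a filter, and it contains $a$. Since $a\neq 1$, $\langle a\rangle$ is strictly bigger than $\{1\}$; but $A$ is simple, so its only filters are $\{1\}$ and $A$, forcing $\langle a\rangle =A$. In particular $0\in \langle a\rangle $, so $a^n\leq 0$, hence $a^n=0$ for some $n\geq 1$.

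For the converse direction, I would take an arbitrary proper filter $F$ of $A$ and argue $F=\{1\}$. Indeed, if $a\in F$ and $a\neq 1$, the hypothesis yields $n\geq 1$ with $a^n=0$; since $F$ is closed under $\odot$, iterating gives $a^n\in F$, so $0\in F$, contradicting properness of $F$. Thus every element of $F$ equals $1$, i.e.\ $F=\{1\}$. Together with $A$ itself, these are the only filters, so $A$ is simple (note that the hypothesis implicitly forces $A$ to be nontrivial, since otherwise $A=\{1\}$ has a unique filter and is not simple in the sense of this paper; one may assume $0\neq 1$ throughout, as is standard).

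The main conceptual step, and the only one that is not essentially immediate, is the forward direction's use of the principal filter $\langle a\rangle$; the obstacle, if any, lies in verifying that this set really is a filter, but this is a routine computation from the monoid axioms and upward closure. No delicate property of residuated lattices beyond Lemma~\ref{l2.2}(\ref{l2.2nenum2}) is needed.
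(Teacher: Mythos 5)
The paper states this lemma without proof, citing \cite{kow} and \cite{pic}, so there is no in-paper argument to compare against. Your proof is correct and is the standard one: the principal filter $\langle a\rangle=\{b: a^n\leq b$ for some $n\geq 1\}$ is indeed a filter by Lemma \ref{l2.2}, (\ref{l2.2nenum2}), and both directions go through exactly as you describe; your side remark about nontriviality is the right convention to invoke, since for the trivial algebra the right-hand condition holds vacuously while the algebra has only one filter.
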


If $s:A\rightarrow L$ is a function, then by the {\em kernel of $s$} we will understand the set $\{a\in A|s(a)=1\}$, which we will denote ${\rm Ker}(s)$. Notice that, if $s$ is a residuated lattice morphism, then: $s$ is injective iff ${\rm Ker}(s)=\{1\}$.
 
\section{Generalized Bosbach States}
\label{bosbach}

\hspace*{10pt} In this section we will present two generalizations for the Bosbach states defined on residuated lattices. We will start from the observation that in the definition of Bosbach states we report essentially to the MV-algebra structure of $[0,1]$. By writing the axioms of Bosbach states in different equivalent ways, there will result two distinct ways of generalizing Bosbach states when we replace the standard MV-algebra $[0,1]$ with an arbitrary residuated lattice.

Throughout this section, let $A$ be a residuated lattice.

\begin{proposition}{\rm \cite{gg}, \cite{lcciu}} Let $s:A\rightarrow [0,1]$ be a function such that $s(0)=0$ and $s(1)=1$. Then the following are equivalent:
\begin{enumerate}
\item\label{p3.1(1)} for all $a,b\in A$, $1+s(a\wedge b)=s(a\vee b)+s(d_A(a,b))$;
\item\label{p3.1(2)} for all $a,b\in A$, $1+s(a\wedge b)=s(a)+s(a\rightarrow b)$;
\item\label{p3.1(3)} for all $a,b\in A$, $s(a)+s(a\rightarrow b)=s(b)+s(b\rightarrow a)$.
\end{enumerate}

\label{p3.1}
\end{proposition}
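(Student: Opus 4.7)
The plan is to prove the three-way equivalence by a short cycle: I will show (i) $\Rightarrow$ (ii) $\Rightarrow$ (iii) and (iii) $\Rightarrow$ (ii) $\Rightarrow$ (i). The whole proof hinges on two arithmetic identities in the residuated lattice, which I extract first, and on the trick of specializing the hypotheses at cleverly chosen pairs (usually involving $a \wedge b$ or $a \vee b$).

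First I would record the identities I will need, all immediate from Lemma 2.2: $a \to (a \wedge b) = (a \to a) \wedge (a \to b) = a \to b$, $(a \wedge b) \to a = 1$, and the key equality $(a \vee b) \to (a \wedge b) = d_A(a,b)$. The last one is proved by two inequalities: on the one hand $(a \vee b) \to (a \wedge b) \leq a \to (a \wedge b) = a \to b$ and symmetrically $\leq b \to a$, hence $(a \vee b) \to (a \wedge b) \leq d_A(a,b)$; on the other, from $(a \to b) \odot a \leq b$ and $(b \to a) \odot b \leq a$ together with Lemma 2.2(\ref{l2.2(8)}) one gets $d_A(a,b) \odot (a \vee b) \leq a \wedge b$, so $d_A(a,b) \leq (a \vee b) \to (a \wedge b)$.

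With these in hand the implications are each a one-line specialization. For (i) $\Rightarrow$ (ii), apply (i) to the pair $(a, a\wedge b)$: then $a \vee (a \wedge b) = a$, $a \wedge (a \wedge b) = a \wedge b$, and $d_A(a, a \wedge b) = a \to b$, yielding (ii). For (ii) $\Rightarrow$ (i), apply (ii) to $(a \vee b, a \wedge b)$: since $a \wedge b \leq a \vee b$, the meet is $a \wedge b$, and the implication $(a \vee b) \to (a \wedge b)$ equals $d_A(a,b)$ by the identity above, yielding (i). For (ii) $\Rightarrow$ (iii) just swap $a$ and $b$ in (ii) and use that $a \wedge b$ is symmetric. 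For (iii) $\Rightarrow$ (ii), apply (iii) to $(a, a \wedge b)$: the two implications collapse to $a \to b$ and $1$, so $s(1)=1$ gives (ii).

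The main conceptual obstacle is the identity $(a \vee b) \to (a \wedge b) = d_A(a,b)$, which is what makes (i) equivalent to the other two; once this is isolated everything else is bookkeeping. A secondary thing to keep in mind is that none of the arguments requires $A$ to be an MV-algebra, a BL-algebra, or even divisible, so only the bare residuated lattice axioms and the boundary conditions $s(0)=0$, $s(1)=1$ are used.
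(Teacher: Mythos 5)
Your proof is correct. The paper does not actually supply a proof of Proposition \ref{p3.1} (it is quoted from \cite{gg}, \cite{lcciu}, with the proof attributed to \cite{fla} and its extensions), but your argument is precisely the pattern the authors use for the generalized Propositions \ref{p3.3} and \ref{p3.4}: the key identity $(a\vee b)\rightarrow (a\wedge b)=d_A(a,b)$ together with $a\rightarrow (a\wedge b)=a\rightarrow b$, followed by specialization at the pairs $(a,a\wedge b)$ and $(a\vee b,a\wedge b)$. All four implications in your cycle check out, and you correctly isolate that $s(1)=1$ is needed only for (\ref{p3.1(3)})$\Rightarrow$(\ref{p3.1(2)}) while $s(0)=0$ is not used at all.
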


The proposition above has been proven in \cite{fla} for Bosbach states defined on pseudo-BL-algebras. Then it was extended to more general cases (\cite{lcciu1}, \cite{dvura}, \cite{kuhr}).

\begin{definition}
A {\em Bosbach state} on $A$ is a function $s:A\rightarrow [0,1]$ such that $s(0)=0$, $s(1)=1$ and $s$ verifies the equivalent conditions from Proposition \ref{p3.1}.
\label{d3.2}
\end{definition}

\begin{lemma}{\rm \cite{gg}, \cite{lcciu}} Let $s:A\rightarrow [0,1]$ be a Bosbach state. Then, for all $a,b\in A$, we have:
\begin{enumerate}
\item\label{l3.2(1)} $s(\neg \, a)=1-s(a)$;
\item\label{l3.2(2)} $s$ is order-preserving: $a\leq b$ implies $s(a)\leq s(b)$;
\item\label{l3.2(3)} $s(a)+s(b)=s(a\vee b)+s(a\wedge b)$.
\end{enumerate}
\label{l3.2}
\end{lemma}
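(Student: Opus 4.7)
The plan is to prove all three statements by specializing the equivalent conditions in Proposition \ref{p3.1} to carefully chosen pairs of elements; no further arithmetic beyond the basic properties of residuated lattices is needed.

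For (\ref{l3.2(1)}), I would substitute $b = 0$ into Proposition \ref{p3.1}(\ref{p3.1(2)}), which states $1 + s(a \wedge b) = s(a) + s(a \rightarrow b)$. Since $a \wedge 0 = 0$, $a \rightarrow 0 = \neg\, a$, and $s(0) = 0$, this collapses directly to $1 = s(a) + s(\neg\, a)$, giving $s(\neg\, a) = 1 - s(a)$.

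For (\ref{l3.2(2)}), assume $a \leq b$. By Lemma \ref{l2.2}(\ref{l2.2(3)}) we have $a \rightarrow b = 1$, so $s(a \rightarrow b) = s(1) = 1$. Plugging this into Proposition \ref{p3.1}(\ref{p3.1(3)}) gives $s(a) + 1 = s(b) + s(b \rightarrow a)$, and since the codomain is $[0,1]$ we have $s(b \rightarrow a) \leq 1$, whence $s(a) \leq s(b)$.

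For (\ref{l3.2(3)}), I would apply Proposition \ref{p3.1}(\ref{p3.1(2)}) twice: first to the pair $(a,b)$ to get $1 + s(a \wedge b) = s(a) + s(a \rightarrow b)$, and then to the pair $(a \vee b, b)$. For the second application the key observation is that $(a \vee b) \wedge b = b$ and, by Lemma \ref{l2.2}(\ref{l2.2(9)}), $(a \vee b) \rightarrow b = (a \rightarrow b) \wedge (b \rightarrow b) = a \rightarrow b$, yielding $1 + s(b) = s(a \vee b) + s(a \rightarrow b)$. Eliminating the shared term $s(a \rightarrow b)$ between the two identities gives exactly $s(a) + s(b) = s(a \vee b) + s(a \wedge b)$. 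The only nontrivial ingredient is recognizing the right substitution for part (\ref{l3.2(3)}); once chosen, the proof reduces to subtracting two instances of the same axiom.
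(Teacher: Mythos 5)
Your proof is correct: all three parts follow as you describe, by substituting $b=0$ into condition (\ref{p3.1(2)}), using $a\to b=1$ together with $s(b\to a)\leq 1$ in condition (\ref{p3.1(3)}), and applying condition (\ref{p3.1(2)}) to the pairs $(a,b)$ and $(a\vee b,b)$ with the identities $(a\vee b)\wedge b=b$ and $(a\vee b)\to b=a\to b$. The paper itself states this lemma without proof, citing \cite{gg} and \cite{lcciu}, and your argument is the standard one for this result.
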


Let $s:A\rightarrow [0,1]$ be a Bosbach state. Then, from the fact that $s$ is order-preserving and from Lemma \ref{l2.2}, (\ref{l2.2(5)}) and (\ref{l2.2nenum}), we deduce that, for all $a,b\in A$:
\begin{enumerate}
\item\label{rnenum(1)} $1-s(a\vee b)+s(a\wedge b)=s(a\vee b)\rightarrow _{L}s(a\wedge b)$ (because $s(a\wedge b)\leq s(a\vee b)$);

\item\label{rnenum(2)} $1-s(d_A(a,b))+s(a\wedge b)=s(d_A(a,b))\rightarrow _{L}s(a\wedge b)$ (because $s(a\wedge b)\leq s(d_A(a,b))$);
\item\label{rnenum(3)} $1-s(a)+s(a\wedge b)=s(a)\rightarrow _{L}s(a\wedge b)$ (because $s(a\wedge b)\leq s(a)$);

\item\label{rnenum(4)} $1-s(a\rightarrow b)+s(a\wedge b)=s(a\rightarrow b)\rightarrow _{L}s(a\wedge b)$ (because $s(a\wedge b)\leq s(a\rightarrow b)$);
\item\label{rnenum(5)} $1-s(a\rightarrow b)+s(b)=s(a\rightarrow b)\rightarrow _{L}s(b)$ (because $s(b)\leq s(a\rightarrow b)$).
\end{enumerate}

It follows:

\begin{itemize}
\item condition (\ref{p3.1(1)}) of Proposition \ref{p3.1} is equivalent to each of the following two equalities:

\noindent $(1^{\prime })$ for all $a,b\in A$, $s(d_A(a,b))=s(a\vee b)\rightarrow _{L}s(a\wedge b)$;

\noindent $(1^{\prime \prime })$ for all $a,b\in A$, $s(a\vee b)=s(d_A(a,b))\rightarrow _{L}s(a\wedge b)$;

\item condition (\ref{p3.1(2)}) of Proposition \ref{p3.1} is equivalent to each of the following two equalities:

\noindent $(2^{\prime })$ for all $a,b\in A$, $s(a\rightarrow b)=s(a)\rightarrow _{L}s(a\wedge b)$;

\noindent $(2^{\prime \prime })$ for all $a,b\in A$, $s(a)=s(a\rightarrow b)\rightarrow _{L}s(a\wedge b)$;

\item condition (\ref{p3.1(3)}) of Proposition \ref{p3.1} is equivalent to the following equality:

\noindent $(3^{\prime })$ for all $a,b\in A$, $s(a\rightarrow b)\rightarrow _{L}s(b)=s(b\rightarrow a)\rightarrow _{L}s(a)$.
\end{itemize}

Each of the equalities $(1^{\prime })$, $(1^{\prime \prime })$, $(2^{\prime })$, $(2^{\prime \prime })$ and $(3^{\prime })$ can suggest a way to extend the definition of the Bosbach state when the standard MV-algebra $[0,1]$ is replaced by an arbitrary residuated lattice. First, we shall compare these conditions in the general case when the codomain of $s$ is an arbitrary residuated lattice.

In the following, let $(L,\vee ,\wedge ,\odot ,\rightarrow ,0,1)$ be a residuated lattice and $s:A\rightarrow L$ be an arbitrary function.

\begin{proposition}
If $s(0)=0$ and $s(1)=1$, then the following are equivalent:

\begin{enumerate}
\item\label{p3.3(1)} for all $a,b\in A$, $s(d_A(a,b))=s(a\vee b)\rightarrow s(a\wedge b)$;
\item\label{p3.3(2)} for all $a,b\in A$ with $b\leq a$, $s(a\rightarrow b)=s(a)\rightarrow s(b)$;
\item\label{p3.3(3)} for all $a,b\in A$, $s(a\rightarrow b)=s(a)\rightarrow s(a\wedge b)$;
\item\label{p3.3(4)} for all $a,b\in A$, $s(a\rightarrow b)=s(a\vee b)\rightarrow s(b)$.
\end{enumerate}
\label{p3.3}
\end{proposition}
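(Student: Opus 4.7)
The plan is to establish equivalences with condition (2) as the hub, since (2) is just the restriction of (3) and (4) to the comparable case $b\leq a$, and condition (1) is essentially the instance of (2) applied to the pair $(a\vee b,a\wedge b)$.

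First I would prove (2) $\Leftrightarrow$ (3) and (2) $\Leftrightarrow$ (4). The implications (3) $\Rightarrow$ (2) and (4) $\Rightarrow$ (2) are immediate: when $b\leq a$, we have $a\wedge b=b$ and $a\vee b=a$. For the converses, the key ingredients are the identities $a\to(a\wedge b)=a\to b$ and $(a\vee b)\to b=a\to b$, both of which follow from Lemma \ref{l2.2}, (\ref{l2.2(9)}), together with $a\to a=1$ from Lemma \ref{l2.2}, (\ref{l2.2(3)}). Since $a\wedge b\leq a$, applying (2) to the pair $(a,a\wedge b)$ yields $s(a\to b)=s(a\to(a\wedge b))=s(a)\to s(a\wedge b)$, giving (3). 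Similarly, since $b\leq a\vee b$, applying (2) to the pair $(a\vee b,b)$ yields $s(a\to b)=s((a\vee b)\to b)=s(a\vee b)\to s(b)$, giving (4).

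Next I would handle (1) $\Leftrightarrow$ (2). For (1) $\Rightarrow$ (2): when $b\leq a$, we have $b\to a=1$, so $d_A(a,b)=(a\to b)\wedge 1=a\to b$, while $a\vee b=a$ and $a\wedge b=b$; thus (1) specializes directly to (2). The reverse direction (2) $\Rightarrow$ (1) is the main computational step. I would establish the identity
\[
(a\vee b)\to(a\wedge b)=d_A(a,b)
\]
by applying Lemma \ref{l2.2}, (\ref{l2.2(9)}) twice: first $(a\vee b)\to(a\wedge b)=((a\vee b)\to a)\wedge((a\vee b)\to b)$, and then $(a\vee b)\to a=(a\to a)\wedge(b\to a)=b\to a$ and similarly $(a\vee b)\to b=a\to b$, whose meet is exactly $d_A(a,b)$. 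Since $a\wedge b\leq a\vee b$, applying (2) to the pair $(a\vee b,a\wedge b)$ then gives $s(d_A(a,b))=s((a\vee b)\to(a\wedge b))=s(a\vee b)\to s(a\wedge b)$, which is (1).

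The main obstacle, if any, is just remembering the correct distributivity of $\to$ over $\vee$ and $\wedge$ in Lemma \ref{l2.2}, (\ref{l2.2(9)}), and recognizing that the identity $(a\vee b)\to(a\wedge b)=d_A(a,b)$ is what makes (1) equivalent to the others; once this identity is in hand, the entire proof is a matter of plugging into (2) the appropriate comparable pair. Note that the hypotheses $s(0)=0$ and $s(1)=1$ are not used in any of the implications, so the statement really is an identity-level equivalence among the four conditions.
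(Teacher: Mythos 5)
Your proposal is correct and follows essentially the same route as the paper: condition (\ref{p3.3(2)}) serves as the hub, and each of the other conditions is reduced to it via the identities $a\rightarrow(a\wedge b)=a\rightarrow b$, $(a\vee b)\rightarrow b=a\rightarrow b$ and $(a\vee b)\rightarrow(a\wedge b)=d_A(a,b)$, all obtained from Lemma \ref{l2.2}, (\ref{l2.2(9)}) and (\ref{l2.2(3)}). Your closing observation that the hypotheses $s(0)=0$ and $s(1)=1$ are not actually used in these equivalences is also accurate.
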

\begin{proof}
Let $a,b\in A$.

\noindent (\ref{p3.3(1)})$\Rightarrow $(\ref{p3.3(2)}): Assume $b\leq a$. Then, by Lemma \ref{l2.2}, (\ref{l2.2(3)}), we have that $d_A(a,b)=a\rightarrow b$, so $s(a\rightarrow b)=s(d_A(a,b))=s(a\vee b)\rightarrow s(a\wedge b)=s(a)\rightarrow s(b)$.

\noindent (\ref{p3.3(2)})$\Rightarrow $(\ref{p3.3(1)}): By Lemma \ref{l2.2}, (\ref{l2.2(9)}) and (\ref{l2.2(3)}), $(a\vee b)\rightarrow (a\wedge b)=(a\rightarrow (a\wedge b))\wedge (b\rightarrow (a\wedge b))=(a\rightarrow a)\wedge (a\rightarrow b)\wedge (b\rightarrow a)\wedge (b\rightarrow b)=(a\rightarrow b)\wedge (b\rightarrow a)=d_A(a,b)$, and $a\wedge b\leq a\vee b$, so $s(d_A(a,b))=s((a\vee b)\rightarrow (a\wedge b))=s(a\vee b)\rightarrow s(a\wedge b)$.

\noindent (\ref{p3.3(2)})$\Rightarrow $(\ref{p3.3(3)}): By Lemma \ref{l2.2}, (\ref{l2.2(9)}) and (\ref{l2.2(3)}), $a\rightarrow (a\wedge b)=a\rightarrow b$, and $a\wedge b\leq a$, so $s(a\rightarrow b)=s(a\rightarrow (a\wedge b))=s(a)\rightarrow s(a\wedge b)$.

\noindent (\ref{p3.3(3)})$\Rightarrow $(\ref{p3.3(2)}): Trivial.

\noindent (\ref{p3.3(2)})$\Leftrightarrow $(\ref{p3.3(4)}): Analogous to (\ref{p3.3(2)})$\Leftrightarrow $(\ref{p3.3(3)}).\end{proof}

\begin{proposition}
If $s(0)=0$ and $s(1)=1$, then the following are equivalent:

\begin{enumerate}
\item\label{p3.4(1)} for all $a,b\in A$, $s(a\vee b)=s(d_A(a,b))\rightarrow s(a\wedge b)$;
\item\label{p3.4(2)} for all $a,b\in A$, $s(a)=s(a\rightarrow b)\rightarrow s(a\wedge b)$;

\item\label{p3.4(3)} for all $a,b\in A$ with $b\leq a$, $s(a)=s(a\rightarrow b)\rightarrow s(b)$;
\item\label{p3.4(4)} for all $a,b\in A$, $s(a\vee b)=s(a\rightarrow b)\rightarrow s(b)$;
\item\label{p3.4(5)} for all $a,b\in A$, $s(a\rightarrow b)\rightarrow s(b)=s(b\rightarrow a)\rightarrow s(a)$.
\end{enumerate}
\label{p3.4}
\end{proposition}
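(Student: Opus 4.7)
My plan is to mirror the strategy used for Proposition 3.3 and establish the equivalences through a cycle of implications that hinges on condition (\ref{p3.4(3)}) as a pivot, since (\ref{p3.4(3)}) is really the ``restricted'' version of (\ref{p3.4(1)}), (\ref{p3.4(2)}) and (\ref{p3.4(4)}) when $b \le a$. The core algebraic identities I will rely on, all consequences of Lemma \ref{l2.2}, (\ref{l2.2(9)}) and (\ref{l2.2(3)}), are:
\begin{equation*}
a \rightarrow (a \wedge b) = a \rightarrow b, \qquad (a \vee b) \rightarrow b = a \rightarrow b, \qquad (a \vee b) \rightarrow (a \wedge b) = d_A(a,b).
\end{equation*}

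The implications (\ref{p3.4(1)})$\Rightarrow$(\ref{p3.4(3)}), (\ref{p3.4(2)})$\Rightarrow$(\ref{p3.4(3)}), (\ref{p3.4(4)})$\Rightarrow$(\ref{p3.4(3)}) are all immediate by restricting to the case $b \leq a$, in which case $a \vee b = a$, $a \wedge b = b$ and $d_A(a,b) = a \rightarrow b$. For the three converses, I would start from (\ref{p3.4(3)}) and apply it to suitably chosen pairs to recover the unrestricted statements: instantiating (\ref{p3.4(3)}) with the pair $(a, a \wedge b)$ gives (\ref{p3.4(2)}) via $a \rightarrow (a \wedge b) = a \rightarrow b$; instantiating it with $(a \vee b, b)$ gives (\ref{p3.4(4)}) via $(a \vee b) \rightarrow b = a \rightarrow b$; and instantiating it with $(a \vee b, a \wedge b)$ gives (\ref{p3.4(1)}) via $(a \vee b) \rightarrow (a \wedge b) = d_A(a,b)$.

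This closes the equivalence of (\ref{p3.4(1)}), (\ref{p3.4(2)}), (\ref{p3.4(3)}), (\ref{p3.4(4)}). It remains to fold condition (\ref{p3.4(5)}) in. For (\ref{p3.4(4)})$\Rightarrow$(\ref{p3.4(5)}), I exploit the commutativity of $\vee$: applying (\ref{p3.4(4)}) to $(a,b)$ and to $(b,a)$ yields two expressions both equal to $s(a \vee b) = s(b \vee a)$, hence $s(a \rightarrow b) \rightarrow s(b) = s(b \rightarrow a) \rightarrow s(a)$. For (\ref{p3.4(5)})$\Rightarrow$(\ref{p3.4(3)}), I specialize to $b \leq a$: then $b \rightarrow a = 1$ and so $s(b \rightarrow a) \rightarrow s(a) = 1 \rightarrow s(a) = s(a)$ by Lemma \ref{l2.2}, (\ref{l2.2(2)}), which transforms (\ref{p3.4(5)}) precisely into (\ref{p3.4(3)}).

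I do not expect any serious obstacle; the argument is essentially a bookkeeping exercise built on the three identities displayed above plus $s(1)=1$. The only point that deserves a little care is checking that (\ref{p3.4(5)}) truly gives back (\ref{p3.4(3)}) and not merely a symmetric variant: the use of $s(1) = 1$ (a hypothesis of the proposition) together with $1 \rightarrow x = x$ is what collapses the right-hand side to $s(a)$, and without this specialization trick (\ref{p3.4(5)}) would look weaker than the other four conditions.
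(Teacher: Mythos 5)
Your proposal is correct and follows essentially the same route as the paper: both arguments pivot on condition (\ref{p3.4(3)}), derive the unrestricted conditions by instantiating it at the pairs $(a,a\wedge b)$, $(a\vee b,b)$, $(a\vee b,a\wedge b)$ using the identities $a\rightarrow(a\wedge b)=a\rightarrow b$, $(a\vee b)\rightarrow b=a\rightarrow b$, $(a\vee b)\rightarrow(a\wedge b)=d_A(a,b)$, obtain (\ref{p3.4(5)}) from (\ref{p3.4(4)}) via commutativity of $\vee$, and close the loop by specializing (\ref{p3.4(5)}) to $b\leq a$ with $s(1)=1$ and $1\rightarrow x=x$. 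No gaps.
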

\begin{proof}

\noindent (\ref{p3.4(1)})$\Rightarrow $(\ref{p3.4(3)}): If $b\leq a$, then $a\vee b=a$, $d_A(a,b)=a\rightarrow b$ (by Lemma \ref{l2.2}, (\ref{l2.2(3)})) and $a\wedge b=b$.

\noindent (\ref{p3.4(3)})$\Rightarrow $(\ref{p3.4(1)}): As in the proof of Proposition \ref{p3.3}, $d_A(a,b)=(a\vee b)\rightarrow (a\wedge b)$, and, since $a\wedge b\leq a\vee b$, we have $s(a\vee b)=s((a\vee b)\rightarrow (a\wedge b))\rightarrow s(a\wedge b)=s(d_A(a,b))\rightarrow s(a\wedge b)$.

\noindent (\ref{p3.4(2)})$\Rightarrow $(\ref{p3.4(3)}): Trivial.

\noindent (\ref{p3.4(3)})$\Rightarrow $(\ref{p3.4(2)}): Since $a\wedge b\leq a$ and, as in the proof of Proposition \ref{p3.3}, $a\rightarrow (a\wedge b)=a\rightarrow b$, we have: $s(a)=s(a\rightarrow (a\wedge b))\rightarrow s(a\wedge b)=s(a\rightarrow b)\rightarrow s(a\wedge b)$. 

\noindent (\ref{p3.4(3)})$\Leftrightarrow $(\ref{p3.4(4)}): Analogous to the proof of (\ref{p3.4(3)})$\Leftrightarrow $(\ref{p3.4(2)}).

\noindent (\ref{p3.4(4)})$\Rightarrow $(\ref{p3.4(5)}): $s(a\rightarrow b)\rightarrow s(b)=s(a\vee b)=s(b\vee a)=s(b\rightarrow a)\rightarrow s(a)$.

\noindent (\ref{p3.4(5)})$\Rightarrow $(\ref{p3.4(2)}): If $b\leq a$, then, by Lemma \ref{l2.2}, (\ref{l2.2(2)}) and (\ref{l2.2(3)}), $s(a)=1\rightarrow s(a)=s(1)\rightarrow s(a)=s(b\rightarrow a)\rightarrow s(a)=s(a\rightarrow b)\rightarrow s(b)$.\end{proof}

Propositions \ref{p3.3} and \ref{p3.4} suggest the following generalizations of Bosbach states:

\begin{definition}
$s$ is called a {\em generalized Bosbach state of type I} (or, in brief, a {\em state of type I} or {\em a type I state}) iff it verifies the equivalent conditions from Proposition \ref{p3.3}.

$s$ is called a {\em generalized Bosbach state of type II} (or, in brief, a {\em state of type II} or {\em a type II state}) iff it verifies the equivalent conditions from Proposition \ref{p3.4}.

$s$ is called a {\em generalized Bosbach state of type III} (or, in brief, a {\em state of type III} or {\em a type III state}) iff it is both a generalized Bosbach state of type I and a generalized Bosbach state of type II.
\label{defsbg}
\end{definition}

\begin{example}
Any residuated lattice morphism $s:A\rightarrow L$ is an order-preserving type I state. The identity morphism $id_A:A\rightarrow A$ is a type II state iff $A$ is an MV-algebra.

Indeed, any residuated lattice morphism verifies condition (\ref{p3.3(2)}) from Proposition \ref{p3.3}. For the remark concerning the identity morphism see Corollary \ref{c3.13}.
\label{sbgex1}
\end{example}

\begin{example}
In \cite[Definition 3.1]{ciudvuhyc}, the notion of state-operator on a BL-algebra is introduced. Condition (\ref{p3.3(3)}) from Proposition \ref{p3.3} is exactly axiom (2) from this definition, thus any state-operator is a type I state. Moreover, according to \cite[Lemma 3.5, (c)]{ciudvuhyc}, any state-operator is an order-preserving type I state.

\end{example}

\begin{example}
Let $A$ be a Heyting algebra and $a\in A$. We denote by $s_a:A\rightarrow A$ the function defined by: for all $x\in A$, $s_a(x)=a\rightarrow x$. For all $x,y\in A$, $s_a(x)\rightarrow s_a(x\wedge y)=(a\rightarrow x)\rightarrow (a\rightarrow (x\wedge y))=(a\wedge (a\rightarrow x))\rightarrow (x\wedge y)=(a\wedge x)\rightarrow (x\wedge y)=((a\wedge x)\rightarrow x)\wedge ((a\wedge x)\rightarrow y)=1\wedge ((a\wedge x)\rightarrow y)=(a\wedge x)\rightarrow y=a\rightarrow (x\rightarrow y)=s_a(x\rightarrow y)$, by Lemma \ref{l2.2}, (\ref{l2.2(7)}), a property of Heyting algebras from Section \ref{preliminaries} and Lemma \ref{l2.2}, (\ref{l2.2(9)}) and (\ref{l2.2(3)}). Thus $s_a$ is an order-preserving type I state, by Lemma \ref{l2.2}, (\ref{l2.2(4)}) and Proposition \ref{p3.3}, (\ref{p3.3(3)}).
\label{sbgex2}
\end{example}

\begin{example}
Let $(A,\leq ,0,1)$ be a bounded chain. By denoting, for all $x,y\in A$, $x\wedge y=\inf \{x,y\}$, $x\vee y=\sup \{x,y\}$ and $x\rightarrow y=\begin{cases}1, & x\leq y,\\ y, & x>y,\end{cases}$ $(A,\vee ,\wedge ,0,1)$ becomes a Heyting algebra. The verification is immediate; this is an example of Heyting algebra from \cite{bal}.

Let $a\in A\setminus \{0\}$, $[0,a)=\{x\in A|x<a\}$ and $f:[0,a)\rightarrow A$ a strictly order-preserving function with $f(0)=0$. We consider the function $f_a:A\rightarrow A$, defined by: for all $x\in A$, $f_a(x)=\begin{cases}f(x), & x<a,\\ 1, & x\geq a.\end{cases}$ Then $f_a$ is an order preserving type I state. Indeed, $f_a$ is obviously order-preserving and let $x,y\in A$ with $y\leq x$. We have to prove that $f_a(x\rightarrow y)=f_a(x)\rightarrow f_a(y)$, which is clear for $x=y$, as Lemma \ref{l2.2}, (\ref{l2.2(3)}) shows. So let $y<x$ now. Since $x\rightarrow y=y$, we have to prove that $f_a(y)=f_a(x)\rightarrow f_a(y)$. We have three cases:

\begin{itemize}
\item $y<x<a$. Then $f_a(x)\rightarrow f_a(y)=f(x)\rightarrow f(y)=f(y)=f_a(y)$.
\item $y<a\leq x$. Then $f_a(x)\rightarrow f_a(y)=1\rightarrow f_a(y)=f(y)=f_a(y)$, by Lemma \ref{l2.2}, (\ref{l2.2(2)}).
\item $a\leq y<x$. Then $f_a(x)\rightarrow f_a(y)=1\rightarrow 1=1=f_a(y)$, by Lemma \ref{l2.2}, (\ref{l2.2(2)}).
\end{itemize}

So $f_a$ is a type I state, by Proposition \ref{p3.3}, (\ref{p3.3(2)}).

Now assume that the chain $A$ is a complete lattice and let $s:A\rightarrow A$ be an arbitrary strictly order-preserving type I state. We denote $a=\inf \{x\in A|s(x)=1\}$ and let $0\leq y<x<a$. Then $s(y)=s(x)\rightarrow s(y)$ and $s(y)<1$, so, by the law of residuation and Lemma \ref{l2.2}, (\ref{l2.2nenum2}), it follows that $s(y)<s(x)$. Therefore, $f=s\mid _{[0,a)}:[0,a)\rightarrow A$ is strictly order-preserving and, obviously, $s=f_a$.
\label{sbgex3}
\end{example}

\begin{example}
Let $(A,\leq ,0,1)$, with the Heyting algebra structure from Example \ref{sbgex3}. Let $s:A\rightarrow A$ be a type II state. Then, for all $x,y\in A$ with $y<x$, $s(x)=s(x\rightarrow y)\rightarrow s(y)=s(y)\rightarrow s(y)=1$, by Proposition \ref{p3.4}, (\ref{p3.4(3)}) and Lemma \ref{l2.2}, (\ref{l2.2(3)}). So $s(x)=\begin{cases}0, & x=0,\\ 1, & x>0.\end{cases}$
\label{sbgex4}
\end{example}

\begin{proposition}
If $s$ is a generalized Bosbach state of type I, then, for all $a,b\in A$:

\begin{enumerate}

\item\label{p3.6(1)} $s(\neg \, a)=\neg \, s(a)$;
\item\label{p3.6(2)} $s(a\vee b)\rightarrow s(a)=s(b)\rightarrow s(a\wedge b)$;
\item\label{p3.6(3)} $s((a\rightarrow b)\rightarrow b)=s(a\rightarrow b)\rightarrow s(b)$;
\item\label{p3.6(4)} $s((a\rightarrow b)\rightarrow b)=(s(a\vee b)\rightarrow s(b))\rightarrow s(b)$;
\item\label{p3.6(5)} $s(a\vee b)\rightarrow (s(a)\wedge s(b))=(s(a)\vee s(b))\rightarrow s(a\wedge b)$;
\item\label{p3.6(6)} $s(a)\odot s(a\rightarrow (a\odot b))\leq s(a\odot b)$.

\end{enumerate}

\label{p3.6}
\end{proposition}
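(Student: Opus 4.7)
The plan is to reduce each of the six claims to one of the four equivalent formulations of the type I axiom in Proposition \ref{p3.3}, picking in each case a substitution which makes the requisite comparability hypothesis automatic, and then to close the calculation with the elementary identities from Lemma \ref{l2.2} (chiefly parts (\ref{l2.2(6)}), (\ref{l2.2nenum}) and (\ref{l2.2(9)})) and Lemma \ref{l2.3}. I expect to use no order-preservation property of $s$, which is consistent with the discussion in the introduction.

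For (\ref{p3.6(1)}), since $0\leq a$ I apply Proposition \ref{p3.3}, (\ref{p3.3(2)}) with $b=0$ to get $s(\neg \, a)=s(a\rightarrow 0)=s(a)\rightarrow s(0)=s(a)\rightarrow 0=\neg \, s(a)$. For (\ref{p3.6(2)}), I evaluate both sides via (\ref{p3.3(2)}): since $a\leq a\vee b$, we have $s((a\vee b)\rightarrow a)=s(a\vee b)\rightarrow s(a)$, and since $a\wedge b\leq b$, we have $s(b\rightarrow (a\wedge b))=s(b)\rightarrow s(a\wedge b)$; by Lemma \ref{l2.2}, (\ref{l2.2(9)}) and (\ref{l2.2(3)}) one computes $(a\vee b)\rightarrow a=b\rightarrow a=b\rightarrow (a\wedge b)$, so both sides coincide with $s(b\rightarrow a)$. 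For (\ref{p3.6(3)}), Lemma \ref{l2.2}, (\ref{l2.2nenum}) gives $b\leq a\rightarrow b$ and then (\ref{p3.3(2)}) applies verbatim. Statement (\ref{p3.6(4)}) follows by substituting the equality of Proposition \ref{p3.3}, (\ref{p3.3(4)}) into (\ref{p3.6(3)}).

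For (\ref{p3.6(5)}), I split both sides with Lemma \ref{l2.2}, (\ref{l2.2(9)}): the left-hand side becomes $(s(a\vee b)\rightarrow s(a))\wedge (s(a\vee b)\rightarrow s(b))$ and the right-hand side becomes $(s(a)\rightarrow s(a\wedge b))\wedge (s(b)\rightarrow s(a\wedge b))$; now (\ref{p3.6(2)}) together with the version obtained by swapping $a$ and $b$ pairs the two meetands off. Finally, for (\ref{p3.6(6)}), the inequality $a\odot b\leq a$ gives $a\wedge (a\odot b)=a\odot b$, so Proposition \ref{p3.3}, (\ref{p3.3(3)}) yields $s(a\rightarrow (a\odot b))=s(a)\rightarrow s(a\odot b)$, and applying Lemma \ref{l2.2}, (\ref{l2.2(6)}) on the codomain $L$ finishes the argument. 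The only step that is not an immediate substitution is (\ref{p3.6(5)}); even there the obstacle is purely bookkeeping, since the asymmetric pairing of meetands is forced by (\ref{p3.6(2)}).
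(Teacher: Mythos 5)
Your proposal is correct and follows essentially the same route as the paper's proof: each item is reduced to the type I axiom in the form of Proposition \ref{p3.3}, (\ref{p3.3(2)})--(\ref{p3.3(4)}) applied to a comparable pair, with (\ref{p3.6(2)}) obtained by identifying both sides with $s(b\rightarrow a)$, (\ref{p3.6(5)}) by splitting with Lemma \ref{l2.2}, (\ref{l2.2(9)}), and (\ref{p3.6(6)}) by the residuation inequality in $L$. The only cosmetic difference is that you invoke condition (\ref{p3.3(2)}) directly where the paper cites the equivalent conditions (\ref{p3.3(3)}) or (\ref{p3.3(4)}), which changes nothing.
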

\begin{proof}
\noindent (\ref{p3.6(1)}): $s(\neg \, a)=s(a\rightarrow 0)=s(a)\rightarrow s(0)=s(a)\rightarrow 0=\neg \, s(a)$ (see Proposition \ref{p3.3}, (\ref{p3.3(2)})).

\noindent (\ref{p3.6(2)}): By Proposition \ref{p3.3}, (\ref{p3.3(3)}) and (\ref{p3.3(4)}).

\noindent (\ref{p3.6(3)}): By Proposition \ref{p3.3}, (\ref{p3.3(2)}), and Lemma \ref{l2.2}, (\ref{l2.2nenum}).

\noindent (\ref{p3.6(4)}): By (\ref{p3.6(3)}) and Proposition \ref{p3.3}, (\ref{p3.3(4)}), $s((a\rightarrow b)\rightarrow b)=s(a\rightarrow b)\rightarrow s(b)=(s(a\vee b)\rightarrow s(b))\rightarrow s(b)$.

\noindent (\ref{p3.6(5)}): By Lemma \ref{l2.2}, (\ref{l2.2(9)}) and Proposition \ref{p3.6}, (\ref{p3.6(2)}), $s(a\vee b)\rightarrow (s(a)\wedge s(b))=(s(a\vee b)\rightarrow s(a))\wedge (s(a\vee b)\rightarrow s(b))=(s(b)\rightarrow s(a\wedge b))\wedge (s(a)\rightarrow s(a\wedge b))=(s(a)\vee s(b))\rightarrow s(a\wedge b)$.

\noindent (\ref{p3.6(6)}): By Lemma \ref{l2.2}, (\ref{l2.2(5)}), Proposition \ref{p3.3}, (\ref{p3.3(2)}) and Lemma \ref{l2.2}, (\ref{l2.2(6)}), $a\odot b\leq a$, so $s(a\rightarrow (a\odot b))=s(a)\rightarrow s(a\odot b)$, hence $s(a)\odot s(a\rightarrow (a\odot b))=s(a)\odot (s(a)\rightarrow s(a\odot b))\leq s(a\odot b)$.\end{proof}

\begin{remark}
In the case when $L$ is the standard MV-algebra $[0,1]$, order-preserving type I states $s:A\rightarrow [0,1]$ coincide with Bosbach states on $A$, as the identity (\ref{p3.3(3)}) from Proposition \ref{p3.3} is equivalent to the identity $(2^{\prime })$, and type II states $s:A\rightarrow [0,1]$ coincide with Bosbach states on $A$, as the identity (\ref{p3.4(5)}) from Proposition \ref{p3.4} is equivalent to the identity $(3^{\prime })$.
\label{r3.7}
\end{remark}

\begin{remark}
Let $B$ be a residuated lattice, $s:B\rightarrow L$ be a function and $f:A\rightarrow L$ be a residuated lattice morphism. Then, by Proposition \ref{p3.3}, (\ref{p3.3(3)}), if $s$ is a type I state, then $s\circ f:A\rightarrow L$ is a type I state, and if, moreover, $s$ is order-preserving and $f$ is order-preserving, then $s\circ f$ is order-preserving. By Proposition \ref{p3.4}, (\ref{p3.4(2)}), if $s$ is a type II state, then $s\circ f$ is a type II state. Thus, if $s$ is a type III state, then $s\circ f$ is a type III state.
\label{rmorf}
\end{remark}

\begin{proposition}
Let $s:A\rightarrow L$ be an order-preserving type I state. Then, for all $a,b,x,y\in A$:

\begin{enumerate}
\item\label{p3.8(1)} $s(a)\odot s(b)\leq s(a\odot b)$;
\item\label{p3.8(2)} $s(a)\ominus s(b)\leq s(a\ominus b)$;
\item\label{p3.8(3)} $s(a\rightarrow b)\leq s(a)\rightarrow s(b)$;
\item\label{p3.8(4)} $s(a\rightarrow b)\odot s(b\rightarrow a)\leq d_L(s(a),s(b))$;

\item\label{p3.8(5)} $s(d_A(a,b))\leq d_L(s(a),s(b))$;
\item\label{p3.8(6)} $s(d_A(a,x))\odot s(d_A(b,y))\leq d_L(s(d_A(a,b)),s(d_A(x,y)))$.
\end{enumerate}
\label{p3.8}
\end{proposition}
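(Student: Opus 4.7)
My strategy is to prove the six statements in the order they appear, since each one feeds into the next. The main tools are Proposition \ref{p3.3} (especially condition (\ref{p3.3(3)})), Proposition \ref{p3.6} (parts (\ref{p3.6(1)}) and (\ref{p3.6(6)})), and arithmetic from Lemmas \ref{l2.2} and \ref{lA}.

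For (\ref{p3.8(1)}), the key remark is that $b\leq a\rightarrow (a\odot b)$ by residuation (after rewriting, this is the trivial $b\odot a\leq a\odot b$). Since $s$ is order-preserving, $s(b)\leq s(a\rightarrow (a\odot b))$, and then Proposition \ref{p3.6}, (\ref{p3.6(6)}), combined with the monotonicity of $\odot $ (Lemma \ref{l2.2}, (\ref{l2.2nenum2})), gives $s(a)\odot s(b)\leq s(a)\odot s(a\rightarrow (a\odot b))\leq s(a\odot b)$. For (\ref{p3.8(2)}), reading $\ominus $ as the usual $x\ominus y=x\odot \neg \, y$, I combine (\ref{p3.8(1)}) with Proposition \ref{p3.6}, (\ref{p3.6(1)}), to obtain $s(a)\ominus s(b)=s(a)\odot \neg \, s(b)=s(a)\odot s(\neg \, b)\leq s(a\odot \neg \, b)=s(a\ominus b)$.

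For (\ref{p3.8(3)}), I rewrite $s(a\rightarrow b)=s(a)\rightarrow s(a\wedge b)$ using Proposition \ref{p3.3}, (\ref{p3.3(3)}); since $a\wedge b\leq b$ and $s$ is order-preserving, the antitonicity of $\rightarrow $ in its second argument (Lemma \ref{l2.2}, (\ref{l2.2(4)})) gives $s(a)\rightarrow s(a\wedge b)\leq s(a)\rightarrow s(b)$. Part (\ref{p3.8(4)}) then follows by applying (\ref{p3.8(3)}) symmetrically to $a\rightarrow b$ and $b\rightarrow a$, multiplying, and invoking $u\odot v\leq u\wedge v$ (Lemma \ref{l2.2}, (\ref{l2.2(5)})) to recover $d_L(s(a),s(b))$ on the right. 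For (\ref{p3.8(5)}), I observe that $d_A(a,b)\leq a\rightarrow b$ and $d_A(a,b)\leq b\rightarrow a$ by definition, so order-preservation of $s$ together with (\ref{p3.8(3)}) yields $s(d_A(a,b))\leq s(a)\rightarrow s(b)$ and $s(d_A(a,b))\leq s(b)\rightarrow s(a)$, hence $s(d_A(a,b))\leq d_L(s(a),s(b))$.

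The proof of (\ref{p3.8(6)}) is a chain of three inequalities. First, (\ref{p3.8(1)}) gives $s(d_A(a,x))\odot s(d_A(b,y))\leq s(d_A(a,x)\odot d_A(b,y))$. Second, Lemma \ref{lA}, (\ref{lA(5)}), applied inside $A$ with $\circ =\leftrightarrow $, supplies $d_A(a,x)\odot d_A(b,y)\leq d_A(a\leftrightarrow b,x\leftrightarrow y)=d_A(d_A(a,b),d_A(x,y))$, which passes under $s$ by monotonicity. Third, (\ref{p3.8(5)}) applied to the pair $d_A(a,b),d_A(x,y)$ bounds $s(d_A(d_A(a,b),d_A(x,y)))$ by $d_L(s(d_A(a,b)),s(d_A(x,y)))$. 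The only nontrivial choice here, and what I expect to be the main obstacle, is realizing that (\ref{p3.8(6)}) should be attacked via this $A$-side lifting rather than by directly combining two instances of (\ref{p3.8(5)}) on the $L$-side; a naive $L$-side attempt would instead yield the unwanted $d_L(d_L(s(a),s(b)),d_L(s(x),s(y)))$.
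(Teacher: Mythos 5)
Your proof is correct and follows essentially the same route as the paper's: the same reduction of (i) to $b\leq a\rightarrow(a\odot b)$, the same use of Proposition \ref{p3.3}, (\ref{p3.3(3)}) for (iii), and the same three-step chain through Lemma \ref{lA}, (\ref{lA(5)}) for (vi). The only cosmetic difference is that in (i) you route through Proposition \ref{p3.6}, (\ref{p3.6(6)}) where the paper invokes Proposition \ref{p3.3}, (\ref{p3.3(2)}) and residuation directly, but these are the same underlying computation.
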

\begin{proof}
\noindent (\ref{p3.8(1)}) By the law of residuation, the fact that $s$ is order-preserving, Lemma \ref{l2.2}, (\ref{l2.2(5)}), Proposition \ref{p3.3}, (\ref{p3.3(2)}) and again the law of residuation, $b\leq a\rightarrow (a\odot b)$, hence $s(b)\leq s(a\rightarrow (a\odot b))=s(a)\rightarrow s(a\odot b)$, therefore $s(a)\odot s(b)\leq s(a\odot b)$.

\noindent (\ref{p3.8(2)}) By Proposition \ref{p3.6}, (\ref{p3.6(1)}) and (\ref{p3.8(1)}) from the current proposition, $s(a)\ominus s(b)=s(a)\odot \neg \, s(b)=s(a)\odot s(\neg \, b)\leq s(a\odot \neg \, b)=s(a\ominus b)$.

\noindent (\ref{p3.8(3)}) By Proposition \ref{p3.3}, (\ref{p3.3(3)}) and Lemma \ref{l2.2}, (\ref{l2.2(4)}), $s(a\rightarrow b)\leq s(a)\rightarrow s(a\wedge b)\leq s(a)\rightarrow s(b)$.

\noindent (\ref{p3.8(4)}) By (\ref{p3.8(3)}) from the current proposition and (\ref{l2.2nenum2}) and (\ref{l2.2(5)}) from Lemma \ref{l2.2}, $s(a\rightarrow b)\odot s(b\rightarrow a)\leq (s(a)\rightarrow s(b))\odot (s(b)\rightarrow s(a))\leq d_L(s(a),s(b))$. 

\noindent (\ref{p3.8(5)}) By the fact that $s$ is order-preserving and (\ref{p3.8(3)}), $s(d_A(a,b))=s((a\rightarrow b)\wedge (b\rightarrow a))\leq s(a\rightarrow b)\wedge s(b\rightarrow a)\leq (s(a)\rightarrow s(b))\wedge (s(b)\rightarrow s(a))=d_L(s(a),s(b))$.

\noindent (\ref{p3.8(6)}) By (\ref{p3.8(1)}) and (\ref{p3.8(5)}) from the current proposition, along with Lemma \ref{lA}, (\ref{lA(5)}), $s(d_A(a,x))\odot s(d_A(b,y))\leq s(d_A(a,x)\odot d_A(b,y))\leq s(d_A(d_A(a,b),d_A(x,y)))\leq d_L(s(d_A(a,b)),s(d_A(x,y)))$.\end{proof}

\begin{proposition}
Let $s:A\rightarrow L$ be a type II state. Then, for all $a,b\in A$:

\begin{enumerate}
\item\label{p3.9(1)} $b\leq a$ implies $s(b)\leq s(a)$ (that is $s$ is order-preserving);
\item\label{p3.9(2)} $s(a)=\neg \, s(\neg \, a)$;
\item\label{p3.9(3)} $s(\neg \, \neg \, a)=s(a)=\neg \, \neg \, s(a)$;
\item\label{p3.9(4)} $s(a\rightarrow b)=s((a\rightarrow b)\rightarrow b)\rightarrow s(b)$;
\item\label{p3.9(6)} $s(\neg \, a)=\neg \, s(a)$;
\item\label{p3.9(5)} $s(a\odot b)=\neg \, s(a\rightarrow \neg \, b)$.
\end{enumerate}

\label{p3.9}
\end{proposition}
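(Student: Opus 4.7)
My plan is to derive each item in turn from the equivalent formulations of a type II state collected in Proposition~\ref{p3.4}, bootstrapping later items from earlier ones. For (\ref{p3.9(1)}), given $b\leq a$, Proposition~\ref{p3.4},~(\ref{p3.4(3)}) supplies $s(a)=s(a\rightarrow b)\rightarrow s(b)$, while Lemma~\ref{l2.2},~(\ref{l2.2nenum}) applied inside $L$ gives $s(b)\leq s(a\rightarrow b)\rightarrow s(b)$; comparing these produces $s(b)\leq s(a)$. For (\ref{p3.9(2)}), I specialize the same equation to $b=0$, obtaining $s(a)=s(\neg\, a)\rightarrow s(0)=s(\neg\, a)\rightarrow 0=\neg\, s(\neg\, a)$.

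For (\ref{p3.9(3)}) I plan to iterate (\ref{p3.9(2)}) together with the triple-negation identity $\neg\,\neg\,\neg\, x=\neg\, x$ from Lemma~\ref{l2.3},~(\ref{l2.3(2)}). Applying (\ref{p3.9(2)}) to $\neg\,\neg\, a$ and collapsing via that identity yields $s(\neg\,\neg\, a)=\neg\, s(\neg\,\neg\,\neg\, a)=\neg\, s(\neg\, a)=s(a)$; applying $\neg\,\neg\,$ to the equation $s(a)=\neg\, s(\neg\, a)$ gives, by the same identity used in $L$, $\neg\,\neg\, s(a)=\neg\,\neg\,\neg\, s(\neg\, a)=\neg\, s(\neg\, a)=s(a)$. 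Item (\ref{p3.9(6)}) then drops out of (\ref{p3.9(3)}) applied to $\neg\, a$: $s(\neg\, a)=\neg\,\neg\, s(\neg\, a)=\neg\,(\neg\, s(\neg\, a))=\neg\, s(a)$, where the last step uses (\ref{p3.9(2)}).

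The subtlest step, which I expect to be the main obstacle, is (\ref{p3.9(4)}). My idea is to apply Proposition~\ref{p3.4},~(\ref{p3.4(5)}) with $a\rightarrow b$ in the role of the first variable, giving
\[
s((a\rightarrow b)\rightarrow b)\rightarrow s(b)=s(b\rightarrow (a\rightarrow b))\rightarrow s(a\rightarrow b).
\]
Now $b\leq a\rightarrow b$ by Lemma~\ref{l2.2},~(\ref{l2.2nenum}), so $b\rightarrow (a\rightarrow b)=1$ by Lemma~\ref{l2.2},~(\ref{l2.2(3)}); then $s(b\rightarrow(a\rightarrow b))=s(1)=1$ and the right-hand side collapses to $1\rightarrow s(a\rightarrow b)=s(a\rightarrow b)$ via Lemma~\ref{l2.2},~(\ref{l2.2(2)}), which is precisely (\ref{p3.9(4)}). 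Finally, for (\ref{p3.9(5)}) I invoke Lemma~\ref{l2.3},~(\ref{l2.3(5)}) in the form $\neg\,(a\odot b)=a\rightarrow\neg\, b$: then $s(a\rightarrow\neg\, b)=s(\neg\,(a\odot b))=\neg\, s(a\odot b)$ by (\ref{p3.9(6)}), and negating both sides and using (\ref{p3.9(3)}) yields $\neg\, s(a\rightarrow\neg\, b)=\neg\,\neg\, s(a\odot b)=s(a\odot b)$.
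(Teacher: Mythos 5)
Your proof is correct and follows essentially the same route as the paper: everything is derived from the equivalent characterizations in Proposition~\ref{p3.4} together with $b\leq a\rightarrow b$ and the triple-negation identity. The only cosmetic differences are that for item~(\ref{p3.9(4)}) the paper applies Proposition~\ref{p3.4},~(\ref{p3.4(3)}) directly to the pair $(a\rightarrow b,\,b)$ rather than going through~(\ref{p3.4(5)}), and your derivations of~(\ref{p3.9(6)}) and~(\ref{p3.9(5)}) combine the earlier items in a slightly different but equally valid order.
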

\begin{proof}
\noindent (\ref{p3.9(1)}) By Lemma \ref{l2.2}, (\ref{l2.2nenum}) and Proposition \ref{p3.4}, (\ref{p3.4(3)}), $s(b)\leq s(a\rightarrow b)\rightarrow s(b)=s(a)$.

\noindent (\ref{p3.9(2)}) By Proposition \ref{p3.4}, (\ref{p3.4(3)}), $s(a)=s(a\rightarrow 0)\rightarrow 0=s(\neg \, a)\rightarrow 0=\neg \, s(\neg \, a)$.

\noindent (\ref{p3.9(3)}) By (\ref{p3.9(2)}) and Lemma \ref{l2.3}, (\ref{l2.3(2)}), $s(\neg \, \neg \, a)=\neg \, s(\neg \, \neg \, \neg \, a)=\neg \, s(\neg \, a)=s(a)$, and also $\neg \, \neg \, s(a)=\neg \, \neg \, \neg \, s(\neg \, a)=\neg \, s(\neg \, a)=s(a)$.

\noindent (\ref{p3.9(4)}) By Lemma \ref{l2.2}, (\ref{l2.2nenum}) and Proposition \ref{p3.4}, (\ref{p3.4(3)}).

\noindent (\ref{p3.9(6)}) By (\ref{p3.9(4)}) and (\ref{p3.9(3)}), $s(\neg \, a)=s(a\rightarrow 0)=s((a\rightarrow 0)\rightarrow 0)\rightarrow s(0)=s(\neg \, \neg \, a)\rightarrow 0=\neg \, s(\neg \, \neg \, a)=\neg \, s(a)$.

\noindent (\ref{p3.9(5)}) By (\ref{p3.9(2)}) and Lemma \ref{l2.3}, (\ref{l2.3(5)}), $s(a\odot b)=\neg \, s(\neg \, (a\odot b))=\neg \, s(a\rightarrow \neg \, b)$.\end{proof}

\begin{remark}
Let $A$ be a totally ordered product algebra and $s:A\rightarrow A$ a type II state. Since, for all $a\in A$, $a\wedge \neg \, a\in \{a,\neg \, a\}$, it follows that, for all $a\in A\setminus \{0\}$, $\neg \, a=0$. By Lemma \ref{l2.3}, (\ref{0neg1}), for all $a\in A$, $s(\neg \, a)=\begin{cases}s(1)=1, & {\rm if}\ a=0,\\ s(0)=0, & {\rm if}\ a\neq 0.\end{cases}$

By Proposition \ref{p3.9}, (\ref{p3.9(6)}) and Lemma \ref{l2.3}, (\ref{0neg1}), it follows that there exists a unique type II state $s:A\rightarrow A$, namely, for all $a\in A$, $s(a)=\neg \, s(\neg \, a)=\begin{cases}0, & {\rm if}\ a=0,\\ 1, & {\rm if}\ a\neq 0,\end{cases}$ as this is indeed a type II state, by Proposition \ref{p3.4}, (\ref{p3.4(3)}) and Lemma \ref{l2.2}, (\ref{l2.2(2)}) and (\ref{l2.2(3)}).
\end{remark}

\begin{remark}
\label{r3.6}
In general, if $s:A\rightarrow L$ is a state of type I, then it is not necessarily order-preserving (that is: $a,b\in A$ and $a\leq b$ do not necessarily imply $s(a)\leq s(b)$) and, even if it is order-preserving, it is not necessarily a state of type II.

Indeed, let us consider the following example of residuated lattice from {\rm \cite{kow}, \cite{gal}, \cite{ior}}: $A=\{0,a,b,c,d,1\}$, with the following partial order relation and operations:

\begin{center}
\begin{picture}(60,100)(0,0)
\put(37,11){\circle*{3}}
\put(35,0){$0$}
\put(37,11){\line(3,4){12}}
\put(49,27){\circle*{3}}
\put(53,24){$d$}

\put(49,27){\line(0,1){20}}

\put(49,47){\circle*{3}}
\put(53,44){$c$}

\put(49,47){\line(-3,4){12}}
\put(37,63){\circle*{3}}
\put(41,63){$a$}
\put(37,11){\line(-1,1){26}}
\put(11,37){\circle*{3}}
\put(3,34){$b$}
\put(11,37){\line(1,1){26}}

\put(37,63){\line(0,1){20}}
\put(37,83){\circle*{3}}
\put(35,85){$1$}
\end{picture}

\end{center}

\begin{center}
\begin{tabular}{cc}
\begin{tabular}{c|cccccc}

$\rightarrow $ & $0$ & $a$ & $b$ & $c$ & $d$ & $1$ \\ \hline
$0$ & $1$ & $1$ & $1$ & $1$ & $1$ & $1$ \\
$a$ & $0$ & $1$ & $b$ & $c$ & $c$ & $1$ \\
$b$ & $c$ & $1$ & $1$ & $c$ & $c$ & $1$ \\
$c$ & $b$ & $1$ & $b$ & $1$ & $a$ & $1$ \\
$d$ & $b$ & $1$ & $b$ & $1$ & $1$ & $1$ \\
$1$ & $0$ & $a$ & $b$ & $c$ & $d$ & $1$
\end{tabular}

& \hspace*{11pt}
\begin{tabular}{c|cccccc}

$\odot $ & $0$ & $a$ & $b$ & $c$ & $d$ & $1$ \\ \hline
$0$ & $0$ & $0$ & $0$ & $0$ & $0$ & $0$ \\
$a$ & $0$ & $a$ & $b$ & $d$ & $d$ & $a$ \\
$b$ & $0$ & $b$ & $b$ & $0$ & $0$ & $b$ \\
$c$ & $0$ & $d$ & $0$ & $d$ & $d$ & $c$ \\
$d$ & $0$ & $d$ & $0$ & $d$ & $d$ & $d$ \\

$1$ & $0$ & $a$ & $b$ & $c$ & $d$ & $1$
\end{tabular}
\end{tabular}
\end{center}

Let us determine the generalized Bosbach states $s_i:A\rightarrow A$. The type I states from $A$ to $A$ are:

\begin{center}
\begin{tabular}{c|cccccc}
$x$ & $0$ & $a$ & $b$ & $c$ & $d$ & $1$\\ \hline 
$s_1(x)$ & $0$ & $a$ & $0$ & $1$ & $a$ & $1$\\
$s_2(x)$ & $0$ & $a$ & $b$ & $c$ & $d$ & $1$\\ 
$s_3(x)$ & $0$ & $1$ & $0$ & $1$ & $1$ & $1$\\ 

$s_4(x)$ & $0$ & $1$ & $b$ & $c$ & $c$ & $1$\\ 

$s_5(x)$ & $0$ & $1$ & $c$ & $b$ & $b$ & $1$\\ 
$s_6(x)$ & $0$ & $1$ & $1$ & $0$ & $0$ & $1$ 
\end{tabular}
\end{center}

Out of these, the only order-preserving ones are $s_2$, $s_3$, $s_4$, $s_5$ and $s_6$. Indeed, $s_1$ is not order-preserving, as $c\leq a$ and $s_1(c)=1>s_1(a)=a$.

The type II states from $A$ to $A$ are $s_3$, $s_4$, $s_5$ and $s_6$.

\end{remark}

\begin{proposition}
Let $A$ and $L$ be divisible residuated lattices and $s:A\rightarrow L$ an order-preserving type I state. Then, for all $a,b\in A$:

\begin{enumerate}
\item\label{p3.10(1)} $s(a\odot b)=s(a)\odot s(a\rightarrow (a\odot b))$;
\item\label{p3.10(2)} $s(a\wedge b)=s(a)\odot s(a\rightarrow b)$.
\end{enumerate}
\label{p3.10}
\end{proposition}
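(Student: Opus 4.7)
The plan is to derive both identities by the same strategy: rewrite $s$ applied to an implication using Proposition \ref{p3.3}, then exploit divisibility of $L$ together with order-preservation of $s$.

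For part (\ref{p3.10(2)}), I would first apply Proposition \ref{p3.3}, (\ref{p3.3(3)}) to obtain $s(a\rightarrow b)=s(a)\rightarrow s(a\wedge b)$. Multiplying both sides by $s(a)$ on the left gives
\[ s(a)\odot s(a\rightarrow b)=s(a)\odot (s(a)\rightarrow s(a\wedge b)). \]
Now I invoke divisibility of $L$ on the right-hand side: $s(a)\odot (s(a)\rightarrow s(a\wedge b))=s(a)\wedge s(a\wedge b)$. Since $a\wedge b\leq a$ and $s$ is order-preserving, $s(a\wedge b)\leq s(a)$, so $s(a)\wedge s(a\wedge b)=s(a\wedge b)$, which is exactly what we wanted.

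For part (\ref{p3.10(1)}), the argument is structurally identical, using Proposition \ref{p3.3}, (\ref{p3.3(2)}) in place of (\ref{p3.3(3)}). Since $a\odot b\leq a$ by Lemma \ref{l2.2}, (\ref{l2.2(5)}), Proposition \ref{p3.3}, (\ref{p3.3(2)}) yields $s(a\rightarrow (a\odot b))=s(a)\rightarrow s(a\odot b)$. Multiplying by $s(a)$ and applying divisibility of $L$ gives $s(a)\odot s(a\rightarrow (a\odot b))=s(a)\wedge s(a\odot b)$, and order-preservation of $s$ collapses this meet to $s(a\odot b)$ since $s(a\odot b)\leq s(a)$.

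There is no real obstacle here; the only thing to keep straight is that divisibility of $L$ is what turns $s(a)\odot (s(a)\rightarrow x)$ into $s(a)\wedge x$ for $x\in\{s(a\wedge b),s(a\odot b)\}$, and order-preservation of $s$ is what allows us to discard the meet with $s(a)$. Divisibility of $A$ is not actually used in the argument, although it offers an alternative route in part (\ref{p3.10(2)}) via the identity $a\wedge b=a\odot (a\rightarrow b)$ combined with Proposition \ref{p3.8}, (\ref{p3.8(1)}) to secure one of the two inequalities.
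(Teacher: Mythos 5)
Your proof is correct. For part (\ref{p3.10(1)}) your argument is exactly the paper's: $a\odot b\leq a$, Proposition \ref{p3.3}, (\ref{p3.3(2)}), divisibility of $L$, and order-preservation. For part (\ref{p3.10(2)}) you take a genuinely different route: you argue directly from Proposition \ref{p3.3}, (\ref{p3.3(3)}) and divisibility of $L$, whereas the paper instead reduces (\ref{p3.10(2)}) to (\ref{p3.10(1)}) by substituting $a\rightarrow b$ for $b$, using divisibility of $A$ twice --- once to write $a\odot(a\rightarrow b)=a\wedge b$ on the left-hand side, and once inside the computation $a\rightarrow(a\odot(a\rightarrow b))=a\rightarrow(a\wedge b)=a\rightarrow b$ via Lemma \ref{l2.2}, (\ref{l2.2(9)}) and (\ref{l2.2(3)}). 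Your version is slightly more economical and, as you observe, shows that the hypothesis that $A$ is divisible is not actually needed for either identity: only divisibility of the codomain $L$ is used. That is a mild but genuine sharpening of the statement as printed; the paper's route, by contrast, makes the dependence of (\ref{p3.10(2)}) on (\ref{p3.10(1)}) explicit but at the cost of the superfluous hypothesis on $A$.
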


\begin{proof}

\noindent (\ref{p3.10(1)}) By Lemma \ref{l2.2}, (\ref{l2.2(5)}) and Proposition \ref{p3.3}, (\ref{p3.3(2)}), $s(a)\odot s(a\rightarrow (a\odot b))=s(a)\odot (s(a)\rightarrow s(a\odot b))=s(a)\wedge s(a\odot b)=s(a\odot b)$.

\noindent (\ref{p3.10(2)}) $a\rightarrow (a\odot (a\rightarrow b))=a\rightarrow (a\wedge b)=(a\rightarrow a)\wedge (a\rightarrow b)=a\rightarrow b$, by (\ref{l2.2(9)}) and (\ref{l2.2(3)}) from Lemma \ref{l2.2}.\end{proof}

\begin{lemma}
Let $A$ be a residuated lattice, $L$ an MV-algebra and $s:A\rightarrow L$ an order-preserving type I state. Then, for all $a,b\in A$, $s(a\vee b)=s((a\rightarrow b)\rightarrow b)=s((b\rightarrow a)\rightarrow a)$.
\label{l3.11}
\end{lemma}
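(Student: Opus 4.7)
The key observation is that although $A$ is only assumed to be a residuated lattice (so the MV-identity $a \vee b = (a \rightarrow b) \rightarrow b$ need not hold inside $A$), the codomain $L$ is an MV-algebra, so this identity is available for elements of the form $s(x)$. The strategy is to push the expression $s((a\rightarrow b)\rightarrow b)$ through $s$ down to the MV-algebra $L$, where the identity can be applied.

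Concretely, the plan is as follows. First, I would invoke Proposition~\ref{p3.6},~(\ref{p3.6(4)}), which was already established for order-preserving type~I states and gives
\[
s((a\rightarrow b)\rightarrow b) \;=\; \bigl(s(a\vee b)\rightarrow s(b)\bigr)\rightarrow s(b).
\]
Next, since $L$ is an MV-algebra, Lemma~\ref{l2.4} applies inside $L$ to the pair $s(a\vee b),\, s(b)$, yielding
\[
\bigl(s(a\vee b)\rightarrow s(b)\bigr)\rightarrow s(b) \;=\; s(a\vee b)\vee s(b).
\]
Finally, because $s$ is order-preserving and $b \leq a\vee b$, we have $s(b) \leq s(a\vee b)$, so $s(a\vee b) \vee s(b) = s(a\vee b)$. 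Chaining these equalities gives $s((a\rightarrow b)\rightarrow b) = s(a\vee b)$.

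For the second equality $s((b\rightarrow a)\rightarrow a) = s(a\vee b)$, I would simply swap the roles of $a$ and $b$ in the argument above and use the commutativity of $\vee$ (so $s(b\vee a) = s(a\vee b)$).

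There is no real obstacle here: the proof is essentially a one-line reduction from the already-proven identity in Proposition~\ref{p3.6},~(\ref{p3.6(4)}) together with the MV-algebra characterization in Lemma~\ref{l2.4}. The only conceptual point worth emphasizing is that the MV-algebra hypothesis is used on $L$, not on $A$; this is precisely why order-preserving type~I states can recover an MV-like join formula on the image side even when the domain is a general residuated lattice.
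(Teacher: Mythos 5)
Your proposal is correct and follows essentially the same route as the paper: both start from Proposition~\ref{p3.6},~(\ref{p3.6(4)}), apply Lemma~\ref{l2.4} in the MV-algebra $L$, and finish using $s(b)\leq s(a\vee b)$ from order-preservation (the paper phrases the last step via $(s(b)\rightarrow s(a\vee b))\rightarrow s(a\vee b)=1\rightarrow s(a\vee b)$ rather than via the join formula, but this is the same content of Lemma~\ref{l2.4}).
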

\begin{proof}
By Proposition \ref{p3.6}, (\ref{p3.6(4)}), Lemma \ref{l2.4} and Lemma \ref{l2.2}, (\ref{l2.2(3)}) and (\ref{l2.2(2)}), $s((a\rightarrow b)\rightarrow b)=(s(a\vee b)\rightarrow s(b))\rightarrow s(b)=(s(b)\rightarrow s(a\vee b))\rightarrow s(a\vee b)=1\rightarrow s(a\vee b)=s(a\vee b)=s(b\vee a)=s((b\rightarrow a)\rightarrow a)$.\end{proof}

\begin{proposition}
Let $A$ be a residuated lattice, $L$ an MV-algebra and $s:A\rightarrow L$ a function. Then the following are equivalent:

\begin{enumerate}
\item\label{p3.12(1)} $s$ is an order-preserving type I state;

\item\label{p3.12(2)} $s$ is a type II state.
\end{enumerate}
\label{p3.12}
\end{proposition}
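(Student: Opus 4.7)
The plan is to verify the two implications using, respectively, the characterizations of type I states by Proposition \ref{p3.3}, (\ref{p3.3(2)}) and of type II states by Proposition \ref{p3.4}, (\ref{p3.4(3)}), together with the MV-algebra identity $(x\rightarrow y)\rightarrow y=x\vee y$ from Lemma \ref{l2.4}, applied in the codomain $L$. Order-preservation together with the already-established restricted implication formula will let $\vee$ in $L$ collapse to the larger element.

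For (\ref{p3.12(1)})$\Rightarrow $(\ref{p3.12(2)}), I would assume $s$ is an order-preserving type I state and verify condition (\ref{p3.4(3)}) of Proposition \ref{p3.4}. Fix $a,b\in A$ with $b\leq a$. By Proposition \ref{p3.3}, (\ref{p3.3(2)}), $s(a\rightarrow b)=s(a)\rightarrow s(b)$, so
\[s(a\rightarrow b)\rightarrow s(b)=(s(a)\rightarrow s(b))\rightarrow s(b).\]
Since $L$ is an MV-algebra, Lemma \ref{l2.4} gives $(s(a)\rightarrow s(b))\rightarrow s(b)=s(a)\vee s(b)$. Because $b\leq a$ and $s$ is order-preserving, $s(b)\leq s(a)$, hence $s(a)\vee s(b)=s(a)$, so $s(a)=s(a\rightarrow b)\rightarrow s(b)$, which is exactly (\ref{p3.4(3)}).

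For (\ref{p3.12(2)})$\Rightarrow $(\ref{p3.12(1)}), I would assume $s$ is a type II state. Proposition \ref{p3.9}, (\ref{p3.9(1)}) immediately yields that $s$ is order-preserving. To confirm it is a type I state, I verify condition (\ref{p3.3(2)}) of Proposition \ref{p3.3}. Fix $a,b\in A$ with $b\leq a$. By Proposition \ref{p3.4}, (\ref{p3.4(3)}), $s(a)=s(a\rightarrow b)\rightarrow s(b)$, so
\[s(a)\rightarrow s(b)=(s(a\rightarrow b)\rightarrow s(b))\rightarrow s(b),\]
which by Lemma \ref{l2.4} equals $s(a\rightarrow b)\vee s(b)$ in the MV-algebra $L$. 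Now Lemma \ref{l2.2}, (\ref{l2.2nenum}) gives $b\leq a\rightarrow b$, and using order-preservation of $s$ we get $s(b)\leq s(a\rightarrow b)$, hence $s(a\rightarrow b)\vee s(b)=s(a\rightarrow b)$. Thus $s(a\rightarrow b)=s(a)\rightarrow s(b)$, as required.

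No step is a real obstacle: the whole proof rests on noticing that the only thing separating the equations in Propositions \ref{p3.3}, (\ref{p3.3(2)}) and \ref{p3.4}, (\ref{p3.4(3)}) is a ``swap'' handled by Lemma \ref{l2.4}, and that the resulting join simplifies under order-preservation. The closest thing to a subtlety is remembering to invoke order-preservation on the correct side in each direction—this is automatic in the forward direction and supplied by Proposition \ref{p3.9}, (\ref{p3.9(1)}) in the reverse direction.
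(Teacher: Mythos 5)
Your proof is correct. It rests on the same key fact as the paper's — the MV-algebra identity $(x\rightarrow y)\rightarrow y=x\vee y$ in the codomain $L$ (Lemma \ref{l2.4}) combined with order-preservation to collapse the resulting join — but the decomposition differs enough to be worth noting. For (\ref{p3.12(1)})$\Rightarrow$(\ref{p3.12(2)}) the paper first proves the auxiliary Lemma \ref{l3.11} ($s(a\vee b)=s((a\rightarrow b)\rightarrow b)=s((b\rightarrow a)\rightarrow a)$) and then verifies the symmetric condition (\ref{p3.4(5)}) of Proposition \ref{p3.4}, whereas you verify the restricted condition (\ref{p3.4(3)}) directly by computing $(s(a)\rightarrow s(b))\rightarrow s(b)=s(a)\vee s(b)=s(a)$ for $b\leq a$; this bypasses Lemma \ref{l3.11} and Proposition \ref{p3.6}, (\ref{p3.6(3)})--(\ref{p3.6(4)}) entirely and is shorter and more self-contained. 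The trade-off is that Lemma \ref{l3.11} is not a detour in the paper: it is reused later (e.g.\ in Propositions \ref{p4.7} and \ref{p4.11}), so the paper gets it for free here. Your reverse direction is essentially the paper's argument specialized to the case $b\leq a$ (you check (\ref{p3.3(2)}) from (\ref{p3.4(3)}) where the paper checks (\ref{p3.3(3)}) from (\ref{p3.4(2)})), with the needed inequality $s(b)\leq s(a\rightarrow b)$ correctly supplied by Lemma \ref{l2.2}, (\ref{l2.2nenum}) and Proposition \ref{p3.9}, (\ref{p3.9(1)}). Both routes are sound; yours is marginally more economical as a standalone proof.
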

\begin{proof}

\noindent (\ref{p3.12(1)})$\Rightarrow $(\ref{p3.12(2)}): Let $s$ be an order-preserving type I state. By Proposition \ref{p3.6}, (\ref{p3.6(3)}) and Lemma \ref{l3.11}, for all $a,b\in A$, $s(a\rightarrow b)\rightarrow s(b)=s((a\rightarrow b)\rightarrow b)=s((b\rightarrow a)\rightarrow a)=s(b\rightarrow a)\rightarrow s(a)$, hence, by Proposition \ref{p3.4}, (\ref{p3.4(5)}), $s$ is a type II state.

\noindent (\ref{p3.12(2)})$\Rightarrow $(\ref{p3.12(1)}): Let $s$ be a type II state and $a,b\in A$. Then, by Proposition \ref{p3.4}, (\ref{p3.4(2)}), $s(a\rightarrow b)\rightarrow s(a\wedge b)=s(a)$, therefore, by Lemma \ref{l2.4}, Lemma \ref{l2.2}, (\ref{l2.2(5)}), Proposition \ref{p3.9}, (\ref{p3.9(1)}) and Lemma \ref{l2.2}, (\ref{l2.2(3)}) and (\ref{l2.2(2)}), $s(a)\rightarrow s(a\wedge b)=(s(a\rightarrow b)\rightarrow s(a\wedge b))\rightarrow s(a\wedge b)=(s(a\wedge b)\rightarrow s(a\rightarrow b))\rightarrow s(a\rightarrow b)=1\rightarrow s(a\rightarrow b)=s(a\rightarrow b)$. Hence, by Proposition \ref{p3.3}, (\ref{p3.3(3)}), $s$ is a type I state. By Proposition \ref{p3.9}, \ref{p3.9(1)}, $s$ is also order-preserving.\end{proof}

\begin{corollary}
Let $A$ be a residuated lattice. Then the following are equivalent:

\begin{enumerate}
\item\label{c3.13(1)} $A$ is an MV-algebra;
\item\label{c3.13(2)} any order-preserving type I state $s:A\rightarrow A$ is a type II state.
\end{enumerate}
\label{c3.13}
\end{corollary}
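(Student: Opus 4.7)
The proof should be essentially a combination of Proposition \ref{p3.12} and Lemma \ref{l2.4}, and is quite short.

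For the direction (\ref{c3.13(1)})$\Rightarrow$(\ref{c3.13(2)}): this is immediate from Proposition \ref{p3.12} applied to $L=A$, since we are assuming $A$ is an MV-algebra, and Proposition \ref{p3.12} says precisely that when the codomain is an MV-algebra, order-preserving type I states and type II states coincide.

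For the direction (\ref{c3.13(2)})$\Rightarrow$(\ref{c3.13(1)}): the key idea is to test the hypothesis on the identity map. The identity morphism $id_A:A\rightarrow A$ is a residuated lattice morphism, so by Example \ref{sbgex1} it is an order-preserving type I state. By hypothesis, it must then be a type II state. Specializing condition (\ref{p3.4(5)}) from Proposition \ref{p3.4} to $s=id_A$ yields: for all $a,b\in A$, $(a\rightarrow b)\rightarrow b=(b\rightarrow a)\rightarrow a$. By Lemma \ref{l2.4}, this identity characterizes MV-algebras among residuated lattices, so $A$ is an MV-algebra.

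There is no substantive obstacle; the content has been done in Proposition \ref{p3.12} and Lemma \ref{l2.4}. The only observation is that the identity is always a type I state (hence a universally available test-function) while being a type II state is a genuine restriction amounting to the MV-algebra condition --- this is exactly the assertion deferred from Example \ref{sbgex1}. Hence the corollary also validates the claim made there about $id_A$.
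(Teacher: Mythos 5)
Your proof is correct and follows exactly the paper's own argument: the forward direction invokes Proposition \ref{p3.12} with $L=A$, and the converse tests the hypothesis on $id_A$, extracts $(a\rightarrow b)\rightarrow b=(b\rightarrow a)\rightarrow a$ from Proposition \ref{p3.4}, (\ref{p3.4(5)}), and concludes via Lemma \ref{l2.4}.
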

\begin{proof}
\noindent (\ref{c3.13(1)})$\Rightarrow $(\ref{c3.13(2)}): By Proposition \ref{p3.12}.

\noindent (\ref{c3.13(2)})$\Rightarrow $(\ref{c3.13(1)}): The identity $id_A:A\rightarrow A$ obviously is an order-preserving type I state. Hence it is also a type II state, and this condition on the identity is sufficient for this implication to take place. By Proposition \ref{p3.4}, (\ref{p3.4(5)}), for all  $a,b\in A$, we have: $(a\rightarrow b)\rightarrow b=(b\rightarrow a)\rightarrow a$. By Lemma \ref{l2.4}, it follows that $A$ is an MV-algebra.\end{proof}

\begin{proposition}
Let $s:A\rightarrow L$ be a type III state. Then, for all $a,b\in A$, $s((a\rightarrow b)\rightarrow b)=s((b\rightarrow a)\rightarrow a)$.
\label{p3.14}
\end{proposition}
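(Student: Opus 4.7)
The plan is to chain together one identity from the type I toolkit with one from the type II toolkit, applied symmetrically in $a$ and $b$. Specifically, since $s$ is a type I state, Proposition \ref{p3.6}, (\ref{p3.6(3)}) gives, for the pair $(a,b)$,
$$s((a\rightarrow b)\rightarrow b)=s(a\rightarrow b)\rightarrow s(b),$$
and, for the pair $(b,a)$,
$$s((b\rightarrow a)\rightarrow a)=s(b\rightarrow a)\rightarrow s(a).$$

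Since $s$ is simultaneously a type II state, Proposition \ref{p3.4}, (\ref{p3.4(5)}) supplies the crucial bridge
$$s(a\rightarrow b)\rightarrow s(b)=s(b\rightarrow a)\rightarrow s(a),$$
and composing these three equalities yields the desired identity $s((a\rightarrow b)\rightarrow b)=s((b\rightarrow a)\rightarrow a)$.

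There is no real obstacle here, since all the ingredients are already packaged in the earlier propositions; the only thing to notice is that Proposition \ref{p3.6}, (\ref{p3.6(3)}) was stated for type I states only, so we must invoke the type I half of the type III hypothesis to apply it in both directions, and then invoke the type II half precisely to connect the two right-hand sides. Accordingly, the write-up will be a one-line computation citing Proposition \ref{p3.6}, (\ref{p3.6(3)}) twice and Proposition \ref{p3.4}, (\ref{p3.4(5)}) once.
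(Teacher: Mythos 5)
Your proposal is correct and is essentially identical to the paper's own proof: the paper also chains $s((a\rightarrow b)\rightarrow b)=s(a\rightarrow b)\rightarrow s(b)=s(b\rightarrow a)\rightarrow s(a)=s((b\rightarrow a)\rightarrow a)$, citing Proposition \ref{p3.6}, (\ref{p3.6(3)}) for the outer equalities and Proposition \ref{p3.4}, (\ref{p3.4(5)}) for the middle one. No differences worth noting.
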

\begin{proof}
By Proposition \ref{p3.6}, (\ref{p3.6(3)}) and Proposition \ref{p3.4}, (\ref{p3.4(5)}), $s((a\rightarrow b)\rightarrow b)= s(a\rightarrow b)\rightarrow s(b)=s(b\rightarrow a)\rightarrow s(a)=s((b\rightarrow a)\rightarrow a)$.\end{proof}

\begin{openproblem}
In Proposition \ref{p3.14}, do we have $s((a\rightarrow b)\rightarrow b)=s((b\rightarrow a)\rightarrow a)=s(a\vee b)$?
\end{openproblem}

\begin{proposition}

Let $A$ be an MV-algebra, $L$ a residuated lattice and $s:A\rightarrow L$ a function such that $s(0)=0$ and $s(1)=1$. Then the following are equivalent:

\begin{enumerate}
\item\label{pnoua(1)} $s$ is an order-preserving type I state;

\item\label{pnoua(2)} for all $a,b\in A$, we have:

\noindent (a) $s(\neg \, a)=\neg \, s(a)$;

\noindent (b) $s(a\rightarrow b)\rightarrow (s(a)\rightarrow s(b))=1$;

\noindent (c) $s(a\oplus b)=(s(a)\rightarrow s(a\odot b))\rightarrow s(b)$.

\end{enumerate}
\label{pnoua}

\end{proposition}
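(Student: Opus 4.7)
The plan is to verify the two implications separately. For $(\ref{pnoua(1)}) \Rightarrow (\ref{pnoua(2)})$, items (a) and (b) come essentially for free: (a) is exactly Proposition \ref{p3.6}, (\ref{p3.6(1)}), and (b), which reads $s(a \rightarrow b) \leq s(a) \rightarrow s(b)$ by Lemma \ref{l2.2}, (\ref{l2.2(3)}), is Proposition \ref{p3.8}, (\ref{p3.8(3)}). The substantive step is (c). To handle it, I would first establish the purely MV-algebraic identity
\[
a \oplus b = (a \rightarrow (a \odot b)) \rightarrow b,
\]
by expanding with $x \rightarrow y = \neg x \oplus y$, Lemma \ref{mvlema}, (\ref{mvdemorgan2}), Lemma \ref{l2.3}, (\ref{l2.3(5)}) and the divisibility of $A$ to get $(a \rightarrow (a \odot b)) \rightarrow b = (a \wedge \neg b) \oplus b$, and then applying Lemma \ref{mvlema}, (\ref{mvdistrib}) and (\ref{mvsum}) to collapse this to $(a \oplus b) \wedge 1 = a \oplus b$. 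Once the identity is in hand, I would apply $s$ and use Proposition \ref{p3.3}, (\ref{p3.3(4)}) together with the inequality $b \leq a \rightarrow (a \odot b)$ (so the join on the right collapses), obtaining $s(a \oplus b) = s(a \rightarrow (a \odot b)) \rightarrow s(b)$; a final application of Proposition \ref{p3.3}, (\ref{p3.3(3)}) with $a \odot b \leq a$ rewrites $s(a \rightarrow (a \odot b))$ as $s(a) \rightarrow s(a \odot b)$, which is (c).

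For $(\ref{pnoua(2)}) \Rightarrow (\ref{pnoua(1)})$, order-preservation follows immediately from (b): if $a \leq b$ then $s(a \rightarrow b) = s(1) = 1$, so (b) forces $s(a) \rightarrow s(b) = 1$, hence $s(a) \leq s(b)$. To verify the type I axiom, I would use the equivalent form Proposition \ref{p3.3}, (\ref{p3.3(2)}), showing that for $b \leq a$ we have $s(a \rightarrow b) = s(a) \rightarrow s(b)$. Since $A$ is MV, $a \rightarrow b = \neg a \oplus b$, so applying (c) to the pair $(\neg a, b)$ and (a) to $s(\neg a)$ gives $s(a \rightarrow b) = (\neg s(a) \rightarrow s(\neg a \odot b)) \rightarrow s(b)$. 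The hypothesis $b \leq a$ yields $\neg a \odot b \leq \neg a \odot a = 0$ in $A$, so $s(\neg a \odot b) = s(0) = 0$, and the expression reduces to $\neg \neg s(a) \rightarrow s(b)$. A final application of (a) to $\neg \neg a = a$ in $A$ gives $s(a) = \neg \neg s(a)$, yielding the desired equality.

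The main obstacle is item (c) in the forward direction: since $\oplus$ is not a primitive operation of the residuated lattice and the type I axiom only speaks about $\rightarrow$, one must find a way to write $s(a \oplus b)$ using expressions that the type I axiom controls. The identity $a \oplus b = (a \rightarrow (a \odot b)) \rightarrow b$ is precisely the bridge; spotting it is the creative step, after which the rest is mechanical. A bonus that smoothes the reverse direction is that (a) combined with $\neg \neg a = a$ in $A$ forces every $s$-value to be involutive in $L$, so the cancellation $\neg \neg s(a) \rightarrow s(b) = s(a) \rightarrow s(b)$ goes through without assuming that $L$ itself is involutive.
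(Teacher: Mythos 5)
Your proposal is correct and follows essentially the same route as the paper's proof: both directions hinge on the MV-identity $a\oplus b=(a\rightarrow (a\odot b))\rightarrow b$ for the forward step (c) and on applying (c) to the pair $(\neg\, a,b)$ with $\neg\, a\odot b=0$ for the converse, with only cosmetic differences (you invoke Proposition \ref{p3.3}, (\ref{p3.3(4)}) and (\ref{p3.3(3)}) where the paper uses the equivalent item (\ref{p3.3(2)}) twice). Your closing observation that $L$ need not be involutive because $s(a)=\neg\,\neg\, s(a)$ follows from (a) and $\neg\,\neg\, a=a$ is exactly the implicit content of the paper's chain $\neg\, s(\neg\, a)=s(\neg\,\neg\, a)=s(a)$.
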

\begin{proof}
\noindent (\ref{pnoua(1)})$\Rightarrow $(\ref{pnoua(2)}): Let $s:A\rightarrow L$ be an order-preserving type I state. (a) is Proposition \ref{p3.6}, (\ref{p3.6(1)}) and (b) results from Proposition \ref{p3.8}, (\ref{p3.8(3)}) and Lemma \ref{l2.2}, (\ref{l2.2(3)}).

Let us prove (c) now. By Lemma \ref{mvlema}, (\ref{mvdemorgan2}), (\ref{mvdemorgan1}), (\ref{mvdistrib}) and (\ref{mvsum}), for all $a,b\in A$, we have: $(a\rightarrow (a\odot b))\rightarrow b=\neg \, (\neg \, a\oplus (a\odot b))\oplus b=(\neg \, \neg \, a\odot \neg \, (a\odot b))\oplus b=(\neg \, \neg \, a\odot (\neg \, a\oplus \neg \, b))\oplus b=(a\odot (\neg \, a\oplus \neg \, b))\oplus b= (a\wedge \neg \, b)\oplus b=(a\oplus b)\wedge (\neg \, b\oplus b)=a\oplus b$. But, by Lemma \ref{l2.2}, (\ref{l2.2(5)}) and the law of residuation, $a\odot b\leq a$ and $b\leq a\rightarrow (a\odot b)$, hence, by Proposition \ref{p3.3}, (\ref{p3.3(2)}), $s(a\oplus b)=s((a\rightarrow (a\odot b))\rightarrow b)=s(a\rightarrow (a\odot b))\rightarrow s(b)=(s(a)\rightarrow s(a\odot b))\rightarrow s(b)$.

\noindent (\ref{pnoua(2)})$\Rightarrow $(\ref{pnoua(1)}): Assume that $s$ satisfies (a), (b) and (c). (b) immediately implies that $s$ is order-preserving, as shown by Lemma \ref{l2.2}, (\ref{l2.2(3)}). Now let $a,b\in A$ with $b\leq a$, thus $\neg \, a\odot b=0$, by Lemma \ref{l2.3}, (\ref{l2.3(2)}) and (\ref{l2.3(1)}). By applying (a) and (c) we obtain: $s(a\rightarrow b)=s(\neg \, a\oplus b)=(s(\neg \, a)\rightarrow s(\neg \, a\odot b))\rightarrow s(b)=(s(\neg \, a)\rightarrow s(0))\rightarrow s(b)=(s(\neg \, a)\rightarrow 0)\rightarrow s(b)=\neg \, s(\neg \, a)\rightarrow s(b)=s(\neg \, \neg \, a)\rightarrow s(b)=s(a)\rightarrow s(b)$. Therefore $s$ is an order-preserving type I state.\end{proof}

\begin{remark}
Conditions (a)-(c) from the previous proposition represent the algebraic form of the axioms $(FP_1)-(FP_3)$ from \cite[page 327]{fla1} in the context of probabilistic many-valued logic FP(\L $_k$,\L ), where \L $_k$ is the $k$-valued \L ukasiewicz logic and \L is the infinite-valued \L ukasiewicz logic.
\label{r3.26}
\end{remark}

\begin{remark}
Notice that in the example from Remark \ref{r3.6} all type II states from $A$ to $A$ are type I states. This is the case for all the numerous examples of finite residuated lattices we considered, whose generalized Bosbach states we determined by means of a small computer program, including the cases where the domain was different from the codomain.

In addition to that, it can be easily proven that, for any pair of residuated lattices $A$ and $L$ which are each determined by one of the three fundamental continuous t-norms, all type II states from $A$ to $L$ are type I states.

However, we have been unable to prove this in the general case and therefore we mention it as an open problem.
\end{remark}

\begin{openproblem}
Prove that, if $s:A\rightarrow L$ is a type II state, then $s$ is a type I state.
\end{openproblem}

Obviously, the definition of type I and type II states can be extended to non-commutative residuated lattices, pseudo-BCK-algebras, pseudo-hoops and so on. It remains to be investigated, for each o these cases, to what extent an interesting theory of generalized Bosbach states can be developped.

\section{Properties of Generalized Bosbach States}
\label{filtcan}

\hspace*{10pt} In this section we study properties of the quotient residuated lattice $A/{\rm Ker}(s)$, where ${\rm Ker}(s)$ is the canonical filter associated with a (type I or type II) generalized Bosbach state $s:A\rightarrow L$. We introduce the notion of state-morphism in our context, then the state-morphisms are characterized in terms of ${\rm Ker}(s)$ and $A/{\rm Ker}(s)$.

Let $A$ and $L$ be two nontrivial residuated lattices.

\begin{lemma}
Let $s:A\rightarrow L$ be an order-preserving type I state or a type II state. Then ${\rm Ker}(s)$ is a proper filter of $A$.
\label{l4.1}
\end{lemma}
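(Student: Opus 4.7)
The plan is to verify that $\operatorname{Ker}(s)$ is nonempty, upward closed under $\leq$ (equivalently, that it is closed under $\odot$), and does not contain $0$. The nicest route is to use the alternative characterization recalled in Section~\ref{preliminaries}: a subset $F$ of $A$ is a filter iff $1\in F$ and, for all $a,b\in A$, $a\in F$ and $a\rightarrow b\in F$ imply $b\in F$. This way I avoid having to separately verify closure under $\odot$.

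First I would observe that $s(1)=1$, so $1\in \operatorname{Ker}(s)$. For the modus-ponens-style closure, let $a,b\in A$ with $s(a)=s(a\rightarrow b)=1$. In the type~I (order-preserving) case, I would apply Proposition~\ref{p3.3}, (\ref{p3.3(3)}), which gives $s(a\rightarrow b)=s(a)\rightarrow s(a\wedge b)$; substituting, $1=1\rightarrow s(a\wedge b)=s(a\wedge b)$ by Lemma~\ref{l2.2}, (\ref{l2.2(2)}). Since $a\wedge b\leq b$ and $s$ is order-preserving, $s(b)=1$, hence $b\in\operatorname{Ker}(s)$. In the type~II case, I would instead invoke Proposition~\ref{p3.4}, (\ref{p3.4(2)}), which gives $s(a)=s(a\rightarrow b)\rightarrow s(a\wedge b)$, so $1=1\rightarrow s(a\wedge b)=s(a\wedge b)$, and then use the order-preservation of $s$ from Proposition~\ref{p3.9}, (\ref{p3.9(1)}) to conclude $s(b)=1$. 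So both arguments run along the same lines, differing only in which of Propositions~\ref{p3.3}--\ref{p3.4} is invoked.

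Finally, for properness: since $L$ is nontrivial, $0\neq 1$ in $L$, and $s(0)=0\neq 1$, so $0\notin\operatorname{Ker}(s)$, i.e.\ $\operatorname{Ker}(s)\neq A$.

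I do not anticipate real obstacles here; the statement is essentially a book-keeping verification once one has chosen the right characterization of filters and the right identity from Propositions~\ref{p3.3} and~\ref{p3.4}. The only mild subtlety is remembering that in the type~II case order-preservation is not part of the hypothesis but a derived property (Proposition~\ref{p3.9}, (\ref{p3.9(1)})), which is needed to pass from $s(a\wedge b)=1$ to $s(b)=1$ in the last step.
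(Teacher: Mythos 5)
Your proposal is correct and follows essentially the same route as the paper: the modus-ponens characterization of filters, Proposition \ref{p3.3}, (\ref{p3.3(3)}) for the type I case, and the nontriviality of $L$ for properness. The only (inessential) difference is in the type II case, where the paper applies Proposition \ref{p3.4}, (\ref{p3.4(5)}) to compute $s(b)=s(a\rightarrow b)\rightarrow s(b)=s(b\rightarrow a)\rightarrow s(a)=1$ directly, whereas you use Proposition \ref{p3.4}, (\ref{p3.4(2)}) together with the derived order-preservation from Proposition \ref{p3.9}, (\ref{p3.9(1)}); both are valid.
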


\begin{proof}
Obviously, $1\in {\rm Ker}(s)$ and $0\notin {\rm Ker}(s)$. Now let $a,b\in A$ such that $a,a\rightarrow b\in {\rm Ker}(s)$, that is $s(a)=s(a\rightarrow b)=1$. We must prove that $b\in {\rm Ker}(s)$, that is $s(b)=1$.

If $s$ is an order-preserving type I state, then, by Proposition \ref{p3.3}, (\ref{p3.3(3)}) and Lemma \ref{l2.2}, (\ref{l2.2(2)}), $1=s(a\rightarrow b)=s(a)\rightarrow s(a\wedge b)=1\rightarrow s(a\wedge b)=s(a\wedge b)\leq s(b)$, thus $s(b)=1$.

If $s$ is a type II state, then, by Lemma \ref{l2.2}, (\ref{l2.2(2)}), Proposition \ref{p3.4}, (\ref{p3.4(5)}) and Lemma \ref{l2.2}, (\ref{l2.2(3)}), $s(b)=1\rightarrow s(b)=s(a\rightarrow b)\rightarrow s(b)=s(b\rightarrow a)\rightarrow s(a)=s(b\rightarrow a)\rightarrow 1=1$.\end{proof}

\begin{lemma}
Let $s:A\rightarrow L$ be an order-preserving type I state or a type II state and $a,b\in A$. If $a/{\rm Ker}(s)=b/{\rm Ker}(s)$, then $s(a)=s(b)=s(a\vee b)=s(a\wedge b)$. 
\label{l4.3}
\end{lemma}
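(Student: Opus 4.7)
The hypothesis $a/{\rm Ker}(s)=b/{\rm Ker}(s)$ unpacks (by the description of the congruence $\equiv (\mod F)$ in the Preliminaries) as $a\rightarrow b,\ b\rightarrow a\in {\rm Ker}(s)$, i.e.\ $s(a\rightarrow b)=s(b\rightarrow a)=1$. The plan is to feed this single piece of information into the characterizations of type I and type II states from Propositions \ref{p3.3} and \ref{p3.4}, and read off the four equalities. I will handle the two cases separately.

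\textbf{Type I case (order-preserving).} First I apply Proposition \ref{p3.3}, (\ref{p3.3(3)}) to both $a\rightarrow b$ and $b\rightarrow a$: this yields $1=s(a\rightarrow b)=s(a)\rightarrow s(a\wedge b)$ and $1=s(b\rightarrow a)=s(b)\rightarrow s(a\wedge b)$. By Lemma \ref{l2.2}, (\ref{l2.2(3)}) this forces $s(a),s(b)\leq s(a\wedge b)$, and the reverse inequalities hold because $s$ is order-preserving and $a\wedge b\leq a,b$. Hence $s(a)=s(b)=s(a\wedge b)$. For the join I invoke Proposition \ref{p3.3}, (\ref{p3.3(4)}): $1=s(a\rightarrow b)=s(a\vee b)\rightarrow s(b)$ gives $s(a\vee b)\leq s(b)$, and the opposite inequality follows from $b\leq a\vee b$ and order-preservation.

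\textbf{Type II case.} Here the shortest route uses Proposition \ref{p3.4}, (\ref{p3.4(5)}): substituting $s(a\rightarrow b)=s(b\rightarrow a)=1$ yields $1\rightarrow s(b)=1\rightarrow s(a)$, whence $s(a)=s(b)$ by Lemma \ref{l2.2}, (\ref{l2.2(2)}). Then Proposition \ref{p3.4}, (\ref{p3.4(4)}) gives $s(a\vee b)=s(a\rightarrow b)\rightarrow s(b)=1\rightarrow s(b)=s(b)$, and Proposition \ref{p3.4}, (\ref{p3.4(2)}) gives $s(a)=s(a\rightarrow b)\rightarrow s(a\wedge b)=1\rightarrow s(a\wedge b)=s(a\wedge b)$, again by Lemma \ref{l2.2}, (\ref{l2.2(2)}).

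There is no real obstacle here — everything is a direct substitution into the already-established equivalent conditions, together with the fact (from Proposition \ref{p3.9}, (\ref{p3.9(1)})) that type II states are automatically order-preserving (not actually needed for the argument, but reassuring for the symmetry between the two cases). The only small point to be careful about is picking the right form of the defining equation in each case: the mixed forms involving $a\wedge b$ and $a\vee b$ (conditions (\ref{p3.3(3)})/(\ref{p3.3(4)}) in Proposition \ref{p3.3} and (\ref{p3.4(2)})/(\ref{p3.4(4)})/(\ref{p3.4(5)}) in Proposition \ref{p3.4}) are exactly what is needed so that setting $s(a\rightarrow b)=1$ collapses the identities to the four desired equalities in one step.
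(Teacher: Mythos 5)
Your proof is correct and follows essentially the same strategy as the paper: unpack the kernel condition and substitute it into the equivalent defining identities of Propositions \ref{p3.3} and \ref{p3.4}, using order-preservation (automatic for type II by Proposition \ref{p3.9}, (\ref{p3.9(1)})) to close the sandwich. The only cosmetic difference is that the paper works with the single equality $s(d_A(a,b))=1$ fed into clause (i) of each proposition, whereas you split the hypothesis into $s(a\rightarrow b)=s(b\rightarrow a)=1$ and use the other clauses; both routes are equally valid.
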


\begin{proof}
Assume $a/{\rm Ker}(s)=b/{\rm Ker}(s)$, that is $d_A(a,b)\in {\rm Ker}(s)$, which means that $s(d_A(a,b))=1$.

If $s$ is an order-preserving type I state, then, by Proposition \ref{p3.3}, (\ref{p3.3(1)}) and Lemma \ref{l2.2}, (\ref{l2.2(3)}), $1=s(d_A(a,b))=s(a\vee b)\rightarrow s(a\wedge b)$, so $s(a\vee b)\leq s(a\wedge b)$. But $s(a\wedge b)\leq s(a),s(b)\leq s(a\vee b)$, as $s$ is order-preserving. Therefore $s(a)=s(b)=s(a\vee b)=s(a\wedge b)$.

If $s$ is a type II state, then, by Proposition \ref{p3.4}, (\ref{p3.4(1)}) and Lemma \ref{l2.2}, (\ref{l2.2(2)}), $s(a\vee b)=s(d_A(a,b))\rightarrow s(a\wedge b)=1\rightarrow s(a\wedge b)=s(a\wedge b)$. But, by Proposition \ref{p3.9}, (\ref{p3.9(1)}), $s(a\wedge b)\leq s(a),s(b)\leq s(a\vee b)$. Therefore $s(a)=s(b)=s(a\vee b)=s(a\wedge b)$.\end{proof}

\begin{proposition}

Let $s:A\rightarrow L$ be an order-preserving type I state and $a,b\in A$. Then the following are equivalent:

\begin{enumerate}
\item\label{p4.4(1)} $a/{\rm Ker}(s)=b/{\rm Ker}(s)$;
\item\label{p4.4(2)} $s(a\vee b)=s(a\wedge b)$;
\item\label{p4.4(3)} $s(a)=s(b)=s(a\vee b)$;
\item\label{p4.4(4)} $s(a)=s(b)=s(a\wedge b)$.
\end{enumerate}
\label{p4.4}
\end{proposition}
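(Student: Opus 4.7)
The plan is to close the cycle by establishing \textbf{(i)} $\Rightarrow$ \textbf{(iii)}, \textbf{(i)} $\Rightarrow$ \textbf{(iv)}, \textbf{(iii)} $\Rightarrow$ \textbf{(ii)}, \textbf{(iv)} $\Rightarrow$ \textbf{(ii)}, and \textbf{(ii)} $\Rightarrow$ \textbf{(i)}. Most of the work is already packaged in earlier results; the proof is really a question of picking the right identity at each arrow.

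First, the implications \textbf{(i)} $\Rightarrow$ \textbf{(iii)} and \textbf{(i)} $\Rightarrow$ \textbf{(iv)} are immediate from Lemma \ref{l4.3}, which states that $a/{\rm Ker}(s)=b/{\rm Ker}(s)$ already forces $s(a)=s(b)=s(a\vee b)=s(a\wedge b)$. For \textbf{(ii)} $\Rightarrow$ \textbf{(i)}, I would use the defining identity of a type I state in the form of Proposition \ref{p3.3}, (\ref{p3.3(1)}): $s(d_A(a,b))=s(a\vee b)\rightarrow s(a\wedge b)$. Assuming $s(a\vee b)=s(a\wedge b)$, this right-hand side equals $1$ by Lemma \ref{l2.2}, (\ref{l2.2(3)}), so $d_A(a,b)\in{\rm Ker}(s)$, i.e.\ $a/{\rm Ker}(s)=b/{\rm Ker}(s)$.

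The two remaining implications \textbf{(iii)} $\Rightarrow$ \textbf{(ii)} and \textbf{(iv)} $\Rightarrow$ \textbf{(ii)} are the only places where something nontrivial happens: the order-preservation of $s$ only gives the chain $s(a\wedge b)\leq s(a),s(b)\leq s(a\vee b)$, so equating any three of these does not formally collapse all four. The key is Proposition \ref{p3.6}, (\ref{p3.6(2)}), namely $s(a\vee b)\rightarrow s(a)=s(b)\rightarrow s(a\wedge b)$. Under hypothesis \textbf{(iii)}, substituting $s(a)=s(a\vee b)$ on the left and $s(b)=s(a\vee b)$ on the right yields $1=s(a\vee b)\rightarrow s(a\wedge b)$, hence $s(a\vee b)\leq s(a\wedge b)$ by Lemma \ref{l2.2}, (\ref{l2.2(3)}); combined with the reverse inequality from order-preservation, this delivers $s(a\vee b)=s(a\wedge b)$. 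Under hypothesis \textbf{(iv)}, the same identity gives $s(a\vee b)\rightarrow s(a\wedge b)=s(a\wedge b)\rightarrow s(a\wedge b)=1$, with the same conclusion.

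I do not anticipate a real obstacle here; the statement is essentially a dictionary between congruence classes and value-equalities, and all the algebraic content is already contained in Proposition \ref{p3.3}, Proposition \ref{p3.6}, and Lemma \ref{l4.3}. The only point requiring a small amount of care is that in steps \textbf{(iii)} $\Rightarrow$ \textbf{(ii)} and \textbf{(iv)} $\Rightarrow$ \textbf{(ii)} one must not be tempted to conclude directly from monotonicity, but rather invoke the extra identity from Proposition \ref{p3.6}, (\ref{p3.6(2)}), which is precisely what order-preservation alone does not supply.
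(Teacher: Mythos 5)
Your proof is correct and uses exactly the same ingredients as the paper's: Lemma \ref{l4.3} for the implications out of (i), Proposition \ref{p3.6}, (\ref{p3.6(2)}) for the nontrivial collapse of the value chain, one of the characterizing identities of Proposition \ref{p3.3} to return to the kernel condition, and order-preservation throughout. The only difference is cosmetic — you route the cycle through (ii) and close it with Proposition \ref{p3.3}, (\ref{p3.3(1)}), whereas the paper routes through (iii) and closes with Proposition \ref{p3.3}, (\ref{p3.3(4)}) — so this is essentially the paper's argument.
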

\begin{proof}
The implications (\ref{p4.4(1)})$\Rightarrow $(\ref{p4.4(2)}),(\ref{p4.4(3)}),(\ref{p4.4(4)}) result from Lemma \ref{l4.3}, and the implications, (\ref{p4.4(2)})$\Rightarrow $(\ref{p4.4(3)}),(\ref{p4.4(4)}) result from the fact that $s$ is order-preserving.

\noindent (\ref{p4.4(3)})$\Rightarrow $(\ref{p4.4(4)}): By Lemma \ref{l2.2}, (\ref{l2.2(3)}), and Proposition \ref{p3.6}, (\ref{p3.6(2)}), $1=s(a\vee b)\rightarrow s(a)=s(b)\rightarrow s(a\wedge b)$, hence $s(b)\leq s(a\wedge b)$. But $s(a\wedge b)\leq s(b)$, as $s$ is order-preserving. So that $s(b)=s(a\wedge b)$.

\noindent (\ref{p4.4(3)})$\Rightarrow $(\ref{p4.4(4)}): By Proposition \ref{p3.6}, (\ref{p3.6(2)}) and Lemma \ref{l2.2}, (\ref{l2.2(3)}), $s(a\vee b)\rightarrow s(a)=s(b)\rightarrow s(a\wedge b)=1$, thus $s(a\vee b)\leq s(a)$. But $s$ is order-preserving and so $s(a)\leq s(a\vee b)$. Hence $s(a)=s(a\vee b)$.

\noindent (\ref{p4.4(3)})$\Rightarrow $(\ref{p4.4(1)}): By Proposition \ref{p3.3}, (\ref{p3.3(4)}) and Lemma \ref{l2.2}, (\ref{l2.2(3)}), $s(a\rightarrow b)=s(a\vee b)\rightarrow s(b)=1$, so $a\rightarrow b\in {\rm Ker}(s)$. Analogously, $b\rightarrow a\in {\rm Ker}(s)$. Thus $a/{\rm Ker}(s)=b/{\rm Ker}(s)$.\end{proof}

Let $s:A\rightarrow L$ be an order-preserving type I state, respectively a type II state. We consider the quotient residuated lattice $A/{\rm Ker}(s)$. By Lemma \ref{l4.3}, we can define a function $\overline{s}:A/{\rm Ker}(s)\rightarrow L$, for all $a\in A$, $\overline{s}(a/{\rm Ker}(s))=s(a)$. It easily follows that $\overline{s}$ is an order-preserving type I state, respectively a type II state.

\begin{proposition}
Assume that the residuated lattice $L$ is involutive and $s:A\rightarrow L$ is an order-preserving type I state. Then $A/{\rm Ker}(s)$ is involutive.
\label{p4.5}

\end{proposition}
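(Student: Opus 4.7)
The plan is to show that for every $a\in A$ we have $\neg\,\neg\,(a/{\rm Ker}(s))=a/{\rm Ker}(s)$ in the quotient, which amounts to proving $a/{\rm Ker}(s)=\neg\,\neg\,a/{\rm Ker}(s)$, since negation in $A/{\rm Ker}(s)$ is computed componentwise.

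First, I would use Proposition \ref{p3.6}, (\ref{p3.6(1)}), applied twice, to compute $s(\neg\,\neg\,a)=\neg\,s(\neg\,a)=\neg\,\neg\,s(a)$. Since $L$ is assumed involutive, $\neg\,\neg\,s(a)=s(a)$, so $s(\neg\,\neg\,a)=s(a)$.

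Next, I would combine this equality with the fact that $a\leq \neg\,\neg\,a$ (Lemma \ref{l2.3}, (\ref{l2.3(2)})), which gives $a\vee \neg\,\neg\,a=\neg\,\neg\,a$. Hence $s(a\vee \neg\,\neg\,a)=s(\neg\,\neg\,a)=s(a)$, so the pair $(a,\neg\,\neg\,a)$ satisfies condition (\ref{p4.4(3)}) of Proposition \ref{p4.4} (namely $s(a)=s(\neg\,\neg\,a)=s(a\vee \neg\,\neg\,a)$). By that proposition, $a/{\rm Ker}(s)=\neg\,\neg\,a/{\rm Ker}(s)$, which is exactly the involutivity of $A/{\rm Ker}(s)$.

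There is no real obstacle: the argument is a three-line chain relying on Proposition \ref{p3.6}, (\ref{p3.6(1)}), the involutivity hypothesis on $L$, and the characterization of kernel-equivalent pairs given by Proposition \ref{p4.4}. The only thing to be careful about is that Proposition \ref{p4.4} requires $s$ to be an order-preserving type I state, which is precisely the present hypothesis, so the application is legitimate.
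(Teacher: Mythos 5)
Your proposal is correct and follows exactly the paper's own argument: apply Proposition \ref{p3.6}, (\ref{p3.6(1)}) and the involutivity of $L$ to get $s(\neg\,\neg\,a)=s(a)$, note $a\vee\neg\,\neg\,a=\neg\,\neg\,a$ via Lemma \ref{l2.3}, (\ref{l2.3(2)}), and conclude $a/{\rm Ker}(s)=\neg\,\neg\,a/{\rm Ker}(s)$ by Proposition \ref{p4.4}. No gaps.
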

\begin{proof}
Let $a\in A$. By Proposition \ref{p3.6}, (\ref{p3.6(1)}), $s(\neg \, \neg \, a)=\neg \, \neg \, s(a)=s(a)$. By Lemma \ref{l2.3}, (\ref{l2.3(2)}), $a\vee \neg \, \neg \, a=\neg \, \neg \, a$, so $s(a\vee \neg \, \neg \, a)=s(\neg \, \neg \, a)$. It follows that $s(a\vee \neg \, \neg \, a)=s(\neg \, \neg \, a)=s(a)$, so, by Proposition \ref{p4.4}, $\neg \, \neg \, a/{\rm Ker}(s)=a/{\rm Ker}(s)$, thus $A/{\rm Ker}(s)$ is involutive.\end{proof}

\begin{corollary}
Assume that $A$ is divisible, $L$ is involutive and $s:A\rightarrow L$ is an order-preserving type I state. Then $A/{\rm Ker}(s)$ is an MV-algebra.
\label{c4.6}

\end{corollary}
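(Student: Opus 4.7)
The plan is to combine Proposition \ref{p4.5} with a divisibility transfer argument, then invoke the characterization of MV-algebras stated in Section \ref{preliminaries} (right before Lemma \ref{l2.4}): a divisible and involutive residuated lattice is an MV-algebra.

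First, since $L$ is involutive and $s$ is an order-preserving type I state, Proposition \ref{p4.5} directly yields that $A/{\rm Ker}(s)$ is involutive. So the only remaining content is to show that $A/{\rm Ker}(s)$ is divisible.

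For divisibility, I would use that the canonical projection $\pi_s:A\rightarrow A/{\rm Ker}(s)$, $a\mapsto a/{\rm Ker}(s)$, is a residuated lattice morphism (this is immediate from the fact that residuated lattices form an equational class and that $A/{\rm Ker}(s)$ is defined as a quotient residuated lattice, as recalled in Section \ref{preliminaries}). Therefore, for all $a,b\in A$,
\[
(a/{\rm Ker}(s))\odot\bigl((a/{\rm Ker}(s))\rightarrow(b/{\rm Ker}(s))\bigr)=\bigl(a\odot(a\rightarrow b)\bigr)/{\rm Ker}(s)=(a\wedge b)/{\rm Ker}(s)=(a/{\rm Ker}(s))\wedge(b/{\rm Ker}(s)),
\]
where the middle equality uses the divisibility of $A$. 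Since every element of $A/{\rm Ker}(s)$ has the form $a/{\rm Ker}(s)$ for some $a\in A$, this shows that $A/{\rm Ker}(s)$ is divisible.

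Putting the two facts together, $A/{\rm Ker}(s)$ is both divisible and involutive, hence an MV-algebra. There is no real obstacle here: the only substantive input is Proposition \ref{p4.5}, and the rest is just the observation that divisibility, being equational, descends to quotients.
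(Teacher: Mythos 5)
Your proposal is correct and follows exactly the paper's own route: Proposition \ref{p4.5} gives involutivity, divisibility passes to the quotient (the paper dismisses this as ``easily seen,'' which you usefully spell out via the canonical projection), and the two together characterize MV-algebras. No issues.
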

\begin{proof}
It is easily seen that $A/{\rm Ker}(s)$ is divisible, and, by Proposition \ref{p4.5}, it is also involutive, hence it is an MV-algebra.\end{proof}

\begin{proposition}
Let $A$ be an MTL-algebra, $L$ an MV-algebra and $s:A\rightarrow L$ an order-preserving type I state. Then $A/{\rm Ker}(s)$ is an MV-algebra.
\label{p4.7}
\end{proposition}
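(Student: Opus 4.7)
The plan is to apply Lemma \ref{l2.4}, which tells us that $A/\mathrm{Ker}(s)$ is an MV-algebra iff $(x\rightarrow y)\rightarrow y=(y\rightarrow x)\rightarrow x$ holds in $A/\mathrm{Ker}(s)$ for all elements $x,y$. Since every element of the quotient is of the form $a/\mathrm{Ker}(s)$, and the residuated lattice operations in the quotient are inherited from $A$, it suffices to show, for all $a,b\in A$, that
\[
(a\rightarrow b)\rightarrow b\;\equiv\;(b\rightarrow a)\rightarrow a\pmod{\mathrm{Ker}(s)}.
\]

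The first key observation is that $L$ is an MV-algebra and $s$ is an order-preserving type I state, so Lemma \ref{l3.11} applies and gives
\[
s((a\rightarrow b)\rightarrow b)=s(a\vee b)=s((b\rightarrow a)\rightarrow a).
\]
The second key observation is that $A$ is an MTL-algebra, so Lemma \ref{lMTL} yields
\[
((a\rightarrow b)\rightarrow b)\wedge ((b\rightarrow a)\rightarrow a)=a\vee b.
\]
Applying $s$ to this identity, the three quantities
\[
s((a\rightarrow b)\rightarrow b),\quad s((b\rightarrow a)\rightarrow a),\quad s\bigl(((a\rightarrow b)\rightarrow b)\wedge ((b\rightarrow a)\rightarrow a)\bigr)
\]
all coincide with $s(a\vee b)$.

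The final step is to invoke Proposition \ref{p4.4}: conditions (\ref{p4.4(1)}) and (\ref{p4.4(4)}) are equivalent, so setting $x=(a\rightarrow b)\rightarrow b$ and $y=(b\rightarrow a)\rightarrow a$, the equality $s(x)=s(y)=s(x\wedge y)$ just established forces $x/\mathrm{Ker}(s)=y/\mathrm{Ker}(s)$. This gives exactly the required congruence, and by Lemma \ref{l2.4}, the quotient $A/\mathrm{Ker}(s)$ is an MV-algebra. No real obstacle is anticipated here; the proof is essentially a clean combination of Lemma \ref{lMTL} (to collapse the two Lawson terms via a meet in $A$), Lemma \ref{l3.11} (to collapse them via $s$ on the MV-valued side), and Proposition \ref{p4.4} (to pass from equalities under $s$ to congruence modulo $\mathrm{Ker}(s)$). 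The only point that requires a moment's care is choosing the correct equivalent form in Proposition \ref{p4.4}, since we naturally have information about the meet rather than the join of $x$ and $y$.
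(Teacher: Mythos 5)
Your proof is correct and follows essentially the same route as the paper's: Lemma \ref{l3.11} gives $s(x)=s(y)=s(a\vee b)$, Lemma \ref{lMTL} identifies $a\vee b$ with $x\wedge y$, Proposition \ref{p4.4} converts $s(x)=s(y)=s(x\wedge y)$ into $x/{\rm Ker}(s)=y/{\rm Ker}(s)$, and Lemma \ref{l2.4} concludes. No gaps; the reduction to representatives and the choice of the meet-form of Proposition \ref{p4.4} are exactly as in the paper.
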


\begin{proof}
Let $a,b\in A$. By Lemma \ref{l3.11}, $s(a\vee b)=s((a\rightarrow b)\rightarrow b)=s((b\rightarrow a)\rightarrow a)$. Let $x=(a\rightarrow b)\rightarrow b$ and $y=(b\rightarrow a)\rightarrow a$. By Lemma \ref{lMTL}, $a\vee b=x\wedge y$. It follows, by Lemma \ref{l3.11}, that $s(x)=s(y)=s(x\wedge y)$. By Proposition \ref{p4.4} and Lemma \ref{l2.4}, $x/{\rm Ker}(s)=y/{\rm Ker}(s)$, therefore $A/{\rm Ker}(s)$ is an MV-algebra.\end{proof}

\begin{proposition}
If $s:A\rightarrow L$ is a type III state, then $A/{\rm Ker}(s)$ is involutive.
\label{p4.8}
\end{proposition}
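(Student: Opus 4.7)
My plan is to imitate the proof of Proposition \ref{p4.5}, but to exploit the type II axiom so as to dispense with the hypothesis that $L$ be involutive. The underlying idea is that, for any $a\in A$, the type II axiom forces $s(a)=s(\neg \, \neg \, a)$, which together with order-preservation and $a\leq \neg \, \neg \, a$ puts $a$ and $\neg \, \neg \, a$ in the same congruence class modulo ${\rm Ker}(s)$.

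First I would unwind the definitions. Since $s$ is a type III state, it is in particular a type II state, so by Proposition \ref{p3.9}, (\ref{p3.9(1)}), $s$ is order-preserving; combined with the type I property, this makes $s$ an order-preserving type I state, and so Proposition \ref{p4.4} is at our disposal. In addition, Lemma \ref{l4.1} guarantees that ${\rm Ker}(s)$ is a proper filter, so the quotient $A/{\rm Ker}(s)$ is a bona fide residuated lattice.

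Next, fix $a\in A$; the goal is $\neg \, \neg \, a/{\rm Ker}(s)=a/{\rm Ker}(s)$. From the type II axiom, Proposition \ref{p3.9}, (\ref{p3.9(3)}) yields $s(\neg \, \neg \, a)=s(a)$. By Lemma \ref{l2.3}, (\ref{l2.3(2)}), $a\leq \neg \, \neg \, a$, and therefore $a\vee \neg \, \neg \, a=\neg \, \neg \, a$, which gives $s(a\vee \neg \, \neg \, a)=s(\neg \, \neg \, a)=s(a)$. Hence the triple equality $s(a)=s(\neg \, \neg \, a)=s(a\vee \neg \, \neg \, a)$ holds, and by Proposition \ref{p4.4}, (\ref{p4.4(3)})$\Rightarrow $(\ref{p4.4(1)}), the two congruence classes coincide, so $A/{\rm Ker}(s)$ is involutive.

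I do not anticipate a genuine obstacle here: the only point that could go wrong is the applicability of Proposition \ref{p4.4}, which requires $s$ to be an order-preserving type I state; and precisely this is ensured, for a type III state, by the automatic order-preservation of type II states recorded in Proposition \ref{p3.9}, (\ref{p3.9(1)}).
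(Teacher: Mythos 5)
Your proof is correct and follows essentially the same route as the paper: use Proposition \ref{p3.9}, (\ref{p3.9(3)}) to get $s(\neg\,\neg\,a)=s(a)$, note $a\vee\neg\,\neg\,a=\neg\,\neg\,a$ by Lemma \ref{l2.3}, (\ref{l2.3(2)}), and conclude via Proposition \ref{p4.4} that the congruence classes of $a$ and $\neg\,\neg\,a$ coincide. Your explicit check that a type III state is order-preserving (so that Proposition \ref{p4.4} applies) is a small justification the paper leaves implicit, but the argument is the same.
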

\begin{proof}
Let $a\in A$. By Proposition \ref{p3.9}, (\ref{p3.9(3)}), $s(a)=s(\neg \, \neg \, a)$ and, by Lemma \ref{l2.3}, (\ref{l2.3(2)}), $a\vee \neg \, \neg \, a=\neg \, \neg \, a$, hence $s(a\vee \neg \, \neg \, a)=s(\neg \, \neg \, a)=s(a)$. By Proposition \ref{p4.4}, $\neg \, \neg \, a/{\rm Ker}(s)=a/{\rm Ker}(s)$, thus $A/{\rm Ker}(s)$ is involutive.\end{proof}

Let $s:A\rightarrow L$ be an arbitrary function. Let us consider the properties:

\noindent $(\alpha )$ for all $a,b\in A$, $s(a\vee b)=s(a)\vee s(b)$;

\noindent $(\beta )$ for all $a,b\in A$, $s(a\wedge b)=s(a)\wedge s(b)$;

\noindent $(\gamma )$ for all $a,b\in A$, $s(a\rightarrow b)=s(a)\rightarrow s(b)$;

\noindent $(\delta )$ for all $a,b\in A$, $s(a\odot b)=s(a)\odot s(b)$.

\begin{lemma}
Assume that $s:A\rightarrow L$ is an order-preserving type I state. Then each of the conditions $(\alpha )$ and $(\beta )$ implies $(\gamma )$.
\label{l4.9}
\end{lemma}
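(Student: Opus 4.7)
The plan is to prove each implication separately, using the two equivalent characterizations of type I states from Proposition \ref{p3.3}, parts (\ref{p3.3(3)}) and (\ref{p3.3(4)}), combined with the distributivity law from Lemma \ref{l2.2}, (\ref{l2.2(9)}).

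For $(\beta) \Rightarrow (\gamma)$, I would start from Proposition \ref{p3.3}, (\ref{p3.3(3)}), which gives $s(a \rightarrow b) = s(a) \rightarrow s(a \wedge b)$. Substituting $(\beta)$ yields $s(a \rightarrow b) = s(a) \rightarrow (s(a) \wedge s(b))$. Then applying Lemma \ref{l2.2}, (\ref{l2.2(9)}) (the distribution of $\rightarrow$ over $\wedge$ on the right) together with Lemma \ref{l2.2}, (\ref{l2.2(3)}) gives $s(a) \rightarrow (s(a) \wedge s(b)) = (s(a) \rightarrow s(a)) \wedge (s(a) \rightarrow s(b)) = 1 \wedge (s(a) \rightarrow s(b)) = s(a) \rightarrow s(b)$, which is exactly $(\gamma)$.

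For $(\alpha) \Rightarrow (\gamma)$, I would proceed dually, starting from Proposition \ref{p3.3}, (\ref{p3.3(4)}), which gives $s(a \rightarrow b) = s(a \vee b) \rightarrow s(b)$. Substituting $(\alpha)$ yields $s(a \rightarrow b) = (s(a) \vee s(b)) \rightarrow s(b)$. Applying Lemma \ref{l2.2}, (\ref{l2.2(9)}) (the distribution of $\rightarrow$ over $\vee$ on the left) and Lemma \ref{l2.2}, (\ref{l2.2(3)}), one obtains $(s(a) \vee s(b)) \rightarrow s(b) = (s(a) \rightarrow s(b)) \wedge (s(b) \rightarrow s(b)) = (s(a) \rightarrow s(b)) \wedge 1 = s(a) \rightarrow s(b)$, which is $(\gamma)$.

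There is no real obstacle here: both arguments are short one-line algebraic manipulations, relying on the appropriate form of the defining identity of a type I state (Proposition \ref{p3.3}) and on a distributivity identity from Lemma \ref{l2.2}. The order-preserving hypothesis is in fact not needed at this step, but it is harmless and is maintained in the statement for coherence with the rest of the section.
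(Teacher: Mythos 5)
Your proof is correct and coincides with the paper's own argument: both implications are obtained exactly as you describe, from Proposition \ref{p3.3}, (\ref{p3.3(4)}) resp.\ (\ref{p3.3(3)}), followed by Lemma \ref{l2.2}, (\ref{l2.2(9)}) and (\ref{l2.2(3)}). Your side remark that order-preservation is not actually used here is also accurate (and indeed $(\alpha)$ or $(\beta)$ each already forces $s$ to be order-preserving).
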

\begin{proof}
\noindent $(\alpha )\Rightarrow (\gamma )$ By Proposition \ref{p3.3}, (\ref{p3.3(4)}) and Lemma \ref{l2.2}, (\ref{l2.2(9)}) and (\ref{l2.2(3)}), $s(a\rightarrow b)=s(a\vee b)\rightarrow s(b)=(s(a)\vee s(b))\rightarrow s(b)=(s(a)\rightarrow s(b))\wedge (s(b)\rightarrow s(b))=s(a)\rightarrow s(b)$.

\noindent $(\beta )\Rightarrow (\gamma )$ By Proposition \ref{p3.3}, (\ref{p3.3(3)}) and Lemma \ref{l2.2}, (\ref{l2.2(9)}) and (\ref{l2.2(3)}), $s(a\rightarrow b)=s(a)\rightarrow s(a\wedge b)=s(a)\rightarrow (s(a)\wedge s(b))=(s(a)\rightarrow s(b))\wedge (s(b)\rightarrow s(b))=s(a)\rightarrow s(b)$.\end{proof}

\begin{lemma}
Let $L$ be an involutive residuated lattice and $s:A\rightarrow L$ an order-preserving type I state. Then $(\beta )$ implies $(\alpha )$.
\label{l4.10}
\end{lemma}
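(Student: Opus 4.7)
The plan is to combine the double-negation symmetry of the involutive codomain $L$ with the identity $s(\neg \, x) = \neg \, s(x)$ available for order-preserving type I states (Proposition \ref{p3.6}, (\ref{p3.6(1)})). The only other ingredient I need is the elementary observation that in any involutive residuated lattice the identity $x \vee y = \neg \, (\neg \, x \wedge \neg \, y)$ holds; this is a direct consequence of $\neg \, \neg \, x = x$ together with the antitonicity of negation (Lemma \ref{l2.3}, (\ref{l2.3(2)}) and (\ref{l2.3(3)})).

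Since $s$ is order-preserving, one inequality, $s(a) \vee s(b) \leq s(a \vee b)$, is immediate and needs no attention.

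For the reverse inequality I would first notice that, even without assuming $A$ to be involutive, the inequality $a \vee b \leq \neg \, (\neg \, a \wedge \neg \, b)$ already holds in $A$: from $a \leq \neg \, \neg \, a$ (Lemma \ref{l2.3}, (\ref{l2.3(2)})), combined with $\neg \, a \wedge \neg \, b \leq \neg \, a$ and the antitonicity of $\neg$ (Lemma \ref{l2.3}, (\ref{l2.3(3)})), we get $a \leq \neg \, (\neg \, a \wedge \neg \, b)$, and symmetrically for $b$. Applying the order-preserving map $s$ yields $s(a \vee b) \leq s(\neg \, (\neg \, a \wedge \neg \, b))$. Then I would rewrite the right-hand side step by step, using Proposition \ref{p3.6}, (\ref{p3.6(1)}) twice and the hypothesis $(\beta )$ once, to get $s(\neg \, (\neg \, a \wedge \neg \, b)) = \neg \, s(\neg \, a \wedge \neg \, b) = \neg \, (s(\neg \, a) \wedge s(\neg \, b)) = \neg \, (\neg \, s(a) \wedge \neg \, s(b))$. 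Finally, invoking the involutivity of $L$ via the identity $x \vee y = \neg \, (\neg \, x \wedge \neg \, y)$ in $L$, the last expression equals $s(a) \vee s(b)$, which closes the argument.

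There is no serious obstacle in this proof; the only point worth noting is that the involutivity hypothesis is needed only on the codomain $L$, not on the domain $A$, because inside $A$ a single inequality $a \vee b \leq \neg \, (\neg \, a \wedge \neg \, b)$ is enough, whereas inside $L$ the corresponding statement must be an equality, and this is precisely what involutivity of $L$ provides.
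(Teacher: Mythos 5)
Your proposal is correct and uses exactly the same ingredients as the paper's proof: the identity $s(\neg\,x)=\neg\,s(x)$ from Proposition \ref{p3.6}, (\ref{p3.6(1)}), condition $(\beta)$, the De Morgan identity $\neg\,(x\vee y)=\neg\,x\wedge\neg\,y$ (Lemma \ref{l2.2}, (\ref{l2.2(9)}) with $c=0$), and the involutivity of $L$. The paper simply computes the single equality $\neg\,s(a\vee b)=\neg\,(s(a)\vee s(b))$ and double-negates, whereas you split the conclusion into two inequalities; this is only a cosmetic difference (and note that the step $\neg\,(\neg\,s(a)\wedge\neg\,s(b))=s(a)\vee s(b)$ needs the De Morgan law just cited, not merely antitonicity of $\neg$).
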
 
\begin{proof}
Let $a,b\in A$. By Proposition \ref{p3.6}, (\ref{p3.6(1)}) and Lemma \ref{l2.2}, (\ref{l2.2(9)}), $\neg \, s(a\vee b)=s(\neg \, (a\vee b))=s(\neg \, a\wedge \neg \, b)=s(\neg \, a)\wedge s(\neg \, b)=\neg \, s(a)\wedge \neg \, s(b)=\neg \, (s(a)\vee s(b))$, hence $\neg \, \neg \, s(a\vee b)=\neg \, \neg \, (s(a)\vee s(b))$, so that $s(a\vee b)=s(a)\vee s(b)$, since $L$ is involutive.\end{proof}

\begin{proposition}
Let $L$ be an MV-algebra and $s:A\rightarrow L$ an order-preserving type I state. Then conditions $(\alpha )$ and $(\gamma )$ are equivalent.
\label{p4.11}
\end{proposition}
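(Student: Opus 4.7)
The direction $(\alpha)\Rightarrow(\gamma)$ is already handled by Lemma \ref{l4.9} (it does not require $L$ to be an MV-algebra, only that $s$ be an order-preserving type I state). So the plan reduces to proving $(\gamma)\Rightarrow(\alpha)$, and this is where the MV-algebra hypothesis on $L$ enters.

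For this direction, the idea is to route $s(a\vee b)$ through the term $(a\rightarrow b)\rightarrow b$, for which I have a handle on both sides. More precisely, since $s$ is an order-preserving type I state and $L$ is an MV-algebra, Lemma \ref{l3.11} gives $s(a\vee b)=s((a\rightarrow b)\rightarrow b)$. Now applying $(\gamma)$ twice (once to the outer implication, once to the inner) yields
\[
s((a\rightarrow b)\rightarrow b)=s(a\rightarrow b)\rightarrow s(b)=(s(a)\rightarrow s(b))\rightarrow s(b).
\]
Finally, since $L$ is an MV-algebra, Lemma \ref{l2.4} identifies $(s(a)\rightarrow s(b))\rightarrow s(b)$ with $s(a)\vee s(b)$, closing the chain to $s(a\vee b)=s(a)\vee s(b)$, which is $(\alpha)$.

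There is no real obstacle here: the whole point is that the MV-algebra structure on $L$ lets us recover the join from the implication via the standard MV-identity $x\vee y=(x\rightarrow y)\rightarrow y$, and Lemma \ref{l3.11} ensures that the corresponding term on the $A$ side has the same $s$-value as $a\vee b$. The proof should be a three-line chain of equalities once these two ingredients are invoked.
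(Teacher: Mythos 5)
Your proposal is correct and follows essentially the same route as the paper: $(\alpha)\Rightarrow(\gamma)$ via Lemma \ref{l4.9}, and $(\gamma)\Rightarrow(\alpha)$ by the chain $s(a\vee b)=s((a\rightarrow b)\rightarrow b)=(s(a)\rightarrow s(b))\rightarrow s(b)=s(a)\vee s(b)$ using Lemma \ref{l3.11} and Lemma \ref{l2.4}.
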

\begin{proof}
\noindent $(\alpha )\Rightarrow (\gamma )$ By Lemma \ref{l4.9}.

\noindent $(\gamma )\Rightarrow (\alpha )$ Let $a,b\in A$. By Lemma \ref{l3.11} and Lemma \ref{l2.4}, $s(a\vee b)=s((a\rightarrow b)\rightarrow b)=(s(a)\rightarrow s(b))\rightarrow s(b)=s(a)\vee s(b)$.\end{proof}

\begin{lemma}
Let $s:A\rightarrow L$ be an order-preserving type I state. Then:

\begin{enumerate}
\item\label{l4.12(1)} if $(\gamma )$ then, for all $a,b\in A$, $\neg \, s(a\odot b)=\neg \, (s(a)\odot s(b))$;
\item\label{l4.12(2)} if $L$ is involutive, then $(\gamma )$ implies $(\delta )$.
\end{enumerate}
\label{l4.12}
\end{lemma}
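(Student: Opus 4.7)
The plan is to reduce everything to the known identity $\neg(x\odot y)=x\rightarrow \neg y$ from Lemma \ref{l2.3}, (\ref{l2.3(5)}), which holds in any residuated lattice, and then to push it through $s$ by means of $(\gamma)$ together with Proposition \ref{p3.6}, (\ref{p3.6(1)}), which asserts that $s$ commutes with negation (recall $s$ is an order-preserving type~I state, so Proposition \ref{p3.6} applies).

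For part (\ref{l4.12(1)}), I would start from $\neg\, s(a\odot b)$ and rewrite it as $s(\neg\, (a\odot b))$ using Proposition \ref{p3.6}, (\ref{p3.6(1)}). Applying Lemma \ref{l2.3}, (\ref{l2.3(5)}) inside $A$ turns this into $s(a\rightarrow \neg\, b)$. Now $(\gamma)$ gives $s(a)\rightarrow s(\neg\, b)$; a second use of Proposition \ref{p3.6}, (\ref{p3.6(1)}) rewrites $s(\neg\, b)$ as $\neg\, s(b)$; and finally Lemma \ref{l2.3}, (\ref{l2.3(5)}) applied this time inside $L$ yields $\neg\, (s(a)\odot s(b))$. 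Chaining these equalities gives exactly $\neg\, s(a\odot b)=\neg\, (s(a)\odot s(b))$.

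For part (\ref{l4.12(2)}), assuming $L$ is involutive, I apply $\neg$ to both sides of the identity just proved and invoke $\neg\, \neg\, x=x$ in $L$ (which is what involutivity means). This immediately yields $s(a\odot b)=s(a)\odot s(b)$, i.e., condition $(\delta )$.

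The proof is essentially a single forward chain of rewrites, so there is no real obstacle; the only thing to be careful about is to keep track of whether a given instance of Lemma \ref{l2.3}, (\ref{l2.3(5)}) is being applied inside $A$ or inside $L$, and to use Proposition \ref{p3.6}, (\ref{p3.6(1)}) (rather than the stronger Proposition \ref{p3.9}, (\ref{p3.9(6)})) since we are only assuming $s$ to be an order-preserving type~I state.
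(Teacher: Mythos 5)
Your proposal is correct and follows essentially the same chain as the paper: rewrite $\neg\, s(a\odot b)$ as $s(\neg\,(a\odot b))=s(a\rightarrow\neg\, b)$ via Proposition \ref{p3.6}, (\ref{p3.6(1)}) and Lemma \ref{l2.3}, (\ref{l2.3(5)}), push $(\gamma)$ through, and undo the rewrites in $L$; part (\ref{l4.12(2)}) then follows by involutivity, exactly as the paper intends when it says ``by (\ref{l4.12(1)})''. No gaps.
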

\begin{proof}
\noindent (\ref{l4.12(1)}) Let $a,b\in A$. By Lemma \ref{l2.3}, (\ref{l2.3(5)}), $\neg \, (a\odot b)=a\rightarrow \neg \, b$. Thus, by Proposition \ref{p3.6}, (\ref{p3.6(1)}), $\neg \, s(a\odot b)=s(\neg \, (a\odot b))=s(a\rightarrow \neg \, b)=s(a)\rightarrow s(\neg \, b)=s(a)\rightarrow \neg \, s(b)=\neg \, (s(a)\odot s(b))$.

\noindent (\ref{l4.12(2)}) By (\ref{l4.12(1)}).\end{proof}

\begin{corollary}
Let $A$ be a divisible residuated lattice, $L$ be an MV-algebra and $s:A\rightarrow L$ an order-preserving type I state. Then $(\alpha )$, $(\beta )$ and $(\gamma )$ are equivalent.
\label{c4.13}
\end{corollary}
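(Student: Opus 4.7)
The plan is to close a triangle of equivalences, two sides of which are already in place. Since $L$ is an MV-algebra, it is in particular involutive and divisible (MV-algebras are the divisible involutive residuated lattices, as noted in the preliminaries). Proposition \ref{p4.11} already delivers $(\alpha )\Leftrightarrow (\gamma )$ under exactly the hypotheses we have, and Lemma \ref{l4.9} delivers $(\beta )\Rightarrow (\gamma )$. So the only missing implication is $(\gamma )\Rightarrow (\beta )$, and this is where the divisibility hypothesis on $A$ will be used.

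To prove $(\gamma )\Rightarrow (\beta )$, I would invoke Proposition \ref{p3.10}, (\ref{p3.10(2)}), whose hypotheses ($A$ divisible, $L$ divisible, $s$ an order-preserving type I state) are all satisfied in our setting. For any $a,b\in A$ this gives
\[ s(a\wedge b)=s(a)\odot s(a\rightarrow b). \]
Assuming $(\gamma )$, we replace $s(a\rightarrow b)$ by $s(a)\rightarrow s(b)$ to obtain
\[ s(a\wedge b)=s(a)\odot (s(a)\rightarrow s(b)). \]
Finally, since $L$ is an MV-algebra, it is divisible, so $s(a)\odot (s(a)\rightarrow s(b))=s(a)\wedge s(b)$, yielding $(\beta )$.

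Putting the pieces together, the proof is simply a one-line chain: $(\alpha )\Leftrightarrow (\gamma )$ by Proposition \ref{p4.11}; $(\beta )\Rightarrow (\gamma )$ by Lemma \ref{l4.9}; and $(\gamma )\Rightarrow (\beta )$ by the argument above. There is no serious obstacle here; the corollary is essentially a packaging result that shows how divisibility of $A$ is the extra ingredient (on top of what was already used in Proposition \ref{p4.11}) needed to upgrade the equivalence of $(\alpha )$ and $(\gamma )$ to a three-way equivalence including $(\beta )$. The conceptual point worth highlighting in the writeup is that divisibility is what lets us pass from a statement about implications to a statement about meets.
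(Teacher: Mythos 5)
Your proof is correct and follows essentially the same route as the paper: $(\alpha)\Leftrightarrow(\gamma)$ from Proposition \ref{p4.11}, $(\beta)\Rightarrow(\gamma)$ from Lemma \ref{l4.9}, and then the chain $s(a\wedge b)=s(a)\odot s(a\rightarrow b)=s(a)\odot(s(a)\rightarrow s(b))=s(a)\wedge s(b)$ for $(\gamma)\Rightarrow(\beta)$. The only (harmless) difference is that you obtain the first equality directly from Proposition \ref{p3.10}, (\ref{p3.10(2)}), whereas the paper writes $s(a\wedge b)=s(a\odot(a\rightarrow b))$ by divisibility of $A$ and then applies Lemma \ref{l4.12} (i.e., $(\gamma)\Rightarrow(\delta)$); both are legitimate, and your version is marginally more economical.
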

\begin{proof}
By Proposition \ref{p4.11}, $(\alpha )\Leftrightarrow (\gamma )$. By Lemma \ref{l4.9}, $(\beta )\rightarrow (\gamma )$. It remains to show:

\noindent $(\gamma )\Rightarrow (\beta )$ Let $a,b\in A$. By Lemma \ref{l4.12}, $s(a\wedge b)=s(a\odot (a\rightarrow b))=s(a)\odot s(a\rightarrow b)=s(a)\odot (s(a)\rightarrow s(b))=s(a)\wedge s(b)$.\end{proof}

\begin{definition}
A function $s:A\rightarrow L$ is called a {\em state-morphism} iff it fulfills $(\alpha )$, $(\beta )$, $(\gamma )$, $s(0)=0$ and $s(1)=1$.
\label{d4.12}
\end{definition}

\begin{remark}

Any state-morphism is an order-preserving type I state.
\label{rsm}

\end{remark}
\begin{proof}

By Proposition \ref{p3.3}, (\ref{p3.3(2)}), any state-morphism is a type I state. By $(\alpha )$ and $(\beta )$, it is also a lattice morphism, thus an order-preserving function.\end{proof}

If $L$ is the standard MV-algebra $[0,1]$, then Definition \ref{d4.12} coincides with the concept of state-morphism from \cite{dindvu}, \cite{dvura} etc.. 

\begin{proposition}
Let $s:A\rightarrow L$ be an order-preserving type I state. If $A/{\rm Ker}(s)$ is totally ordered, then $s$ is a state-morphism.
\label{p4.13}
\end{proposition}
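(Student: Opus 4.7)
The plan is to verify that $s$ satisfies the three conditions $(\alpha)$, $(\beta)$, $(\gamma)$ of a state-morphism, since $s(0)=0$ and $s(1)=1$ are already built into the definition of a type I state. The driving observation is that total order on $A/{\rm Ker}(s)$ lets us reduce everything to a single asymmetric case: for any $a,b\in A$, without loss of generality $a/{\rm Ker}(s)\leq b/{\rm Ker}(s)$, which is to say $a\rightarrow b\in {\rm Ker}(s)$, i.e.\ $s(a\rightarrow b)=1$.

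First I would establish $s(a)\leq s(b)$ in this case. This can be read off from Proposition~\ref{p3.8}, (\ref{p3.8(3)}): from $s(a\rightarrow b)\leq s(a)\rightarrow s(b)$ and $s(a\rightarrow b)=1$ we get $s(a)\rightarrow s(b)=1$, hence $s(a)\leq s(b)$ by Lemma~\ref{l2.2}, (\ref{l2.2(3)}). Next, since $a/{\rm Ker}(s)\leq b/{\rm Ker}(s)$ implies $(a\vee b)/{\rm Ker}(s)=b/{\rm Ker}(s)$ and $(a\wedge b)/{\rm Ker}(s)=a/{\rm Ker}(s)$, Lemma~\ref{l4.3} yields $s(a\vee b)=s(b)$ and $s(a\wedge b)=s(a)$.

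Combining these with $s(a)\leq s(b)$ gives $s(a\vee b)=s(b)=s(a)\vee s(b)$ and $s(a\wedge b)=s(a)=s(a)\wedge s(b)$, which are exactly $(\alpha)$ and $(\beta)$. Condition $(\gamma)$ then follows at once from Lemma~\ref{l4.9}. Finally, $s(0)=0$ and $s(1)=1$ are part of the hypotheses defining a type I state (they are the conditions assumed in Proposition~\ref{p3.3}), so $s$ is a state-morphism in the sense of Definition~\ref{d4.12}.

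The argument is essentially a clean case reduction; I do not foresee any real obstacle, only the need to invoke Lemma~\ref{l4.3} at the right places and to justify the WLOG step using the total order on $A/{\rm Ker}(s)$ together with the symmetric role of $a$ and $b$ in each of $(\alpha)$, $(\beta)$, $(\gamma)$.
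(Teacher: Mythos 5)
Your proof is correct and follows essentially the same route as the paper's: reduce by total order of $A/{\rm Ker}(s)$ to the case $s(a\rightarrow b)=1$, deduce $s(a\vee b)=s(a)\vee s(b)$ and $s(a\wedge b)=s(a)\wedge s(b)$, and then obtain $(\gamma)$ from Lemma \ref{l4.9}. The only (immaterial) difference is that you get the intermediate equalities $s(a\vee b)=s(b)$ and $s(a\wedge b)=s(a)$ by applying Lemma \ref{l4.3} to coinciding congruence classes, whereas the paper reads them off directly from the type I identities $s(a\rightarrow b)=s(a)\rightarrow s(a\wedge b)=s(a\vee b)\rightarrow s(b)$ together with order-preservation.
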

\begin{proof}
Let $a,b\in A$. Then $a/{\rm Ker}(s)\leq b/{\rm Ker}(s)$ or $b/{\rm Ker}(s)\leq a/{\rm Ker}(s)$. Assume, for example, that $a/{\rm Ker}(s)\leq b/{\rm Ker}(s)$, thus $(a\rightarrow b)/{\rm Ker}(s)=1/{\rm Ker}(s)$, that is $a\rightarrow b\in {\rm Ker}(s)$, that is $s(a\rightarrow b)=1$, by Lemma \ref{l2.2}, (\ref{l2.2(3)}). By Remark \ref{rsm}, Proposition \ref{p3.3}, (\ref{p3.3(3)}) and (\ref{p3.3(4)}) and Lemma \ref{l2.2}, (\ref{l2.2(3)}), $1=s(a\rightarrow b)=s(a)\rightarrow s(a\wedge b)=s(a\vee b)\rightarrow s(b)$, thus $s(a)\leq s(a\wedge b)$ and $s(a\vee b)\leq s(b)$. Since $s$ is order-preserving, it follows that $s(a)=s(a\wedge b)\leq s(a\vee b)=s(b)$, thus $s(a\vee b)=s(a)\vee s(b)$ and $s(a\wedge b)=s(a)\wedge s(b)$. By Lemma \ref{l4.9}, we also have $s(a\rightarrow b)=s(a)\rightarrow s(b)$, therefore $s$ is a state-morphism.\end{proof}

\begin{corollary}
Let $s:A\rightarrow L$ be an order-preserving type I state. If $A/{\rm Ker}(s)$ is an MV-algebra and ${\rm Ker}(s)$ is a maximal filter of $A$, then $s$ is a state-morphism. 
\label{c4.14}
\end{corollary}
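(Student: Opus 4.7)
The strategy is to reduce the claim to Proposition \ref{p4.13}: it suffices to show that, under the given hypotheses, $A/{\rm Ker}(s)$ is totally ordered, after which an immediate application of that proposition to the order-preserving type I state $s$ concludes the argument.

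First I would translate the maximality of ${\rm Ker}(s)$ into simplicity of the quotient $A/{\rm Ker}(s)$. By the standard correspondence between filters of $A$ containing ${\rm Ker}(s)$ and filters of $A/{\rm Ker}(s)$, maximality of ${\rm Ker}(s)$ forces $A/{\rm Ker}(s)$ to have only the two filters $\{1/{\rm Ker}(s)\}$ and $A/{\rm Ker}(s)$ itself, hence to be simple in the sense of the definition preceding Lemma \ref{l2.6}. Since, by hypothesis, $A/{\rm Ker}(s)$ is in addition an MV-algebra, we are dealing with a simple MV-algebra.

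Next I would appeal to the classical structure theorem for simple MV-algebras (Chang): every simple MV-algebra is isomorphic to a subalgebra of the standard MV-algebra $[0,1]$, and is therefore linearly ordered. With $A/{\rm Ker}(s)$ totally ordered, Proposition \ref{p4.13} applies directly and yields that $s$ is a state-morphism.

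The only non-bookkeeping ingredient is the step ``simple MV-algebra $\Rightarrow$ totally ordered'', which is where I expect the main (only) conceptual obstacle to lie. If one prefers a self-contained justification avoiding Chang's representation, one can argue internally from Lemma \ref{l2.6}: every non-unit element of $A/{\rm Ker}(s)$ is nilpotent, and combining this with the MV-identity $(a\rightarrow b)\rightarrow b=(b\rightarrow a)\rightarrow a$ from Lemma \ref{l2.4} together with the residuation laws forces at least one of $a\rightarrow b$, $b\rightarrow a$ to equal $1$ for arbitrary $a,b$, i.e.\ linear order. Either route then feeds into Proposition \ref{p4.13} to close the proof.
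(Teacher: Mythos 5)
Your proposal is correct and follows essentially the same route as the paper: maximality of ${\rm Ker}(s)$ makes $A/{\rm Ker}(s)$ a simple MV-algebra, simple MV-algebras are totally ordered (the paper cites this fact rather than proving it, just as you invoke Chang's representation), and Proposition \ref{p4.13} then finishes the argument.
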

\begin{proof}

If ${\rm Ker}(s)$ is a maximal filter of $A$, then $A/{\rm Ker}(s)$ is a simple MV-algebra, thus totally ordered (see \cite{cito2}). By Proposition \ref{p4.13}, it follows that $s$ is a state-morphism.\end{proof}

\begin{proposition}
Assume that $L$ is totally ordered and $s:A\rightarrow L$ is a state-morphism. Then $A/{\rm Ker}(s)$ is totally ordered.
\label{p4.15}
\end{proposition}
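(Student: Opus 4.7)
The plan is to show directly that any two equivalence classes in $A/{\rm Ker}(s)$ are comparable. Given arbitrary $a,b\in A$, it suffices to verify that $a\rightarrow b\in {\rm Ker}(s)$ or $b\rightarrow a\in {\rm Ker}(s)$, since by Lemma \ref{l2.2}, (\ref{l2.2(3)}), this is equivalent to the comparability of $a/{\rm Ker}(s)$ and $b/{\rm Ker}(s)$.

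To exploit the total order on $L$, I would push the problem down to $L$ via the morphism-like properties of $s$. Using condition $(\gamma )$ from Definition \ref{d4.12}, we have $s(a\rightarrow b)=s(a)\rightarrow s(b)$ and $s(b\rightarrow a)=s(b)\rightarrow s(a)$. Since $L$ is totally ordered, either $s(a)\leq s(b)$ or $s(b)\leq s(a)$. In the first case, Lemma \ref{l2.2}, (\ref{l2.2(3)}) gives $s(a)\rightarrow s(b)=1$, so $s(a\rightarrow b)=1$, i.e., $a\rightarrow b\in {\rm Ker}(s)$; in the second case, symmetrically $b\rightarrow a\in {\rm Ker}(s)$.

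In either case the classes $a/{\rm Ker}(s)$ and $b/{\rm Ker}(s)$ are comparable in $A/{\rm Ker}(s)$, which establishes the conclusion. There is no serious obstacle here: the whole argument is essentially a one-line application of $(\gamma )$ together with the assumed totality of $L$. The only point worth double-checking is that $s(1)=1$ is indeed part of the definition of state-morphism (it is), so that "$s(x)=1$" correctly translates to "$x\in {\rm Ker}(s)$".
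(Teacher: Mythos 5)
Your argument is correct and is essentially identical to the paper's proof: both reduce comparability of $a/{\rm Ker}(s)$ and $b/{\rm Ker}(s)$ to the totality of $L$ via condition $(\gamma )$ and Lemma \ref{l2.2}, (\ref{l2.2(3)}). Nothing to add.
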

\begin{proof}
Let $a,b\in A$. Then $s(a)\leq s(b)$ or $s(b)\leq s(a)$, so that, by Lemma \ref{l2.2}, (\ref{l2.2(3)}), $s(a\rightarrow b)=s(a)\rightarrow s(b)=1$ or $s(b\rightarrow a)=s(b)\rightarrow s(a)=1$, thus $a/{\rm Ker}(s)\leq b/{\rm Ker}(s)$ or $b/{\rm Ker}(s)\leq a/{\rm Ker}(s)$.\end{proof}

\begin{corollary}
Let $L$ be totally ordered and $s:A\rightarrow L$ be an order-preserving type I state. Then: $s$ is a state-morphism iff $A/{\rm Ker}(s)$ is totally ordered.
\label{c4.16}
\end{corollary}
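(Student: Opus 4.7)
The plan is to observe that this corollary is essentially a packaging of the two preceding results, Proposition \ref{p4.13} and Proposition \ref{p4.15}, so almost nothing new needs to be proven; I just need to check that the hypotheses line up correctly in each direction.

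For the forward implication, I would assume that $s$ is a state-morphism. Since $L$ is totally ordered by hypothesis, Proposition \ref{p4.15} applies directly and yields that $A/{\rm Ker}(s)$ is totally ordered. No additional work is needed here, as the hypothesis ``order-preserving type I state'' is automatic for state-morphisms (Remark \ref{rsm}) and in any case is not used in Proposition \ref{p4.15}.

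For the reverse implication, I would assume that $A/{\rm Ker}(s)$ is totally ordered. Since $s$ is by assumption an order-preserving type I state, Proposition \ref{p4.13} applies directly and yields that $s$ is a state-morphism. Note that the total orderedness of $L$ is not used in this direction; it is only needed to make the forward direction work.

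There is no real obstacle here; the only thing to be careful about is to cite each of Propositions \ref{p4.13} and \ref{p4.15} on the correct side of the equivalence and to verify that the common standing assumption (that $s$ is an order-preserving type I state) supplies whatever Proposition \ref{p4.13} requires. The corollary is essentially a convenient restatement noting that, under the joint hypotheses ``$L$ totally ordered'' and ``$s$ order-preserving type I state,'' the two properties ``$s$ is a state-morphism'' and ``$A/{\rm Ker}(s)$ is totally ordered'' each transfer to the other via the previously established propositions.
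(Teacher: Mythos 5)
Your proposal is correct and matches the paper's proof exactly: the paper likewise dispatches the corollary by citing Proposition \ref{p4.15} for the forward direction and Proposition \ref{p4.13} for the reverse. Your additional remarks about which hypothesis is needed on which side are accurate.
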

\begin{proof}
By Propositions \ref{p4.13} and \ref{p4.15}.\end{proof}

\begin{corollary}
If $L$ is totally ordered and $s:A\rightarrow L$ is a state-morphism then ${\rm Ker}(s)$ is a prime filter of $A$.
\end{corollary}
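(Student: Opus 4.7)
The plan is to chase through the definitions: unfold ``prime filter'' into its meet-irreducibility-type condition on joins, and then use that a state-morphism preserves joins (property $(\alpha)$ from Definition \ref{d4.12}) to transfer the question from $A$ to $L$, where total order finishes the job.

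First, I would note that ${\rm Ker}(s)$ is automatically a proper filter of $A$: by Remark \ref{rsm} every state-morphism is an order-preserving type I state, so Lemma \ref{l4.1} applies. Thus it remains only to verify the primeness condition.

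For primeness, suppose $a,b\in A$ satisfy $a\vee b\in{\rm Ker}(s)$, i.e.\ $s(a\vee b)=1$. Using property $(\alpha)$ of the state-morphism (which gives $s(a\vee b)=s(a)\vee s(b)$), we obtain $s(a)\vee s(b)=1$ in $L$. Since $L$ is totally ordered, the join $s(a)\vee s(b)$ coincides with $\max\{s(a),s(b)\}$, so at least one of $s(a),s(b)$ must equal $1$, meaning $a\in{\rm Ker}(s)$ or $b\in{\rm Ker}(s)$. This is exactly primeness.

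There is essentially no hard step: the heavy lifting (that $s$ commutes with joins) is built into the definition of state-morphism, and total ordering of $L$ converts ``join equals $1$'' into ``one summand equals $1$.'' Alternatively, one could invoke Proposition \ref{p4.15} to see that $A/{\rm Ker}(s)$ is totally ordered and then recall that the kernel of a quotient onto a chain is automatically a prime filter, but the direct argument above is shorter and avoids passing to the quotient.
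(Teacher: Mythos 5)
Your proof is correct, but it takes a different route from the paper. The paper's own proof is a two-line reduction: it invokes Proposition \ref{p4.15} to conclude that $A/{\rm Ker}(s)$ is totally ordered, and then cites an external result (Proposition 1.41, (iii) of Piciu's book) asserting that a filter with totally ordered quotient is prime. You instead argue directly: properness of ${\rm Ker}(s)$ comes from Remark \ref{rsm} together with Lemma \ref{l4.1} (valid since $A$ and $L$ are assumed nontrivial in this section), and primeness follows because $(\alpha)$ turns $s(a\vee b)=1$ into $s(a)\vee s(b)=1$, which in a chain forces $s(a)=1$ or $s(b)=1$. Your argument is self-contained and more elementary --- it uses only the defining identity $(\alpha)$ of a state-morphism and the total order on $L$, bypassing both the quotient construction and the external citation; what the paper's route buys in exchange is reuse of Proposition \ref{p4.15}, which it has just proved and which carries independent interest (it is also the key to Corollary \ref{c4.16}). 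You correctly note the quotient route as an alternative, so you have in effect identified the paper's proof as well. No gaps.
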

\begin{proof}

By Proposition \ref{p4.15}, $A/{\rm Ker}(s)$ is totally ordered, thus, by \cite[Proposition 1.41, (iii)]{pic}, ${\rm Ker}(s)$ is a prime filter.\end{proof}

\begin{corollary}
If $A$ is an MTL-algebra, $L$ is totally ordered and $s:A\rightarrow L$ an order-preserving type I state, then: $s$ is a state-morphism iff ${\rm Ker}(s)$ is a prime filter.
\end{corollary}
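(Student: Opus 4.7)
The forward direction is immediate: the preceding corollary already gives that if $L$ is totally ordered and $s:A\to L$ is a state-morphism, then ${\rm Ker}(s)$ is a prime filter. No use of the MTL hypothesis is needed here.

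For the converse, assume $A$ is an MTL-algebra, $L$ is totally ordered, $s$ is an order-preserving type I state and ${\rm Ker}(s)$ is a prime filter. My plan is to reduce to Proposition \ref{p4.13} by showing that $A/{\rm Ker}(s)$ is totally ordered, and the MTL axiom is exactly what makes this work. For arbitrary $a,b\in A$, the MTL identity gives $(a\to b)\vee (b\to a)=1\in {\rm Ker}(s)$. Since ${\rm Ker}(s)$ is a prime filter, either $a\to b\in {\rm Ker}(s)$ or $b\to a\in {\rm Ker}(s)$, which by Lemma \ref{l2.2}, (\ref{l2.2(3)}), translates to $a/{\rm Ker}(s)\leq b/{\rm Ker}(s)$ or $b/{\rm Ker}(s)\leq a/{\rm Ker}(s)$ in the quotient $A/{\rm Ker}(s)$. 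Hence $A/{\rm Ker}(s)$ is totally ordered.

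Once the quotient is totally ordered, Proposition \ref{p4.13} applies directly (its hypothesis is precisely that $s$ is an order-preserving type I state with $A/{\rm Ker}(s)$ totally ordered) and yields that $s$ is a state-morphism. I expect no real obstacle; the only subtle point is noticing that the reverse direction never uses the assumption that $L$ is totally ordered (that hypothesis is only needed for the forward implication, inherited from the previous corollary), and that the MTL hypothesis on $A$ enters in exactly one place, namely to pass from primeness of ${\rm Ker}(s)$ to linearity of $A/{\rm Ker}(s)$.
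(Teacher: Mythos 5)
Your proof is correct and follows essentially the same route as the paper: the paper simply cites Corollary \ref{c4.16} together with an external result (\cite[Lemma 2.61]{beloh}, which identifies the prime filters of an MTL-algebra as exactly those whose quotient is a chain), whereas you supply the needed half of that lemma directly via the MTL identity $(a\rightarrow b)\vee (b\rightarrow a)=1$ and primeness. Your side remarks are also accurate: the hypothesis that $L$ is totally ordered is only used in the forward implication, and the MTL hypothesis only in passing from primeness of ${\rm Ker}(s)$ to linearity of $A/{\rm Ker}(s)$.
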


\begin{proof}
Apply Corollary \ref{c4.16} and \cite[Lemma 2.61]{beloh}.\end{proof}

\begin{proposition}
Let $L$ be a simple residuated lattice and $s:A\rightarrow L$ a state-morphism. Then ${\rm Ker}(s)$ is a maximal filter of $A$.
\label{p4.17}
\end{proposition}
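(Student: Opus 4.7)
The plan is to apply the characterization of maximal filters given by Lemma~\ref{l2.5}. Since any state-morphism is an order-preserving type~I state (Remark~\ref{rsm}), Lemma~\ref{l4.1} already guarantees that ${\rm Ker}(s)$ is a proper filter, so all that remains is to show: for every $a\in A$ with $s(a)\neq 1$, there is a positive integer $n$ with $\neg(a^n)\in{\rm Ker}(s)$, equivalently $s(\neg(a^n))=1$.

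Fix such an $a$, so $s(a)\in L\setminus\{1\}$. Simplicity of $L$ together with Lemma~\ref{l2.6} produces some $n\geq 1$ for which $s(a)^n=0$. The crux is to transfer this annihilation property back through $s$. A state-morphism is not postulated to preserve $\odot$ on the nose, but I will show by induction on $n$ that $\neg s(a^n)=\neg(s(a)^n)$. The base case $n=1$ is trivial. For the inductive step, Lemma~\ref{l2.3}, (\ref{l2.3(5)}), rewrites both $\neg(a^{n+1})=a\rightarrow\neg(a^n)$ and $\neg(s(a)^{n+1})=s(a)\rightarrow\neg(s(a)^n)$ in terms of $\rightarrow$; then condition $(\gamma)$ of a state-morphism combined with $s(\neg x)=\neg s(x)$ from Proposition~\ref{p3.6}, (\ref{p3.6(1)}), gives $\neg s(a^{n+1})=s(\neg(a^{n+1}))=s(a)\rightarrow\neg s(a^n)$, and the inductive hypothesis identifies this with the right-hand side.

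From $s(a)^n=0$ and Lemma~\ref{l2.3}, (\ref{0neg1}), it follows that $\neg s(a^n)=\neg(s(a)^n)=\neg 0=1$, whence $s(\neg(a^n))=\neg s(a^n)=1$, so $\neg(a^n)\in{\rm Ker}(s)$, and Lemma~\ref{l2.5} yields the maximality of ${\rm Ker}(s)$. I do not foresee a genuine obstacle here; the only delicate moment is choosing to prove the negated identity $\neg s(a^n)=\neg(s(a)^n)$ rather than the raw $s(a^n)=s(a)^n$, which is forced by the fact that $(\gamma)$ and Proposition~\ref{p3.6}, (\ref{p3.6(1)}), together control $s$ only on expressions built from $\rightarrow$ and $\neg$, into which products can be translated via Lemma~\ref{l2.3}, (\ref{l2.3(5)}).
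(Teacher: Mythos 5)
Your proof is correct and follows the paper's own route: proper filter via Remark~\ref{rsm} and Lemma~\ref{l4.1}, the maximality criterion of Lemma~\ref{l2.5}, simplicity of $L$ via Lemma~\ref{l2.6}, and the identity $\neg\, s(a^n)=\neg\,(s(a)^n)$ obtained from $(\gamma )$, Proposition~\ref{p3.6}, (\ref{p3.6(1)}), and Lemma~\ref{l2.3}, (\ref{l2.3(5)}). The only difference is cosmetic: the paper cites Lemma~\ref{l4.12}, (\ref{l4.12(1)}) and silently iterates it, whereas you make the underlying induction on $n$ explicit.
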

\begin{proof}
Let $a\in A\setminus {\rm Ker}(s)$, thus $s(a)\neq 1$. By Lemmas \ref{l4.1} and \ref{l2.5}, it is sufficient to prove that there exists an $n\in \N ^{*}$ such that $\neg \, (a^{n})\in {\rm Ker}(s)$. By Lemma \ref{l2.6}, there exists an $n\in \N ^{*}$ such that $(s(a))^n=0$. By Lemma \ref{l4.12}, (\ref{l4.12(1)}), $s(\neg \, (a^n))=s(a^n\rightarrow 0)=s(a^n)\rightarrow s(0)=s(a^n)\rightarrow 0=\neg \, s(a^n)=\neg \, s(a)^n=1$, so $\neg \, (a^{n})\in {\rm Ker}(s)$.\end{proof}

\begin{remark}
It is known that the standard MV-algebra $[0,1]$ is simple (\cite{ciudvuhyc}). Then from Proposition \ref{p4.17} we get the following known result (\cite{dvura2}, \cite{lcciu1}): a Bosbach state $s:A\rightarrow [0,1]$ is a state-morphism iff ${\rm Ker}(s)$ is a maximal filter in $A$.
\label{r4.18}
\end{remark}

\section{Glivenko Property and Rie\v can states}
\label{riecan}

\hspace*{10pt} In this section we study the relation between generalized Bosbach states on a residuated lattice $A$ with Glivenko property and generalized Bosbach states on the involutive residuated lattice ${\rm Reg}(A)$ of the regular elements of $A$. We define the notion of generalized Rie\v can state and we relate type I states and generalized Rie\v can states.

In the following, let $A$ be a residuated lattice and ${\rm Reg}(A)=\{\neg \, a|a\in A\}=\{a\in A|a=\neg \, \neg \, a\}$ the set of the {\em regular elements} of $A$. $A$ is said to be {\em involutive} iff $A={\rm Reg}(A)$. For all $a,b\in A$, we denote $a\vee ^{*}b=\neg \, \neg \, (a\vee b)$, $a\wedge ^{*}b=\neg \, \neg \, (a\wedge b)$, $a\odot ^{*}b=\neg \, \neg \, (a\odot b)$.

We say that $A$ has {\em Glivenko property} iff, for all $a,b\in A$, $\neg \, \neg \, (a\rightarrow b)=a\rightarrow \neg \, \neg \, b$.

\begin{proposition}{\rm \cite[Theorem 2.1, page 163]{cito2}} The following are equivalent:

\noindent (i) $A$ has Glivenko property;\index{residuated lattice!Glivenko}

\noindent (ii) $({\rm Reg}(A),\vee ^{*},\wedge ^{*},\odot ^{*},\rightarrow ,0,1)$ is an involutive residuated lattice and $\neg \, \neg \, :A\rightarrow {\rm Reg}(A)\ :\ a\rightarrow \neg \, \neg \, a$ is a surjective morphism of residuated lattices.\index{${\rm Reg}(A)$}
\label{`glivnegneg`}
\end{proposition}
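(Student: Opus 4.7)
The plan is to prove the two implications separately, with (ii) $\Rightarrow$ (i) being much shorter.

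For (ii) $\Rightarrow $ (i), assume $\neg \, \neg \, :A\rightarrow {\rm Reg}(A)$ is a morphism of residuated lattices. Applying preservation of $\rightarrow $ to the pair $(a,\neg \, \neg \, b)$ gives $\neg \, \neg \, (a\rightarrow \neg \, \neg \, b)=\neg \, \neg \, a\rightarrow \neg \, \neg \, \neg \, \neg \, b=\neg \, \neg \, a\rightarrow \neg \, \neg \, b$. Since $a\rightarrow \neg \, \neg \, b=\neg \, (a\odot \neg \, b)$ is already regular by Lemma \ref{l2.3}, (\ref{l2.3(5)}), the left-hand side equals $a\rightarrow \neg \, \neg \, b$. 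Combining this with the morphism identity $\neg \, \neg \, (a\rightarrow b)=\neg \, \neg \, a\rightarrow \neg \, \neg \, b$ yields the Glivenko identity $\neg \, \neg \, (a\rightarrow b)=a\rightarrow \neg \, \neg \, b$.

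For (i) $\Rightarrow $ (ii), I would first establish the key technical lemma $\neg \, \neg \, x\odot \neg \, \neg \, y\leq \neg \, \neg \, (x\odot y)$ under Glivenko: residuate $x\odot y\leq \neg \, \neg \, (x\odot y)$ to $x\leq y\rightarrow \neg \, \neg \, (x\odot y)$, apply $\neg \, \neg \, $ together with Glivenko to obtain $\neg \, \neg \, x\leq y\rightarrow \neg \, \neg \, (x\odot y)$, residuate back, and then iterate with $y$ replaced by $\neg \, \neg \, y$. Granted this lemma, I would verify that $({\rm Reg}(A),\vee ^{*},\wedge ^{*},\odot ^{*},\rightarrow ,0,1)$ is an involutive residuated lattice: ${\rm Reg}(A)$ is closed under $\rightarrow $ because $a\rightarrow b=a\rightarrow \neg \, \neg \, b=\neg \, \neg \, (a\rightarrow b)$ whenever $b$ is regular; it contains $0$ and $1$; the starred operations land in ${\rm Reg}(A)$ by construction; the lattice and monoid axioms together with residuation reduce to manipulations using the key lemma and the closure properties of $\neg \, \neg \, $ (in particular, for regular $c$, $\neg \, \neg \, (a\odot b)\leq c$ is equivalent to $a\odot b\leq c$, which gives residuation for $\odot ^{*}$); involutivity holds by the very definition of ${\rm Reg}(A)$.

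It then remains to check that $\neg \, \neg \, :A\rightarrow {\rm Reg}(A)$ is a surjective morphism. Surjectivity and preservation of $0,1$ are immediate. Preservation of $\rightarrow $ reads $\neg \, \neg \, (a\rightarrow b)=\neg \, \neg \, a\rightarrow \neg \, \neg \, b$, which follows from Glivenko together with the regularity of $a\rightarrow \neg \, \neg \, b$ argued above. Preservation of $\vee ,\wedge ,\odot $ each reduces to an identity of the form $\neg \, \neg \, (a\circ b)=\neg \, \neg \, (\neg \, \neg \, a\circ \neg \, \neg \, b)$; the easy direction comes from $a\leq \neg \, \neg \, a$ and monotonicity, and the hard direction invokes the key lemma (for $\odot $) or a direct double-negation argument (for $\vee ,\wedge $), followed by another application of $\neg \, \neg \, $. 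The main obstacle is precisely proving and then correctly deploying the key lemma $\neg \, \neg \, x\odot \neg \, \neg \, y\leq \neg \, \neg \, (x\odot y)$; once this is in hand, everything else is careful bookkeeping with the idempotence $\neg \, \neg \, \neg \, \neg \, =\neg \, \neg \, $ and the Glivenko rewriting $\neg \, \neg \, (a\rightarrow b)=a\rightarrow \neg \, \neg \, b$.
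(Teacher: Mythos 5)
The paper gives no proof of this proposition (it is quoted from \cite[Theorem 2.1]{cito2}), so your argument stands on its own. Most of it is sound: the direction (ii)$\Rightarrow$(i) is correct; the key lemma $\neg\,\neg\, x\odot \neg\,\neg\, y\leq \neg\,\neg\,(x\odot y)$ is correctly proved by the two-step residuation argument; and its use for the residuation law of $\odot ^{*}$, the associativity of $\odot ^{*}$, and the preservation of $\odot $ and $\vee $ is fine. (For preservation of $\rightarrow $ you still owe the identity $a\rightarrow \neg\,\neg\, b=\neg\,\neg\, a\rightarrow \neg\,\neg\, b$, since Glivenko plus regularity only gets you to $a\rightarrow \neg\,\neg\, b$; but this is a one-liner: by Lemma \ref{l2.3}, (\ref{l2.3(5)}) and $\neg\,\neg\,\neg\, a=\neg\, a$, both sides equal $\neg\, b\rightarrow \neg\, a$.)

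The genuine gap is the preservation of $\wedge $. The hard inequality there is $\neg\,\neg\, a\wedge \neg\,\neg\, b\leq \neg\,\neg\,(a\wedge b)$, and this is emphatically not ``a direct double-negation argument'': it can fail without the Glivenko property (take $0<x,y<h<1$ with $x\wedge y=0$, $x\vee y=h$ and $u\odot v=0$ whenever $u,v\leq h$; then $\neg\, x=\neg\, y=\neg\, h=h$, so $\neg\,\neg\, x\wedge \neg\,\neg\, y=h$ while $\neg\,\neg\,(x\wedge y)=0$), and your key lemma only yields the weaker $\neg\,\neg\, a\odot \neg\,\neg\, b\leq \neg\,\neg\,(a\wedge b)$, with $\odot \leq \wedge $ pointing the wrong way. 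A correct argument runs as follows: put $c=\neg\,\neg\, a\wedge \neg\,\neg\, b$; since $c\leq \neg\,\neg\, a$, Glivenko gives $\neg\,\neg\,(c\rightarrow a)=c\rightarrow \neg\,\neg\, a=1$, hence $\neg\,(c\rightarrow a)=0$, and likewise $\neg\,(c\rightarrow b)=0$; elements with zero negation are closed under $\wedge $, because if $\neg\, u=\neg\, v=0$ then $\neg\,(u\wedge v)\leq \neg\,(u\odot v)=u\rightarrow \neg\, v=\neg\, u=0$ by Lemma \ref{l2.2}, (\ref{l2.2(5)}) and (\ref{l2.2(7)}); therefore $\neg\,(c\rightarrow (a\wedge b))=\neg\,((c\rightarrow a)\wedge (c\rightarrow b))=0$ by Lemma \ref{l2.2}, (\ref{l2.2(9)}), and one more application of Glivenko gives $c\rightarrow \neg\,\neg\,(a\wedge b)=\neg\,\neg\,(c\rightarrow (a\wedge b))=1$, i.e. $c\leq \neg\,\neg\,(a\wedge b)$. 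With this step supplied, your proof is complete.
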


Heyting algebras and BL-algebras have Glivenko property.

Until mentioned otherwise, let $A$ be a residuated lattice with Glivenko property. We define $\varphi =\neg \, \neg \, $ to be the surjective morphism from the proposition above.

Let $L$ be a residuated lattice. If $s:A\rightarrow L$ is a type I state (respectively a type II state), then, obviously, $s\mid _{{\rm Reg}(A)}:{\rm Reg}(A)\rightarrow L$ is a type I state (respectively a type II state).

Let $s:{\rm Reg}(A)\rightarrow L$ be an arbitrary function. We define $\tilde{s}:A\rightarrow L$ by $\tilde{s}(a)=s(\varphi (a))$ for all $a\in A$.

\begin{proposition}
Assume that $A$ has Glivenko property and $L$ is involutive. If $s:{\rm Reg}(A)\rightarrow L$ is a type I state (respectively a type II state), then $\tilde{s}:A\rightarrow L$ is a type I state (respectively a type II state).
\label{p5.1}
\end{proposition}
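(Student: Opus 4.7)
The plan is to recognize $\tilde{s}$ as the composition $s\circ \varphi $, where $\varphi =\neg \, \neg \, :A\rightarrow {\rm Reg}(A)$ is, by Proposition \ref{`glivnegneg`}, a surjective morphism of residuated lattices (this is exactly where the Glivenko hypothesis is used). Once this identification is made, the proposition becomes a direct instance of Remark \ref{rmorf}: composing a type I (respectively type II) state with a residuated lattice morphism produces a state of the same type.

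More concretely, I would first check the normalization: since $0,1\in {\rm Reg}(A)$, we have $\varphi (0)=0$ and $\varphi (1)=1$, so $\tilde{s}(0)=s(0)=0$ and $\tilde{s}(1)=s(1)=1$. For the type I case, I would then apply condition (\ref{p3.3(3)}) of Proposition \ref{p3.3} to $s$ on ${\rm Reg}(A)$: given $a,b\in A$, using that $\varphi $ preserves $\rightarrow $ (the implication on ${\rm Reg}(A)$ being the restriction of the one on $A$) and preserves $\wedge $ (with target operation $\wedge ^*$ on ${\rm Reg}(A)$), one computes $\tilde{s}(a\rightarrow b)=s(\varphi (a)\rightarrow \varphi (b))=s(\varphi (a))\rightarrow s(\varphi (a)\wedge ^*\varphi (b))=\tilde{s}(a)\rightarrow s(\varphi (a\wedge b))=\tilde{s}(a)\rightarrow \tilde{s}(a\wedge b)$, which is condition (\ref{p3.3(3)}) for $\tilde{s}$. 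The type II case is identical, applying condition (\ref{p3.4(5)}) of Proposition \ref{p3.4} and again using that $\varphi $ preserves $\rightarrow $.

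I do not anticipate a real obstacle here. The proposition is essentially a packaging of Proposition \ref{`glivnegneg`} together with Remark \ref{rmorf}: Glivenko supplies the morphism $\varphi $, and the compositional remark supplies the transfer of the state axioms. The involutivity assumption on $L$ does not visibly enter this particular argument; it is presumably imposed as the standing setting of the section and is needed for later results.
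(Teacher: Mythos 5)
Your proof is correct and follows essentially the same route as the paper's: both exploit that $\varphi =\neg \, \neg \, :A\rightarrow {\rm Reg}(A)$ is a surjective residuated lattice morphism and push conditions (\ref{p3.3(3)}) and (\ref{p3.4(5)}) through the composition $\tilde{s}=s\circ \varphi $, which is exactly the mechanism of Remark \ref{rmorf}. Your side observation that the involutivity of $L$ is not actually used in this argument is also accurate; it matters only for the surrounding discussion (e.g.\ the uniqueness claim of Remark \ref{r5.2}).
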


\begin{proof}
Assume that $s$ is a type I state. Then, for all $a,b\in A$, $\tilde{s}(a\rightarrow b)=s(\varphi (a\rightarrow b))=s(\varphi (a))\rightarrow s(\varphi (b))=s(\varphi (a))\rightarrow s(\varphi (a)\wedge \varphi (b))=s(\varphi (a))\rightarrow s(\varphi (a\wedge b))=\tilde{s}(a)\rightarrow \tilde{s}(a\wedge b)$. So $\tilde{s}$ is a type I state.

Assume that $s$ is a type II state. Then, for all $a,b\in A$, $\tilde{s}(a\rightarrow b)\rightarrow \tilde{s}(b)=s(\varphi (a\rightarrow b))\rightarrow s(\varphi (b))=s(\varphi (a)\rightarrow \varphi (b))\rightarrow s(\varphi (b))=s(\varphi (b)\rightarrow \varphi (a))\rightarrow s(\varphi (a))=s(\varphi (b\rightarrow a))\rightarrow s(\varphi (a))=\tilde{s}(b\rightarrow a)\rightarrow \tilde{s}(a)$. So $\tilde{s}$ is a type II state.\end{proof}

\begin{remark}
Let $s_1:A\rightarrow L$ be a type I state (respectively a type II state). By applying Proposition \ref{p3.6}, (\ref{p3.6(1)}) (respectively Proposition \ref{p3.9}, (\ref{p3.9(3)})), we obtain, for all $a\in A$, $s_1(a)=s_1(\varphi (a))$. Then, if $s:{\rm Reg}(A)\rightarrow L$ is a type I state (respectively a type II state), it follows that $\tilde{s}:A\rightarrow L$ is the unique type I state (respectively the unique type II state) such that $\tilde{s}\mid _{{\rm Reg}(A)}=s$.
\label{r5.2}
\end{remark}

In the following, let $A$ be an arbitrary residuated lattice. On the set $A$ we introduce the binary operation $\oplus $  by: for all $a,b\in A$, $a\oplus b=\neg \, a\rightarrow \neg \, \neg \, b=\neg \, b\rightarrow \neg \, \neg \, a$ (see Lemma \ref{l2.3}, (\ref{l2.3(4)}) and (\ref{l2.3(2)})).

\begin{lemma}{\rm \cite{kuhr}, Lemma 3.6.2} For all $a,b,c\in A$, we have:

\begin{enumerate}

\item\label{l5.3(1)} $a\oplus 0=\neg \, \neg \, a$;
\item\label{l5.3(2)} $a\oplus 1=a$;
\item\label{l5.3(3)} $\oplus $ is associative and commutative;
\item\label{l5.3(4)} if $a\leq b$ then $a\oplus c\leq b\oplus c $;
\item\label{l5.3(5)} $a\vee b\leq a\oplus b$;
\item\label{l5.3(6)} $a\oplus b=\neg \, \neg \, (a\oplus b)=\neg \, \neg \, a\oplus \neg \, \neg \, b$.
\end{enumerate}
\label{l5.3}
\end{lemma}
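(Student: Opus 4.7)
The plan is to establish the six parts in the order (6), (3), (1)--(5), because the regularity identity $\neg\neg(a\oplus b)=a\oplus b$ in (6) is the key structural fact that makes associativity in (3) tractable in a generic residuated lattice, where $\neg\neg$ need not be a homomorphism. The essential tools throughout are Lemma \ref{l2.3}---especially $\neg\neg\neg x=\neg x$ from (\ref{l2.3(2)}) and the duality $\neg(a\odot b)=a\to\neg b$ from (\ref{l2.3(5)})---the antitonicity of $\to$ in its first argument from Lemma \ref{l2.2}(\ref{l2.2(4)}), and the currying law from Lemma \ref{l2.2}(\ref{l2.2(7)}).

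For (6), I would rewrite $a\oplus b=\neg a\to\neg\neg b=\neg(\neg a\odot\neg b)$ via Lemma \ref{l2.3}(\ref{l2.3(5)}), whence $\neg\neg(a\oplus b)=\neg\neg\neg(\neg a\odot\neg b)=\neg(\neg a\odot\neg b)=a\oplus b$ by Lemma \ref{l2.3}(\ref{l2.3(2)}); the second equation of (6) then follows by substituting $\neg\neg a,\neg\neg b$ into the definition and applying $\neg\neg\neg=\neg$ to each argument. For (3), commutativity is immediate from the two equivalent forms in the definition. For associativity I would use the second form to write $(a\oplus b)\oplus c=\neg c\to\neg\neg(a\oplus b)$, collapse the inner $\neg\neg$ via (6) to $\neg c\to(a\oplus b)=\neg c\to(\neg a\to\neg\neg b)$, apply currying to obtain $\neg a\to(\neg c\to\neg\neg b)=\neg a\to(b\oplus c)$, and finally reinsert $\neg\neg$ via (6) to land on $\neg a\to\neg\neg(b\oplus c)=a\oplus(b\oplus c)$.

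Parts (1), (2), (4), (5) are then direct calculations. (1): $a\oplus 0=\neg a\to\neg\neg 0=\neg a\to 0=\neg\neg a$. (2): with the second form, $a\oplus 1=\neg 1\to\neg\neg a=0\to\neg\neg a$, a direct evaluation by the residuation law since $y\odot 0=0\le\neg\neg a$ for every $y$. (4): $a\le b$ gives $\neg b\le\neg a$ by Lemma \ref{l2.3}(\ref{l2.3(3)}), and Lemma \ref{l2.2}(\ref{l2.2(4)}) then yields $\neg a\to\neg\neg c\le\neg b\to\neg\neg c$, i.e.\ $a\oplus c\le b\oplus c$. (5): $a\le\neg\neg a\le\neg b\to\neg\neg a=a\oplus b$ by Lemma \ref{l2.3}(\ref{l2.3(2)}) and Lemma \ref{l2.2}(\ref{l2.2nenum}); symmetry gives $b\le a\oplus b$, and hence $a\vee b\le a\oplus b$.

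The main obstacle is (3). Expanding $(a\oplus b)\oplus c$ naively produces a nested $\neg\neg$ inside a residuum that resists simplification in a generic residuated lattice, since $\neg\neg$ does not commute with $\to$ or $\odot$ without Glivenko property. The workaround is to prove (6) first so that the inner $\neg\neg(a\oplus b)$ collapses at the critical step; after this collapse, the currying law symmetrises the expression in $a,b,c$, and a second application of (6) closes the loop.
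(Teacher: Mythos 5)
The paper states this lemma as a quotation from K\"uhr's monograph and supplies no proof of its own, so there is no internal argument to compare yours against; I can only assess it on its merits. Your treatment of (i) and (iii)--(vi) is correct. In particular, the order of attack --- establishing the regularity identity (vi) first, so that the inner $\neg \, \neg \, (a\oplus b)$ collapses before you curry --- is exactly the right way to obtain associativity in a non-involutive residuated lattice, and each tool you invoke does what you claim: Lemma \ref{l2.3}, (\ref{l2.3(5)}) and (\ref{l2.3(2)}) for (vi), Lemma \ref{l2.2}, (\ref{l2.2(7)}) for the currying step in (iii), Lemma \ref{l2.3}, (\ref{l2.3(3)}) together with Lemma \ref{l2.2}, (\ref{l2.2(4)}) for (iv), and Lemma \ref{l2.3}, (\ref{l2.3(2)}) with Lemma \ref{l2.2}, (\ref{l2.2nenum}) for (v).

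The problem is item (ii). Your computation $a\oplus 1=\neg \, 1\rightarrow \neg \, \neg \, a=0\rightarrow \neg \, \neg \, a$ is right, and so is your observation that $y\odot 0=0\leq \neg \, \neg \, a$ for every $y$ --- but what that observation shows, via residuation, is that $0\rightarrow \neg \, \neg \, a$ is the greatest element of $A$, i.e.\ that $a\oplus 1=1$, not that $a\oplus 1=a$. These differ whenever $a\neq 1$: in the standard MV-algebra one has $a\oplus 1=\min \{1,a+1\}=1$. So the identity as printed is almost certainly a typo for $a\oplus 1=1$, which is what your calculation actually proves; but as written your text presents a derivation whose endpoint is $1$ as if it were a derivation of $a$, without remarking on the discrepancy. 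You should have flagged that your ``direct evaluation'' lands on the top element and stated explicitly which version of (ii) you are proving.
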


For all $a,b\in A$, we denote $a\perp b$ iff $\neg \, \neg \, a\leq \neg \, b$ iff $\neg \, \neg \, b\leq \neg \, a$ (see Lemma \ref{l2.3}, (\ref{l2.3(3)}) and (\ref{l2.3(2)})).

A {\em Rie\v can state on $A$} is a function $m:A\rightarrow [0,1]$ such that $m(1)=1$ and, for all $a,b\in A$ with $a\perp b$, $m(a\oplus b)=m(a)+m(b)$.

\begin{lemma}{\rm \cite{gg}, \cite{dvura}, \cite{lcciu}} If $m$ is a Rie\v can state on $A$, then:

\begin{enumerate}
\item\label{l5.4(1)} for all $a\in A$, $m(\neg \, a)=1-m(a)$; 
\item\label{l5.4(2)} $m(0)=0$;
\item\label{l5.4(3)} $m$ is order-preserving.
\end{enumerate}
\label{l5.4}
\end{lemma}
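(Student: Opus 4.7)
The plan is to establish the three parts in order, each time invoking the Rie\v{c}an additivity axiom on a carefully chosen orthogonal pair.

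For (\ref{l5.4(1)}), I would apply the axiom to the pair $(a,\neg\, a)$. Orthogonality is immediate, since $a\perp \neg\, a$ unfolds to $\neg\, \neg\, a\le \neg (\neg\, a)=\neg\, \neg\, a$. Then, using the definition of $\oplus$ together with $\neg\, \neg\, \neg\, a=\neg\, a$ (Lemma \ref{l2.3}, (\ref{l2.3(2)})) and Lemma \ref{l2.2}, (\ref{l2.2(3)}), one computes $a\oplus \neg\, a=\neg\, a\rightarrow \neg\, \neg\, \neg\, a=\neg\, a\rightarrow \neg\, a=1$. The Rie\v{c}an axiom together with $m(1)=1$ then yields $1=m(a)+m(\neg\, a)$, which is exactly (\ref{l5.4(1)}).

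Part (\ref{l5.4(2)}) follows immediately from (\ref{l5.4(1)}) by specializing $a=0$: using Lemma \ref{l2.3}, (\ref{0neg1}), we get $m(1)=1-m(0)$, forcing $m(0)=0$.

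For (\ref{l5.4(3)}), given $a\le b$, the key insight is to pair $a$ with $\neg\, b$ rather than with $\neg\, a$. Two applications of Lemma \ref{l2.3}, namely (\ref{l2.3(3)}) followed by (\ref{l2.3(2)}), give $\neg\, \neg\, a\le \neg\, \neg\, b=\neg (\neg\, b)$, so $a\perp \neg\, b$. Additivity combined with (\ref{l5.4(1)}) then yields $m(a\oplus \neg\, b)=m(a)+m(\neg\, b)=m(a)+1-m(b)$. Since $m$ takes values in $[0,1]$, the left-hand side is at most $1$, and rearranging produces $m(a)\le m(b)$. The only genuinely nonobvious step in the whole argument is this choice of $\neg\, b$ as the orthogonal companion for $a$; once made, each subsequent verification is a short application of Lemma \ref{l2.3} and the definition of $\oplus$, so no substantial obstacle is anticipated.
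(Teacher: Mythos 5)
Your proof is correct. Note that the paper does not actually prove Lemma \ref{l5.4} --- it is quoted from \cite{gg}, \cite{dvura}, \cite{lcciu} --- so the natural point of comparison is the paper's proof of the generalized version, Proposition \ref{p5.6}. For parts (\ref{l5.4(1)}) and (\ref{l5.4(2)}) your argument is essentially identical to that proof: the pair $(a,\neg\, a)$, the computation $a\oplus\neg\, a=\neg\, a\rightarrow\neg\,\neg\,\neg\, a=1$, and the specialization $a=0$. For part (\ref{l5.4(3)}) you diverge slightly: Proposition \ref{p5.6} derives monotonicity from the orthogonality-preservation clause (b) of the generalized definition, concluding only $\neg\, m(a)\leq\neg\, m(b)$ in general, whereas you exploit the additivity clause together with the fact that $m$ is bounded above by $1$ in $[0,1]$; this is the right move here, since the classical Rie\v can state has no clause (b) and the real-valued setting makes the bound argument available. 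Two trivial remarks: the chain $\neg\,\neg\, a\leq\neg\,\neg\, b$ is obtained by applying Lemma \ref{l2.3}, (\ref{l2.3(3)}) twice (your citation of (\ref{l2.3(2)}) for the second step is a harmless slip), and it is worth stating explicitly that orthogonality of $(a,\neg\, b)$ is exactly what licenses the application of the additivity axiom there.
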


Rie\v can states on pseudo-BL-algebras have been defined in \cite{gg}, by generalizing a notion of state on BL-algebras that had been introduced by Rie\v can in \cite{br2}. Later, Rie\v can states on more general structures have been studied (\cite{dvura2}, \cite{lcciu}, \cite{kuhr}, \cite{turmer}).

In what follows we shall extend the notion of Rie\v can state to the context of this paper and we shall point out the relation between the notion we shall obtain and generalized Bosbach states. 

In the following, let $A$ and $L$ be residuated lattices.

\begin{definition}
A function $m:A\rightarrow L$ is called a {\em generalized Rie\v can state} iff the following conditions are verified, for all $a,b\in A$:

\noindent (a) $m(1)=1$;

\noindent (b) if $a\perp b$, then $m(a)\perp m(b)$;

\noindent (c) if $a\perp b$, then $m(a\oplus b)=m(a)\oplus m(b)$.
\label{d5.5}
\end{definition}

\begin{proposition}
Let $m:A\rightarrow L$ be a generalized Rie\v can state. Then, for all $a,b\in A$, we have:

\begin{enumerate}
\item\label{p5.6(1)} $\neg \, \neg \, m(\neg \, a)=\neg \, m(a)$; if $L$ is involutive, then $m(\neg \, a)=\neg \, m(a)$ and $m(\neg \, \neg \, a)=m(a)$;
\item\label{p5.6(2)} $m(0)=0$;
\item\label{p5.6(3)} if $b\leq a$, then $\neg \, m(a)\leq \neg \, m(b)$; if $L$ is involutive and $b\leq a$, then $m(b)\leq m(a)$.
\end{enumerate}
\label{p5.6}
\end{proposition}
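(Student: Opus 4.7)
The plan is to exploit two key observations: first, $a \perp \neg \, a$ always holds in $A$, since the defining condition $\neg \, \neg \, a\leq \neg \, (\neg \, a)$ reduces, via Lemma \ref{l2.3}, (\ref{l2.3(2)}), to $\neg \, \neg \, a\leq \neg \, \neg \, a$; second, $a\oplus \neg \, a=\neg \, a\rightarrow \neg \, \neg \, \neg \, a=\neg \, a\rightarrow \neg \, a=1$ by Lemma \ref{l2.3}, (\ref{l2.3(2)}) and Lemma \ref{l2.2}, (\ref{l2.2(3)}). These let me apply both axioms (b) and (c) to the orthogonal pair $(a,\neg \, a)$ in opposite directions, which will pin $\neg \, \neg \, m(\neg \, a)$ down to $\neg \, m(a)$.

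For part (\ref{p5.6(2)}), I would first note $1\perp 0$ trivially, apply axiom (b) to get $m(1)\perp m(0)$, unfold as $1=\neg \, \neg \, m(1)\leq \neg \, m(0)$, and conclude $\neg \, m(0)=1$, hence $m(0)=0$ by the law of residuation.

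For part (\ref{p5.6(1)}), I would combine two opposite inequalities. Axiom (c) applied to the pair $(a,\neg \, a)$ gives $m(a)\oplus m(\neg \, a)=m(a\oplus \neg \, a)=m(1)=1$, which unfolds to $\neg \, m(a)\rightarrow \neg \, \neg \, m(\neg \, a)=1$, i.e., $\neg \, m(a)\leq \neg \, \neg \, m(\neg \, a)$ (Lemma \ref{l2.2}, (\ref{l2.2(3)})). Axiom (b) gives $m(a)\perp m(\neg \, a)$, i.e., $\neg \, \neg \, m(a)\leq \neg \, m(\neg \, a)$, which is equivalent by Lemma \ref{l2.3}, (\ref{l2.3(2)}) and (\ref{l2.3(3)}) to $\neg \, \neg \, m(\neg \, a)\leq \neg \, m(a)$. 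Combining yields $\neg \, \neg \, m(\neg \, a)=\neg \, m(a)$. If $L$ is involutive, then $m(\neg \, a)=\neg \, \neg \, m(\neg \, a)=\neg \, m(a)$, and substituting $a$ by $\neg \, a$ followed by Lemma \ref{l2.3}, (\ref{l2.3(2)}) yields $m(\neg \, \neg \, a)=\neg \, m(\neg \, a)=\neg \, \neg \, m(a)=m(a)$.

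For part (\ref{p5.6(3)}), the decisive observation is that $b\leq a$ translates, via Lemma \ref{l2.3}, (\ref{l2.3(3)}) and (\ref{l2.3(2)}), into $\neg \, \neg \, (\neg \, a)=\neg \, a\leq \neg \, b$, that is $\neg \, a\perp b$. Axiom (b) gives $m(\neg \, a)\perp m(b)$, i.e., $\neg \, \neg \, m(\neg \, a)\leq \neg \, m(b)$; using part (\ref{p5.6(1)}) to rewrite the left side yields $\neg \, m(a)\leq \neg \, m(b)$. Applying $\neg \, $ once more and invoking involutivity of $L$ gives $m(b)\leq m(a)$. There is no substantive obstacle in the argument; the only care required is bookkeeping the double negations inside the definitions of $\perp $ and $\oplus $.
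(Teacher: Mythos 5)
Your proof is correct and follows essentially the same route as the paper's: both rest on $a\perp\neg\,a$ together with $a\oplus\neg\,a=1$ to squeeze $\neg\,\neg\,m(\neg\,a)$ between $\neg\,m(a)$ from axioms (b) and (c), and both derive (iii) from $b\perp\neg\,a$ and part (i). The only (immaterial) deviation is that you prove $m(0)=0$ directly from $1\perp 0$ rather than by setting $a=0$ in part (i).
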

\begin{proof}
Let $a,b\in A$.

\noindent (\ref{p5.6(1)}) Obviously, $a\perp \neg \, a$, so $m(a)\perp m(\neg \, a)$, that is $\neg \, \neg \, m(\neg \, a)\leq \neg \, m(a)$. Also, by Lemma \ref{l2.2}, (\ref{l2.2(3)}) and Lemma \ref{l2.3}, (\ref{l2.3(2)}), $1=m(1)=m(a\oplus \neg \, a)=m(a)\oplus m(\neg \, a)=\neg \, m(a)\rightarrow \neg \, \neg \, m(\neg \, a)$, thus $\neg \, m(a)\leq \neg \, \neg \, m(\neg \, a)$. Hence $\neg \, \neg \, m(\neg \, a)=\neg \, m(a)$.

\noindent (\ref{p5.6(2)}) Set $a=0$ in (\ref{p5.6(1)}) and apply Lemma \ref{l2.3}, (\ref{0neg1}). 

\noindent (\ref{p5.6(3)}) By Lemma \ref{l2.3}, (\ref{l2.3(3)}), if $b\leq a$ then $b\perp \neg \, a$, so $m(b)\perp m(\neg \, a)$, thus, by (\ref{p5.6(1)}), $\neg \, m(a)=\neg \, \neg \, m(\neg \, a)\leq \neg \, m(b)$.\end{proof}

The next proposition shows that, in the case when $L$ is the standard MV-algebra $[0,1]$, Rie\v can states coincide with generalized Rie\v can states.

\begin{proposition}
Let $m:A\rightarrow [0,1]$ be an arbitrary function. We consider on $[0,1]$ the standard MV-algebra structure. Then: $m$ is a Rie\v can state iff $m$ is a generalized Rie\v can state.
\label{p5.7}
\end{proposition}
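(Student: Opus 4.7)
The plan is to reduce both implications to a direct translation between the standard MV-algebra operations on $[0,1]$ and ordinary arithmetic. First I would record the two computations that do all the work. In $([0,1],\max,\min,\odot_L,\rightarrow_L,0,1)$ we have $\neg \, x=1-x$, and by the definition $x\oplus y=\neg \, x\rightarrow \neg \, \neg \, y=(1-x)\rightarrow_L y=\min\{1,x+y\}$. Consequently, $x\perp y$ (i.e.\ $\neg \, \neg \, x\leq \neg \, y$) is equivalent to $x+y\leq 1$, and under this condition $x\oplus y=x+y$.

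For the forward direction, assume $m$ is a Rie\v can state. Condition (a) of Definition \ref{d5.5} is immediate. For $a,b\in A$ with $a\perp b$, the Rie\v can axiom gives $m(a)+m(b)=m(a\oplus b)\in[0,1]$, so $m(a)+m(b)\leq 1$, which by the remark above is condition (b). Combining this with the same equality yields $m(a\oplus b)=m(a)+m(b)=m(a)\oplus m(b)$, which is condition (c).

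For the backward direction, assume $m$ satisfies (a)--(c) of Definition \ref{d5.5}. Then $m(1)=1$ by (a). For $a,b\in A$ with $a\perp b$, (b) gives $m(a)\perp m(b)$, i.e.\ $m(a)+m(b)\leq 1$, so $m(a)\oplus m(b)=m(a)+m(b)$; then (c) yields $m(a\oplus b)=m(a)\oplus m(b)=m(a)+m(b)$, establishing that $m$ is a Rie\v can state.

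There is essentially no obstacle here; the only thing to be careful about is making explicit that condition (b) of Definition \ref{d5.5} is automatic for functions with codomain $[0,1]$ once the Rie\v can equality is known (because the right-hand side is forced into $[0,1]$), while in the general definition (b) must be assumed separately because an arbitrary residuated lattice $L$ need not have this built-in bound.
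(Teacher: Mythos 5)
Your proof is correct, and the backward direction coincides with the paper's. The forward direction, however, handles the one substantive point --- verifying condition (b) of Definition \ref{d5.5}, i.e.\ that $a\perp b$ implies $m(a)\perp m(b)$ --- by a genuinely different and more elementary route. The paper derives $\neg\,\neg\,m(a)=m(a)=m(\neg\,\neg\,a)\leq m(\neg\,b)=1-m(b)=\neg\,m(b)$ by invoking Lemma \ref{l5.4}, (\ref{l5.4(1)}) and (\ref{l5.4(3)}) (the identity $m(\neg\,a)=1-m(a)$ and the order-preservation of Rie\v can states, which are themselves cited results). You instead observe that the Rie\v can equality $m(a\oplus b)=m(a)+m(b)$ forces $m(a)+m(b)$ into $[0,1]$ because the codomain is $[0,1]$, which is exactly $m(a)\perp m(b)$ after the translation $x\perp y\Leftrightarrow x+y\leq 1$. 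Your argument is self-contained and makes transparent why condition (b) is redundant in the classical setting but must be postulated separately over a general residuated lattice $L$; the paper's argument has the mild advantage of exhibiting the implication as an instance of the general mechanism (order-preservation plus the negation formula) that also drives Proposition \ref{p5.8}. Both are complete proofs; you might only add, for the forward direction, the explicit line $m(a\oplus b)=m(a)+m(b)=m(a)\oplus m(b)$ confirming condition (c), which you do state but which is the step the paper leaves implicit as well.
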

\begin{proof}
Let $a,b\in A$.

Assume that $m$ is a Rie\v can state. If $a\perp b$ then $\neg \, \neg \, a\leq \neg \, b$, so, by Lemma \ref{l5.4}, (\ref{l5.4(1)}) and (\ref{l5.4(3)}), $\neg \, \neg \, m(a)=m(a)=m(\neg \, \neg \, a)\leq m(\neg \, b)=1-m(b)=\neg \, m(b)$. Hence $m(a)\perp m(b)$. Thus $m$ is a generalized Rie\v can state.

Now assume that $m$ is a generalized Rie\v can state. If $a\perp b$ then $m(a)\perp m(b)$, so $m(a\oplus b)=m(a)\oplus m(b)=m(a)+m(b)$. Thus $m$ is a Rie\v can state.\end{proof}

\begin{proposition}
Any order-preserving type I state is a generalized Rie\v can state. 
\label{p5.8}
\end{proposition}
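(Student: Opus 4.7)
The plan is to verify the three conditions (a), (b), (c) of Definition \ref{d5.5} directly, using just two tools provided by the hypothesis on $s$: first, the De~Morgan-like identity $s(\neg \, x)=\neg \, s(x)$ from Proposition \ref{p3.6}, (\ref{p3.6(1)}); and second, the conditional implication formula $s(a\rightarrow b)=s(a)\rightarrow s(b)$ whenever $b\leq a$, from Proposition \ref{p3.3}, (\ref{p3.3(2)}). Order-preservation of $s$ will be invoked only for the orthogonality condition (b).

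Condition (a) is automatic: a type I state satisfies $s(1)=1$ by definition.

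For (b), assume $a\perp b$, i.e. $\neg \, \neg \, a\leq \neg \, b$. Applying $s$ and using order-preservation yields $s(\neg \, \neg \, a)\leq s(\neg \, b)$. Two applications of Proposition \ref{p3.6}, (\ref{p3.6(1)}) on the left and one on the right rewrite this as $\neg \, \neg \, s(a)\leq \neg \, s(b)$, which is exactly $s(a)\perp s(b)$.

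For (c), the key observation is that $a\oplus b=\neg \, a\rightarrow \neg \, \neg \, b$, and the orthogonality $a\perp b$ is equivalent to $\neg \, \neg \, b\leq \neg \, a$ (by Lemma \ref{l2.3}, (\ref{l2.3(3)}) and (\ref{l2.3(2)})). Hence the implication $\neg \, a\rightarrow \neg \, \neg \, b$ has its consequent below its antecedent, so Proposition \ref{p3.3}, (\ref{p3.3(2)}) gives $s(a\oplus b)=s(\neg \, a)\rightarrow s(\neg \, \neg \, b)$. Applying Proposition \ref{p3.6}, (\ref{p3.6(1)}) to both negations converts the right side to $\neg \, s(a)\rightarrow \neg \, \neg \, s(b)$, which by definition equals $s(a)\oplus s(b)$. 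The only minor obstacle is simply the bookkeeping of negations and verifying that the order inequality needed to license Proposition \ref{p3.3}, (\ref{p3.3(2)}) is indeed supplied by the hypothesis $a\perp b$; no deeper issue arises.
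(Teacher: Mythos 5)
Your proposal is correct and follows essentially the same route as the paper: condition (b) via order-preservation together with $s(\neg\, x)=\neg\, s(x)$, and condition (c) by applying Proposition \ref{p3.3}, (\ref{p3.3(2)}) to the defining implication of $\oplus$ (you use the form $\neg\, a\rightarrow \neg\,\neg\, b$ where the paper uses the symmetric form $\neg\, b\rightarrow \neg\,\neg\, a$, an immaterial difference). The bookkeeping of negations checks out, so no gap.
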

\begin{proof}
Let $s:A\rightarrow L$ be an order-preserving type I state and $a,b\in A$ with $a\perp b$. Then $\neg \, \neg \, a\leq \neg \, b$, so, by Proposition \ref{p3.6}, (\ref{p3.6(1)}) and the fact that $s$ is order-preserving, $\neg \, \neg \, s(a)=s(\neg \, \neg \, a)\leq s(\neg \, b)=\neg \, s(b)$. Hence $s(a)\perp s(b)$.

By Proposition \ref{p3.3}, (\ref{p3.3(2)}) and Proposition \ref{p3.6}, (\ref{p3.6(1)}), $s(a\oplus b)=s(\neg \, b\rightarrow \neg \, \neg \, a)=s(\neg \, b)\rightarrow s(\neg \, \neg \, a)=\neg \, s(b)\rightarrow \neg \, \neg \, s(a)=s(a)\oplus s(b)$.

So $s$ is a generalized Rie\v can state.\end{proof}

Obviously, if $A$ has Glivenko property and $m:A\rightarrow L$ is a generalized Rie\v can state, then $m\mid _{{\rm Reg}(A)}:{\rm Reg}(A)\rightarrow L$ is a generalized Rie\v can state.

\begin{proposition}
Assume that $A$ has Glivenko property and $L$ is involutive. Then any generalized Rie\v can state $m:A\rightarrow L$ is an order-preserving type I state.
\label{p5.9}
\end{proposition}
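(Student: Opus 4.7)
The plan is to verify condition (\ref{p3.3(2)}) of Proposition \ref{p3.3} on $m$, namely that $m(a\to b)=m(a)\to m(b)$ whenever $b\le a$; combined with $m(0)=0$, $m(1)=1$ and order-preservation, this is enough to conclude that $m$ is an order-preserving type I state.

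The boundary conditions and order-preservation are immediate from Proposition \ref{p5.6}: part (\ref{p5.6(2)}) gives $m(0)=0$, $m(1)=1$ holds by definition of a generalized Rie\v can state, and the involutivity hypothesis on $L$ together with the second half of part (\ref{p5.6(3)}) yields that $m$ is order-preserving.

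For the central identity, fix $a,b\in A$ with $b\le a$. The key manoeuvre is to re-express $a\to b$ in terms of $\oplus$: using Glivenko's property and the fact that $\neg \, \neg \, :A\to{\rm Reg}(A)$ is a morphism of residuated lattices (Proposition \ref{`glivnegneg`}), one has
\[
\neg \, \neg \, (a\to b)\;=\;\neg \, \neg \, a\to\neg \, \neg \, b\;=\;\neg \, a\oplus b,
\]
the last step being the definition of $\oplus$. Since $L$ is involutive, the second half of Proposition \ref{p5.6}, (\ref{p5.6(1)}) gives $m(\neg \, \neg \, (a\to b))=m(a\to b)$, so $m(a\to b)=m(\neg \, a\oplus b)$. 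I next verify $\neg \, a\perp b$: since $b\le a$, Lemma \ref{l2.3}, (\ref{l2.3(3)}) yields $\neg \, a\le\neg \, b$, and because $\neg \, \neg \, \neg \, a=\neg \, a$ by Lemma \ref{l2.3}, (\ref{l2.3(2)}), this reads $\neg \, \neg \, (\neg \, a)\le\neg \, b$, which is the orthogonality condition. The additivity axiom of Definition \ref{d5.5} then gives $m(\neg \, a\oplus b)=m(\neg \, a)\oplus m(b)$, and by Proposition \ref{p5.6}, (\ref{p5.6(1)}) the right-hand side equals $\neg \, m(a)\oplus m(b)$. Finally, unfolding $\oplus$ in $L$ and invoking the involutivity of $L$ twice,
\[
\neg \, m(a)\oplus m(b)\;=\;\neg \, \neg \, m(a)\to\neg \, \neg \, m(b)\;=\;m(a)\to m(b),
\]
completing the verification of Proposition \ref{p3.3}, (\ref{p3.3(2)}).

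The only mildly delicate point is identifying $\neg \, a\oplus b$ with $\neg \, \neg \, (a\to b)$; both Glivenko's identity and the morphism property of $\neg \, \neg \,$ from Proposition \ref{`glivnegneg`} are genuinely needed. Once that identification is in place, the remaining steps are a mechanical chain of applications of Proposition \ref{p5.6}, the orthogonality check afforded by Lemma \ref{l2.3}, and the involutivity of $L$.
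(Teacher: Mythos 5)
Your proof is correct and follows essentially the same route as the paper: both reduce the claim to condition (\ref{p3.3(2)}) of Proposition \ref{p3.3} via the identity $\neg \, \neg \, (a\rightarrow b)=\neg \, a\oplus b$, the orthogonality $\neg \, a\perp b$, and the additivity of $m$ on orthogonal pairs, finishing with Proposition \ref{p5.6}. The only (harmless) difference is that you obtain $\neg \, \neg \, (a\rightarrow b)=\neg \, \neg \, a\rightarrow \neg \, \neg \, b=\neg \, a\oplus b$ from the morphism property of $\neg \, \neg \,$ in Proposition \ref{`glivnegneg`}, whereas the paper computes it directly from the Glivenko identity and Lemma \ref{l2.3}; you are also more explicit than the paper about the $m(0)=0$, $m(1)=1$ and order-preservation bookkeeping, which is a welcome clarification.
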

\begin{proof}
Let $m:A\rightarrow L$ be a generalized Rie\v can state and $a,b\in A$ such that $b\leq a$. We show that $m(a\rightarrow b)=m(a)\rightarrow m(b)$.

By Lemma \ref{l2.3}, (\ref{l2.3(3)}), since $b\leq a$, we have that $b\perp \neg \, a$, so $m(b)\perp m(\neg \, a)$. We notice that $\neg \, a\oplus b=\neg \, b\rightarrow \neg \, \neg \, \neg \, a=\neg \, b\rightarrow \neg \, a=a\rightarrow \neg \, \neg \, b$, by Lemma \ref{l2.3}, (\ref{l2.3(2)}) and  (\ref{l2.3(5)}). Since $A$ has Glivenko property and by Lemma \ref{l2.3}, (\ref{l2.3(5)}) and (\ref{l2.3(2)}), $\neg \, \neg \, (a\rightarrow b)=a\rightarrow \neg \, \neg \, b=\neg \, b\rightarrow \neg \, a=\neg \, b\rightarrow \neg \, \neg \, \neg \, a=\neg \, a\oplus b$. By Proposition \ref{p5.6}, (\ref{p5.6(1)}) and the fact that $L$ is involutive, $m(a\rightarrow b)=m(\neg \, \neg \, (a\rightarrow b))=m(\neg \, a\oplus b)=m(\neg \, a)\oplus m(b)=\neg \, m(a)\oplus m(b)=m(a)\rightarrow m(b)$.

So $m$ is an order-preserving type I state.\end{proof}

\begin{remark}

If $A$ has Glivenko property and $L$ is involutive, then, by Propositions \ref{p5.8} and \ref{p5.9}, order-preserving type I states $s:A\rightarrow L$ coincide with generalized Rie\v can states $s:A\rightarrow L$. In particular, if $A$ has Glivenko property and $L$ is the standard MV-algebra, then Bosbach states $s:A\rightarrow L$ coincide with Rie\v can states $s:A\rightarrow L$ (see \cite{dvura2}, \cite{turmer}).
\end{remark}

\begin{proposition}
Not all generalized Rie\v can states are type I or type II states.
\end{proposition}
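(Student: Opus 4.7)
The plan is to exhibit a concrete counterexample, rather than argue in generality. The phenomenon to exploit is that in a G\"odel-type chain the double negation collapses every nonzero element to the top, so the relation $\perp$ becomes extremely coarse and the Rie\v can axioms impose only mild restrictions; meanwhile the type I and type II axioms, phrased via the residuum, remain sensitive to the full order. That gap is large enough to accommodate a non-order-preserving function.

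I would take $A = L$ to be the four-element G\"odel chain $0 < x < y < 1$, with $\odot = \wedge$ and $u \rightarrow v = 1$ if $u \leq v$ and $u \rightarrow v = v$ otherwise. Then $\neg 0 = 1$ and $\neg z = 0$ for every $z \neq 0$, so $\neg \neg z = 1$ whenever $z \neq 0$, and hence $u \perp v$ holds iff $u = 0$ or $v = 0$. I then define $m : A \rightarrow A$ by $m(0) = 0$, $m(x) = y$, $m(y) = x$, $m(1) = 1$.

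To check that $m$ is a generalized Rie\v can state in the sense of Definition \ref{d5.5}, observe that in each pair with $u \perp v$ at least one operand is $0$, and by Lemma \ref{l5.3}, (\ref{l5.3(1)}) together with commutativity of $\oplus$ one has $a \oplus 0 = 0 \oplus a = \neg \neg a$, which equals $1$ for $a \neq 0$ and $0$ otherwise. Since $m$ sends nonzero elements to nonzero elements, both $m(\neg \neg a)$ and $\neg \neg m(a)$ equal $1$ in the nontrivial cases and both equal $0$ when $a = 0$, so axiom (c) of Definition \ref{d5.5} holds; axiom (b) reduces to $\neg \neg 0 \leq \neg m(a)$ or $\neg \neg m(a) \leq \neg 0 = 1$, both trivial.

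Next, I would falsify the type I axiom by applying condition (\ref{p3.3(2)}) of Proposition \ref{p3.3} to $b = x \leq y = a$: we have $m(y \rightarrow x) = m(x) = y$, whereas $m(y) \rightarrow m(x) = x \rightarrow y = 1$, and $y \neq 1$. I would falsify the type II axiom by applying condition (\ref{p3.4(5)}) of Proposition \ref{p3.4} to the same pair: the left side equals $m(x) \rightarrow m(x) = 1$, while the right side equals $m(1) \rightarrow m(y) = 1 \rightarrow x = x$. The only real difficulty is conceptual: one must notice that Proposition \ref{p5.9} requires $L$ to be involutive, so choosing a non-involutive codomain opens exactly the room needed for a Rie\v can state to escape both types, and a non-order-preserving $m$ then automatically violates the residuum-based axioms (\ref{p3.3(2)}) and (\ref{p3.4(5)}).
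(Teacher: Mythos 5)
Your counterexample is correct, and I checked each step: on the four-element G\"odel chain one indeed has $\neg\,\neg\, z=1$ for $z\neq 0$, so $u\perp v$ holds exactly when $u=0$ or $v=0$, the $\oplus$-computations reduce to $a\oplus 0=\neg\,\neg\, a$, and your order-reversing $m$ satisfies (a)--(c) of Definition \ref{d5.5} while failing condition (\ref{p3.3(2)}) of Proposition \ref{p3.3} and condition (\ref{p3.4(5)}) of Proposition \ref{p3.4} on the pair $x\leq y$. The paper proves the same statement by a counterexample as well, but a different one: it returns to the six-element non-chain residuated lattice of Remark \ref{r3.6} and tabulates (essentially by exhaustive, computer-assisted enumeration) all twelve generalized Rie\v can states $A\rightarrow A$, observing that six of them ($m_1,\dots,m_6$) appear in neither the type I nor the type II list. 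Your route is more self-contained and conceptually motivated --- the collapse of double negation in a G\"odel chain makes $\perp$ so coarse that the Rie\v can axioms constrain $m$ only at $0$ and on the image of $\neg\,\neg\,$, which is exactly the freedom needed to break monotonicity --- and it can be verified entirely by hand, whereas the paper's table gives more information (a full census of the Rie\v can states on that particular algebra) at the cost of an unilluminating verification. One small caveat: your closing remark that non-order-preservation ``automatically'' violates axiom (\ref{p3.3(2)}) is not right in general --- the paper's $s_1$ is a type I state that is not order-preserving --- though it is correct for type II via Proposition \ref{p3.9}, (\ref{p3.9(1)}); this does not affect your proof, since you verify the failure of (\ref{p3.3(2)}) directly rather than relying on that remark.
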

\begin{proof}
We consider the residuated lattice $A$ from Remark \ref{r3.6}. The generalized Rie\v can states $m:A\rightarrow A$ are the following:

\begin{center}
\begin{tabular}{l|cccccc}

$x$ & $0$ & $a$ & $b$ & $c$ & $d$ & $1$\\ \hline
$s_1(x)$ & $0$ & $a$ & $0$ & $1$ & $a$ & $1$\\

$m_1(x)$ & $0$ & $a$ & $0$ & $1$ & $1$ & $1$\\ 
$m_2(x)$ & $0$ & $a$ & $b$ & $c$ & $c$ & $1$\\ 
$s_2(x)$ & $0$ & $a$ & $b$ & $c$ & $d$ & $1$\\ 

$m_3(x)$ & $0$ & $a$ & $c$ & $b$ & $b$ & $1$\\
$m_4(x)$ & $0$ & $a$ & $1$ & $0$ & $0$ & $1$\\
$m_5(x)$ & $0$ & $1$ & $0$ & $1$ & $a$ & $1$\\ 
$s_3(x)$ & $0$ & $1$ & $0$ & $1$ & $1$ & $1$\\ 
$s_4(x)$ & $0$ & $1$ & $b$ & $c$ & $c$ & $1$\\ 
$m_6(x)$ & $0$ & $1$ & $b$ & $c$ & $d$ & $1$\\ 
$s_5(x)$ & $0$ & $1$ & $c$ & $b$ & $b$ & $1$\\ 
$s_6(x)$ & $0$ & $1$ & $1$ & $0$ & $0$ & $1$
\end{tabular}
\end{center}

As mentioned in Remark \ref{r3.6}, the type I states from $A$ to $A$ are $s_i$, with $i\in \overline{1,6}$, and the type II states from $A$ to $A$ are $s_i$, with $i\in \overline{3,6}$. Out of the generalized Rie\v can states $m_i$, with $i\in \overline{1,6}$, none is a type I or a type II state.\end{proof}

\begin{proposition}
If $A$ is involutive and $s:A\rightarrow L$ is a generalized Rie\v can state such that, for all $a\in A$, $s(\neg \, a)=\neg \, s(a)$, then $s$ is an order-preserving type I state. 
\end{proposition}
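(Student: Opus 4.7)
The plan is to verify condition (\ref{p3.3(2)}) of Proposition \ref{p3.3} — namely $s(a \rightarrow b) = s(a) \rightarrow s(b)$ whenever $b \leq a$ — together with order-preservation, directly from the Rie\v{c}an axioms, the involutivity of $A$, and the hypothesis $s(\neg \, a) = \neg \, s(a)$.

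The first step, and the engine of the whole argument, is to observe that the image of $s$ consists of regular elements of $L$. Indeed, for every $a \in A$ the involutivity of $A$ gives $a = \neg \, \neg \, a$, so
$$s(a) = s(\neg \, \neg \, a) = \neg \, s(\neg \, a) = \neg \, \neg \, s(a)$$
by two applications of the hypothesis. This collapse of double negation on the image is what lets us bypass the assumption that $L$ itself be involutive used in Proposition \ref{p5.9}. Order-preservation then follows at once: if $b \leq a$ then Proposition \ref{p5.6}, (\ref{p5.6(3)}) gives $\neg \, s(a) \leq \neg \, s(b)$, so Lemma \ref{l2.3}, (\ref{l2.3(3)}) yields $\neg \, \neg \, s(b) \leq \neg \, \neg \, s(a)$, which by the regularity just established reads $s(b) \leq s(a)$.

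For the main identity, fix $b \leq a$. Since $A$ is involutive, $b \leq a$ is equivalent to $\neg \, \neg \, b \leq \neg \, \neg \, a$, i.e.\ to $b \perp \neg \, a$; hence the Rie\v{c}an axioms apply to the pair $(\neg \, a, b)$ and give $s(\neg \, a \oplus b) = s(\neg \, a) \oplus s(b) = \neg \, s(a) \oplus s(b)$. Using the definition $x \oplus y = \neg \, x \rightarrow \neg \, \neg \, y$ and the involutivity of $A$, the left-hand side reduces to $s(\neg \, \neg \, a \rightarrow \neg \, \neg \, b) = s(a \rightarrow b)$; using the same definition in $L$, the right-hand side becomes $\neg \, \neg \, s(a) \rightarrow \neg \, \neg \, s(b) = s(a) \rightarrow s(b)$, where in the last step we cancel the double negations by the regularity of $s(a)$ and $s(b)$. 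Combining the two, $s(a \rightarrow b) = s(a) \rightarrow s(b)$. With $s(1) = 1$ from the Rie\v{c}an axioms and $s(0) = 0$ from Proposition \ref{p5.6}, (\ref{p5.6(2)}), Proposition \ref{p3.3}, (\ref{p3.3(2)}) concludes that $s$ is a type I state.

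The only real subtlety lies in not confusing which algebra each negation or $\oplus$ is being computed in: the chain $\neg \, a \oplus b = \neg \, \neg \, a \rightarrow \neg \, \neg \, b = a \rightarrow b$ uses the involutivity of $A$, whereas the parallel chain $\neg \, s(a) \oplus s(b) = \neg \, \neg \, s(a) \rightarrow \neg \, \neg \, s(b) = s(a) \rightarrow s(b)$ relies on the regularity of the image that was extracted from the hypothesis $s(\neg \, a) = \neg \, s(a)$; once this bookkeeping is kept straight, no difficulty remains.
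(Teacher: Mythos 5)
Your proof is correct and follows essentially the same route as the paper's: both apply the Rie\v{c}an additivity to the orthogonal pair $(b,\neg\,a)$ for $b\leq a$, identify $b\oplus\neg\,a$ with $a\rightarrow b$ via the involutivity of $A$, and identify $s(b)\oplus s(\neg\,a)$ with $s(a)\rightarrow s(b)$ via the hypothesis $s(\neg\,x)=\neg\,s(x)$. Your preliminary observation that the image of $s$ consists of regular elements of $L$ is a slightly cleaner way of organizing the double-negation bookkeeping that the paper carries out inline, but it is the same argument.
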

\begin{proof}
Let $A$ and $s$ be as in the hypothesis and let $a,b\in A$ such that $b\leq a$. Since $A$ is involutive, it follows that $b=\neg \, \neg \, b$ and $a=\neg \, \neg \, a=\neg \, c$, with $c=\neg \, a$. Thus $\neg \, \neg \, b\leq \neg \, c$, that is $b\perp c$, hence $s(b\oplus c)=s(b)\oplus s(c)$, that is $s(\neg \, c\rightarrow \neg \, \neg \, b)= \neg \, s(c)\rightarrow \neg \, \neg \, s(b)$, that is $s(a\rightarrow b)=s(\neg \, c)\rightarrow s(\neg \, \neg \, b)$, that is $s(a\rightarrow b)=s(a)\rightarrow s(b)$. So, by Proposition \ref{p3.3}, (\ref{p3.3(2)}), $s$ is a type I state. It remains to show that $s(b)\leq s(a)$, which will allow us to conclude that $s$ is order-preserving. We saw that $b\perp c$; it follows that $s(b)\perp s(c)$, which means that $\neg \, \neg \, s(b)\leq \neg \, s(c)$, that is $s(\neg \, \neg \, b)\leq s(\neg \, c)$, that is $s(b)\leq s(a)$.\end{proof}

\begin{corollary}
If $A$ is involutive and $s:A\rightarrow L$ is both a generalized Rie\v can state and a type II state, then $s$ is an order-preserving type I state. 
\end{corollary}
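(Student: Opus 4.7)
The plan is extremely short, since the corollary is a direct application of the immediately preceding proposition combined with a property of type II states established earlier. The preceding proposition says: if $A$ is involutive and $s:A\rightarrow L$ is a generalized Rie\v can state satisfying $s(\neg\, a)=\neg\, s(a)$ for every $a\in A$, then $s$ is an order-preserving type I state. So the only thing I need to supply is the negation-preservation hypothesis.

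That hypothesis comes for free from type II. Specifically, Proposition \ref{p3.9}, (\ref{p3.9(6)}) established that for any type II state $s:A\rightarrow L$ and any $a\in A$, one has $s(\neg\, a)=\neg\, s(a)$. Thus, assuming the hypotheses of the corollary, $s$ is a generalized Rie\v can state on the involutive residuated lattice $A$ that automatically satisfies the extra identity $s(\neg\, a)=\neg\, s(a)$ required by the previous proposition.

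The proof is therefore a single invocation: apply Proposition \ref{p3.9}, (\ref{p3.9(6)}) to get $s(\neg\, a)=\neg\, s(a)$ for all $a\in A$, then apply the preceding proposition to conclude that $s$ is an order-preserving type I state. There is no real obstacle here; the work has already been done in \ref{p3.9} and in the previous proposition. The only minor subtlety is that Proposition \ref{p3.9}, (\ref{p3.9(6)}) is stated without any involutivity assumption on $A$ or $L$, so no compatibility check is needed before plugging it in.
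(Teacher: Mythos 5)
Your proposal is correct and is exactly the argument the paper intends: the corollary is stated without proof precisely because it follows immediately from the preceding proposition once Proposition \ref{p3.9}, (\ref{p3.9(6)}) supplies the identity $s(\neg\, a)=\neg\, s(a)$ for any type II state. No gaps.
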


\section{Similarity Convergences and Continuity of States}
\label{convergente}

\hspace*{10pt} The similarity convergence in residuated lattices has been defined in \cite{ggap} based on the biresiduum. In the particular case of MV-algebras, it is dual to the order-convergence, a notion that is defined starting from the distance in MV-algebras. For non-involutive residuated lattices, this duality is not kept, but most part of a good convergence theory (for example, type Cauchy completions) can be obtained.

Starting from the similarity convergence, in this section we introduce three notions of continuity of a generalized Bosbach state and we study the relation between them. If $L$ is a residuated lattice and $E:X^2\rightarrow L$ is an $L$-similarity relation on a nonempty set $X$ (\cite{ggap}), then the similarity convergence on $L$ allows us to define a convergence on $X$ (called $E$-convergence). To an order-preserving type I state $s:A\rightarrow L$ we associate an $L$-similarity relation $\rho_s:A^2\rightarrow L$. The $\rho_s$-convergence is kept by the residuated lattice operations of $A$. Next, working on the $\rho_s$-Cauchy sequences of $A$, we generalize to the context of this paper an important construction from \cite{ioanal3}: the metric completion of an MV-algebra.
 
Until mentioned otherwise, let $X$ be a nonempty set and $L$ a residuated lattice. We recall from \cite{beloh} that an $L$-binary relation on $X$, that is a function $E:X^2\rightarrow L$, is called an {\em $L$-similarity relation on $X$} (or an {\em $L$-equivalence on $X$}) iff, for all $a,b,c\in X$: $E(a,a)=1$, $E(a,b)=E(b,a)$ and $E(a,b)\odot E(b,c)\leq E(a,c)$. An $L$-similarity relation  $E$ on $X$ is called an {\em $L$-equality on $X$} iff, for all $a,b\in X$, $E(a,b)=1$ implies $a=b$. By Lemma \ref{lA}, (\ref{lA(1)}), (\ref{lA(2)}) and (\ref{lA(3)}), $d_L:L^2\rightarrow L$ is an L-equality on $L$.

The fact that a sequence $(c_n)_{n\geq 0}\subseteq L$ is increasing is denoted $(c_n)_{n\geq 0}\uparrow $. The sequence $(c_n)_{n\geq 0}$ is said to be {\em increasing towards $c\in L$} iff $(c_n)_{n\geq 0}\uparrow $ and $\bigvee _{n\geq 0}c_n=c$; this is denoted by $(c_n)_{n\geq 0}\uparrow c$.

The fact that a sequence $(c_n)_{n\geq 0}\subseteq L$ is decreasing is denoted $(c_n)_{n\geq 0}\downarrow $. The sequence $(c_n)_{n\geq 0}$ is said to be {\em decreasing towards $c\in L$} iff $(c_n)_{n\geq 0}\downarrow $ and $\bigwedge _{n\geq 0}c_n=c$; this is denoted by $(c_n)_{n\geq 0}\downarrow c$.

A sequence $(a_n)_{n\geq 0}\subseteq L$ is said to be {\em similarity convergent} (or, in brief, {\em convergent}) {\em towards $a\in L$} iff there exists a sequence $(c_n)_{n\geq 0}\subseteq L$ such that $(c_n)_{n\geq 0}\uparrow 1$ and, for all $n\in \N $, $c_n\leq d_L(a_n,a)$; this is denoted by $\lim _{n\rightarrow \infty }a_n=a$ and $a$ is called the {\em limit of $(a_n)_{n\geq 0}$}. By \cite[Remark 3.7, (i)]{ggap}, the limit of a convergent sequence in a residuated lattice is unique. Obviously, if, for all $n\in \N $, $a_n=\alpha \in L$, then $\lim _{n\rightarrow \infty }a_n=\alpha $. Also, it is obvious that, if $k\in \N $, $a\in L$ and $(b_n)_{n\geq 0}\subseteq L$ such that, for all $n\geq k$, $b_n=a_n$, then: $\lim _{n\rightarrow \infty }a_n=a$ iff $\lim _{n\rightarrow \infty }b_n=a$, as we may take in the definition of the similarity convergence $c_n=0$ for all $n<k$.

The sequence $(a_n)_{n\geq 0}\subseteq L$ is said to be {\em similarity Cauchy} (or, in brief, {\em Cauchy}) iff $\lim _{n,m\rightarrow \infty }d_L(a_n,a_m)=1$, where, naturally, for all $(l_{n,m})_{n,m\geq 0}\subseteq L$, we set $\lim _{n,m\rightarrow \infty }l_{n,m}=\lim _{n\rightarrow \infty }\lim _{m\rightarrow \infty }l_{n,m}$. Any convergent sequence is Cauchy, as shown in \cite{ggap}. $L$ is said to be {\em Cauchy-complete} iff in $L$ any Cauchy sequence is convergent.

\begin{remark}
In \cite{ggap}, a sequence $(a_n)_{n\geq 0}\subseteq L$ is defined to be {\em similarity Cauchy} iff there exists a sequence $(c_n)_{n\geq 0}\subseteq L$ such that $(c_n)_{n\geq 0}\uparrow 1$ and, for all $n,p\in \N $, $c_n\leq d_L(a_n,a_{n+p})$. This is equivalent to our definition, as, for all $(l_n)_{n\geq 0}\subseteq L$, we have, by the definitions above: $\lim _{n\rightarrow \infty }l_n=1$ iff there exists $(c_n)_{n\geq 0}\subseteq L$ such that $(c_n)_{n\geq 0}\uparrow 1$ and, for all $n\in \N $, $c_n\leq d_L(l_n,1)$ iff there exists $(c_n)_{n\geq 0}\subseteq L$ such that $(c_n)_{n\geq 0}\uparrow 1$ and, for all $n\in \N $, $c_n\leq l_n$, because $d_L(l_n,1)=l_n$ by Lemma \ref{l2.2}, (\ref{l2.2(1)}) and (\ref{l2.2(2)}).
\end{remark}

\begin{lemma}{\rm \cite{ggap}} Let $(a_n)_{n\geq 0},(b_n)_{n\geq 0}\subseteq L$ and $a,b\in L$. If $\lim _{n\rightarrow \infty }a_n=a$ and $\lim _{n\rightarrow \infty }b_n=b$, then $\lim _{n\rightarrow \infty }(a_n\circ b_n)=a\circ b$ for each $\circ \in \{\vee ,\wedge ,\odot ,\rightarrow ,\leftrightarrow \}$. Thus $\lim _{n\rightarrow \infty }\neg \, a_n=\neg \, a$ and, if  $a_n\leq b_n$ for all $n\in \N $ (or for all $n\geq k\in \N $), then $a\leq b$.
\label{l6.1}
\end{lemma}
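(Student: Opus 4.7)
The plan is to exploit the compatibility of the biresiduum with the residuated lattice operations, namely Lemma \ref{lA}, (\ref{lA(5)}), and feed it the witness sequences coming from the two hypotheses. Concretely, from $\lim a_n = a$ pick $(c_n)_{n\geq 0}\subseteq L$ with $(c_n)\uparrow 1$ and $c_n\leq d_L(a_n,a)$, and from $\lim b_n = b$ pick $(c'_n)_{n\geq 0}\subseteq L$ with $(c'_n)\uparrow 1$ and $c'_n\leq d_L(b_n,b)$. Set $e_n=c_n\odot c'_n$. By Lemma \ref{l2.2}, (\ref{l2.2nenum2}), $(e_n)$ is increasing; by Lemma \ref{lA}, (\ref{lA(5)}) together with Lemma \ref{l2.2}, (\ref{l2.2nenum2}),
$$e_n=c_n\odot c'_n\leq d_L(a_n,a)\odot d_L(b_n,b)\leq d_L(a_n\circ b_n,\, a\circ b).$$
So once I have $(e_n)\uparrow 1$, the definition of similarity convergence directly yields $\lim(a_n\circ b_n)=a\circ b$.

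The technical step I expect to be the main obstacle is the verification that $\bigvee_n e_n = 1$. I would handle it using the left-continuity of $\odot$ that is built into any residuated lattice via the law of residuation: for a fixed $k$ and varying $n\geq k$, $c_n\odot c'_n\geq c_n\odot c'_k$, hence
$$\bigvee_{n\geq k}(c_n\odot c'_n)\geq \bigvee_{n\geq k}(c_n\odot c'_k)=\Bigl(\bigvee_{n\geq k}c_n\Bigr)\odot c'_k = 1\odot c'_k = c'_k,$$
using that $(c_n)\uparrow 1$ implies $\bigvee_{n\geq k}c_n=1$, and that $\odot$ distributes over arbitrary existing suprema in each argument (a standard consequence of the law of residuation; compare Lemma \ref{l2.2}, (\ref{l2.2(8)}) for the binary case). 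Since $(e_n)$ is increasing, its sup over $n\geq k$ equals its total sup, so $\bigvee_n e_n\geq c'_k$ for every $k$, whence $\bigvee_n e_n\geq \bigvee_k c'_k=1$, giving $(e_n)\uparrow 1$.

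For the tail assertions I would argue as follows. The negation statement $\lim\neg\, a_n=\neg\, a$ is the special case $\circ=\rightarrow$ applied to $(a_n)$ and to the constant sequence $b_n=0$ (which trivially converges to $0$, taking $c'_n=1$). For the order claim, suppose $a_n\leq b_n$ for every $n$ (the ``for all $n\geq k$'' version reduces to this by the remark preceding the lemma that modifying finitely many terms does not alter the limit). By Lemma \ref{l2.2}, (\ref{l2.2(3)}), $a_n\rightarrow b_n=1$ for each $n$, so $\lim(a_n\rightarrow b_n)=1$; on the other hand, by the $\circ=\rightarrow$ case already proved, $\lim(a_n\rightarrow b_n)=a\rightarrow b$. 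Uniqueness of limits in $L$ (recorded in the discussion before Lemma \ref{l6.1}, citing \cite{ggap}) forces $a\rightarrow b=1$, i.e.\ $a\leq b$ by Lemma \ref{l2.2}, (\ref{l2.2(3)}) again.
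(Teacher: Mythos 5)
Your proof is correct. Note that the paper itself gives no argument for this lemma --- it is imported verbatim from \cite{ggap} --- so there is nothing internal to compare against; your write-up is essentially the standard proof one would find in that reference. The two load-bearing points are exactly the ones you isolate: the compatibility inequality $d_L(a_n,a)\odot d_L(b_n,b)\leq d_L(a_n\circ b_n,a\circ b)$ from Lemma \ref{lA}, (\ref{lA(5)}), and the fact that $\odot$ preserves all \emph{existing} suprema in each argument, which follows from the law of residuation (the map $x\mapsto x\odot c$ is a left adjoint), so no completeness assumption on $L$ is needed. One small presentational caveat: when you write $\bigvee_{n\geq k}(c_n\odot c'_n)\geq\bigvee_{n\geq k}(c_n\odot c'_k)$ you are implicitly assuming the left-hand supremum exists; the clean way to phrase it is that any upper bound $u$ of $\{e_n\}_{n\geq 0}$ dominates $\{c_n\odot c'_k\}_{n\geq k}$, hence $u\geq\bigl(\bigvee_{n\geq k}c_n\bigr)\odot c'_k=c'_k$ for every $k$, hence $u\geq 1$; since $1$ is an upper bound, $\bigvee_n e_n$ exists and equals $1$. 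With that rephrasing the argument is airtight, and the tail assertions (negation via $\circ=\rightarrow$ against the constant sequence $0$, and order preservation via uniqueness of limits applied to $a_n\rightarrow b_n=1$) are handled correctly.
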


\begin{lemma}{\rm \cite{ggap}} Let $(a_n)_{n\geq 0}\subseteq L$ and $a\in L$. If $(a_n)_{n\geq 0}\uparrow a$ or $(a_n)_{n\geq 0}\downarrow a$ then $\lim _{n\rightarrow \infty }a_n=a$.
\label{l6.2}
\end{lemma}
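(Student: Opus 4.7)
The plan is to take the simplest witnessing sequence $c_n := d_L(a_n,a)$ in both cases, so the relation $c_n \le d_L(a_n,a)$ holds trivially, and the whole proof reduces to establishing $(c_n)_{n\ge 0}\uparrow 1$. I sketch the increasing case in detail; the decreasing case is entirely dual.

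Assume $(a_n)_{n\ge 0}\uparrow a$. Because $a_n\le a$, Lemma \ref{l2.2}(\ref{l2.2(3)}) collapses the biresiduum: $d_L(a_n,a) = (a_n\to a)\wedge(a\to a_n) = 1\wedge(a\to a_n) = a\to a_n$, so $c_n = a\to a_n$. Monotonicity follows immediately from Lemma \ref{l2.2}(\ref{l2.2(4)}): from $a_n\le a_{n+1}$ we get $a\to a_n \le a\to a_{n+1}$. It remains to show
\[
\textstyle\bigvee_n (a\to a_n) = 1.
\]
The natural route is to verify that the residual $a\to -$ commutes with the particular monotone join at hand, so that $\bigvee_n(a\to a_n) = a\to(\bigvee_n a_n) = a\to a = 1$ by Lemma \ref{l2.2}(\ref{l2.2(3)}).

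For the decreasing case $(a_n)_{n\ge 0}\downarrow a$, the direction flips: now $a\le a_n$, so $d_L(a_n,a) = a_n\to a$. Monotonicity $a_n\to a \le a_{n+1}\to a$ follows from $a_{n+1}\le a_n$ via the contravariance of the residuum in its first argument (again Lemma \ref{l2.2}(\ref{l2.2(4)})), and the analogous goal is $\bigvee_n(a_n\to a) = (\bigwedge_n a_n)\to a = a\to a = 1$, requiring the dual commutation of the residuum with a monotone meet. These two commutation identities between $\to$ and monotone joins or meets are the real content of the lemma and form the principal obstacle in the general residuated-lattice setting, since they are not formal consequences of the Galois connection alone in the direction we need; once they are in place (as carried out in \cite{ggap}), the definition of similarity convergence is fulfilled in both cases and the lemma follows.
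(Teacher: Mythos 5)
The paper gives no proof of this lemma; it is imported verbatim from \cite{ggap}, so there is no internal argument to compare yours against, and your attempt has to stand on its own. Up to the reduction it is fine, and your choice $c_n=d_L(a_n,a)$ is in fact optimal: any admissible witness satisfies $c_n\leq d_L(a_n,a)\leq 1$, so the existence of some $(c_n)_{n\geq 0}\uparrow 1$ forces $\bigvee_{n}d_L(a_n,a)=1$, while conversely $(d_L(a_n,a))_{n\geq 0}$ is increasing by Lemma \ref{l2.2}, (\ref{l2.2(4)}), as you note. Hence the lemma is exactly equivalent to $\bigvee_{n}(a\rightarrow a_n)=1$ in the increasing case and $\bigvee_{n}(a_n\rightarrow a)=1$ in the decreasing case. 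But you then leave precisely these identities unproven, remarking that they are ``not formal consequences of the Galois connection'' and deferring to \cite{ggap}. Since, by your own reduction, these identities \emph{are} the lemma, the proof is not complete: everything you actually establish is bookkeeping around the one step that matters.

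The gap is not merely cosmetic, because the distributivity you need runs against the residuation adjunction. Lemma \ref{l2.2}, (\ref{l2.2(9)}) gives $(\bigvee_{i}a_i)\rightarrow c=\bigwedge_{i}(a_i\rightarrow c)$; the identities $c\rightarrow(\bigvee_{i}a_i)=\bigvee_{i}(c\rightarrow a_i)$ and $(\bigwedge_{i}a_i)\rightarrow c=\bigvee_{i}(a_i\rightarrow c)$ are recorded in this paper only for MV-algebras (Lemma \ref{mvlema}, (\ref{mvvee}) and (\ref{mvwedge})) and fail in general residuated lattices. Concretely, in the standard G\"{o}del algebra take $a_n=\frac{1}{2}-\frac{1}{n+2}$, so that $(a_n)_{n\geq 0}\uparrow a$ with $a=\frac{1}{2}$; then $d_L(a_n,a)=a\rightarrow_{G}a_n=a_n$, hence $\bigvee_{n}d_L(a_n,a)=\frac{1}{2}\neq 1$ and, by the equivalence above, $(a_n)_{n\geq 0}$ is not similarity convergent to $a$. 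So your route cannot be completed at the stated level of generality --- and since your choice of $c_n$ was the largest possible, no other route can be either: the statement needs additional hypotheses on $L$ (involutivity or the MV-algebra distributivity laws) or a more careful reading of how it is being imported from \cite{ggap}.
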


A sequence $(a_n)_{n\geq 0}\subseteq X$ is said to be {\em $E$-convergent towards $a\in X$} iff $\lim _{n\rightarrow \infty }E(a_n,a)=1$; this is denoted by $a_n\stackrel{\textstyle E}{\textstyle \rightarrow }a$. $(a_n)_{n\geq 0}$ is said to be {\em $E$-Cauchy} iff $\lim _{n,m\rightarrow \infty }E(a_n,a_m)=1$.

\begin{lemma}
Assume that $E:X^2\rightarrow L$ is an $L$-equality and let $(a_n)_{n\geq 0}\subseteq X$, $a,a^{\prime }\in X$. If $a_n\stackrel{\textstyle E}{\textstyle \rightarrow }a$ and $a_n\stackrel{\textstyle E}{\textstyle \rightarrow }a^{\prime }$ then $a=a^{\prime }$.
\label{l6.3}
\end{lemma}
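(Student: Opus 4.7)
The plan is to exploit the transitivity axiom of the $L$-similarity relation $E$ together with the limit-preservation lemmas from the excerpt. Concretely, for each $n$ we have, by symmetry and transitivity of $E$,
\[
E(a_n,a)\odot E(a_n,a')=E(a,a_n)\odot E(a_n,a')\le E(a,a').
\]
This is the key arithmetic inequality; no further manipulation inside $L$ is needed.

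Next I would pass to the limit as $n\to\infty$. By hypothesis $\lim_{n\to\infty}E(a_n,a)=1$ and $\lim_{n\to\infty}E(a_n,a')=1$, so Lemma~\ref{l6.1} (applied to the operation $\odot$) yields
\[
\lim_{n\to\infty}\bigl(E(a_n,a)\odot E(a_n,a')\bigr)=1\odot 1=1.
\]
The constant sequence with value $E(a,a')$ trivially has limit $E(a,a')$. Since the inequality $E(a_n,a)\odot E(a_n,a')\le E(a,a')$ holds for every $n$, the inequality-preservation part of Lemma~\ref{l6.1} gives $1\le E(a,a')$, hence $E(a,a')=1$.

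Finally, because $E$ is assumed to be an $L$-\emph{equality} (not merely an $L$-similarity), the equality $E(a,a')=1$ forces $a=a'$, completing the argument. I do not foresee any real obstacle here: the only nontrivial ingredients are the transitivity axiom for similarity relations and the fact (Lemma~\ref{l6.1}) that $\odot$ and the order behave well under similarity limits, both of which are already available in the excerpt. The proof is essentially a one-line computation followed by a limit.
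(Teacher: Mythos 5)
Your proposal is correct and follows essentially the same route as the paper: establish $E(a_n,a)\odot E(a_n,a')\le E(a,a')$ via symmetry and transitivity, pass to the limit with Lemma \ref{l6.1} to conclude $E(a,a')=1$, and invoke the $L$-equality property. The paper's proof is identical in substance, merely leaving the symmetry/transitivity step implicit.
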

\begin{proof}
Assume that $a_n\stackrel{\textstyle E}{\textstyle \rightarrow }a$ and $a_n\stackrel{\textstyle E}{\textstyle \rightarrow }a^{\prime }$, that is $\lim _{n\rightarrow \infty }E(a_n,a)=1$ and $\lim _{n\rightarrow \infty }E(a_n,a^{\prime })=1$, thus, by Lemma \ref{l6.1}, $\lim _{n\rightarrow \infty }(E(a_n,a)\odot E(a_n,a^{\prime }))=1\odot 1=1$. But, for all $n\in \N $, $E(a_n,a)\odot E(a_n,a^{\prime })\leq E(a,a^{\prime })$, thus $E(a,a^{\prime })=1$ by Lemma \ref{l6.1}, so $a=a^{\prime }$.\end{proof}

\begin{lemma}
If $E$ is an $L$-equality, then any $E$-convergent sequence is $E$-Cauchy.
\label{l6.4}
\end{lemma}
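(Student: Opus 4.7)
The plan is to prove something a bit more precise: for each fixed $n$, the inner limit $\lim_{m\to\infty} E(a_n,a_m)$ exists and equals $E(a_n,a)$. Once this is established, the outer limit over $n$ will equal $\lim_{n\to\infty} E(a_n,a) = 1$ by the hypothesis that $(a_n)$ is $E$-convergent to $a$, giving $\lim_{n,m\to\infty} E(a_n,a_m) = 1$ as required.

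The technical heart of the argument is the bound
\[
E(a_m,a) \ \leq \ d_L\bigl(E(a_n,a_m),E(a_n,a)\bigr),
\]
which I would derive from transitivity and symmetry of $E$. Indeed, transitivity gives $E(a_n,a_m) \odot E(a_m,a) \leq E(a_n,a)$, and then the law of residuation yields $E(a_m,a) \leq E(a_n,a_m) \rightarrow E(a_n,a)$. Symmetrically, $E(a_n,a) \odot E(a,a_m) \leq E(a_n,a_m)$ (using $E(a_m,a) = E(a,a_m)$), hence $E(a_m,a) \leq E(a_n,a) \rightarrow E(a_n,a_m)$. Taking the meet gives the displayed inequality.

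Next, by the hypothesis that $(a_n) \stackrel{E}{\to} a$, i.e.\ $\lim_{m\to\infty}E(a_m,a) = 1$, there exists $(c_m)_{m\geq 0}\subseteq L$ with $(c_m)\uparrow 1$ and $c_m \leq d_L(E(a_m,a),1) = E(a_m,a)$ for all $m$ (using Lemma \ref{l2.2}, (\ref{l2.2(1)}) and (\ref{l2.2(2)}), as in the Remark following the definition of Cauchy sequence). Combining with the bound above, $c_m \leq d_L(E(a_n,a_m), E(a_n,a))$ for every $m$, so by the very definition of similarity convergence, $\lim_{m\to\infty} E(a_n,a_m) = E(a_n,a)$ for each fixed $n$.

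Finally, using the convention $\lim_{n,m\to\infty} = \lim_{n\to\infty}\lim_{m\to\infty}$, we conclude
\[
\lim_{n,m\to\infty} E(a_n,a_m) \ = \ \lim_{n\to\infty} E(a_n,a) \ = \ 1,
\]
so $(a_n)$ is $E$-Cauchy. The only nontrivial step is pinning down the correct value of the inner limit and producing the transitivity-based inequality above; once that bound is in hand, the rest is just unwinding definitions. Note also that the $L$-equality hypothesis is not actually needed for the argument itself (only transitivity and symmetry are used); it is presumably included so that the limit $a$ associated with an $E$-convergent sequence is uniquely determined, via Lemma \ref{l6.3}.
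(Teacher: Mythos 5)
Your proof is correct and rests on the same key fact as the paper's: the transitivity inequality $E(a_n,a)\odot E(a_m,a)\leq E(a_n,a_m)$ combined with the hypothesis $\lim_{n\rightarrow\infty}E(a_n,a)=1$. The paper's version is a one-liner that passes this lower bound through the double limit via Lemma \ref{l6.1}, whereas you compute the inner limit $\lim_{m\rightarrow\infty}E(a_n,a_m)=E(a_n,a)$ exactly from the definition of similarity convergence via the biresiduum bound; this makes the existence of the iterated limit explicit, and your remark that only symmetry and transitivity (not the $L$-equality axiom) are used is likewise consistent with the paper's argument.
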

\begin{proof}
Let $(a_n)_{n\geq 0}\subseteq X$ and $a\in X$ such that $a_n\stackrel{\textstyle E}{\textstyle \rightarrow }a$, that is $\lim _{n\rightarrow \infty }E(a_n,a)=1$. Then, by Lemma \ref{l6.1}, $\lim _{n,m\rightarrow \infty }E(a_n,a_m)\geq \lim _{n,m\rightarrow \infty }(E(a_n,a)\odot E(a_m,a))=1\odot 1=1$, therefore $(a_n)_{n\geq 0}$ is $E$-Cauchy.\end{proof}

Until mentioned otherwise, let $A$ and $L$ be two residuated lattices and $E:A^2\rightarrow L$ an $L$-similarity relation.

If $E:A^2\rightarrow L$ is an $L$-equality and any $E$-Cauchy sequence is $E$-convergent, then the residuated lattice $A$ is said to be {\em $E$-complete}.

For any function $s:A\rightarrow L$, we denote by $\rho _s:A^2\rightarrow L$ the function defined by: for all $a,b\in A$, $\rho _s(a,b)=s(d_A(a,b))$.

\begin{lemma}
Let $s:A\rightarrow L$ be an order-preserving type I state. Then, for all $a,b,x,y\in A$, we have:

\begin{enumerate}
\item\label{l6.5(1)} $\rho _s(a,b)\leq \rho _s(\neg \, a,\neg \, b)$;
\item\label{l6.5(2)} $\rho _s(a,b)\odot \rho _s(x,y)\leq \rho _s(a\circ x,b\circ y)$, for each $\circ \in \{\vee ,\wedge ,\odot ,\rightarrow ,\leftrightarrow \}$;
\item\label{l6.5(3)} $\rho _s(a,b)\leq d_L(s(a),s(b))$;
\item\label{l6.5(4)} if $a$ and $b$ are comparable, then: $\rho _s(a,b)=d_L(s(a),s(b))$;
\item\label{l6.5(5)} $\rho _s(a,x)\odot \rho _s(b,y)\leq d_L(\rho _s(a,b),\rho _s(x,y))$.
\end{enumerate}
\label{l6.5}
\end{lemma}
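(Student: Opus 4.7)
The plan is to observe that each of the five inequalities is a straightforward translation of already-established results, mostly from Lemma \ref{lA} and Proposition \ref{p3.8}, through the definition $\rho_s(a,b)=s(d_A(a,b))$. Since $s$ is an order-preserving type I state, I can freely combine order-preservation of $s$, its sub-multiplicativity ($s(u)\odot s(v)\le s(u\odot v)$), and the already-proved arithmetic properties of the biresiduum $d_A$.

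For (1), I would apply Lemma \ref{lA}, (\ref{lA(4)}), which gives $d_A(a,b)\le d_A(\neg\, a,\neg\, b)$, and then use order-preservation of $s$ to get $s(d_A(a,b))\le s(d_A(\neg\, a,\neg\, b))$, i.e.\ $\rho_s(a,b)\le \rho_s(\neg\, a,\neg\, b)$. For (2), I would chain Proposition \ref{p3.8}, (\ref{p3.8(1)}) with Lemma \ref{lA}, (\ref{lA(5)}) and order-preservation:
\[\rho_s(a,b)\odot\rho_s(x,y)=s(d_A(a,b))\odot s(d_A(x,y))\le s(d_A(a,b)\odot d_A(x,y))\le s(d_A(a\circ x,b\circ y))=\rho_s(a\circ x,b\circ y).\]
For (3), the statement is exactly Proposition \ref{p3.8}, (\ref{p3.8(5)}), rewritten with the $\rho_s$ notation. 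For (5), the statement is exactly Proposition \ref{p3.8}, (\ref{p3.8(6)}), rewritten with the $\rho_s$ notation.

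The slightly less mechanical item is (4). Here I would treat the case $a\le b$ (the case $b\le a$ being symmetric). Then, by Lemma \ref{l2.2}, (\ref{l2.2(3)}), $a\rightarrow b=1$, hence $d_A(a,b)=(a\rightarrow b)\wedge(b\rightarrow a)=b\rightarrow a$, so $\rho_s(a,b)=s(b\rightarrow a)$. Since $s$ is order-preserving, $s(a)\le s(b)$, so $s(a)\rightarrow s(b)=1$, and consequently $d_L(s(a),s(b))=s(b)\rightarrow s(a)$. Applying Proposition \ref{p3.3}, (\ref{p3.3(2)}) (with $a\le b$, the inequality needed for that clause), I get $s(b\rightarrow a)=s(b)\rightarrow s(a)$, which delivers $\rho_s(a,b)=d_L(s(a),s(b))$.

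I do not anticipate a genuine obstacle: everything rests on results already proved in Section \ref{bosbach} and Section \ref{preliminaries}. The only point requiring a little care is (4), where one must remember that $d_A$ collapses to a single implication on comparable arguments, and that Proposition \ref{p3.3}, (\ref{p3.3(2)}) applies precisely in that situation.
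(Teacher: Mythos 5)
Your proposal is correct and follows essentially the same route as the paper: items (1), (2), (3) and (5) are handled exactly as in the paper's proof (via Lemma \ref{lA}, Proposition \ref{p3.8}, and order-preservation of $s$), and for (4) the paper likewise reduces $d_A$ to a single implication on comparable elements and invokes Proposition \ref{p3.3}, (\ref{p3.3(2)}), merely treating the mirror case $b\leq a$ instead of $a\leq b$. No gaps.
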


\begin{proof}
\noindent (\ref{l6.5(1)}) By Lemma \ref{lA}, (\ref{lA(4)}) and the fact that $s$ is order-preserving, $\rho _s(a,b)=s(d_A(a,b))\leq s(d_A(\neg \, a,\neg \, b))=\rho _s(\neg \, a,\neg \, b)$.

\noindent (\ref{l6.5(2)}) Let $\circ \in \{\vee ,\wedge ,\odot ,\rightarrow ,\leftrightarrow \}$. By Proposition \ref{p3.8}, (\ref{p3.8(1)}), Lemma \ref{lA}, (\ref{lA(5)}) and the fact that $s$ is order-preserving, $\rho _s(a,b)\odot \rho _s(x,y)=s(d_A(a,b))\odot s(d_A(x,y))\leq s(d_A(a,b)\odot d_A(x,y))\leq s(d_A(a\circ x,b\circ y))=\rho _s(a\circ x,b\circ y)$.

\noindent (\ref{l6.5(3)}) By Proposition \ref{p3.8}, (\ref{p3.8(5)}).

\noindent (\ref{l6.5(4)}) Assume, for instance, that $b\leq a$. Then, by Lemma \ref{l2.2}, (\ref{l2.2(3)}) and Proposition \ref{p3.3}, (\ref{p3.3(2)}), $\rho _s(a,b)=s(d_A(a,b))=s(a\rightarrow b)=s(a)\rightarrow s(b)=d_L(s(a),s(b))$. 

\noindent (\ref{l6.5(5)}) By Proposition \ref{p3.8}, (\ref{p3.8(6)}).\end{proof}

\begin{proposition}
If $s:A\rightarrow L$ is an order-preserving type I state, then $\rho _s$ is an $L$-similarity relation on $A$.
\label{p6.6}
\end{proposition}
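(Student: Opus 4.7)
The plan is to verify directly the three defining axioms of an $L$-similarity relation recalled earlier in this section: for all $a,b,c\in A$, $\rho_s(a,a)=1$, $\rho_s(a,b)=\rho_s(b,a)$, and $\rho_s(a,b)\odot \rho_s(b,c)\leq \rho_s(a,c)$.

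Reflexivity and symmetry are immediate from the two basic properties of the biresiduum in $A$. By Lemma \ref{lA}, (\ref{lA(1)}), $d_A(a,a)=1$, so $\rho_s(a,a)=s(1)=1$ since every generalized Bosbach state satisfies $s(1)=1$. By Lemma \ref{lA}, (\ref{lA(2)}), $d_A(a,b)=d_A(b,a)$, so applying $s$ to both sides gives $\rho_s(a,b)=\rho_s(b,a)$.

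The only step with any content is transitivity, and it is a two-line combination of ingredients already collected above. Start from the triangle inequality for the biresiduum, Lemma \ref{lA}, (\ref{lA(3)}): $d_A(a,b)\odot d_A(b,c)\leq d_A(a,c)$. Apply the sub-multiplicativity of order-preserving type I states, Proposition \ref{p3.8}, (\ref{p3.8(1)}), to $x=d_A(a,b)$ and $y=d_A(b,c)$, and then the fact that $s$ is order-preserving to propagate $\leq$ through $s$. This yields
\[
\rho_s(a,b)\odot \rho_s(b,c)=s(d_A(a,b))\odot s(d_A(b,c))\leq s(d_A(a,b)\odot d_A(b,c))\leq s(d_A(a,c))=\rho_s(a,c),
\]
which is the required inequality.

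There is no real obstacle: the whole proof is a routine composition of Lemma \ref{lA} with Proposition \ref{p3.8}, (\ref{p3.8(1)}). The only conceptual point worth flagging is that both hypotheses on $s$ are essential here: order-preservation is what lets us apply $s$ monotonically to the biresiduum triangle inequality, and the sub-multiplicativity $s(x)\odot s(y)\leq s(x\odot y)$ is where the ``type I'' hypothesis really enters, via Proposition \ref{p3.8}, (\ref{p3.8(1)}).
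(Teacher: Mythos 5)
Your proof is correct and follows essentially the same route as the paper: the paper's proof also consists of exactly the chain $\rho_s(a,b)\odot\rho_s(b,c)=s(d_A(a,b))\odot s(d_A(b,c))\leq s(d_A(a,b)\odot d_A(b,c))\leq s(d_A(a,c))=\rho_s(a,c)$ via Proposition \ref{p3.8}, (\ref{p3.8(1)}), the triangle inequality for the biresiduum and order-preservation, with reflexivity and symmetry left implicit. Your explicit treatment of reflexivity and symmetry, and your citation of Lemma \ref{lA}, (\ref{lA(3)}) for the triangle inequality, are both accurate.
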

\begin{proof}
By Proposition \ref{p3.8}, (\ref{p3.8(1)}), Lemma \ref{lA}, (\ref{lA(5)}) and the fact that $s$ is order-preserving, $\rho _s(a,b)\odot \rho _s(b,c)=s(d_A(a,b))\odot s(d_A(b,c))\leq s(d_A(a,b)\odot d_A(b,c))\leq s(d_A(a,c))=\rho _s(a,c)$.\end{proof}

If $s:A\rightarrow L$ is a generalized Bosbach state or a Rie\v can state, then we will say that $s$ is {\em faithful} iff, for all $a\in A$, $s(a)=1$ implies  $a=1$.

\begin{remark}
By Lemma \ref{lA}, (\ref{lA(1)}), if $s:A\rightarrow L$ is a faithful order-preserving type I state, then $\rho _s$ is an $L$-equality on $A$.

\label{r6.7}
\end{remark}

\begin{lemma}
Let $s:A\rightarrow L$ be a faithful order-preserving type I state, $(a_n)_{n\geq 0},(b_n)_{n\geq 0}\subseteq A$ and $a,b\in A$. If $a_n\stackrel{\textstyle \rho _s}{\textstyle \rightarrow }a$ and $b_n\stackrel{\textstyle \rho _s}{\textstyle \rightarrow }b$, then $a_n\circ b_n\stackrel{\textstyle \rho _s}{\textstyle \rightarrow }a\circ b$ for each $\circ \in \{\vee ,\wedge ,\odot ,\rightarrow ,\leftrightarrow \}$. From this and the definitions of $\neg \, $ and $\leq $, it follows that $\neg \, a_n\stackrel{\textstyle \rho _s}{\textstyle \rightarrow }\neg \, a$ and, if $a_n\leq b_n$ for all $n\in \N $ (or for all $n\geq k\in \N $), then $a\leq b$. 
\label{l6.8}
\end{lemma}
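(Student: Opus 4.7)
The plan is to transport the convergence through the single inequality Lemma \ref{l6.5}, (\ref{l6.5(2)}): for each $\circ \in \{\vee ,\wedge ,\odot ,\rightarrow ,\leftrightarrow \}$ one has $\rho _s(a_n,a)\odot \rho _s(b_n,b)\leq \rho _s(a_n\circ b_n,a\circ b)$. By hypothesis $\lim _{n\rightarrow \infty }\rho _s(a_n,a)=\lim _{n\rightarrow \infty }\rho _s(b_n,b)=1$, so Lemma \ref{l6.1} applied to $\odot $ gives $\lim _{n\rightarrow \infty }(\rho _s(a_n,a)\odot \rho _s(b_n,b))=1\odot 1=1$. A short squeeze then upgrades this to $\lim _{n\rightarrow \infty }\rho _s(a_n\circ b_n,a\circ b)=1$: unpacking the definition, there exists $(c_n)_{n\geq 0}\uparrow 1$ with $c_n\leq \rho _s(a_n,a)\odot \rho _s(b_n,b)\leq \rho _s(a_n\circ b_n,a\circ b)$, and by Lemma \ref{l2.2}, (\ref{l2.2(1)}) and (\ref{l2.2(2)}) we have $d_L(x,1)=x$, so this is exactly the defining condition for $a_n\circ b_n\stackrel{\textstyle \rho _s}{\textstyle \rightarrow }a\circ b$.

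For negation I would reduce to the already-handled case $\circ =\rightarrow $ via the constant sequence $b_n=0$: trivially $\rho _s(0,0)=s(1)=1$, so $0\stackrel{\textstyle \rho _s}{\textstyle \rightarrow }0$, whence $\neg \, a_n=a_n\rightarrow 0\stackrel{\textstyle \rho _s}{\textstyle \rightarrow }a\rightarrow 0=\neg \, a$.

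For the order statement I would use the $\circ =\wedge $ case together with uniqueness of limits. Suppose $a_n\leq b_n$ for all $n\geq k$; then $a_n\wedge b_n=a_n$ for all $n\geq k$, so the sequences $(a_n\wedge b_n)_{n\geq 0}$ and $(a_n)_{n\geq 0}$ coincide from index $k$ onward and hence, by the tail invariance of similarity convergence recorded before Lemma \ref{l6.1}, share the same limit. By the case $\circ =\wedge $ the common value is $a\wedge b$, while by hypothesis it is also $a$. Because $s$ is faithful, $\rho _s$ is an $L$-equality (Remark \ref{r6.7}), so Lemma \ref{l6.3} forces $a=a\wedge b$, i.e.\ $a\leq b$.

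The only nontrivial step is the squeeze in the first paragraph; everything else is routine bookkeeping built from the inequality of Lemma \ref{l6.5}, (\ref{l6.5(2)}) and the uniqueness/tail-invariance properties of the similarity convergence. I do not foresee any obstacle beyond correctly citing the residuation identity $d_L(x,1)=x$ that lets the squeeze close.
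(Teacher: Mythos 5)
Your proposal is correct and follows essentially the same route as the paper, whose entire proof is ``Apply Lemma \ref{l6.1} and Lemma \ref{l6.5}, (\ref{l6.5(2)})''; you have merely spelled out the squeeze (via the identity $d_L(x,1)=x$) and the reductions for $\neg$ and $\leq$ that the paper leaves implicit. All the cited ingredients, including the use of faithfulness through Remark \ref{r6.7} and Lemma \ref{l6.3} for the order claim, are applied correctly.
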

\begin{proof}
Apply Lemma \ref{l6.1} and Lemma \ref{l6.5}, (\ref{l6.5(2)}).\end{proof}

Let $s:A\rightarrow L$ be an arbitrary function and $a\in A$. Then $s$ is said to be:

\begin{itemize}

\item {\em $\uparrow $-continuous in $a$} iff, for any sequence $(a_n)_{n\geq 0}\subseteq A$ such that $a_n\uparrow a$, we have $\lim _{n\rightarrow \infty }s(a_n)=s(a)$;

\item {\em $\downarrow $-continuous in $a$} iff, for any sequence $(a_n)_{n\geq 0}\subseteq A$ such that $a_n\downarrow a$, we have $\lim _{n\rightarrow \infty }s(a_n)=s(a)$;

\item {\em continuous in $a$} iff it is $\uparrow $-continuous in $a$ and $\downarrow $-continuous in $a$. 
\end{itemize}

$s$ is said to be {\em $\uparrow $-continuous} (respectively {\em $\downarrow $-continuous}, {\em continuous}) iff it is $\uparrow $-continuous (respectively $\downarrow $-continuous, continuous) in any $a\in A$.

\begin{proposition}
Assume that $L$ is involutive and let $s:A\rightarrow L$ be a type I state and $a\in A$. If $s$ is $\downarrow $-continuous in $a$ then it is also $\uparrow $-continuous in $a$. Thus, if $s$ is $\downarrow $-continuous then it is also $\uparrow $-continuous.
\label{p6.9}
\end{proposition}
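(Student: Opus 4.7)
The plan is to pass to negations and exploit the involutivity of $L$ together with Proposition \ref{p3.6}, (\ref{p3.6(1)}). First, take an arbitrary sequence $(a_n)_{n\geq 0}\subseteq A$ with $a_n\uparrow a$. I would show that $(\neg \, a_n)_{n\geq 0}\downarrow \neg \, a$: since $a_n\leq a_{n+1}$, Lemma \ref{l2.3}, (\ref{l2.3(3)}) gives $\neg \, a_{n+1}\leq \neg \, a_n$, so the sequence is decreasing; and Lemma \ref{l2.2}, (\ref{l2.2(9)}) applied with $c=0$ to the existing supremum $\bigvee _{n\geq 0}a_n=a$ yields $\bigwedge _{n\geq 0}\neg \, a_n=(\bigvee _{n\geq 0}a_n)\rightarrow 0=\neg \, a$.

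Next, I would invoke the $\downarrow $-continuity hypothesis at the point $\neg \, a$ (which is available because the proposition is to be applied to the point $\neg \, a$, matching the argument needed for $\uparrow $-continuity at $a$; this is precisely the point where the ``Thus'' of the second sentence takes over when $s$ is $\downarrow $-continuous everywhere) to conclude $\lim _{n\rightarrow \infty }s(\neg \, a_n)=s(\neg \, a)$. Using Proposition \ref{p3.6}, (\ref{p3.6(1)}) (valid because $s$ is a type I state), this rewrites as
\[
\lim _{n\rightarrow \infty }\neg \, s(a_n)=\neg \, s(a).
\]

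Finally, I would apply Lemma \ref{l6.1} (continuity of $\neg \, $ with respect to similarity convergence in $L$) to the previous limit, obtaining
\[
\lim _{n\rightarrow \infty }\neg \, \neg \, s(a_n)=\neg \, \neg \, s(a).
\]
Since $L$ is involutive, $\neg \, \neg \, s(a_n)=s(a_n)$ for every $n$ and $\neg \, \neg \, s(a)=s(a)$, so this is $\lim _{n\rightarrow \infty }s(a_n)=s(a)$, i.e.\ $s$ is $\uparrow $-continuous in $a$. The second assertion is then immediate: if $s$ is $\downarrow $-continuous on all of $A$, the argument above applies with $\downarrow $-continuity at $\neg \, a$ available for every $a\in A$, giving $\uparrow $-continuity everywhere.

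The only real subtlety (and the main obstacle) is the bookkeeping of where the $\downarrow $-continuity is used: the natural reading of the argument requires $\downarrow $-continuity at $\neg \, a$, and the ``Thus'' clause of the proposition is what guarantees this when the full $\downarrow $-continuity of $s$ is assumed. All other ingredients are purely formal manipulations with negation: the De Morgan-type identity $\neg \, \bigvee =\bigwedge \neg $ from Lemma \ref{l2.2}, (\ref{l2.2(9)}), the preservation of $\neg \, $ by type I states from Proposition \ref{p3.6}, (\ref{p3.6(1)}), and continuity of $\neg \, $ for similarity convergence from Lemma \ref{l6.1}.
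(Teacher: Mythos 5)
Your proof is correct and follows essentially the same route as the paper's: pass to $(\neg\, a_n)_{n\geq 0}\downarrow \neg\, a$ via Lemma \ref{l2.3}, (\ref{l2.3(3)}) and Lemma \ref{l2.2}, (\ref{l2.2(9)}), apply $\downarrow$-continuity at $\neg\, a$, use $s(\neg\, x)=\neg\, s(x)$ from Proposition \ref{p3.6}, (\ref{p3.6(1)}), and finish with Lemma \ref{l6.1} and the involutivity of $L$. The subtlety you flag is genuine but not a gap on your side: the argument really uses $\downarrow$-continuity at $\neg\, a$ rather than at $a$, and the paper's own proof silently does the same by assuming $\downarrow$-continuity everywhere, so your careful bookkeeping of this point is if anything an improvement on the original.
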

\begin{proof}
Assume that $s$ is $\downarrow $-continuous and let $(a_n)_{n\geq 0}\subseteq A$ such that $a_n\uparrow a$, that is $a_n\uparrow $ and $\bigvee _{n\in \N }a_n=a$. Then, by Lemma \ref{l2.3}, (\ref{l2.3(3)}) and Lemma \ref{l2.2}, (\ref{l2.2(9)}), $(\neg \, a_n)_{n\geq 0}\downarrow $ and $\bigwedge _{n\geq 0}(\neg \, a_n)=\neg \, (\bigvee _{n\geq 0}a_n)=\neg \, a$, thus $\neg \, a_n\downarrow \neg \, a$. By Lemma \ref{l6.1}, Proposition \ref{p3.6}, (\ref{p3.6(1)}) and the fact that $L$ is involutive, $\neg \, \lim _{n\rightarrow \infty }s(a_n)=\lim _{n\rightarrow \infty }\neg \, s(a_n)=\lim _{n\rightarrow \infty }s(\neg \, a_n)=s(\neg \, a)=\neg \, s(a)$, hence $\lim _{n\rightarrow \infty }s(a_n)=s(a)$. Therefore $s$ is $\uparrow $-continuous.\end{proof}

\begin{proposition}
Let $s:A\rightarrow L$ be a type II state and $a\in A$. If $s$ is $\downarrow $-continuous in $a$ then it is also $\uparrow $-continuous in $a$. Thus, if $s$ is $\downarrow $-continuous then it is also $\uparrow $-continuous.
\label{p6.10}
\end{proposition}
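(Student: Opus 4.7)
The plan is to imitate the strategy used in Proposition \ref{p6.9}, replacing the hypothesis that $L$ is involutive with the intrinsic ``involutivity'' that type II states enjoy on their image, courtesy of Proposition \ref{p3.9}, (\ref{p3.9(3)}) and (\ref{p3.9(6)}).

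Concretely, suppose $s$ is $\downarrow$-continuous and take $(a_n)_{n\geq 0}\subseteq A$ with $a_n\uparrow a$. As in the proof of Proposition \ref{p6.9}, Lemma \ref{l2.3}, (\ref{l2.3(3)}) together with Lemma \ref{l2.2}, (\ref{l2.2(9)}) yields $\neg\, a_n\downarrow \neg\, a$. Applying $\downarrow$-continuity at the point $\neg\, a$ gives $\lim_{n\to\infty} s(\neg\, a_n)=s(\neg\, a)$. Now Proposition \ref{p3.9}, (\ref{p3.9(6)}) rewrites both sides in the form $\neg\, s(\cdot)$, so using Lemma \ref{l6.1} to commute $\neg$ with the limit we obtain $\neg\,\lim_{n\to\infty} s(a_n) = \neg\, s(a)$.

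To finish I would apply $\neg$ once more and use Proposition \ref{p3.9}, (\ref{p3.9(3)}), which tells us that every value $s(a_n)$ and $s(a)$ already satisfies $\neg\,\neg\, x=x$. Hence, invoking Lemma \ref{l6.1} to push $\neg\,\neg\,$ inside the limit,
\[
\lim_{n\to\infty}s(a_n)\;=\;\lim_{n\to\infty}\neg\,\neg\, s(a_n)\;=\;\neg\,\neg\,\lim_{n\to\infty}s(a_n)\;=\;\neg\,\neg\, s(a)\;=\;s(a),
\]
which is exactly $\uparrow$-continuity at $a$. The second sentence of the proposition then follows immediately by quantifying over $a$.

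I do not foresee a real obstacle here: the only conceptual point is to notice that for a type II state the image lies in the regular part of $L$, so that the involutivity assumption on $L$ used in Proposition \ref{p6.9} becomes superfluous. Everything else is a direct transcription of the argument for Proposition \ref{p6.9}, with Proposition \ref{p3.9}, (\ref{p3.9(3)}) and (\ref{p3.9(6)}) playing the role that Proposition \ref{p3.6}, (\ref{p3.6(1)}) plus involutivity of $L$ played there.
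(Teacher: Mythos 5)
Your argument is correct and is essentially the paper's own proof: both pass to $\neg \, a_n\downarrow \neg \, a$, apply $\downarrow $-continuity there, and then strip off the negation using Proposition \ref{p3.9} (the paper combines (\ref{p3.9(3)}) with (\ref{p3.9(2)}) where you combine (\ref{p3.9(6)}) with (\ref{p3.9(3)}), which is the same computation). The only point to tidy is that expressions such as $\neg \, \lim _{n\rightarrow \infty }s(a_n)$ and $\neg \, \neg \, \lim _{n\rightarrow \infty }s(a_n)$ presuppose that $\lim _{n\rightarrow \infty }s(a_n)$ exists, which is not yet known at that stage (Lemma \ref{l6.1} only lets you push $\neg \, $ \emph{inside} a limit already known to exist); the step should be phrased as: the convergent sequence $s(\neg \, a_n)=\neg \, s(a_n)$ has limit $s(\neg \, a)=\neg \, s(a)$, hence by Lemma \ref{l6.1} the sequence $\neg \, \neg \, s(a_n)=s(a_n)$ converges to $\neg \, \neg \, s(a)=s(a)$.
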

\begin{proof}
Assume that $s$ is $\downarrow $-continuous and let $(a_n)_{n\geq 0}\subseteq A$ such that $a_n\uparrow a$. Then, by the proof of Proposition \ref{p6.9}, $\neg \, a_n\downarrow \neg \, a$, hence $\lim _{n\rightarrow \infty }s(\neg \, a_n)=\neg \, a$. By Proposition \ref{p3.9}, (\ref{p3.9(3)}) and (\ref{p3.9(2)}), and Lemma \ref{l6.1}, $\lim _{n\rightarrow \infty }s(a_n)=\lim _{n\rightarrow \infty }s(\neg \, \neg \, a_n)=\lim _{n\rightarrow \infty }\neg \, s(\neg \, a_n)=\neg \, \lim _{n\rightarrow \infty }s(\neg \, a_n)=\neg \, s(\neg \, a)=s(a)$. Therefore $s$ is $\uparrow $-continuous.\end{proof}

\begin{proposition}
Let $A$ be an MV-algebra and $s:A\rightarrow L$ an order-preserving type I state. Let us consider the following statements:
\begin{enumerate}
\item\label{p6.12(1)} $s$ is $\uparrow $-continuous in $1$;
\item\label{p6.12(2)} $s$ is $\uparrow $-continuous;
\item\label{p6.12(3)} $s$ is $\downarrow $-continuous in $0$;
\item\label{p6.12(4)} $s$ is $\downarrow $-continuous;
\item\label{p6.12(5)} $s$ is continuous.
\end{enumerate}

Then (\ref{p6.12(2)})$\Leftrightarrow $(\ref{p6.12(1)})$\Rightarrow $(\ref{p6.12(4)})$\Rightarrow $(\ref{p6.12(3)}). If $L$ is involutive then (\ref{p6.12(1)}) iff (\ref{p6.12(2)}) iff (\ref{p6.12(3)}) iff (\ref{p6.12(4)}) iff (\ref{p6.12(5)}). 
\label{p6.12}
\end{proposition}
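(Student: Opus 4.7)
My plan is to prove unconditionally the cycle (\ref{p6.12(2)})$\Rightarrow $(\ref{p6.12(1)})$\Rightarrow $(\ref{p6.12(4)})$\Rightarrow $(\ref{p6.12(3)}), and then close it via (\ref{p6.12(3)})$\Rightarrow $(\ref{p6.12(1)}) under the assumption that $L$ is involutive. The implications (\ref{p6.12(2)})$\Rightarrow $(\ref{p6.12(1)}) and (\ref{p6.12(4)})$\Rightarrow $(\ref{p6.12(3)}) are just specializations to the points $1$ and $0$. The two nontrivial unconditional implications (\ref{p6.12(1)})$\Rightarrow $(\ref{p6.12(2)}) and (\ref{p6.12(1)})$\Rightarrow $(\ref{p6.12(4)}) follow a common strategy: convert convergence of $s(a_n)$ to $s(a)$ into convergence of an auxiliary sequence $s(a\rightarrow a_n)$ or $s(a_n\rightarrow a)$ to $1$, using the MV-algebra distributivity of the implication (Lemma \ref{mvlema}, (\ref{mvvee}) and (\ref{mvwedge})) to force the corresponding sequences $a\rightarrow a_n$ or $a_n\rightarrow a$ in $A$ to increase towards $1$, after which (\ref{p6.12(1)}) applies.

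Concretely, for (\ref{p6.12(1)})$\Rightarrow $(\ref{p6.12(2)}), if $a_n\uparrow a$, then $a_n\leq a$ and $s$ order-preserving give $d_L(s(a_n),s(a))=s(a)\rightarrow s(a_n)$, which by Proposition \ref{p3.3}, (\ref{p3.3(2)}), equals $s(a\rightarrow a_n)$. The sequence $(a\rightarrow a_n)_n$ is increasing by Lemma \ref{l2.2}, (\ref{l2.2(4)}), and has join $a\rightarrow \bigvee _n a_n=a\rightarrow a=1$ by Lemma \ref{mvlema}, (\ref{mvvee}); so $a\rightarrow a_n\uparrow 1$, and (\ref{p6.12(1)}) yields $\lim _{n\to\infty}s(a\rightarrow a_n)=s(1)=1$, hence $\lim _{n\to\infty}s(a_n)=s(a)$. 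Dually, for (\ref{p6.12(1)})$\Rightarrow $(\ref{p6.12(4)}), if $a_n\downarrow a$, then $a\leq a_n$ and again by Proposition \ref{p3.3}, (\ref{p3.3(2)}), $d_L(s(a_n),s(a))=s(a_n)\rightarrow s(a)=s(a_n\rightarrow a)$; the sequence $(a_n\rightarrow a)_n$ is increasing by Lemma \ref{l2.2}, (\ref{l2.2(4)}), with join $(\bigwedge _n a_n)\rightarrow a=a\rightarrow a=1$ by Lemma \ref{mvlema}, (\ref{mvwedge}), so (\ref{p6.12(1)}) again closes the argument.

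Assuming $L$ involutive, it remains to prove (\ref{p6.12(3)})$\Rightarrow $(\ref{p6.12(1)}), since the rest chains as (\ref{p6.12(1)})$\Leftrightarrow $(\ref{p6.12(2)}), (\ref{p6.12(1)})$\Rightarrow $(\ref{p6.12(4)})$\Rightarrow $(\ref{p6.12(3)})$\Rightarrow $(\ref{p6.12(1)}), and (\ref{p6.12(5)}) is by definition the conjunction of (\ref{p6.12(2)}) and (\ref{p6.12(4)}). Given $a_n\uparrow 1$, Lemma \ref{l2.3}, (\ref{l2.3(3)}), makes $(\neg \, a_n)_n$ decreasing, while $\bigwedge _n\neg \, a_n=\neg \, \bigvee _n a_n=\neg \, 1=0$ by Lemma \ref{l2.2}, (\ref{l2.2(9)}), and Lemma \ref{l2.3}, (\ref{0neg1}); so $\neg \, a_n\downarrow 0$, and (\ref{p6.12(3)}) gives $\lim _{n\to\infty}s(\neg \, a_n)=0$. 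Using Proposition \ref{p3.6}, (\ref{p3.6(1)}), this reads $\lim _{n\to\infty}\neg \, s(a_n)=0$, and one more application of negation via Lemma \ref{l6.1}, combined with the involutivity of $L$, rewrites it as $\lim _{n\to\infty}s(a_n)=\neg \, \neg \, \lim _{n\to\infty}s(a_n)=\neg \, 0=1=s(1)$. The main obstacle is conceptual rather than computational: MV-distributivity of the implication is exactly what makes the auxiliary sequences have supremum $1$, and involutivity of $L$ is exactly what allows convergence to be transferred through negation in (\ref{p6.12(3)})$\Rightarrow $(\ref{p6.12(1)}); without either hypothesis, the respective trick collapses.
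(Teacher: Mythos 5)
Your proposal is correct and follows essentially the same route as the paper: both reduce $(\ref{p6.12(1)})\Rightarrow(\ref{p6.12(2)})$ and $(\ref{p6.12(1)})\Rightarrow(\ref{p6.12(4)})$ to the observation that, by comparability, $d_L(s(a_n),s(a))=s(d_A(a_n,a))$ (you inline Proposition \ref{p3.3}, (\ref{p3.3(2)}) where the paper cites Lemma \ref{l6.5}, (\ref{l6.5(4)}), which is proved the same way), with $(d_A(a_n,a))_n\uparrow 1$ forced by Lemma \ref{mvlema}, (\ref{mvvee}) and (\ref{mvwedge}), and both close the involutive case via $\neg\,a_n\downarrow 0$, Proposition \ref{p3.6}, (\ref{p3.6(1)}) and Lemma \ref{l6.1}. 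No gaps.
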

\begin{proof}
First let us prove that (\ref{p6.12(1)}) iff (\ref{p6.12(2)}). The converse implication is trivial. For the direct implication, let us assume that $s$ is $\uparrow $-continuous in $1$. Let $a\in A$ and $(a_n)_{n\geq 0}\subseteq A$ such that $a_n\uparrow a$, hence, for all $n\in \N $, $a_n\leq a$, which implies that $d_A(a_n,a)=a\rightarrow a_n$, by Lemma \ref{l2.2}, (\ref{l2.2(3)}). Thus, by Lemma \ref{l2.2}, (\ref{l2.2(4)}), $(d_A(a_n,a))_{n\geq 0}\uparrow $. Moreover, $\bigvee _{n\geq 0}d_A(a_n,a)=\bigvee _{n\geq 0}(a\rightarrow a_n)=a\rightarrow (\bigvee _{n\geq 0}a_n)=a\rightarrow a=1$, by Lemma \ref{mvlema}, (\ref{mvvee}), and Lemma \ref{l2.2}, (\ref{l2.2(3)}). Thus $(d_A(a_n,a))_{n\geq 0}\uparrow 1$. By Lemma \ref{l6.1}, the fact that, for all $n\in \N $, $a_n\leq a$, and Lemma \ref{l6.5}, (\ref{l6.5(4)}), it follows that $d_L(\lim _{n\rightarrow \infty }s(a_n),s(a))=\lim _{n\rightarrow \infty }d_L(s(a_n),s(a))=\lim _{n\rightarrow \infty }s(d_A(a_n,a))=s(1)=1$. Hence $\lim _{n\rightarrow \infty }s(a_n)=s(a)$, therefore $s$ is $\uparrow $-continuous in $a$.

Now let us prove that (\ref{p6.12(1)}) implies (\ref{p6.12(4)}). Thus let us assume that $s$ is $\uparrow $-continuous in $1$. Let $a\in A$ and $(a_n)_{n\geq 0}\subseteq A$ such that $a_n\downarrow a$, hence, for all $n\in \N $, $a_n\geq a$, which implies that $d_A(a_n,a)=a_n\rightarrow a$, by Lemma \ref{l2.2}, (\ref{l2.2(3)}). Thus, by Lemma \ref{l2.2}, (\ref{l2.2(4)}), $(d_A(a_n,a))_{n\geq 0}\uparrow $. Moreover, by Lemma \ref{mvlema}, (\ref{mvwedge}). $\bigvee _{n\geq 0}d_A(a_n,a)=\bigvee _{n\geq 0}(a_n\rightarrow a)=(\bigwedge _{n\geq 0}a_n)\rightarrow a=a\rightarrow a=1$, by Lemma \ref{mvlema}, (\ref{mvwedge}), and Lemma \ref{l2.2}, (\ref{l2.2(3)}). Thus $(d_A(a_n,a))_{n\geq 0}\uparrow 1$. By Lemma \ref{l6.1}, the fact that, for all $n\in \N $, $a_n\geq a$, and Lemma \ref{l6.5}, (\ref{l6.5(4)}), it follows that $d_L(\lim _{n\rightarrow \infty }s(a_n),s(a))=\lim _{n\rightarrow \infty }d_L(s(a_n),s(a))=\lim _{n\rightarrow \infty }s(d_A(a_n,a))=s(1)=1$. Hence $\lim _{n\rightarrow \infty }s(a_n)=s(a)$, therefore $s$ is $\downarrow $-continuous in $a$.

Trivially (\ref{p6.12(4)}) implies (\ref{p6.12(3)}).

Now let us assume that $L$ is involutive. For proving the equivalences in the enunciation it remains to show that (\ref{p6.12(3)}) implies (\ref{p6.12(1)}). Thus, let us assume that $s$ is $\downarrow $-continuous in $0$ and let $(a_n)_{n\geq 0}\subseteq A$ such that $a_n\uparrow 1$. Then $a_n\uparrow $, thus $\neg \, a_n\downarrow $, by Lemma \ref{l2.2}, (\ref{l2.2(4)}). Moreover, by Lemma \ref{l2.2}, (\ref{l2.2(9)}) and Lemma \ref{l2.3}, (\ref{0neg1}), $\bigwedge _{n\geq 0}\neg \, a_n=\neg \, (\bigvee _{n\geq 0}a_n)=\neg \, 1=0$. So $\neg \, a_n\downarrow 0$, hence $\lim _{n\rightarrow \infty }s(\neg \, a_n)=s(0)=0$. By Lemma \ref{p3.6}, (\ref{p3.6(1)}), and Lemma \ref{l6.1}, $\lim _{n\rightarrow \infty }s(a_n)=\lim _{n\rightarrow \infty }\neg \, \neg \, s(a_n)=\lim _{n\rightarrow \infty }\neg \, s(\neg \, a_n)=\neg \, (\lim _{n\rightarrow \infty }s(\neg \, a_n))=\neg \, 0=1=s(1)$. Hence $s$ is $\uparrow $-continuous in $1$.\end{proof}

Let $E:A^2\rightarrow L$ be an $L$-similarity relation and $s:A\rightarrow L$ an arbitrary function. We say that $s$ is {\em $E$-continuous in $a\in A$} iff, for all $(a_n)_{n\geq 0}\subseteq A$ such that $a_n\stackrel{\textstyle E}{\textstyle \rightarrow }a$, we have $\lim _{n\rightarrow \infty }s(a_n)=s(a)$. We say that $s$ is {\em $E$-continuous} iff it is $E$-continuous in any $a\in A$. Actually, these definitions are valid for the residuated lattice $A$ replaced by an arbitrary nonempty set $X$, but we shall not work with them in this general case.

\begin{proposition}
Any order-preserving type I state $s:A\rightarrow L$ is $\rho _s$-continuous.
\label{p6.11}
\end{proposition}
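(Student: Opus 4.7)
The plan is to reduce the claim directly to Lemma \ref{l6.5}, (\ref{l6.5(3)}), which asserts that $\rho_s(a,b)=s(d_A(a,b))\leq d_L(s(a),s(b))$ for any order-preserving type I state. Since $\rho_s$-convergence in $A$ is, by definition, measured through the similarity convergence in $L$ applied to $\rho_s(a_n,a)$, this monotonicity inequality will transfer convergence in $A$ to convergence of images in $L$.

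Concretely, I would fix $a\in A$ and an arbitrary sequence $(a_n)_{n\geq 0}\subseteq A$ with $a_n\stackrel{\rho_s}{\to}a$, meaning $\lim_{n\to\infty}\rho_s(a_n,a)=1$. Unfolding the definition of similarity convergence towards $1$ (and using the Remark immediately after the definition, which observes that $d_L(l_n,1)=l_n$ via Lemma \ref{l2.2}, (\ref{l2.2(1)}) and (\ref{l2.2(2)})), this means that there exists $(c_n)_{n\geq 0}\subseteq L$ with $(c_n)_{n\geq 0}\uparrow 1$ and $c_n\leq \rho_s(a_n,a)$ for every $n\in \N$.

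The key step is now a one-line transfer: by Lemma \ref{l6.5}, (\ref{l6.5(3)}), we have $\rho_s(a_n,a)\leq d_L(s(a_n),s(a))$ for every $n$, so the same witnessing sequence $(c_n)_{n\geq 0}$ satisfies $c_n\leq d_L(s(a_n),s(a))$. Together with $(c_n)_{n\geq 0}\uparrow 1$, this is exactly the definition of $\lim_{n\to\infty}s(a_n)=s(a)$ in $L$. Hence $s$ is $\rho_s$-continuous in $a$, and since $a$ was arbitrary, $s$ is $\rho_s$-continuous.

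I do not expect any real obstacle here; the entire argument is a straightforward unfolding of definitions, with Lemma \ref{l6.5}, (\ref{l6.5(3)}) doing the essential work. The only subtle point worth spelling out is the reduction $d_L(\ell,1)=\ell$ that identifies convergence of $(\rho_s(a_n,a))_{n\geq 0}$ to $1$ with the existence of the witnessing increasing sequence below $\rho_s(a_n,a)$; this is precisely the identification recorded in the Remark preceding Lemma \ref{l6.1}, and it is what makes the monotonicity inequality plug in directly.
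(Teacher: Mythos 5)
Your proof is correct and follows essentially the same route as the paper's: both arguments rest entirely on Lemma \ref{l6.5}, (\ref{l6.5(3)}), applied to the terms $\rho_s(a_n,a)\leq d_L(s(a_n),s(a))$. The only (cosmetic) difference is that you unfold the definition of similarity convergence explicitly via the witnessing sequence $(c_n)_{n\geq 0}\uparrow 1$ together with the identity $d_L(\ell,1)=\ell$, whereas the paper passes the inequality to the limit via Lemma \ref{l6.1} and Lemma \ref{lA}, (\ref{lA(1)}); your version is, if anything, the more carefully spelled out of the two.
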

\begin{proof}
Let $s:A\rightarrow L$ be an order-preserving type I state, $a\in A$ and $(a_n)_{n\geq 0}\subseteq A$ such that $a_n\stackrel{\textstyle \rho _s}{\textstyle \rightarrow }a$, that is $\lim _{n\rightarrow \infty }\rho _s(a_n,a)=1$. By Lemma \ref{l6.5}, (\ref{l6.5(3)}), for all $n\in \N $, $\rho _s(a_n,a)\leq d_L(s(a_n),s(a))$. By Lemma \ref{l6.1} and Lemma \ref{lA}, (\ref{lA(1)}), it follows that $1=\lim _{n\rightarrow \infty }d_L(s(a_n),s(a))=d_L(\lim _{n\rightarrow \infty }s(a_n),s(a))$, hence $\lim _{n\rightarrow \infty }s(a_n)=s(a)$, thus $s$ is $\rho _s$-continuous in $a$.\end{proof}

A residuated lattice $A$ is said to be {\em $\sigma $-complete} iff any sequence $(a_n)_{n\geq 0}\subseteq A$ has a supremum and an infimum in $A$. Notice that: $A$ is {\em $\sigma $-complete} iff any increasing sequence in $A$ has a supremum in $A$ and any decreasing sequence in $A$ has an infimum in $A$. This is easily shown, because, if the latter is verified, then, for any $(a_n)_{n\geq 0}\subseteq A$, if we consider the increasing sequence $(\bigvee _{k=0}^{n}a_k)_{n\geq 0}$, that has a supremum by the hypothesis, and the decreasing sequence $(\bigwedge _{k=0}^{n}a_k)_{n\geq 0}$, that has an infimum by the hypothesis, then $\bigvee _{n\geq 0}(\bigvee _{k=0}^{n}a_k)=\bigvee _{n\geq 0}a_n$ and $\bigwedge _{n\geq 0}(\bigwedge _{k=0}^{n}a_k)=\bigwedge _{n\geq 0}a_n$, which can easily be shown by the definition of the supremum and that of the infimum.

\begin{proposition}
Let $s:A\rightarrow L$ be a faithful order-preserving type I state, $A$ be $\rho _s$-complete and $L$ be $\sigma $-complete. Then $A$ is $\sigma $-complete and $s$ is $\uparrow $-continuous in $1$.
\label{p6.14}
\end{proposition}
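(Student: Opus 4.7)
The plan is to combine the $\sigma$-completeness of $L$ (to get candidates in $L$), Lemma \ref{l6.5}, (\ref{l6.5(4)}) (to translate monotone sequences in $A$ into something $\rho_s$-Cauchy), and the $\rho_s$-completeness of $A$ (to pull the limit back to $A$).

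First I would invoke the remark preceding Proposition \ref{p6.14}: to show $A$ is $\sigma $-complete it suffices to show that every increasing sequence in $A$ has a supremum and every decreasing sequence in $A$ has an infimum. Let me focus on the increasing case. Given $(a_n)_{n\geq 0}\subseteq A$ with $(a_n)_{n\geq 0}\uparrow $, since $s$ is order-preserving, $(s(a_n))_{n\geq 0}\uparrow $ in $L$; by $\sigma $-completeness of $L$ there exists $c=\bigvee _{n\geq 0}s(a_n)\in L$, and Lemma \ref{l6.2} gives $\lim _{n\rightarrow \infty }s(a_n)=c$. Then by Lemma \ref{l6.4} (applicable since $d_L$ is an $L$-equality by Lemma \ref{lA}, (\ref{lA(1)}), (\ref{lA(2)}), (\ref{lA(3)})), the sequence $(s(a_n))_{n\geq 0}$ is $d_L$-Cauchy, i.e.\ $\lim _{n,m\rightarrow \infty }d_L(s(a_n),s(a_m))=1$. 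Since $a_n$ and $a_m$ are comparable for all $n,m$, Lemma \ref{l6.5}, (\ref{l6.5(4)}) gives $\rho _s(a_n,a_m)=d_L(s(a_n),s(a_m))$, so $(a_n)_{n\geq 0}$ is $\rho _s$-Cauchy. By the $\rho _s$-completeness of $A$, there exists $a\in A$ with $a_n\stackrel{\textstyle \rho _s}{\textstyle \rightarrow }a$; by Remark \ref{r6.7}, such a limit is unique.

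Next I would verify $a=\bigvee _{n\geq 0}a_n$ using Lemma \ref{l6.8} twice. For any fixed $k$, the constant sequence $(a_k)_{n\geq k}$ $\rho _s$-converges to $a_k$ (trivially, since $\rho _s(a_k,a_k)=s(1)=1$), while the tail $(a_n)_{n\geq k}$ still $\rho _s$-converges to $a$, and $a_k\leq a_n$ for $n\geq k$; hence $a_k\leq a$, so $a$ is an upper bound. Conversely, if $b\in A$ satisfies $a_n\leq b$ for all $n$, then comparing with the constant sequence $b$ via Lemma \ref{l6.8} yields $a\leq b$, so $a$ is the least upper bound. The decreasing case is entirely symmetric (the key input, Lemma \ref{l6.5}, (\ref{l6.5(4)}), is symmetric in comparability), which finishes the $\sigma $-completeness of $A$.

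For the $\uparrow $-continuity of $s$ at $1$, I would take $(a_n)_{n\geq 0}\subseteq A$ with $a_n\uparrow 1$. By the construction above, this sequence is $\rho _s$-Cauchy with $\rho _s$-limit equal to its supremum, which is $1$. Since by Proposition \ref{p6.11} any order-preserving type I state is $\rho _s$-continuous, we obtain $\lim _{n\rightarrow \infty }s(a_n)=s(1)=1$, as required.

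The main obstacle is the identification of the $\rho _s$-limit with the lattice supremum; everything else is a routine assembly of the cited lemmas. The pivotal ingredients that make this work are that faithfulness turns $\rho _s$ into an $L$-equality (ensuring uniqueness of $\rho _s$-limits) and that Lemma \ref{l6.5}, (\ref{l6.5(4)}) gives equality (not just inequality) between $\rho _s$ and $d_L$ on comparable pairs, so that the Cauchy condition transfers from $L$ back to $A$ without loss.
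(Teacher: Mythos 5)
Your proof is correct and follows essentially the same route as the paper's: monotone sequence in $A$ $\rightarrow$ convergent (hence Cauchy) image in $L$ via $\sigma$-completeness $\rightarrow$ transfer to a $\rho_s$-Cauchy sequence via Lemma \ref{l6.5}, (\ref{l6.5(4)}) $\rightarrow$ $\rho_s$-limit via $\rho_s$-completeness $\rightarrow$ identification of that limit with the supremum via Lemma \ref{l6.8}. The only (harmless) differences are cosmetic: the paper identifies the limit with the supremum by passing to $a_n\vee a_k$ and invoking Lemma \ref{l6.3}, where you use the order-preservation clause of Lemma \ref{l6.8} directly, and for $\uparrow$-continuity in $1$ the paper computes $\rho_s(a_n,1)=s(a_n)$ explicitly where you cite Proposition \ref{p6.11}.
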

\begin{proof}
By Remark \ref{r6.7}, $\rho _s$ is an $L$-equality on $A$. Let $(a_n)_{n\geq 0}\subseteq A$ be such that $(a_n)_{n\geq 0}\uparrow $. Since $s$ is order-preserving, it follows that $(s(a_n))_{n\geq 0}\uparrow $ in $L$. Since $L$ is $\sigma $-complete, there exists $\bigvee _{n\geq 0}s(a_n)$ in $L$, thus $(s(a_n))_{n\geq 0}\uparrow \bigvee _{n\geq 0}s(a_n)$, therefore, by Lemma \ref{l6.2}, $(s(a_n))_{n\geq 0}$ is convergent in $L$, hence $(s(a_n))_{n\geq 0}$ is Cauchy. By Lemma \ref{l6.5}, (\ref{l6.5(4)}), for all $n,m\in \N $, $\rho _s(a_n,a_m)=d_L(s(a_n),s(a_m))$, thus $\lim _{n,m\rightarrow \infty }\rho _s(a_n,a_m)=\lim _{n,m\rightarrow \infty }d_L(s(a_n),s(a_m))=1$, so $(a_n)_{n\geq 0}$ is $\rho _s$-Cauchy. But $A$ is $\rho _s$-complete, therefore there exists $a\in A$ such that $a_n\stackrel{\textstyle \rho _s}{\textstyle \rightarrow }a$. Let $k\in \N $, arbitrary but fixed. By Lemma \ref{l6.8}, $a_n\vee a_k\stackrel{\textstyle \rho _s}{\textstyle \rightarrow }a\vee a_k$. Since $(a_n)_{n\geq 0}\uparrow $, we have that, for all $n\geq k$, $a_n\vee a_k=a_n$, and, since $a_n\stackrel{\textstyle \rho _s}{\textstyle \rightarrow }a$, we may conclude that $(a_n\vee a_k)_{n\geq 0}\stackrel{\textstyle \rho _s}{\textstyle \rightarrow }a$. By Lemma \ref{l6.3}, it follows that $a\vee a_k=a$, that is $a_k\leq a$. Thus $a_n\leq a$ for all $n\in \N $. Now let $b\in A$ such that, for all $n\in \N $, $a_n\leq b$, that is $a_n\vee b=b$. By Lemma \ref{l6.8}, it follows that $a_n\vee b\stackrel{\textstyle \rho _s}{\textstyle \rightarrow }a\vee b$, that is $b\stackrel{\textstyle \rho _s}{\textstyle \rightarrow }a\vee b$, that is $b=a\vee b$, thus $a\leq b$. Hence $\bigvee _{n\geq 0}a_n=a$. Analogously one can prove that any decreasing sequence in $A$ has an infimum in $A$. Therefore $A$ is $\sigma $-complete.

It remains to show that $s$ is $\uparrow $-continuous in $1$. Let $(a_n)_{n\geq 0}\subseteq A$ such that $a_n\uparrow 1$. By the above, there exists $a\in A$ such that $a_n\stackrel{\textstyle \rho _s}{\textstyle \rightarrow }a$ and $a_n\leq a$ for all $n\in \N $, thus $1=\bigvee _{n\geq 0}a_n\leq a$, hence $a=1$. So $a_n\stackrel{\textstyle \rho _s}{\textstyle \rightarrow }1$, that is $\lim _{n\rightarrow \infty }\rho _s(a_n,1)=1$. But, for all $n\in \N $, $\rho _s(a_n,1)=s(d_A(a_n,1))=s(a_n)$, as Lemma \ref{l2.2}, (\ref{l2.2(1)}) and (\ref{l2.2(2)}), shows. So $\lim _{n\rightarrow \infty }s(a_n)=1=s(1)$, hence $s$ is $\uparrow $-continuous in $1$.\end{proof}

\begin{remark}
Let $A$ be an MV-algebra, $L$ a $\sigma $-complete involutive residuated lattice and $s:A\rightarrow L$ a faithful order-preserving type I state such that $A$ is $\rho _s$-complete. Then, by Propositions \ref{p6.12} and \ref{p6.14}, $s$ is continuous. This way, Theorem 3.7 from \cite{ioanal} becomes a particular case of Proposition \ref{p6.14}.
\label{r6.15}
\end{remark}

In \cite{ioanal}, the author defines and studies the {\em metric completion} of an MV-algebra endowed with an MV-state. This is a version for MV-algebras of the metric completion of an $l$-group with a state (see \cite{good}). In the following, we shall analyse the way in which this construction can be generalized to the case of a residuated lattice $A$ endowed with an order-preserving type I state.

Throughout the rest of this section, $A$ and $L$ will be two residuated lattices such that $L$ is Cauchy-complete and $s:A\rightarrow L$ will be an order-preserving type I state. 

By Proposition \ref{p6.6}, $\rho _s$ is an $L$-similarity relation on $A$. Let us denote by ${\cal C}_s(A)$ the set of the $\rho _s$-Cauchy sequences in $A$ and let us define on ${\cal C}_s(A)$ the following binary operations: for all $\circ \in \{\vee ,\wedge ,\odot ,\rightarrow ,\leftrightarrow \}$, we define: for all $\underline{a}=(a_n)_{n\geq 0},\underline{b}=(b_n)_{n\geq 0}\in {\cal C}_s(A)$, $\underline{a}\circ \underline{b}=(a_n\circ b_n)_{n\geq 0}\in {\cal C}_s(A)$, because, by Lemma \ref{l6.5}, (\ref{l6.5(2)}) and Lemma \ref{l6.1}, $\lim _{n,m\rightarrow \infty }\rho _s(a_n\circ b_n,a_m\circ b_m)\geq (\lim _{n,m\rightarrow \infty }\rho _s(a_n,a_m))\odot (\lim _{n,m\rightarrow \infty }\rho _s(b_n,b_m))=1$, thus $\lim _{n,m\rightarrow \infty }\rho _s(a_n\circ b_n,a_m\circ b_m)=1$, so $(a_n\circ b_n)_{n\geq 0}$ is a $\rho _s$-Cauchy sequence in $A$. We denote $\underline{0}=(0)_{n\geq 0},\underline{1}=(1)_{n\geq 0}\in {\cal C}_s(A)$, as all constant sequences in $A$ are obviously $\rho _s$-Cauchy (see Lemma \ref{lA}, (\ref{lA(1)})). It is immediate that $({\cal C}_s(A),\vee ,\wedge ,\odot ,\rightarrow ,\underline{0},\underline{1})$ is a residuated lattice, whose biresiduum is $\leftrightarrow $ and whose negation is: for all $\underline{a}=(a_n)_{n\geq 0}\in {\cal C}_s(A)$, $\neg \, \underline{a}=\underline{a}\rightarrow \underline{0}=(a_n\rightarrow 0)_{n\geq 0}=(\neg \, a_n)_{n\geq 0}\in {\cal C}_s(A)$, as $\underline{a}\rightarrow \underline{0}\in {\cal C}_s(A)$.

Let $\underline{a}=(a_n)_{n\geq 0},\underline{b}=(b_n)_{n\geq 0}\in {\cal C}_s(A)$. By Lemma \ref{l6.5}, (\ref{l6.5(5)}) and Lemma \ref{l6.1}, for all $n,m\in \N $, $\rho _s(a_n,a_m)\odot \rho _s(b_n,b_m)\leq d_L(\rho _s(a_n,b_n),\rho _s(a_m,b_m))$, hence $\lim _{n,m\rightarrow \infty }d_L(\rho _s(a_n,b_n),\rho _s(a_m,b_m))=1$, thus the sequence $(\rho _s(a_n,b_n))_{n\geq 0}\subseteq L$ is Cauchy and hence convergent, since $L$ is Cauchy-complete.

Let us define on ${\cal C}_s(A)$ the following binary relation: $\sim \subseteq {\cal C}_s(A)\times {\cal C}_s(A)$, defined by: for all $\underline{a}=(a_n)_{n\geq 0},\underline{b}=(b_n)_{n\geq 0}\in {\cal C}_s(A)$, $\underline{a}\sim \underline{b}$ iff $\lim _{n\rightarrow \infty }\rho _s(a_n,b_n)=1$. $\rho _s$ is an $L$-similarity relation on $A$, hence, by applying Lemma \ref{l6.1}, we obtain that $\sim $ is an equivalence relation on ${\cal C}_s(A)$. Let us consider the quotient set $\tilde{A}_s:={\cal C}_s(A)/_{\sim }=\{\tilde{\underline{a}}|\underline{a}\in {\cal C}_s(A)\}$, where we denoted by $\tilde{\underline{a}}$ the equivalence class of a sequence $\underline{a}\in {\cal C}_s(A)$ with respect to $\sim $. Let us define on $\tilde{A}_s$ the following binary operations: for all $\circ \in \{\vee ,\wedge ,\odot ,\rightarrow ,\leftrightarrow \}$, we define: for all $\underline{a},\underline{b}\in {\cal C}_s(A)$, $\tilde{\underline{a}}\circ \tilde{\underline{b}}=\widetilde{\underline{a}\circ \underline{b}}\in \tilde{A}_s$. Let us prove that all of these operations are well defined. Let $\circ \in \{\vee ,\wedge ,\odot ,\rightarrow ,\leftrightarrow \}$ and let $\underline{a}=(a_n)_{n\geq 0},\underline{a^{\prime }}=(a^{\prime }_n)_{n\geq 0},\underline{b}=(b_n)_{n\geq 0},\underline{b^{\prime }}=(b^{\prime }_n)_{n\geq 0}\in {\cal C}_s(A)$ such that $\underline{a}\sim \underline{a^{\prime }}$ and $\underline{b}\sim \underline{b^{\prime }}$, that is: $\lim _{n\rightarrow \infty }\rho _s(a_n,a^{\prime }_n)=\lim _{n\rightarrow \infty }\rho _s(b_n,b^{\prime }_n)=1$. By Lemma \ref{l6.5}, (\ref{l6.5(2)}) and Lemma \ref{l6.1}, it follows that $\lim _{n\rightarrow \infty }\rho _s(a_n\circ b_n,a^{\prime }_n\circ b^{\prime }_n)=1$, that is $\underline{a}\circ \underline{b}\sim \underline{a^{\prime }}\circ \underline{b^{\prime }}$, that is $\widetilde{\underline{a}\circ \underline{b}}=\widetilde{\underline{a^{\prime }}\circ \underline{b^{\prime }}}$. So $\circ $ is well defined. Thus $\sim $ has become a congruence relation on the residuated lattice $({\cal C}_s(A),\vee ,\wedge ,\odot ,\rightarrow ,\underline{0},\underline{1})$, and the fact that residuated lattices form an equational class ensures us that $(\tilde{A}_s,\vee ,\wedge ,\odot ,\rightarrow ,\tilde{\underline{0}},\tilde{\underline{1}})$ is a residuated lattice, whose biresiduum is obviously $\leftrightarrow $ and whose negation is: for all $\underline{a}\in {\cal C}_s(A)$, $\neg \, \tilde{\underline{a}}=\tilde{\underline{a}}\rightarrow \tilde{\underline{0}}=\widetilde{\underline{a}\rightarrow \underline{0}}=\widetilde{\neg \, \underline{a}}\in \tilde{A}_s$. 

\begin{lemma}
If $L$ is involutive then $\tilde{A}_s$ is involutive.
\label{l6.16}
\end{lemma}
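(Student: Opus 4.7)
The plan is to verify directly that $\neg\,\neg\,\tilde{\underline{a}} = \tilde{\underline{a}}$ for every $\underline{a}=(a_n)_{n\geq 0}\in \mathcal{C}_s(A)$. Unwinding the negation on $\tilde{A}_s$ given just before the lemma, we have $\neg\,\neg\,\tilde{\underline{a}} = \widetilde{(\neg\,\neg\,a_n)_{n\geq 0}}$, so the claim reduces to the equivalence $(\neg\,\neg\,a_n)_{n\geq 0}\sim (a_n)_{n\geq 0}$, i.e. $\lim_{n\to\infty}\rho_s(\neg\,\neg\,a_n,a_n)=1$.

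The key observation is that, for each $n$, the two elements $a_n$ and $\neg\,\neg\,a_n$ are comparable: by Lemma~\ref{l2.3}, (\ref{l2.3(2)}), $a_n\leq \neg\,\neg\,a_n$. Hence Lemma~\ref{l6.5}, (\ref{l6.5(4)}) applies and gives
\[
\rho_s(\neg\,\neg\,a_n,a_n)=d_L(s(\neg\,\neg\,a_n),s(a_n)).
\]
Next I would compute $s(\neg\,\neg\,a_n)$. Since $s$ is an order-preserving type I state, Proposition~\ref{p3.6}, (\ref{p3.6(1)}) yields $s(\neg\,x)=\neg\,s(x)$ for every $x\in A$; applied twice, $s(\neg\,\neg\,a_n)=\neg\,\neg\,s(a_n)$. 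Now the hypothesis that $L$ is involutive is used exactly once, to conclude $\neg\,\neg\,s(a_n)=s(a_n)$, so $s(\neg\,\neg\,a_n)=s(a_n)$.

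Combining these steps, $\rho_s(\neg\,\neg\,a_n,a_n)=d_L(s(a_n),s(a_n))=1$ by Lemma~\ref{lA}, (\ref{lA(1)}), for every $n\in\N$. The sequence $(\rho_s(\neg\,\neg\,a_n,a_n))_{n\geq 0}$ is therefore constantly $1$ and trivially similarity-converges to $1$, which by definition of $\sim$ gives $(\neg\,\neg\,a_n)_{n\geq 0}\sim (a_n)_{n\geq 0}$, i.e. $\neg\,\neg\,\tilde{\underline{a}}=\tilde{\underline{a}}$. Since $\underline{a}$ was arbitrary, $\tilde{A}_s$ is involutive. There is no real obstacle here: once the definition of negation on $\tilde{A}_s$ is unwound, everything reduces via Lemma~\ref{l6.5}, (\ref{l6.5(4)}) to a pointwise equality that holds thanks to Proposition~\ref{p3.6}, (\ref{p3.6(1)}) and the involutivity of $L$, so no passage to the limit is actually needed.
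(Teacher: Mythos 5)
Your proof is correct and follows essentially the same route as the paper: both reduce the claim to the pointwise identity $\rho_s(a_n,\neg\,\neg\,a_n)=1$, obtained from $a_n\leq\neg\,\neg\,a_n$, the identity $s(\neg\,\neg\,a_n)=\neg\,\neg\,s(a_n)=s(a_n)$ (Proposition \ref{p3.6}, (\ref{p3.6(1)}) plus involutivity of $L$), and the evaluation of $s$ on the resulting implication. The only cosmetic difference is that you route the computation through Lemma \ref{l6.5}, (\ref{l6.5(4)}), whereas the paper computes $s(\neg\,\neg\,a\rightarrow a)$ directly via Proposition \ref{p3.3}, (\ref{p3.3(2)}); these are the same calculation.
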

\begin{proof}
By Lemma \ref{l2.3}, (\ref{l2.3(2)}), Proposition \ref{p3.3}, (\ref{p3.3(2)}), Proposition \ref{p3.6}, (\ref{p3.6(1)}), the fact that $L$ is involutive and Lemma \ref{l2.2}, (\ref{l2.2(3)}), for all $a\in A$, $s(\neg \, \neg \, a\rightarrow a)=s(\neg \, \neg \, a)\rightarrow s(a)=\neg \, \neg \, s(a)\rightarrow s(a)=s(a)\rightarrow s(a)=1$ and thus $\rho _s(a,\neg \, \neg \, a)=s(d_A(a,\neg \, \neg \, a))=s(\neg \, \neg \, a\rightarrow a)=1$. Thus, for all $a\in A$, $\rho _s(a,\neg \, \neg \, a)=s(d_A(a,\neg \, \neg \, a))=s()$. Let $\underline{a}=(a_n)_{n\geq 0}\in {\cal C}_s(A)$ and let us consider the sequence $\neg \, \neg \, \underline{a}=(\neg \, \neg \, a_n)_{n\geq 0}\in {\cal C}_s(A)$. For all $n\in \N $, $\rho _s(a_n,\neg \, \neg \, a_n)=1$, hence $\neg \, \neg \, \underline{a}\sim \underline{a}$, that is $\widetilde{\neg \, \neg \, \underline{a}}=\tilde{\underline{a}}$, that is $\neg \, \neg \, \tilde{\underline{a}}=\tilde{\underline{a}}$.\end{proof}

For all $a\in A$, let us denote in this paragraph the constant sequence $\underline{a}=(a)_{n\geq 0}\in {\cal C}_s(A)$. The function $\psi _s:A\rightarrow {\cal C}_s(A)$, defined by $\psi _s(a)=\underline{a}$ for all $a\in A$, is obviously an injective residuated lattice morphism. By composing the canonical projection from ${\cal C}_s(A)$ to the quotient residuated lattice $\tilde{A}_s$ with the morphism $\psi _s$, we obtain the residuated lattice morphism $\varphi _s:A\rightarrow \tilde{A}_s$, defined by $\varphi _s(a)=\tilde{\underline{a}}$ for all $a\in A$.

\begin{lemma}
Let $\underline{a}=(a_n)_{n\geq 0},\underline{b}=(b_n)_{n\geq 0},\underline{c}=(c_n)_{n\geq 0},\underline{d}=(d_n)_{n\geq 0}\in {\cal C}_s(A)$. If $\underline{a}\sim \underline{c}$ and $\underline{b}\sim \underline{d}$, then $\lim _{n\rightarrow \infty }\rho _s(a_n,b_n)=\lim _{n\rightarrow \infty }\rho _s(c_n,d_n)$.
\label{l6.17}
\end{lemma}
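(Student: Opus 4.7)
The plan is to exploit the key inequality established in Lemma \ref{l6.5}, (\ref{l6.5(5)}), which states that $\rho_s(a,x)\odot \rho_s(b,y)\leq d_L(\rho_s(a,b),\rho_s(x,y))$. This is exactly the ``uniform continuity'' type property of $\rho_s$ that lets one transport $\sim$-equivalence through the limit of pairwise biresiduum values. First I would instantiate this inequality pointwise: for each $n\in \N$, taking $(a,x,b,y)=(a_n,c_n,b_n,d_n)$ gives
\[\rho_s(a_n,c_n)\odot \rho_s(b_n,d_n)\leq d_L(\rho_s(a_n,b_n),\rho_s(c_n,d_n)).\]

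Next I would pass to limits. The hypotheses $\underline{a}\sim \underline{c}$ and $\underline{b}\sim \underline{d}$ mean precisely $\lim_{n\to\infty}\rho_s(a_n,c_n)=\lim_{n\to\infty}\rho_s(b_n,d_n)=1$. By Lemma \ref{l6.1}, the limit of the product on the left exists and equals $1\odot 1=1$, and the monotonicity clause of Lemma \ref{l6.1} (applied to the pointwise inequality) yields $\lim_{n\to\infty}d_L(\rho_s(a_n,b_n),\rho_s(c_n,d_n))=1$.

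To conclude, recall from the paragraph preceding Lemma \ref{l6.16} that both sequences $(\rho_s(a_n,b_n))_{n\geq 0}$ and $(\rho_s(c_n,d_n))_{n\geq 0}$ are Cauchy in $L$, and since $L$ is Cauchy-complete they have limits, say $\alpha$ and $\beta$ respectively. Applying Lemma \ref{l6.1} once more, now to the binary operation $\leftrightarrow$, we get $d_L(\alpha,\beta)=\lim_{n\to\infty}d_L(\rho_s(a_n,b_n),\rho_s(c_n,d_n))=1$. Finally, by Lemma \ref{lA}, (\ref{lA(1)}), $d_L(\alpha,\beta)=1$ forces $\alpha=\beta$, which is the desired equality.

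There is no real obstacle here: the argument is a routine ``uniform continuity plus take limits'' proof. The only subtle point worth double-checking is the existence of the two limits $\alpha$ and $\beta$ before invoking Lemma \ref{l6.1} on $\leftrightarrow$, since that lemma is phrased for convergent sequences; this is guaranteed by Cauchy-completeness of $L$ together with the Cauchy property of $(\rho_s(a_n,b_n))_{n\geq 0}$ and $(\rho_s(c_n,d_n))_{n\geq 0}$ already established above via Lemma \ref{l6.5}, (\ref{l6.5(5)}).
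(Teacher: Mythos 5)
Your proof is correct, but it follows a different route from the paper's. The paper uses the transitivity of the $L$-similarity relation $\rho_s$ (Proposition \ref{p6.6}) in the chained form $\rho_s(c_n,a_n)\odot\rho_s(a_n,b_n)\odot\rho_s(b_n,d_n)\leq\rho_s(c_n,d_n)$, passes to the limit via Lemma \ref{l6.1} to obtain $\lim_{n\rightarrow\infty}\rho_s(a_n,b_n)\leq\lim_{n\rightarrow\infty}\rho_s(c_n,d_n)$, and then gets the reverse inequality by symmetry; equality follows from the two one-sided estimates. You instead invoke the compatibility of $\rho_s$ with the biresiduum, Lemma \ref{l6.5}, (\ref{l6.5(5)}), to bound $d_L(\rho_s(a_n,b_n),\rho_s(c_n,d_n))$ from below by a product tending to $1$, and conclude $d_L(\alpha,\beta)=1$ hence $\alpha=\beta$ by Lemma \ref{lA}, (\ref{lA(1)}). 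Your version gets the equality in a single two-sided estimate and reuses verbatim the computation the paper already performs just before this lemma to show $(\rho_s(a_n,b_n))_{n\geq 0}$ is Cauchy; the paper's version avoids any appeal to Cauchy-completeness of $L$ beyond what is needed to know the two limits exist (which both arguments require). You were right to flag that Lemma \ref{l6.1} can only be applied to the pointwise inequality once both sides are known to converge; your appeal to the convergence of $(\rho_s(a_n,b_n))_{n\geq 0}$ and $(\rho_s(c_n,d_n))_{n\geq 0}$, established earlier from Lemma \ref{l6.5}, (\ref{l6.5(5)}) and Cauchy-completeness, closes that gap, though for cleanliness this should be stated before, not after, the claim that the right-hand side has limit $1$.
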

\begin{proof}
By the fact that $\rho _s$ is an $L$-similarity relation on $A$ and Lemma \ref{l2.2}, (\ref{l2.2nenum2}), we have that: for all $n\in \N $, $\rho _s(c_n,a_n)\odot \rho _s(a_n,b_n)\odot \rho _s(b_n,d_n)\leq \rho _s(c_n,d_n)$. By Lemma \ref{lA}, (\ref{lA(2)}) and Lemma \ref{l6.1}, it follows that $1\odot (\lim _{n\rightarrow \infty }\rho _s(a_n,b_n))\odot 1\leq \lim _{n\rightarrow \infty }\rho _s(b_n,d_n)$, hence $\lim _{n\rightarrow \infty }\rho _s(a_n,b_n)\leq \lim _{n\rightarrow \infty }\rho _s(c_n,d_n)$. The converse inequality results in a similar way.\end{proof}

By Lemma \ref{l6.17}, we can define the function $\tilde{\rho _s}:\tilde{A}_s\times \tilde{A}_s\rightarrow L$, by: for all $\underline{a}=(a_n)_{n\geq 0},\underline{b}=(b_n)_{n\geq 0}\in {\cal C}_s(A)$, $\tilde{\rho _s}(\tilde{\underline{a}},\tilde{\underline{b}})=\lim _{n\rightarrow \infty }\rho _s(a_n,b_n)$.

\begin{proposition}
$\tilde{\rho _s}$ is an $L$-similarity relation on $\tilde{A}_s$.
\label{p6.18}
\end{proposition}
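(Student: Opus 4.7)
The plan is to verify the three defining properties of an $L$-similarity relation by lifting the corresponding facts from $\rho_s$, using the continuity of the residuated operations under similarity limits in $L$ (Lemma \ref{l6.1}). Before any of this, I would remark that $\tilde{\rho_s}$ is well-defined precisely by Lemma \ref{l6.17}, which was established specifically to make the definition $\tilde{\rho_s}(\tilde{\underline{a}},\tilde{\underline{b}})=\lim_{n\to\infty}\rho_s(a_n,b_n)$ independent of representatives, and the limit exists in $L$ because $(\rho_s(a_n,b_n))_{n\geq 0}$ was shown to be Cauchy in the Cauchy-complete $L$ just before the statement.

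Fix $\underline{a}=(a_n),\underline{b}=(b_n),\underline{c}=(c_n)\in{\cal C}_s(A)$. For reflexivity, for each $n\in\N$ we have $\rho_s(a_n,a_n)=s(d_A(a_n,a_n))=s(1)=1$ by Lemma \ref{lA}, (\ref{lA(1)}), so $\tilde{\rho_s}(\tilde{\underline{a}},\tilde{\underline{a}})=\lim_{n\to\infty}1=1$. For symmetry, Lemma \ref{lA}, (\ref{lA(2)}) gives $\rho_s(a_n,b_n)=s(d_A(a_n,b_n))=s(d_A(b_n,a_n))=\rho_s(b_n,a_n)$ termwise, so the limits agree and $\tilde{\rho_s}(\tilde{\underline{a}},\tilde{\underline{b}})=\tilde{\rho_s}(\tilde{\underline{b}},\tilde{\underline{a}})$.

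For the transitivity inequality, I would invoke Proposition \ref{p6.6}, which states that $\rho_s$ itself is an $L$-similarity relation on $A$, so for every $n\in\N$ we have
\[
\rho_s(a_n,b_n)\odot\rho_s(b_n,c_n)\leq\rho_s(a_n,c_n).
\]
Passing to the limit and using Lemma \ref{l6.1} (continuity of $\odot$ with respect to similarity convergence in $L$, together with preservation of the order by similarity limits), we get
\[
\tilde{\rho_s}(\tilde{\underline{a}},\tilde{\underline{b}})\odot\tilde{\rho_s}(\tilde{\underline{b}},\tilde{\underline{c}})=\lim_{n\to\infty}\bigl(\rho_s(a_n,b_n)\odot\rho_s(b_n,c_n)\bigr)\leq\lim_{n\to\infty}\rho_s(a_n,c_n)=\tilde{\rho_s}(\tilde{\underline{a}},\tilde{\underline{c}}),
\]
which completes the verification.

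The proof is essentially a routine transfer of properties from the pointwise level to the quotient, so no step presents a serious obstacle; the only subtlety is conceptual, namely making sure the limits appearing in the definition of $\tilde{\rho_s}$ actually exist (which is guaranteed by Cauchy-completeness of $L$) and that they respect $\odot$ (which is Lemma \ref{l6.1}). Everything else reduces termwise to the already-proven fact that $\rho_s$ is an $L$-similarity on $A$.
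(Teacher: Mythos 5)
Your proof is correct and follows essentially the same route as the paper: reflexivity and symmetry are transferred termwise (the paper simply calls these immediate), and transitivity is obtained exactly as in the paper by taking the pointwise inequality $\rho_s(a_n,b_n)\odot\rho_s(b_n,c_n)\leq\rho_s(a_n,c_n)$ from Proposition \ref{p6.6} and passing to the limit via Lemma \ref{l6.1}. Your additional remarks on well-definedness via Lemma \ref{l6.17} and Cauchy-completeness of $L$ are consistent with the discussion the paper places just before the proposition.
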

\begin{proof}
It is immediate that $\tilde{\rho _s}$ is reflexive and symmetric. In order to prove that it is transitive, let us consider $\underline{a}=(a_n)_{n\geq 0},\underline{b}=(b_n)_{n\geq 0},\underline{c}=(c_n)_{n\geq 0}\in {\cal C}_s(A)$. Then, by the fact that $\rho _s$ is an $L$-similarity relation on $A$, it follows that, for all $n\in \N $, $\rho _s(a_n,b_n)\odot \rho _s(b_n,c_n)\leq \rho _s(a_n,c_n)$. By applying Lemma \ref{l6.1}, we obtain: $\tilde{\rho _s}(\tilde{\underline{a}},\tilde{\underline{b}})\odot \tilde{\rho _s}(\tilde{\underline{b}},\tilde{\underline{c}})=\lim _{n\rightarrow \infty }(\rho _s(a_n,b_n)\odot \rho _s(b_n,c_n))\leq \tilde{\rho _s}(\tilde{\underline{a}},\tilde{\underline{c}})$.\end{proof}

\begin{lemma}
Let $\underline{a}=(a_n)_{n\geq 0},\underline{b}=(b_n)_{n\geq 0}\in {\cal C}_s(A)$. If $\underline{a}\sim \underline{b}$, then $\lim _{n\rightarrow \infty }s(a_n)=\lim _{n\rightarrow \infty }s(b_n)$.
\label{l6.19}
\end{lemma}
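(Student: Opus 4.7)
The plan is to first establish that both sequences $(s(a_n))_{n\geq 0}$ and $(s(b_n))_{n\geq 0}$ actually converge in $L$, and then compare their limits via the fact that $\rho_s$ dominates the biresiduum on the image.

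First, I would show that $(s(a_n))_{n\geq 0}$ is Cauchy in $L$. Since $\underline{a}\in {\cal C}_s(A)$, we have $\lim_{n,m\to\infty}\rho_s(a_n,a_m)=1$. By Lemma \ref{l6.5}, (\ref{l6.5(3)}), for all $n,m\in \N$, $\rho_s(a_n,a_m)\leq d_L(s(a_n),s(a_m))$, so Lemma \ref{l6.1} yields $\lim_{n,m\to\infty} d_L(s(a_n),s(a_m))=1$; i.e., $(s(a_n))_{n\geq 0}$ is Cauchy in $L$. Since $L$ is Cauchy-complete, there is $\alpha\in L$ with $\lim_{n\to\infty} s(a_n)=\alpha$. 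The same argument gives $\beta\in L$ with $\lim_{n\to\infty}s(b_n)=\beta$.

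Second, I would compare $\alpha$ and $\beta$ using the hypothesis. Again by Lemma \ref{l6.5}, (\ref{l6.5(3)}), for every $n\in\N$ we have $\rho_s(a_n,b_n)\leq d_L(s(a_n),s(b_n))$. The hypothesis $\underline{a}\sim \underline{b}$ means $\lim_{n\to\infty}\rho_s(a_n,b_n)=1$, while Lemma \ref{l6.1} applied to the operation $\leftrightarrow$ gives $\lim_{n\to\infty} d_L(s(a_n),s(b_n))= d_L(\alpha,\beta)$. Passing to limits in the above inequality via Lemma \ref{l6.1} yields $1\leq d_L(\alpha,\beta)$, hence $d_L(\alpha,\beta)=1$. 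By Lemma \ref{lA}, (\ref{lA(1)}), this forces $\alpha=\beta$, which is the desired equality $\lim_{n\to\infty}s(a_n)=\lim_{n\to\infty}s(b_n)$.

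There is no real obstacle here: everything reduces to applying Lemma \ref{l6.5}, (\ref{l6.5(3)}) (which bounds $\rho_s$ below $d_L\circ(s\times s)$), the continuity of the residuated lattice operations under similarity convergence (Lemma \ref{l6.1}), and the Cauchy-completeness of $L$. The only point requiring slight care is to keep track that the convergence used in two different places is the same similarity convergence in $L$, and that the constants $\alpha$ and $\beta$ obtained from Cauchy-completeness are legitimate limits to which Lemma \ref{l6.1} can be applied.
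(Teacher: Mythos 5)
Your proof is correct and follows essentially the same route as the paper: both rest on Lemma \ref{l6.5}, (\ref{l6.5(3)}) to bound $\rho_s(a_n,b_n)$ by $d_L(s(a_n),s(b_n))$, pass to the limit via Lemma \ref{l6.1}, and conclude with Lemma \ref{lA}, (\ref{lA(1)}). The only difference is that you explicitly verify (via Cauchy-completeness of $L$) that $\lim_{n\rightarrow \infty}s(a_n)$ and $\lim_{n\rightarrow \infty}s(b_n)$ exist, a point the paper leaves implicit; this is a harmless and arguably welcome addition rather than a different argument.
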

\begin{proof}
By Lemma \ref{l6.5}, (\ref{l6.5(3)}), for all $n\in \N $, $\rho _s(a_n,b_n)\leq d_L(s(a_n),s(b_n))$. By Lemma \ref{l6.1} and the fact that $\lim _{n\rightarrow \infty }\rho _s(a_n,b_n)=1$, we have: $d_L(\lim _{n\rightarrow \infty }s(a_n),\lim _{n\rightarrow \infty }s(b_n))=\lim _{n\rightarrow \infty }d_L(s(a_n),s(b_n))=1$. By Lemma \ref{lA}, (\ref{lA(1)}), we get: $\lim _{n\rightarrow \infty }s(a_n)=\lim _{n\rightarrow \infty }s(b_n)$.\end{proof}

Lemma \ref{l6.19} allows us to define the function $\tilde{s}:\tilde{A}_s\rightarrow L$, for all $\underline{a}=(a_n)_{n\geq 0}\in {\cal C}_s(A)$, $\tilde{s}(\tilde{\underline{a}})=\lim _{n\rightarrow \infty }s(a_n)$.

\begin{proposition}
$\tilde{s}$ is a faithful order-preserving type I state.
\label{p6.20}
\end{proposition}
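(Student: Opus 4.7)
The plan is to verify, in order: the boundary conditions $\tilde{s}(\tilde{\underline{0}})=0$ and $\tilde{s}(\tilde{\underline{1}})=1$, condition (\ref{p3.3(3)}) of Proposition \ref{p3.3} for $\tilde{s}$, order-preservation, and faithfulness. A preliminary observation that will be used throughout is that whenever $\underline{a}=(a_n)_{n\geq 0}\in {\cal C}_s(A)$, the sequence $(s(a_n))_{n\geq 0}\subseteq L$ is Cauchy: indeed, by Lemma \ref{l6.5}, (\ref{l6.5(3)}), $\rho _s(a_n,a_m)\leq d_L(s(a_n),s(a_m))$, so $\rho _s$-Cauchiness of $\underline{a}$ gives $d_L$-Cauchiness of $(s(a_n))_{n\geq 0}$, which converges because $L$ is Cauchy-complete. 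Thus $\tilde{s}(\tilde{\underline{a}})=\lim _{n\rightarrow \infty }s(a_n)$ always exists in $L$, and this is exactly what justifies all limit manipulations below via Lemma \ref{l6.1}.

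The boundary conditions are trivial: the constant sequences $\underline{0}$ and $\underline{1}$ give $\tilde{s}(\tilde{\underline{0}})=\lim _{n\rightarrow \infty }s(0)=0$ and $\tilde{s}(\tilde{\underline{1}})=\lim _{n\rightarrow \infty }s(1)=1$. For the type I property, I would verify (\ref{p3.3(3)}) directly: given $\underline{a}=(a_n)_{n\geq 0},\underline{b}=(b_n)_{n\geq 0}\in {\cal C}_s(A)$, use that $s$ itself is a type I state to write $s(a_n\rightarrow b_n)=s(a_n)\rightarrow s(a_n\wedge b_n)$ for every $n$, then pass to the limit using Lemma \ref{l6.1} (which lets the limit pass through $\rightarrow $) to obtain
\[
\tilde{s}(\tilde{\underline{a}}\rightarrow \tilde{\underline{b}})=\lim _{n\rightarrow \infty }s(a_n\rightarrow b_n)=\Bigl(\lim _{n\rightarrow \infty }s(a_n)\Bigr)\rightarrow \Bigl(\lim _{n\rightarrow \infty }s(a_n\wedge b_n)\Bigr)=\tilde{s}(\tilde{\underline{a}})\rightarrow \tilde{s}(\tilde{\underline{a}}\wedge \tilde{\underline{b}}).
\]

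For order-preservation, I would observe that $\tilde{\underline{a}}\leq \tilde{\underline{b}}$ is equivalent to $\tilde{\underline{a}}\rightarrow \tilde{\underline{b}}=\tilde{\underline{1}}$, i.e.\ $\lim _{n\rightarrow \infty }\rho _s(a_n\rightarrow b_n,1)=1$, which by Lemma \ref{l2.2}, (\ref{l2.2(1)})--(\ref{l2.2(2)}), simplifies to $\lim _{n\rightarrow \infty }s(a_n\rightarrow b_n)=1$. Combining this with the inequality $s(a_n\rightarrow b_n)\leq s(a_n)\rightarrow s(b_n)$ from Proposition \ref{p3.8}, (\ref{p3.8(3)}), and passing to the limit via Lemma \ref{l6.1}, yields $\tilde{s}(\tilde{\underline{a}})\rightarrow \tilde{s}(\tilde{\underline{b}})=1$, whence $\tilde{s}(\tilde{\underline{a}})\leq \tilde{s}(\tilde{\underline{b}})$ by Lemma \ref{l2.2}, (\ref{l2.2(3)}).

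Faithfulness is the easiest step: if $\tilde{s}(\tilde{\underline{a}})=1$, then $\lim _{n\rightarrow \infty }s(a_n)=1$. Since $a_n\leq 1$, we have $d_A(a_n,1)=1\rightarrow a_n=a_n$, so $\rho _s(a_n,1)=s(a_n)$, and thus $\lim _{n\rightarrow \infty }\rho _s(a_n,1)=1$, meaning $\underline{a}\sim \underline{1}$, i.e.\ $\tilde{\underline{a}}=\tilde{\underline{1}}$. The only mild difficulty in the whole argument is the bookkeeping required to justify that all the limits involved (and in particular the ones obtained after applying $\rightarrow $ componentwise) actually exist in $L$; this is handled uniformly by the preliminary observation together with Lemma \ref{l6.1}, which guarantees that the residuated lattice operations are continuous for the similarity convergence in $L$.
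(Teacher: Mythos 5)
Your proposal is correct and follows essentially the same route as the paper: the boundary values, the verification of condition (\ref{p3.3(3)}) by passing the pointwise identity $s(a_n\rightarrow b_n)=s(a_n)\rightarrow s(a_n\wedge b_n)$ to the limit via Lemma \ref{l6.1}, and the faithfulness argument via $\rho_s(a_n,1)=s(a_n)$ are all exactly the paper's proof. The only (harmless) difference is that for order-preservation the paper simply invokes Lemma \ref{l6.1}, whereas you spell out a slightly longer but valid argument through $\tilde{\underline{a}}\rightarrow\tilde{\underline{b}}=\tilde{\underline{1}}$ and Proposition \ref{p3.8}, (\ref{p3.8(3)}); your preliminary observation that $(s(a_n))_{n\geq 0}$ is Cauchy, hence convergent, makes explicit a point the paper leaves implicit.
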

\begin{proof}
Obviously, $\tilde{s}(\tilde{\underline{0}})=0$ and $\tilde{s}(\tilde{\underline{1}})=1$. By Lemma \ref{l6.1}, $\tilde{s}$ is an order-preserving function.

Now let $\underline{a}=(a_n)_{n\geq 0},(b_n)_{n\geq 0}\in {\cal C}_s(A)$. Then, by Proposition \ref{p3.3}, (\ref{p3.3(3)}) and Lemma \ref{l6.1}, $\tilde{s}(\tilde{\underline{a}}\rightarrow \tilde{\underline{b}})=\lim _{n\rightarrow \infty }s(a_n\rightarrow b_n)=\lim _{n\rightarrow \infty }(s(a_n)\rightarrow s(a_n\wedge b_n))=(\lim _{n\rightarrow \infty }s(a_n))\rightarrow (\lim _{n\rightarrow \infty }s(a_n\wedge b_n))=\tilde{s}(\tilde{\underline{a}})\rightarrow \tilde{s}(\widetilde{\underline{a}\wedge \underline{b}})=\tilde{s}(\tilde{\underline{a}})\rightarrow \tilde{s}(\tilde{\underline{a}}\wedge \tilde{\underline{b}})$. Thus, by Proposition \ref{p3.3}, (\ref{p3.3(3)}), $\tilde{s}$ is a type I state. If $\tilde{s}(\tilde{\underline{a}})=1$, then, by Lemma \ref{l2.2}, (\ref{l2.2(1)}) and (\ref{l2.2(2)}), $\lim _{n\rightarrow \infty }\rho _s(a_n,1)=\lim _{n\rightarrow \infty }s(d_A(a_n,1))=\lim _{n\rightarrow \infty }s(a_n)=1$, so $\underline{a}\sim \underline{1}$, that is $\tilde{\underline{a}}=\tilde{\underline{1}}$. Hence $\tilde{s}$ is faithful.\end{proof}

The following theorem collects the main properties of $\tilde{A}_s$, $\tilde{\rho _s}$ and $\tilde{s}$.

\begin{theorem}
Let $A$ and $L$ be two residuated lattices, such that $L$ is Cauchy-complete, and $s:A\rightarrow L$ an order-preserving type I state. Then:

\begin{enumerate}
\item\label{t6.2(1)} $\tilde{A}_s$ is a residuated lattice; if $L$ is involutive then $\tilde{A}_s$ is also involutive;
\item\label{t6.2(2)} $\tilde{s}$ is a faithful order-preserving type I state;
\item\label{t6.2(4)} $\varphi _s$ is a residuated lattice morphism and $\tilde{s}\circ \varphi _s=s$;
\item\label{t6.2(5)} $\varphi _s$ is injective iff $s$ is faithful;
\item\label{t6.2(6)} $\tilde{\rho _s}=\rho _{\tilde{s}}$;
\item\label{t6.2(7)} for any $(a_n)_{n\geq 0}\subseteq A$ and $a\in A$, if $a_n\stackrel{\textstyle \rho _s}{\textstyle \rightarrow }a$, then $\varphi _s(a_n)\stackrel{\textstyle \tilde{\rho _s}}{\textstyle \rightarrow }\varphi _s(a)$;
\item\label{t6.2(8)} for any residuated lattice $C$, any faithful order-preserving type I state $m:C\rightarrow L$ such that $C$ is $\rho _m$-complete, and any residuated lattice morphism $f:A\rightarrow C$ such that $m\circ f=s$, there exists a residuated lattice morphism $\tilde{f}:\tilde{A}_s\rightarrow C$ such that $m\circ \tilde{f}=\tilde{s}$ and $\tilde{f}\circ \varphi _s=f$.
\end{enumerate}
\label{t6.2}
\end{theorem}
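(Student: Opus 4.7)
My plan is as follows. Parts (\ref{t6.2(1)}) and (\ref{t6.2(2)}) are essentially in hand: (\ref{t6.2(1)}) is the construction preceding the theorem (quotient of the residuated lattice $\mathcal{C}_s(A)$ by the congruence $\sim$) together with Lemma \ref{l6.16}, and (\ref{t6.2(2)}) is exactly Proposition \ref{p6.20}. Part (\ref{t6.2(4)}) follows because $\varphi_s$ is the composition of the evident morphism $\psi_s$ with the canonical projection $\mathcal{C}_s(A)\rightarrow \tilde{A}_s$, and $\tilde{s}(\varphi_s(a))=\tilde{s}(\tilde{\underline{a}})=\lim_{n\rightarrow\infty}s(a)=s(a)$. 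For (\ref{t6.2(6)}), I would unwrap the definitions: $\rho_{\tilde{s}}(\tilde{\underline{a}},\tilde{\underline{b}})=\tilde{s}(d_{\tilde{A}_s}(\tilde{\underline{a}},\tilde{\underline{b}}))=\tilde{s}(\widetilde{d_A(\underline{a},\underline{b})})=\lim_{n\rightarrow\infty}s(d_A(a_n,b_n))=\lim_{n\rightarrow\infty}\rho_s(a_n,b_n)=\tilde{\rho_s}(\tilde{\underline{a}},\tilde{\underline{b}})$. For (\ref{t6.2(5)}), I would use the identity $\rho_s(a,1)=s(d_A(a,1))=s(a)$ (from Lemma \ref{l2.2}, (\ref{l2.2(1)}) and (\ref{l2.2(2)})): if $s$ is faithful and $\varphi_s(a)=\varphi_s(b)$, then $s(d_A(a,b))=1$ forces $d_A(a,b)=1$ hence $a=b$; conversely, if $\varphi_s$ is injective and $s(a)=1$, then $\rho_s(a,1)=1$ shows $\varphi_s(a)=\varphi_s(1)$, so $a=1$. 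For (\ref{t6.2(7)}), since $\underline{a_n}$ and $\underline{a}$ are constant sequences, $\tilde{\rho_s}(\varphi_s(a_n),\varphi_s(a))=\lim_{k\rightarrow\infty}\rho_s(a_n,a)=\rho_s(a_n,a)$, which tends to $1$ by hypothesis.

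The main obstacle is the universal property (\ref{t6.2(8)}). The key observation is that, because $f$ is a residuated lattice morphism and $m\circ f=s$, for every $a,b\in A$ we have
\[\rho_m(f(a),f(b))=m(d_C(f(a),f(b)))=m(f(d_A(a,b)))=s(d_A(a,b))=\rho_s(a,b).\]
Hence any $\rho_s$-Cauchy sequence $(a_n)_{n\geq 0}\subseteq A$ is sent by $f$ to a $\rho_m$-Cauchy sequence in $C$, which is $\rho_m$-convergent since $C$ is $\rho_m$-complete. Because $m$ is faithful, $\rho_m$ is an $L$-equality by Remark \ref{r6.7}, so the $\rho_m$-limit is unique by Lemma \ref{l6.3}. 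This lets me define $\tilde{f}:\tilde{A}_s\rightarrow C$ by $\tilde{f}(\tilde{\underline{a}})=\rho_m\text{-}\lim_{n\rightarrow\infty}f(a_n)$, with the same identity yielding well-definedness: if $\underline{a}\sim\underline{b}$ then $\lim_{n\rightarrow\infty}\rho_m(f(a_n),f(b_n))=\lim_{n\rightarrow\infty}\rho_s(a_n,b_n)=1$, and the standard three-term estimate together with Lemma \ref{l6.3} forces the two $\rho_m$-limits to coincide.

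It then remains to check that $\tilde{f}$ has the required properties. That $\tilde{f}$ is a residuated lattice morphism is obtained by invoking Lemma \ref{l6.8} for $m$ (which is a faithful order-preserving type I state on $C$): for $\circ\in\{\vee,\wedge,\odot,\rightarrow\}$,
\[\tilde{f}(\tilde{\underline{a}}\circ\tilde{\underline{b}})=\rho_m\text{-}\lim_{n\rightarrow\infty}f(a_n\circ b_n)=\rho_m\text{-}\lim_{n\rightarrow\infty}(f(a_n)\circ f(b_n))=\tilde{f}(\tilde{\underline{a}})\circ\tilde{f}(\tilde{\underline{b}}),\]
while $\tilde{f}(\tilde{\underline{0}})=0$ and $\tilde{f}(\tilde{\underline{1}})=1$ are immediate from constant sequences. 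Evaluating on constant sequences also yields $\tilde{f}\circ\varphi_s=f$. Finally, applying Proposition \ref{p6.11} to $m$ (so that $m$ is $\rho_m$-continuous on $C$) gives
\[m(\tilde{f}(\tilde{\underline{a}}))=m\bigl(\rho_m\text{-}\lim_{n\rightarrow\infty}f(a_n)\bigr)=\lim_{n\rightarrow\infty}m(f(a_n))=\lim_{n\rightarrow\infty}s(a_n)=\tilde{s}(\tilde{\underline{a}}),\]
which completes (\ref{t6.2(8)}).
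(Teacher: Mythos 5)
Your proposal is correct and follows essentially the same route as the paper: the same reduction of parts (i)--(vii) to the preceding construction and lemmas, and for (viii) the same key identity $\rho_m(f(a),f(b))=\rho_s(a,b)$, the same definition of $\tilde{f}$ as the $\rho_m$-limit of $(f(a_n))_{n\geq 0}$, and the same use of Lemmas \ref{l6.3} and \ref{l6.8}. The only (harmless) variations are cosmetic: you verify (v) directly rather than via the kernel criterion, you establish well-definedness of $\tilde{f}$ by the three-term estimate instead of the auxiliary congruence $\approx$, and you obtain $m\circ\tilde{f}=\tilde{s}$ from the $\rho_m$-continuity of $m$ (Proposition \ref{p6.11}) where the paper invokes Lemma \ref{l6.19}.
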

\begin{proof}
\noindent (\ref{t6.2(1)}) This is Lemma \ref{l6.16}.

\noindent (\ref{t6.2(2)}) This is Proposition \ref{p6.20}.

\noindent (\ref{t6.2(4)}) We know that $\varphi _s$ is a residuated lattice morphism. Let $a\in A$ and $\underline{a}=(a)_{n\geq 0}$. $(\tilde{s}\circ \varphi _s)(a)=\tilde{s}(\tilde{\underline{a}})=\lim _{n\rightarrow \infty }s(a)=s(a)$. Thus $\tilde{s}\circ \varphi _s=s$.

\noindent (\ref{t6.2(5)}) Let $a\in A$ and $\underline{a}=(a)_{n\geq 0}\in {\cal C}_s(A)$. We have the equivalences: $a\in {\rm Ker}(\varphi _s)$ iff $\varphi _s(a)=\tilde{\underline{1}}$ iff $\tilde{\underline{a}}=\tilde{\underline{1}}$ iff $\lim _{n\rightarrow \infty }\rho _s(a,1)=1$ iff $\lim _{n\rightarrow \infty }s(a)=1$ iff $s(a)=1$, by Lemma \ref{l2.2}, (\ref{l2.2(1)}) and (\ref{l2.2(2)}). Hence: $\varphi _s$ is injective iff ${\rm Ker}(\varphi _s)=\{1\}$ iff the fact that $s(a)=1$ implies $a=1$ iff $s$ is faithful. 

\noindent (\ref{t6.2(6)}) $\tilde{\rho _s},\rho _{\tilde{s}}:\tilde{A}_s\times \tilde{A}_s\rightarrow L$. For all $\underline{a}=(a_n)_{n\geq 0},\underline{b}=(b_n)_{n\geq 0}\in {\cal C}_s(A)$, we have the following equalities: $\rho _{\tilde{s}}(\tilde{\underline{a}},\tilde{\underline{b}})=\tilde{s}(d_{\tilde{A}_s}(\tilde{\underline{a}},\tilde{\underline{b}}))=\tilde{s}((d_A(a_n,b_n))_{n\geq 0})=\lim _{n\rightarrow \infty }s(d_A(a_n,b_n))=\lim _{n\rightarrow \infty }\rho _s(a_n,b_n)=\tilde{\rho _s}(\tilde{\underline{a}},\tilde{\underline{b}})$. Thus $\tilde{\rho _s}=\rho _{\tilde{s}}$.

\noindent (\ref{t6.2(7)}) Let $\underline{x}=(a_n)_{n\geq 0}\subseteq A$ and $a\in A$, such that $a_n\stackrel{\textstyle \rho _s}{\textstyle \rightarrow }a$, that is $\lim _{n\rightarrow \infty }\rho _s(a_n,a)=1$. Let us denote $\underline{a}=(a)_{n\geq 0}$. For all $n\in \N $, $\tilde{\rho _s}(\varphi _s(a_n),\varphi _s(a))=\rho _{\tilde{s}}(\varphi _s(a_n),\varphi _s(a))=\tilde{s}(d_{\tilde{A}_s}(\varphi _s(a_n),\varphi _s(a)))=\tilde{s}(d_{\tilde{A}_s}(\tilde{\underline{x}},\tilde{\underline{a}}))=\tilde{s}((d_A(a_n,a))_{n\geq 0})=\lim _{n\rightarrow \infty }s(d_A(a_n,a))=\lim _{n\rightarrow \infty }\rho _s(a_n,a)=1$. Hence $\lim _{n\rightarrow \infty }\tilde{\rho _s}(\varphi _s(a_n),\varphi _s(a))=1$, that is $\varphi _s(a_n)\stackrel{\textstyle \tilde{\rho _s}}{\textstyle \rightarrow }\varphi _s(a)$.

\noindent (\ref{t6.2(8)}) Let $C$, $m$ and $f$ be like in the enunciation. Then, by Remark \ref{r6.7}, $\rho _m$ is an $L$-equality on $C$. We shall denote by $\approx $ the congruence on ${\cal C}_m(C)$ defined in the same way as $\sim $ on ${\cal C}_s(A)$.

Let $(a_n)_{n\geq 0}\in {\cal C}_s(A)$, arbitrary but fixed, so $\lim _{n,k\rightarrow \infty }\rho _s(a_n,a_k)=1$. For all $n,k\in \N $, since $f$ is a residuated lattice morphism, we have: $\rho _m(f(a_n),f(a_k))=m(d_C(f(a_n),f(a_k)))=m(f(d_A(a_n,a_k)))=s(d_A(a_n,a_k))=\rho _s(a_n,a_k)$. Thus $\lim _{n,k\rightarrow \infty }\rho _m(f(a_n),f(a_k))=1$, that is $(f(a_n))_{n\geq 0}\in {\cal C}_m(C)$, so, since $C$ is $\rho _m$-complete, there exists $c\in C$ such that $f(a_n)\stackrel{\textstyle \rho _m}{\textstyle \rightarrow }c$. This element $c$ of $C$ is unique, as Lemma \ref{l6.3} shows. We set $\tilde{f}(\widetilde{(a_n)_{n\geq 0}})=c$.

Let us prove that $\tilde{f}$ is well defined. Let $(a_n)_{n\geq 0},(b_n)_{n\geq 0}\in {\cal C}_s(A)$, such that $(a_n)_{n\geq 0}\sim (b_n)_{n\geq 0}$. By the above, there exist $c,d\in C$ such that $f(a_n)\stackrel{\textstyle \rho _m}{\textstyle \rightarrow }c$ and $f(b_n)\stackrel{\textstyle \rho _m}{\textstyle \rightarrow }d$. We have to prove that $c=d$. The fact that $f(a_n)\stackrel{\textstyle \rho _m}{\textstyle \rightarrow }c$ is equivalent to $\lim _{n\rightarrow \infty }\rho _m(f(a_n),c)=1$, that is $(f(a_n))_{n\geq 0}\approx (c)_{n\geq 0}$ (the constant sequence). Analogously, $(f(b_n))_{n\geq 0}\approx (d)_{n\geq 0}$. By Lemma \ref{l6.19}, $\lim _{n\rightarrow \infty }s(a_n)=\lim _{n\rightarrow \infty }s(b_n)$, that is $\lim _{n\rightarrow \infty }m(f(a_n))=\lim _{n\rightarrow \infty }m(f(b_n))$. By the fact that $f$ is a residuated lattice morphism and $(a_n)_{n\geq 0}\sim (b_n)_{n\geq 0}$, it follows that $\lim _{n\rightarrow \infty }\rho _m(f(a_n),f(b_n))=\lim _{n\rightarrow \infty }m(d_C(f(a_n),f(b_n)))=\lim _{n\rightarrow \infty }m(f(d_A(a_n,b_n)))=\lim _{n\rightarrow \infty }s(d_A(a_n,b_n))=\lim _{n\rightarrow \infty }\rho _s(a_n,b_n)=1$, hence $(f(a_n))_{n\geq 0}\approx (f(b_n))_{n\geq 0}$. By the symmetry and the transitivity of $\approx $, it follows that $(c)_{n\geq 0}\approx (d)_{n\geq 0}$, thus $1=\lim _{n\rightarrow \infty }\rho _m(c,d)=\rho _m(c,d)=m(d_C(c,d))$. By the fact that $m$ is faithful and by Lemma \ref{lA}, (\ref{lA(1)}), it results that $c=d$, therefore $\tilde{f}$ is well defined.

Let us prove that $\tilde{f}$ defined this way is a residuated lattice morphism. It is trivial that $\tilde{f}(\tilde{\underline{0}})=0$ and $\tilde{f}(\tilde{\underline{1}})=1$. Now let $\circ \in \{\vee ,\wedge ,\odot ,\rightarrow \}$ and $(a_n)_{n\geq 0},(b_n)_{n\geq 0}\in {\cal C}_s(A)$. By the above, there exist $c,d\in C$ such that $f(a_n)\stackrel{\textstyle \rho _m}{\textstyle \rightarrow }c$ and $f(b_n)\stackrel{\textstyle \rho _m}{\textstyle \rightarrow }d$, and $\tilde{f}(\widetilde{(a_n)_{n\geq 0}})=c$ and $\tilde{f}(\widetilde{(b_n)_{n\geq 0}})=d$. By Lemma \ref{l6.8} and the fact that $f$ is a residuated lattice morphism, we have: $f(a_n\circ b_n)=f(a_n)\circ f(b_n)\stackrel{\textstyle \rho _m}{\textstyle \rightarrow }c\circ d$, thus $\tilde{f}(\widetilde{(a_n)_{n\geq 0}}\circ \widetilde{(b_n)_{n\geq 0}})=\tilde{f}(\widetilde{(a_n\circ b_n)_{n\geq 0}})=c\circ d=\tilde{f}(\widetilde{(a_n)_{n\geq 0}})\circ \tilde{f}(\widetilde{(b_n)_{n\geq 0}})$. So $\tilde{f}$ is a residuated lattice morphism.

For all $a\in A$, the constant sequence $(f(a))_{n\geq 0}\stackrel{\textstyle \rho _m}{\textstyle \rightarrow }f(a)\in C$, thus $\tilde{f}(\varphi _s(a))=\tilde{f}(\widetilde{(a)_{n\geq 0}})=f(a)$. So $\tilde{f}\circ \varphi _s=f$. Now let $(a_n)_{n\geq 0}\in {\cal C}_s(A)$. By the above, there exists $c\in C$ such that $f(a_n)\stackrel{\textstyle \rho _m}{\textstyle \rightarrow }c$, so $\tilde{f}(\widetilde{(a_n)_{n\geq 0}})=c$. As above, one can show that $(f(a_n))_{n\geq 0}\approx (c)_{n\geq 0}$, thus, by Lemma \ref{l6.19}, $\tilde{s}(\widetilde{(a_n)_{n\geq 0}})=\lim _{n\rightarrow \infty }s(a_n)=\lim _{n\rightarrow \infty }m(f(a_n))=\lim _{n\rightarrow \infty }m(c)=m(c)=(m\circ \tilde{f})(\widetilde{(a_n)_{n\geq 0}})$. Hence $m\circ \tilde{f}=\tilde{s}$.\end{proof}

\begin{openproblem}
Prove that the morphism $\tilde{f}:\tilde{A}_s\rightarrow C$ from Theorem \ref{t6.2}, (\ref{t6.2(8)}) is unique.
\end{openproblem}

Now let us analyse the construction of $\tilde{A}_s$ for the particular case when $L=([0,1],\max ,\min ,\odot _L,\rightarrow _L)$ is the standard MV-algebra, which is Cauchy-complete, as one can easily deduce from the fact that, if $d$ is the Euclidean distance in $\R $ restricted to $[0,1]\times [0,1]$, then $([0,1],d)$ is a complete metric space, and from the computation: for all $x,y\in L=[0,1]$, $d_L(x,y)=\min \{\min \{1,1-x+y\},\min \{1,1-y+x\}\}=\min \{1,1-x+y,1-y+x\}=\min \{1-x+y,1-y+x\}=1-\max \{x-y,y-x\}=1-|x-y|=1-d(x,y)$ (the deduction can be made in a similar manner to the one below that shows that $(A,\delta _s)$ is a complete pseudo-metric space iff $A$ is $\rho _s$-complete). We are still in the framework: $A$ is a residuated lattice and $s:A\rightarrow L$ is an order-preserving type I state, thus, in this case, $s:A\rightarrow [0,1]$ is a Bosbach state, as Remark \ref{r3.7} shows.

Let us define the function $\delta _s:A^2\rightarrow [0,1]$, for all $a,b\in A$, $\delta _s(a,b)=1-\rho _s(a,b)$. A function $\delta _t$ can be defined in this way for any Bosbach state $t$ on any residuated lattice.

\begin{remark}
$\delta _s$ is a pseudo-metric on $A$. Indeed, by Proposition \ref{p6.6}, for all $a,b,c\in A$, $\rho _s(a,b)\odot _L\rho _s(b,c)\leq \rho _s(a,c)$, thus $(1-\delta _s(a,b))\odot _L(1-\delta _s(b,c))\leq 1-\delta _s(a,c)$, that is $\max \{0,1-\delta _s(a,b)-\delta _s(b,c)\}\leq 1-\delta _s(a,c)$, therefore $1-\delta _s(a,b)-\delta _s(b,c)\leq 1-\delta _s(a,c)$, that is $\delta _s(a,c)\leq \delta _s(a,b)+\delta _s(b,c)$. Moreover, by Lemma \ref{lA}, (\ref{lA(1)}), it follows that $\delta _s$ is a metric on $A$ iff $s$ is faithful. This is valid for any residuated lattice $A$ and any Bosbach state $s$ on $A$.
\label{r6.22}
\end{remark}

The remark above shows that $(A,\delta _s)$ is a pseudo-metric space, thus we can construct its metric completion. In order to accomplish this, let us notice that: a sequence $(a_n)_{n\geq 0}$ in the pseudo-metric space $(A,\delta _s)$ converges towards $a\in A$ (in the pseudo-metric sense) iff $\lim _{n\rightarrow \infty }\delta _s(a_n,a)=0$, that is $\lim _{n\rightarrow \infty }\rho _s(a_n,a)=1$, that is $a_n\stackrel{\textstyle \rho _s}{\textstyle \rightarrow }a$. Also, a sequence $(a_n)_{n\geq 0}\subseteq A$ is Cauchy in the pseudo-metric space $(A,\delta _s)$ iff $\lim _{n,m\rightarrow \infty }\delta _s(a_n,a_m)=0$ iff $\lim _{n,m\rightarrow \infty }\rho _s(a_n,a_m)=1$ iff $(a_n)_{n\geq 0}$ is $\rho _s$-Cauchy. It follows that $(A,\delta _s)$ is a complete pseudo-metric space iff $A$ is $\rho _s$-complete, and this is valid for an arbitrary residuated lattice $A$ and an arbitrary Bosbach state $s:A\rightarrow [0,1]$. It also follows that, with the definition above, ${\cal C}_s(A)$ is equal to the set of the Cauchy sequences of the pseudo-metric space $(A,\delta _s)$ and the binary relation $\sim $ on ${\cal C}_s(A)$ satisfies: for all $\underline{a}=(a_n)_{n\geq 0},\underline{b}=(b_n)_{n\geq 0}\in {\cal C}_s(A)$, $\underline{a}\sim \underline{b}$ iff $\lim _{n\rightarrow \infty }\rho _s(a_n,b_n)=1$ iff $\lim _{n\rightarrow \infty }\delta _s(a_n,b_n)=0$.

With the notations above, let us define the function $\tilde{\delta _s}:\tilde{A}_s\times \tilde{A}_s\rightarrow L$, for all $\underline{a}=(a_n)_{n\geq 0},\underline{b}=(b_n)_{n\geq 0}\in {\cal C}_s(A)$, $\tilde{\delta _s}(\tilde{\underline{a}},\tilde{\underline{b}})=\lim _{n\rightarrow \infty }\delta _s(a_n,b_n)=1-\lim _{n\rightarrow \infty }\rho _s(a_n,b_n)=1-\tilde{\rho _s}(\tilde{\underline{a}},\tilde{\underline{b}})$; $\tilde{\delta _s}$ is well defined because $\tilde{\rho _s}$ is well defined. By Proposition \ref{p6.18}, $\tilde{\rho _s}$ is an $L$-similarity relation on $\tilde{A}_s$. Moreover, by Theorem \ref{t6.2}, (\ref{t6.2(6)}), $\tilde{\delta _s}=1-\tilde{\rho _s}=1-\rho _{\tilde{s}}=\delta _{\tilde{s}}$ with the notation above Remark \ref{r6.22}, hence, by Remark \ref{r6.22} and since $\tilde{s}$ is a faithful Bosbach state by Proposition \ref{p6.20}, it follows that $\tilde{\delta _s}=\delta _{\tilde{s}}$ is a metric on $\tilde{A}_s$. The usual construction from the theory of metric spaces identifies $(\tilde{A}_s,\tilde{\delta _s}=\delta _{\tilde{s}})$ to be the metric completion of $(A,\delta _s)$. The universality property of the metric completion ensures us that, for any Cauchy-complete metric space $C$ and any isometry $f:A\rightarrow C$, there exists a unique isometry $\tilde{f}:\tilde{A}_s\rightarrow C$ such that $\tilde{f}\circ \varphi _s=f$. We can translate this as the theorem below, by relying on the following lemma.

\begin{lemma}
Let $A_1$ and $A_2$ be two residuated lattices, $s_1:A_1\rightarrow [0,1]$ and $s_2:A_2\rightarrow [0,1]$ Bosbach states and $h:A_1\rightarrow A_2$ a morphism of residuated lattices. Then: $s_2\circ h=s_1$ iff $h$ is an isometry between the pseudo-metric spaces $(A_1,\delta _{s_1})$ and $(A_2,\delta _{s_2})$.
\end{lemma}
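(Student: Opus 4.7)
The plan is to unfold both notions in terms of the biresiduum and the state, and observe that the statement $s_2\circ h=s_1$ is equivalent, on the one hand, to the isometry condition tested at all pairs, and on the other hand, essentially to testing the isometry condition at pairs of the form $(a,1)$.

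First I would prove the direct implication. Assume $s_2\circ h=s_1$. Since $h$ is a residuated lattice morphism, it preserves $\rightarrow $ and $\wedge $, so it preserves the biresiduum: $h(d_{A_1}(a,b))=d_{A_2}(h(a),h(b))$ for all $a,b\in A_1$. Hence, for all $a,b\in A_1$,
\[
\delta _{s_2}(h(a),h(b))=1-s_2(d_{A_2}(h(a),h(b)))=1-s_2(h(d_{A_1}(a,b)))=1-s_1(d_{A_1}(a,b))=\delta _{s_1}(a,b),
\]
which is exactly the isometry condition.

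For the converse, assume $h$ is an isometry between $(A_1,\delta _{s_1})$ and $(A_2,\delta _{s_2})$. The key trick is to test the isometry condition at $b=1$. By Lemma \ref{l2.2}, (\ref{l2.2(1)}) and (\ref{l2.2(2)}), $d_{A_1}(a,1)=(a\rightarrow 1)\wedge (1\rightarrow a)=1\wedge a=a$, so, using $s_1(1)=1$,
\[
\delta _{s_1}(a,1)=1-s_1(d_{A_1}(a,1))=1-s_1(a).
\]
On the other hand, since $h$ is a residuated lattice morphism we have $h(1)=1$, so analogously $d_{A_2}(h(a),1)=h(a)$ and
\[
\delta _{s_2}(h(a),h(1))=\delta _{s_2}(h(a),1)=1-s_2(h(a)).
\]
The isometry hypothesis, applied to the pair $(a,1)$, gives $1-s_1(a)=1-s_2(h(a))$, that is, $s_1(a)=s_2(h(a))$ for every $a\in A_1$. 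Hence $s_2\circ h=s_1$.

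There is essentially no obstacle here: the whole proof is a one-line unfolding in one direction and the standard $b=1$ specialization in the other. The only point deserving care is the observation that morphisms of residuated lattices preserve the biresiduum (so that $\rho _s$, hence $\delta _s$, transports along $h$), which follows at once from the definition $d_A(a,b)=(a\rightarrow b)\wedge (b\rightarrow a)$.
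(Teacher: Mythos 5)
Your proof is correct and follows essentially the same route as the paper's: the forward direction unfolds $\delta_{s_2}(h(a),h(b))$ using the fact that $h$ preserves the biresiduum, and the converse specializes the isometry condition to the pair $(a,1)$, using $d_{A_1}(a,1)=a$ from Lemma \ref{l2.2}, (\ref{l2.2(1)}) and (\ref{l2.2(2)}). (The parenthetical appeal to $s_1(1)=1$ in your converse is not actually needed, but this is harmless.)
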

\begin{proof}
We shall use the fact that $h$ is a residuated lattice morphism and thus it preserves the biresiduum.

\noindent ``$\Rightarrow $``: Assume that $s_2\circ h=s_1$ and let $a,b\in A_1$. $\delta _{s_2}(h(a),h(b))=1-\rho _{s_2}(h(a),h(b))=1-s_2(d_{A_2}(h(a),h(b)))=1-s_2(h(d_{A_1}(a,b)))=1-s_1(d_{A_1}(a,b))=1-\rho _{s_1}(a,b)=\delta _{s_1}(a,b)$. Hence $h$ is an isometry.

\noindent ``$\Leftarrow $``: Assume that $h$ is an isometry, that is, for all $a,b\in A_1$, $\delta _{s_2}(h(a),h(b))=\delta _{s_1}(a,b)$. Let $a\in A_1$. By Lemma \ref{l2.2}, (\ref{l2.2(1)}) and (\ref{l2.2(2)}), $s_2(h(a))=s_2(h(d_{A_1}(a,1)))=s_2(d_{A_2}(h(a),h(1)))=\rho _{s_2}(h(a),h(1))=1-\delta _{s_2}(h(a),h(1))=1-\delta _{s_1}(a,1)=\rho _{s_1}(a,1)=s_1(d_{A_1}(a,1))=s_1(a)$. Hence $s_2\circ h=s_1$.\end{proof}

\begin{theorem}
For any residuated lattice $C$, any faithful Bosbach state $m:C\rightarrow [0,1]$ such that $(C,\delta _m)$ is a Cauchy-complete metric space, and any residuated lattice morphism $f:A\rightarrow C$ such that $m\circ f=s$, there exists a unique residuated lattice morphism $\tilde{f}:\tilde{A}_s\rightarrow C$ such that $m\circ \tilde{f}=\tilde{s}$ and $\tilde{f}\circ \varphi _s=f$.
\label{t01}
\end{theorem}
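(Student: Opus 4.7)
My plan is to derive existence directly from Theorem \ref{t6.2}(\ref{t6.2(8)}) and to handle uniqueness via a density argument in the metric space $(\tilde{A}_s, \tilde{\delta_s})$.

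First I would translate the hypothesis that $(C,\delta_m)$ is a Cauchy-complete metric space into the $\rho_m$-language used in the previous section. Because $m$ is faithful, Remark \ref{r6.22} shows that $\delta_m$ is a genuine metric on $C$, and the relation $\delta_m = 1-\rho_m$ (discussed right after Proposition \ref{p6.20}) implies that a sequence in $C$ is $\delta_m$-Cauchy (resp.\ $\delta_m$-convergent) if and only if it is $\rho_m$-Cauchy (resp.\ $\rho_m$-convergent). Thus $C$ is $\rho_m$-complete, and Theorem \ref{t6.2}(\ref{t6.2(8)}) directly furnishes a residuated lattice morphism $\tilde{f}:\tilde{A}_s\rightarrow C$ with $m\circ \tilde{f}=\tilde{s}$ and $\tilde{f}\circ \varphi_s=f$.

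The new content is uniqueness, and the key preparatory step is to prove that $\varphi_s(A)$ is dense in $(\tilde{A}_s,\tilde{\delta_s})$. Given any $\tilde{\underline{a}}\in \tilde{A}_s$ with $\underline{a}=(a_n)_{n\geq 0}\in {\cal C}_s(A)$, the definition of $\tilde{\rho_s}$ gives $\tilde{\rho_s}(\varphi_s(a_n),\tilde{\underline{a}})=\lim_{k\rightarrow \infty }\rho_s(a_n,a_k)$, and since $(a_n)_{n\geq 0}$ is $\rho_s$-Cauchy this tends to $1$ as $n\rightarrow \infty $. Equivalently, $\delta_{\tilde{s}}(\varphi_s(a_n),\tilde{\underline{a}})\rightarrow 0$, so the sequence $\varphi_s(a_n)$ approaches $\tilde{\underline{a}}$ in the metric $\tilde{\delta_s}=\delta_{\tilde{s}}$ (the equality following from Theorem \ref{t6.2}(\ref{t6.2(6)})).

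Now suppose $\tilde{f}_1,\tilde{f}_2:\tilde{A}_s\rightarrow C$ are two residuated lattice morphisms both satisfying the stated conditions. By the lemma immediately preceding the theorem, each equation $m\circ \tilde{f}_i=\tilde{s}$ forces $\tilde{f}_i$ to be an isometry $(\tilde{A}_s,\delta_{\tilde{s}})\rightarrow (C,\delta_m)$, and an isometry of metric spaces is automatically continuous. Applying continuity to the approximating sequence yields $\tilde{f}_i(\varphi_s(a_n))\rightarrow \tilde{f}_i(\tilde{\underline{a}})$ in $(C,\delta_m)$ for $i=1,2$. But $\tilde{f}_i\circ \varphi_s=f$, so the single sequence $(f(a_n))_{n\geq 0}$ converges in the metric space $(C,\delta_m)$ simultaneously to $\tilde{f}_1(\tilde{\underline{a}})$ and to $\tilde{f}_2(\tilde{\underline{a}})$. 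Uniqueness of limits in the metric space $(C,\delta_m)$ then gives $\tilde{f}_1(\tilde{\underline{a}})=\tilde{f}_2(\tilde{\underline{a}})$ for every $\tilde{\underline{a}}\in \tilde{A}_s$, as required.

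The main obstacle is conceptual rather than computational: one must recognise that the faithfulness of $m$ upgrades the $\rho_m$-convergence machinery to honest metric-space convergence on $C$, and symmetrically that $\tilde{s}$ is faithful (Proposition \ref{p6.20}) so that $\tilde{\delta_s}$ is a metric on $\tilde{A}_s$. Once this is seen, the standard density-plus-continuity uniqueness argument for metric completions applies without modification.
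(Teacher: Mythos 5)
Your proposal is correct and follows essentially the same route as the paper: existence is delegated to Theorem \ref{t6.2}, (\ref{t6.2(8)}) after converting Cauchy-completeness of $(C,\delta _m)$ into $\rho _m$-completeness, and uniqueness comes from the translation lemma (morphisms intertwining the states are isometries) together with the universal property of the metric completion. The only difference is cosmetic: you spell out the density of $\varphi _s(A)$ in $(\tilde{A}_s,\tilde{\delta _s})$ and the limit-uniqueness argument explicitly, whereas the paper absorbs these into the phrase ``the usual construction from the theory of metric spaces.''
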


\begin{remark}
By an observation above, the fact that $(C,\delta _m)$ is Cauchy-complete is equivalent to the fact that $C$ is $\rho _m$-complete and hence the unique morphism $\tilde{f}$ from Theorem \ref{t01} is none other than the morphism constructed in the proof of Theorem \ref{t6.2}, (\ref{t6.2(8)}).
\end{remark}

\begin{proposition}
$\tilde{A}_s$ is $\rho _{\tilde{s}}$-complete, $\sigma $-complete and involutive, and $\tilde{s}$ is $\uparrow $-continuous in $1$. If $A$ is an MV-algebra, then $\tilde{s}$ is continuous.
\end{proposition}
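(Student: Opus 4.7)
The plan is to assemble each of the four assertions from results already established. First, since in the metric-completion setting immediately preceding this proposition $L=[0,1]$ is the standard MV-algebra and hence involutive, Lemma \ref{l6.16} applies directly and yields that $\tilde{A}_s$ is involutive.

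Next, for $\rho_{\tilde{s}}$-completeness, I would invoke the identification of $(\tilde{A}_s,\tilde{\delta_s})=(\tilde{A}_s,\delta_{\tilde{s}})$ as the metric completion of the pseudo-metric space $(A,\delta_s)$, carried out just before Theorem \ref{t01}: as a metric completion, it is a Cauchy-complete metric space. The general equivalence ``$(C,\delta_m)$ is a Cauchy-complete metric space iff $C$ is $\rho_m$-complete'', stated in the paragraph before Remark \ref{r6.22} for an arbitrary residuated lattice $C$ and an arbitrary Bosbach state $m\colon C\to[0,1]$, applied here with $C=\tilde{A}_s$ and $m=\tilde{s}$ (which is a Bosbach state by Remark \ref{r3.7} combined with Proposition \ref{p6.20}), then delivers $\rho_{\tilde{s}}$-completeness of $\tilde{A}_s$.

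The remaining two assertions are harvested from Propositions \ref{p6.14} and \ref{p6.12}. Applying Proposition \ref{p6.14} with $A$ replaced by $\tilde{A}_s$ and $s$ replaced by the faithful order-preserving type I state $\tilde{s}$ (Theorem \ref{t6.2}, (\ref{t6.2(2)})), using that $L=[0,1]$ is $\sigma$-complete together with the $\rho_{\tilde{s}}$-completeness just obtained, I get simultaneously that $\tilde{A}_s$ is $\sigma$-complete and that $\tilde{s}$ is $\uparrow$-continuous in $1$. For the final clause, observe that since MV-algebras form an equational class and every operation on $\tilde{A}_s$ is induced componentwise from the corresponding operation on $A$ through the passage to ${\cal C}_s(A)/_{\sim}$, the assumption that $A$ is an MV-algebra forces $\tilde{A}_s$ to be one as well; Proposition \ref{p6.12} applied to $\tilde{s}\colon\tilde{A}_s\to L$, with $L$ involutive, then upgrades $\uparrow$-continuity at $1$ to full continuity.

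The only step that is not mere bookkeeping is the $\rho_{\tilde{s}}$-completeness: a direct argument would require a diagonal extraction of a representative Cauchy sequence from a given $\rho_{\tilde{s}}$-Cauchy sequence of equivalence classes in $\tilde{A}_s$, plus verification that its class is the required limit. The metric-completion identification already carried out in the excerpt absorbs precisely that argument, so in the present proof it suffices to quote it and translate between the $\delta$ and $\rho$ formulations.
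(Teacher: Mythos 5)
Your proposal is correct and follows essentially the same route as the paper: the paper likewise derives $\rho_{\tilde{s}}$-completeness from the identification of $(\tilde{A}_s,\delta_{\tilde{s}})$ with the metric completion of $(A,\delta_s)$, then applies Proposition \ref{p6.14} (with $\tilde{A}_s$, $\tilde{s}$ in place of $A$, $s$) for $\sigma$-completeness and $\uparrow$-continuity in $1$, and uses the fact that $\tilde{A}_s$ inherits the MV-algebra identities together with Propositions \ref{p6.12} and \ref{p6.14} (packaged as Remark \ref{r6.15}) for the final clause. Your explicit appeal to Lemma \ref{l6.16} for involutivity (using that $[0,1]$ is involutive) is exactly what the paper leaves implicit, so the two arguments coincide.
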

\begin{proof}
By the above, $(\tilde{A}_s,\delta _{\tilde{s}})$ is a complete metric space, thus $\tilde{s}$ is a faithful Bosbach state and $\tilde{A}_s$ is a $\rho _{\tilde{s}}$-complete residuated lattice. Since $[0,1]$ with the natural order is $\sigma $-complete, it follows by Proposition \ref{p6.14} that $A$ is $\sigma $-complete and $s$ is $\uparrow $-continuous in $1$.

If $A$ is an MV-algebra, then obviously $\tilde{A}_s$ is an MV-algebra. $[0,1]$ is involutive, as any MV-algebra is. By Remark \ref{r6.15}, it follows that $\tilde{s}$ is continuous.\end{proof}

Adopting a denomination from \cite{ioanal3}, we shall call $\tilde{A}_s$ {\em the $s$-completion of $A$}.

\section{Final Remarks}
\label{finalremarks}

\hspace*{10pt} In this section we will sketch two ways in which we can relate generalized Bosbach states to monoidal t-norm-based logics and we will formulate some open problems.

\noindent (I) The probabilistic logic FP(\L $_n$,\L ) studied in \cite{flago}, \cite{fla1} is a formal description of a way of reasoning on the probability of fuzzy events through the infinite-valued \L ukasiewicz logic \L . In \cite{flago}, \cite{fla1} the authors admit the hypothesis that fuzzy events follow the rules of the finite-valued \L ukasiewicz logic \L $_n$.

We shall sketch now a more general context for developping some logics similar to FP(\L $_n$,\L ). Let ${\cal C}_1$ and ${\cal C}_2$ be two schematic extensions of the MTL logic (\cite{fla1}). The probabilistic logic ${\rm FP}({\cal C}_1,{\cal C}_2)$ is based on the following hypotheses:

\begin{itemize}
\item the events are structured by the logic ${\cal C}_1$;
\item the evaluation of the probability of the events is made in conformity to the logic ${\cal C}_2$.
\end{itemize}

The language of the logic ${\rm FP}({\cal C}_1,{\cal C}_2)$ is constructed by starting from a numerable set of propositional variables $V=\{p_1,p_2,\ldots ,p_k,\ldots \}$, the truth constant $\perp $, the connectives $\vee ,\wedge ,\rightarrow ,\& $ and a symbol $P$ (for the modality ``probably``). The formulas of ${\rm FP}({\cal C}_1,{\cal C}_2)$ are defined in two steps:

\begin{itemize}
\item the set $F_m(V)$ of the {\em non-modal} formulas is exactly the set of the formulas of ${\cal C}_1$ (the non-modal formulas will be denoted $\varphi ,\ \psi ,\ldots $);
\item the atomic modal formulas are of the form $P(\varphi )$, with $\varphi \in F_m(V)$; the set $MF_m(V)$ of the modal formulas is constructed inductively, starting from the atomic modal formulas and using the connectives $\vee ,\wedge ,\rightarrow ,\& $ and the truth constant $\perp $.
\end{itemize}

${\rm FP}({\cal C}_1,{\cal C}_2)$ has the following axioms:

\begin{itemize}
\item the axioms of ${\cal C}_1$ for non-modal formulas;
\item the axioms of ${\cal C}_2$ for modal formulas;
\item the following axioms for the modality $P$:

\noindent (A1) $P(\varphi \rightarrow \psi )\rightarrow (P(\varphi )\rightarrow P(\psi ))$

\noindent (A2) $P(\varphi \rightarrow \psi )\rightarrow (P(\varphi )\rightarrow P(\varphi \wedge \psi ))$
\end{itemize}

${\rm FP}({\cal C}_1,{\cal C}_2)$ has two deduction rules:

\begin{itemize}
\item the modus ponens rule (for modal and non-modal formulas);
\item the necessity rule: from $\varphi $ derive $P(\varphi )$.
\end{itemize}

Remark \ref{r3.26} shows that the logic FP(\L $_n$,\L ) can be obtained from ${\rm FP}({\cal C}_1,{\cal C}_2)$ by setting ${\cal C}_1=$\L $_n$ and ${\cal C}_2=$\L .

\begin{openproblem}
Define a semantics corresponding to the logic ${\rm FP}({\cal C}_1,{\cal C}_2)$ (by extending the notions of weak probabilistic Kripke model and strong probabilistic Kripke model from \cite{flago}, \cite{fla1}) and prove the weak and strong completeness theorems for ${\rm FP}({\cal C}_1,{\cal C}_2)$.
\label{o7.1}
\end{openproblem}

\noindent (II) Let ${\cal C}$ be a schematic extension of MTL and ${\cal C}_{\forall }$ be the predicate logic associated to ${\cal C}$ (see \cite{haj}, \cite{[a]}). We shall denote by $E$ the set of the sentences of ${\cal C}_{\forall }$ and by $E/_{\sim }=\{\hat{\varphi }|\varphi \in E\}$ the Lindenbaum-Tarski algebra of ${\cal C}_{\forall }$. $E/_{\sim }$ is an MTL-algebra that also verifies the algebraic form of the axioms specific to ${\cal C}_{\forall }$.

Let $D$ be a subset of $E$ such that:

\begin{itemize}
\item $D$ contains all the formal theorems of ${\cal C}_{\forall }$;
\item $D$ is closed with respect to the connectives $\vee ,\wedge ,\rightarrow ,\& $ and $D$ contains the truth constant $\perp $.
\end{itemize}

Then $D/_{\sim }=\{\hat{\varphi }|\varphi \in D\}$ is a subalgebra of $E/_{\sim }$ (in particular, $D/_{\sim }$ is an MTL-algebra). We consider on $[0,1]$ the structure of MTL-algebra induced by a left-continuous t-norm (\cite{beloh}).

\begin{definition}
A function $\mu :D\rightarrow [0,1]$ is called a {\em logical probability} on $D$ iff, for all $\varphi ,\psi \in D$:

\noindent (P1) if $\vdash \varphi $ then $\mu (\varphi )=1$;

\noindent (P2) $\mu (\varphi \rightarrow \psi )\rightarrow (\mu (\varphi )\rightarrow \mu (\psi ))=1$;

\noindent (P3) $\mu (\varphi \rightarrow \psi )=\mu (\varphi )\rightarrow \mu (\varphi \wedge \psi )$.
\label{d7.2}
\end{definition}

\begin{lemma}

Let $\mu :D\rightarrow [0,1]$ be a logical probability and $\varphi ,\psi \in D$. Then:

\begin{enumerate}
\item\label{l7.3(1)} if $\vdash \varphi \rightarrow \psi $ then $\mu (\varphi )\leq \mu (\psi )$;
\item\label{l7.3(2)} if $\vdash \varphi \leftrightarrow \psi $ then $\mu (\varphi )=\mu (\psi )$.
\end{enumerate}
\label{l7.3}
\end{lemma}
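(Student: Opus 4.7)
The plan is to prove (i) as a direct consequence of (P1), (P2) and the law of residuation on $[0,1]$, and then to derive (ii) from (i) by a routine syntactic step.

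For (i), suppose $\vdash \varphi \rightarrow \psi$. By (P1), $\mu(\varphi \rightarrow \psi)=1$. By (P2), $\mu(\varphi \rightarrow \psi) \rightarrow (\mu(\varphi)\rightarrow \mu(\psi))=1$, which by Lemma \ref{l2.2}, (\ref{l2.2(3)}), means $\mu(\varphi \rightarrow \psi) \leq \mu(\varphi)\rightarrow \mu(\psi)$. Substituting $\mu(\varphi \rightarrow \psi)=1$ and using Lemma \ref{l2.2}, (\ref{l2.2(2)}), we get $\mu(\varphi)\rightarrow \mu(\psi)=1$, and by Lemma \ref{l2.2}, (\ref{l2.2(3)}) again we conclude $\mu(\varphi)\leq \mu(\psi)$.

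For (ii), suppose $\vdash \varphi \leftrightarrow \psi$, i.e.\ $\vdash (\varphi \rightarrow \psi)\wedge (\psi \rightarrow \varphi)$. Since MTL (hence any schematic extension ${\cal C}$ and its predicate counterpart ${\cal C}_{\forall }$) proves $\alpha \wedge \beta \rightarrow \alpha$ and $\alpha \wedge \beta \rightarrow \beta$, applying modus ponens yields $\vdash \varphi \rightarrow \psi$ and $\vdash \psi \rightarrow \varphi$. Both of these formulas lie in $D$ since $D$ is closed with respect to $\wedge $ and $\rightarrow $ and contains all formal theorems (so in particular the two components of a provable conjunction can be obtained through derivation inside $D$). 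Applying (i) twice gives $\mu(\varphi)\leq \mu(\psi)$ and $\mu(\psi)\leq \mu(\varphi)$, hence $\mu(\varphi)=\mu(\psi)$.

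The only mildly delicate point is ensuring that everything stays inside the domain $D$ of $\mu$: this is handled by the hypothesis that $D$ contains all formal theorems of ${\cal C}_{\forall }$ and is closed under the relevant connectives, so both $\varphi \rightarrow \psi$ and $\psi \rightarrow \varphi$ belong to $D$ whenever $\varphi ,\psi \in D$. No further work is required, as conditions (P2) and (P3) are not needed for (ii) beyond their use through (i).
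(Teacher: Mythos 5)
Your proof is correct; the paper in fact states this lemma without proof, and your argument is the evident intended one: (i) follows directly from (P1) applied to $\varphi \rightarrow \psi$ (which lies in $D$ by closure under $\rightarrow $) together with (P2) and the characterization $a\leq b$ iff $a\rightarrow b=1$, and (ii) follows from (i) by extracting $\vdash \varphi \rightarrow \psi $ and $\vdash \psi \rightarrow \varphi $ from $\vdash \varphi \leftrightarrow \psi $. Your care about membership in $D$ is appropriate and the argument has no gaps.
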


By Lemma \ref{l7.3}, (\ref{l7.3(2)}), we can define a function $\tilde{\mu }:D/_{\sim }\rightarrow [0,1]$ by $\tilde{\mu }(\hat{\varphi })=\mu (\varphi )$ for all $\varphi \in D$. It immediately follows that $\tilde{\mu }$ is an order-preserving type I state on the residuated lattice $D/_{\sim }$.

Let $U$ be a set of new constants and ${\cal C}_{\forall }(U)$ be the language obtained from ${\cal C}_{\forall }$ by adjoining the constants from $U$. We denote by $E(U)$ the set of the constants of ${\cal C}_{\forall }(U)$.

We fixe a set of constants $U$ and a logical probability $m:E(U)\rightarrow [0,1]$. We shall introduce two conditions on the pair $(U,m)$:

\noindent $(G\exists )$ for any formula $\phi (x)$ of ${\cal C}_{\forall }(U)$, $m(\exists x\phi (x))=\sup \{m(\bigvee _{i=1}^{n}\phi (a_i))|n\in \N ^{*},a_1,\ldots ,a_n\in U\}$;

\noindent $(G\forall )$ for any formula $\phi (x)$ of ${\cal C}_{\forall }(U)$, $m(\forall x\phi (x))=\inf \{m(\bigwedge _{i=1}^{n}\phi (a_i))|n\in \N ^{*},a_1,\ldots ,a_n\in U\}$.

$(G\exists )$ and $(G\forall )$ are similar to the Gaifman conditions on the probabilities defined in classical first-order logic (\cite{[b]}).

A {\em probabilistic structure} on ${\cal C}_{\forall }$ is a pair $(U,m)$ that satisfies $(G\exists )$ and $(G\forall )$. A probabilistic structure $(U,m)$ is a {\em probabilistic model of a logical probability $\mu :D\rightarrow [0,1]$} iff $m\mid _{D}=\mu $.

\begin{openproblem}
Prove for some schematic extensions ${\cal C}$ of the MTL logic the following completeness theorem: any logical probability $\mu $ admits a probabilistic model.
\label{o7.4}
\end{openproblem}

In the case of classical first-order logic, the enunciation above is Gaifman`s completeness theorem (\cite{[b]}). If ${\cal C}$ is the infinite-valued \L ukasiewicz logic \L , then such a completeness theorem is valid (\cite{[c]}).

\end{document}